\documentclass[12pt,a4paper]{amsart}

\usepackage{euscript,amsfonts,amssymb,amsmath,amscd,amsthm,enumerate,hyperref}

\usepackage{comment}

\usepackage{graphicx}

\usepackage{color}

\usepackage[margin=1.1in]{geometry}
\newtheorem{theorem}{Theorem} %[section]
\newtheorem*{theorem*}{Theorem}
\newtheorem{lemma}[theorem]{Lemma}
\newtheorem{definition}[theorem]{Definition}
\newtheorem{proposition}[theorem]{Proposition}

\newtheorem{genEx}[theorem]{General Example}

\theoremstyle{remark}
\newtheorem{remark}[theorem]{Remark}

\numberwithin{equation}{section} \numberwithin{theorem}{section}

%------------------------

\newcommand{\la}{\lambda}

\newcommand{\GT}{\mathbb{GT}}

\newcommand{\N}{\mathbb N}

\newcommand{\R}{\mathbb R}

\newcommand{\C}{\mathbb C}

\newcommand{\HH}{\mathbb H}

\newcommand{\F}{ \mathbf F}

\newcommand{\pa}{\partial}

\newcommand{\cov}{\mathrm{cov}}

\newcommand{\mes}{\mathbf m}

\newcommand{\al}{\alpha}

\newcommand{\ep}{\epsilon}

\newcommand{\ii}{{\mathbf i}}

\newcommand{\FF}{\mathcal F}

\newcommand{\FFF}{\mathfrak F}

\newcommand{\GG}{\mathcal G}

\newcommand{\E}{\mathbf E}

\renewcommand{\c}{\mathbf c}

\renewcommand{\d}{\mathbf d}

\newcommand{\pr}{\mathrm{pr}}

\newcommand{\st}{\mathrm{st}}

\newcommand{\eps}{\varepsilon}

\newcommand{\Cov}{\mathrm{Cov}}

\newcommand{\rh}{\mathfrak{r}} % just probability measure

\sloppy

\title[Fluctuations of particle systems]
{Fluctuations of particle systems determined by Schur generating functions}

\author{Alexey Bufetov}

\address[Alexey Bufetov]{Department of Mathematics, Massachusetts Institute of Technology, Cambridge, MA, USA. E-mail: alexey.bufetov@gmail.com}

\author{Vadim Gorin}

\address[Vadim Gorin]{Department of Mathematics, Massachusetts Institute of Technology, Cambridge, MA, USA, and
 Institute for Information Transmission Problems of Russian Academy of Sciences, Moscow, Russia. E-mail: vadicgor@gmail.com}

\begin{document}

\begin{abstract}
We develop a new toolbox for the analysis of the global behavior of stochastic
discrete particle systems. We introduce and study the notion of the Schur generating
function of a random discrete configuration. Our main result provides a Central
Limit Theorem (CLT) for such a configuration given certain conditions on the Schur
generating function. As applications of this approach, we prove CLT's for several
probabilistic models coming from asymptotic representation theory and statistical
physics, including random lozenge and domino tilings, non-intersecting random walks,
 decompositions of tensor products of representations of unitary groups.
\end{abstract}

\maketitle

\tableofcontents

\section{Introduction}

\subsection{Overview}

This article is about the random $N$--particle configurations on $\mathbb Z$ and their asymptotic
behavior as $N\to\infty$. For each $N=1,2,\dots,$ let $\ell^{(N)}$ be a random $N$--dimensional
vector
\begin{equation}
\label{eq_particle_config}
 \ell^{(N)}=\bigl(\ell_1^{(N)}>\ell_2^{(N)}>\dots>\ell_N^{(N)}\bigr),\quad \ell_i^{(N)}\in\mathbb
 Z.
\end{equation}
Our aim is to deal with \emph{global} fluctuations of $\ell^{(N)}$. One way to make sense of those
is to take an arbitrary test function $f(x)$ and consider linear statistics
\begin{equation}
\label{eq_linear_statistics}
 \mathcal L_f^{(N)}=\sum_{i=1}^{N} f\left(\frac{\ell^{(N)}_i}{N} \right).
\end{equation}
We mostly deal with the case when $f(x)$ is a polynomial (or, more generally, a smooth function),
yet if $f(x)$ is the indicator function of an interval, then \eqref{eq_linear_statistics} merely
counts the number of random particles inside this interval.

Since by its definition,  $\mathcal L_f^{(N)}$ is a sum of $N$ terms, it is
reasonable to expect that it grows linearly in $N$. And, indeed, in the class of
systems that we study, $\frac{1}{N} \mathcal L_f^{(N)}$ converges as $N\to\infty$ to
a \emph{deterministic} limit depending on the choice of $f$. We will refer to such a
phenomenon as the Law of Large Numbers, appealing to the evident analogy with a
similar statement of classical probability dealing with sequences of independent
random variables.

The next natural question is to study the fluctuations $\mathcal L_f^{(N)}-\E \mathcal L_f^{(N)}$
as $N\to\infty$. Such fluctuations would grow as $\sqrt{N}$ in the systems arising from sequences
of independent random variables, but the scale is different in our context. We deal with
probability distributions coming from $2d$ statistical mechanics (lozenge and domino tilings,
families of non-intersecting paths), asymptotic representation theory, random matrix theory, and
for them the typical situation is that $\mathcal L_f^{(N)}-\E \mathcal L_f^{(N)}$ does not grow as
$N\to\infty$. Nevertheless, in all cases the fluctuations are asymptotically Gaussian, which
justifies the name Central Limit Theorem for these kinds of results.

\medskip

The main theme of the present article is to develop a new toolbox for proving the Law of Large
Numbers and Central Limit Theorems, which would be \emph{robust} to perturbations of $\ell^{(N)}$.
It is somewhat hard to concisely describe the class of systems where the toolbox is helpful. One
reason is that we believe our conditions to be in a sense equivalent to the LLN and CLT, see the
end of Section \ref{Section_moments_method} (which, of course, does not make these conditions
immediate to check). Yet we list below an extensive list of available applications.

Again coming back to the classical one-dimensional probability, a universal tool is given there by
the method of \emph{characteristic functions}. For instance, it can be used to prove that averages
of independent random variables converge to a Gaussian limit under very mild assumptions on the
distributions of these variables, cf.\ textbooks \cite{Kallenberg}, \cite{Durrett}.

In our context the characteristic functions were not found to be useful, mostly due to the fact
that the dimension (number of the particles) grows with $N$, while the individual coordinates
$\ell^{(N)}_i$, $i=1,\dots,N$ are very far from being independent. Therefore, we suggest to replace
them by a new notion of \emph{Schur generating function} which we now introduce.

Recall that a Schur function is a symmetric Laurent polynomial in variables $x_1,\dots,x_N$
parameterized by $\lambda=(\lambda_1\ge \lambda_2 \ge \dots\ge \lambda_N)$ and given by
$$
 s_\lambda(x_1,\dots,x_N)=\frac{\det\left[ x_i^{\lambda_j+N-j}\right]_{i,j=1}^N}{\prod_{1\le i<j
 \le N} (x_i-x_j)}.
$$
Let $\delta_N$ denote the $N$--tuple $(N-1,N-2,\dots,0)$ and note that the map
$\lambda\to \lambda+\delta_N$ makes the weakly decreasing coordinates of $\lambda$
strictly decreasing, as in \eqref{eq_particle_config}.

For a random $N$--tuple of strictly ordered integers $\ell^{(N)}$, as in
\eqref{eq_particle_config}, its distribution is a function $\rh_N$ of $N$ weakly
decreasing integers given by $\rh_N (\lambda) := {\rm Prob}
(\ell^{(N)}=\lambda+\delta_N)$.

\begin{definition}
The Schur generating function $S_{\rh_N}$ of a random $\rh_N$--distributed
$N$--particle configuration $\ell^{(N)}$, is a function of $N$ variables
$x_1,\dots,x_N$ given by
\begin{equation}\label{eq_Schur_gen_intro}
 S_{\rh_N} (x_1,\dots,x_N)=\sum_{\lambda} \rh_N \left(\lambda\right)
 \frac{s_\lambda(x_1,\dots,x_N)}{s_\lambda(1,1,\dots,1)}.
\end{equation}
\end{definition}

In \cite{BG} we showed how the Law of Large Numbers can be extracted from the
asymptotic behavior of Schur generating functions. Interestingly, the answer, i.e.\
the exact formula for $\lim_{N\to\infty} \frac{1}{N} \mathcal L_f^{(N)}$ depends
only on $f$ and the $N\to\infty$ asymptotics of $S_{\rh_N}(x_1,1,1,\dots,1)$, that is, \emph{all variables except for one} can be set to $1$ prior to the asymptotic analysis. A similar
phenomenon was also found in \cite{Novak2} by another method.

%the \emph{one--variable} version of the Schur generating function, which is

Here we make the next step and address the Central Limit Theorem for global fluctuations, the precise
statement in this direction is Theorem \ref{theorem:main-one-level}. In fact, we go even further,
and also analyze random \emph{sequences} of $N$--particle configurations forming Markov chains, see
Theorems \ref{theorem:main-projections}, \ref{theorem:main-multiplication},
\ref{th:general-for-domino} below. This time the answer, which is the covariance for
$\lim_{N\to\infty} \left(\mathcal L_f^{(N)}-\E \mathcal L_f^{(N)}\right)$,
depends only on asymptotics of
$S_{\rh_N}(x_1,x_2,1,1,\dots,1)$, that is, \emph{all variables except for two} can be set to $1$ prior to the asymptotic analysis.
For proving the asymptotic Gaussianity we need more.

Our theorems reduce the LLN and CLT to asymptotic behavior of Schur generating
functions, which is \emph{known} in many cases. This leads to proofs of the LLN and
CLT for a variety of stochastic systems of particles, including:
\begin{enumerate}
\item Lozenge tilings of trapezoid domains, cf.\ Figure \ref{Fig_Lozenge_1} in Section \ref{Section_Applications}.
\item Domino tilings of Aztec diamond, cf.\ Figure \ref{Fig_Domino_1} in Section \ref{Section_Applications} (for the application to domino tilings of
more complicated domains see \cite{BK}).
\item Ensembles of non-intersecting random walks, cf.\ Figure \ref{Fig_Poisson_non} in Section \ref{Section_Applications}.
\item $2+1$--dimensional random growth models. %, cf.\ \cite{BF}, \cite{BG_Shuffle}
\item Measures governing the decomposition into irreducible components for tensor products of
irreducible representations of the unitary group $U(N)$.
\item Measures governing the decomposition of restrictions onto $U(N)$ of extreme characters of the
infinite--dimensional unitary group $U(\infty)$.%, cf.\ \cite{BBO}.
\item Schur--Weyl measures.% , cf.\ \cite{Biane}.
\end{enumerate}
A more detailed exposition of the applications of our method is given in Section
\ref{Section_Applications}.

\subsection{Previous work on the subject}

One advantage of our approach through Schur generating functions is that it is quite general, and
as a result, in each of our applications we can address more general situations than those
rigorously known before. However, particular cases of some of our applications were accessible
previously by other important techniques. Let us list several of those.

\begin{itemize}

\item Determinantal point processes have led to Central Limit Theorems for uniformly random lozenge
tilings of certain domains in \cite{Ken}, \cite{Pet2}, for $2+1$ dimensional random
growth in \cite{BF}, \cite{Duits_2rates}, \cite{Kuan_ips}. Similar results for
domino tilings of the Aztec diamond were announced (without technical details) in
\cite{CJY}.

\item Asymptotic analysis of orthogonal polynomials through the recurrence relations has led to
Central Limit Theorems for ensembles of non-intersecting paths with specific initial
conditions (which also include some tiling models) in \cite{Br_Du},
\cite{Duits_nonc}.

\item Discrete loop equations (also known as Nekrasov equations) have led in \cite{BGG} to Central
Limit Theorems for discrete log--gases, which has overlaps with specific ensembles of
non-intersecting paths and tilings.

\item Various representation--theoretic ideas, involving, in particular, computations in the algebra of shifted symmetric functions and
universal enveloping algebra of $\mathfrak{gl}_N$ have led in \cite{Kerov},
\cite{Ivanov_Olsh}, \cite{Fulman}, \cite{Hora_Obata}, \cite{Borodin_Bufetov},
\cite{BBO}, \cite{Dol_Fer}, \cite{Kuan_noncom}, \cite{Meliot} to several instances of Central
Limit Theorem for the probability distributions of asymptotic representation theory.

\item Differential operators acting in the algebra of symmetric functions in infinitely-many
variables were used in \cite{Moll} for proving the Central Limit Theorem for the
Jack measures.

\end{itemize}

Let us emphasize, that despite the existence of several competing methods, most of our applications
were not previously accessible by any of them. Yet our technique is adapted to the study of the
global behavior of probabilistic systems, while some of these methods are more suitable for the
study of the local behavior.

\subsection{Continuous models}

Replacing $\ell_i^{(N)}\in \mathbb Z$ by $\ell_i^{(N)}\in\mathbb R$ in \eqref{eq_particle_config},
we arrive at continuous analogues of the particle configurations under consideration. In this
fashion, our results are closely related to the global asymptotics for the eigenvalues of random
matrix ensembles.

One precise example is given by the \emph{semiclassical limit}, which degenerates the decomposition
of tensor products of irreducible representations of $U(N)$ (one of our applications) to spectral
decomposition of sums of independent Hermitian matrices, see \cite[Section 1.3]{BG} for the
details. The Central Limit Theorem for this random matrix problem is well-known, see \cite[Section
10]{PS}. It can be put into the context of the \emph{second order freeness} in the free probability
theory, see \cite{MS}, \cite{MSS}. In Section \ref{sec:9-4} we explain how the covariance for our
Central Limit Theorem for tensor products degenerates to the random matrix one.

Another degeneration is the appearance of the Gaussian Unitary Ensemble (GUE) as a scaling limit of
lozenge and domino tilings near the boundary of the tiled domain, see \cite{OR_Birth}, \cite{JN},
\cite{GP}, \cite{Novak}. Recall that GUE is the eigenvalue distribution of $H=\frac12 (X+X^*)$,
where $X$ is $N\times N$ matrix of i.i.d.\ mean $0$ complex Gaussian random variables. And again
for GUE the Gaussian asymptotics for global fluctuations is well--known and can be generalized in
(at least) two directions. The first one is a general Central Limit Theorem for (continous)
log--gases of \cite{Joh_CLT} based on the loop equations. The second generalization is to replace
the Gaussian distributions in the definition of GUE by arbitrary ones and to study the resulting
\emph{Wigner matrix}. Then the global fluctuations can be accessed by the \emph{moments method},
see e.g.\ \cite[Chapter 2]{AGZ} for an exposition. In more details, one computes the moments of the
eigenvalues in the following form
\begin{equation} \label{eq_moments_of_rm}
\E\left( \prod_{k=1}^n {\rm Trace}\bigl(H^{m_k}\bigr) \right) = \E \left(
\prod_{k=1}^n \sum_{i=1}^N (h_i)^{m_k} \right),\quad \{h_i\}_{i=1}^N \text{ are
eigenvalues of }H.
\end{equation}
The independence of matrix elements of $H$ paves a way to find the asymptotic of the left--hand
side of \eqref{eq_moments_of_rm}, which then gives the global asymptotic of linear statistics of
the form \eqref{eq_linear_statistics} with polynomial test functions $f(x)$.

\subsection{Moments method}

\label{Section_moments_method}

The moments method was never available for the \emph{discrete} particle configurations as in
\eqref{eq_particle_config} for a very simple reason: there is no underlying random matrix or an
analogue thereof. Here we change this situation by providing a way to efficiently compute (a discrete
analogue of) the right--hand side in \eqref{eq_moments_of_rm}. Let us briefly state the key idea.

Let $\partial_i$ denote the derivative with respect to the variable $x_i$ and consider the
differential operator
$$
\mathcal D_m=\prod_{1\le i<j \le N} \frac{1}{ x_i-x_j} \left( \sum_{i=1}^N \left (x_i
\partial_i\right)^m \right) \prod_{1\le i<j\le N} (x_i-x_j).
$$
A straightforward computation shows that the Schur functions are eigenvectors of $\mathcal D_m$:
$$
 \mathcal D_m s_\lambda= \left(\sum_{i=1}^{N} (\lambda_i+N-i)^m \right) s_\lambda,\quad
 \lambda=(\lambda_1,\dots,\lambda_N).
$$
Therefore, applying such operators to \eqref{eq_Schur_gen_intro} we get
\begin{equation}
\label{eq_moment_computation}
 \E \left( \prod_{k=1}^n
\sum_{i=1}^N \left(\ell_i^{(N)}\right)^{m_k} \right)= \left[ \left(\prod_{k=1}^n \mathcal
D_{m_k}\right) S_{\rh_N} \right]_{x_1=x_2=\dots=x_N=1}.
\end{equation}
The fact that differential (or difference) operators applied to symmetric functions
can be used for the analysis of random particle configurations is by no means new, see
e.g.\ \cite{BC}, \cite{BCGS} for recent similar statements, and the asymptotic
questions boil down to finding a way to analyze the right--hand side of
\eqref{eq_moment_computation}. This is where the specific and relatively simple
definition of $\mathcal D_m$ shines, as we are able to develop a combinatorial
approach (yet based on several analytic lemmas) to the right--hand side of
\eqref{eq_moment_computation}.

One important observed feature is that the right--hand side of
\eqref{eq_moment_computation} depends only on the values of the Schur generating
function $S_{\rh_N}$ at points $(x_1,\dots,x_N)$ such that \emph{all but
a bounded number} of coordinates (i.e.\ the total number is not growing with $N$) are
equal to $1$. First, this reduces a problem in growing (with $N$) dimension to a
much more tractable finite--dimensional form. Second, the values of Schur
generating functions at such points are very robust and not too sensitive to small
perturbations for $\ell^{(N)}$. This is indicated by the results of \cite{GM},
\cite{GP} on the asymptotics of Schur functions, on which we elaborate in Section
\ref{sec:asymp-Schur-func}. In particular, these results give enough control on the
values of Schur generating functions to give the asymptotic expansion for the
left--hand side of \eqref{eq_moment_computation} needed for the Central Limit
theorem. In contrast to our method, previous results and related approaches in the
area, such as those of \cite{Borodin_Bufetov}, \cite{BC}, \cite{BCGS}, \cite{BBO},
\cite{Moll} relied on the \emph{exact form} of the Schur generating function or its
analogue; in particular, it was necessary to assume its factorization into a product
of $1$--variable functions.

From the technical point of view, even after all these observations are made, the asymptotic
analysis still needs many efforts and is much more complicated than that of \cite{BG} where the Law
of Large Numbers was addressed through the same technique.

\medskip

Let us end this section with a speculation. We believe that it should be possible to \emph{reverse}
the theorems of the present article: the knowledge of the Law of Large Numbers and Central Limit
Theorem should give (perhaps, subject to technical conditions) exhaustive information about
asymptotics of the Schur generating functions for all but finitely many values of coordinates $x_i$
equal to $1$. We plan to develop this direction in a separate publication\footnote{This was subsequently proven to be true, see \cite{Bufetov_Gorin_GFF2}.}.

\subsection{Organization of the article}

The rest of the text is organized as follows. In Section \ref{Section_main_results} we formulate
our main results linking the Central Limit Theorem for global fluctuations to the asymptotic of
Schur generating functions. Numerous applications of these results are presented in Section
\ref{Section_Applications}. Section \ref{sec:4} gives a generalization of
\eqref{eq_moment_computation} which underlies all our developments. The remaining sections present
a step-by-step proof for the statements of Sections \ref{Section_main_results} and
\ref{Section_Applications}.

\subsection{Acknowledgements}

We would like to thank Alisa Knizel for help with preparing the picture of a domino tiling. We
thank Alexei Borodin for useful comments. We are grateful to an anonymous referee for suggestions on improving the text.
V.G.\ was partially supported by the NSF grant
DMS-1407562, by the NEC Corporation Fund for Research in Computers and Communications and by the Sloan Research Fellowship. Both authors were partially supported by the NSF grant DMS-1664619.

\subsection{Notation}
\label{sec:notations}

Here we collect some notations that we use throughout this paper. Note that some of these notations are slightly unconventional.

By $\vec{x}$ we denote the variables $(x_1, \dots, x_N)$.

We denote by $(1^N)$ the sequence $\underbrace{(1,1,\dots,1)}_{N}$.

By $\pa_i$ we denote the partial derivative $\frac{\pa}{\pa x_i}$. We use $\pa_z$ instead of
$\frac{\pa}{\pa z}$. For a function of one variable $f(x)$ we sometimes denote the derivative by the
conventional notation $f'(x)$. By $\pa_i^0 f$ we mean the function $f$ itself.

For a differential operator $\mathcal D$ by $\mathcal D [ F(x) ] G(x)$ we mean that the differential operator is applied to $F(x)$ only.
Let $S_r$ be the group of all permutations of $r$ elements; then
$$
Sym_{x_1, \dots, x_{r}} f(x_1, \dots, x_r) := \frac{1}{r!} \sum_{\sigma \in S_r} f( x_{\sigma(1)}, x_{\sigma(2)}, \dots, x_{\sigma(r)}),
$$
denotes the symmetrization of a function.

Let $V_N (\vec{x}) := \prod_{1 \le i<j \le N} (x_i-x_j)$ be the Vandermond determinant in variables $x_1, \dotsm x_N$.

Sometimes we omit the arguments of functions in formulas. For example, we can use the symbol $V_N$ instead of $V_N (\vec{x})$.

We use notations $[N]:= \{1,2,\dots, N\}$, $[2;N] := \{2,3,\dots, N\}$.

$\sum_{\{a_1, \dots, a_{r} \} \subset [N]}$ denotes the summation over all subsets of $[N]$ consisting of $r$ elements.

All contours of integration in this paper are counter-clockwise.

\section{Main results}

\label{Section_main_results}

\subsection{Preliminaries and Law of Large Numbers}
\label{sec:LLN-formulation}

An $N$-tuple of non-increasing integers $\la = (\la_1\ge \la_2 \ge \dots \ge \la_N)$
is called a \textit{signature} of length $N$. We denote by $\GT_N$ the set of all
signatures of length $N$. The \textit{Schur function} $s_{\la}$, $\la\in\GT_N$, is a
symmetric Laurent polynomial defined by
\begin{equation*}
 s_\la (x_1, \dots ,x_N)=\frac{\det \left[x_i^{ \lambda_j + N -j}\right] }{ \det \left[x_i^{N-j}\right]}= \frac{\det\left[x_i^{\lambda_j+N-j}\right]}{\prod \limits_{i<j}
 (x_i-x_j)}.
\end{equation*}

Let $\rh$ be a probability measure on the set $\GT_N$.
A \emph{Schur generating function}
$S_{\rh} (x_1,\dots,x_N)$ is a symmetric Laurent power series in $x_1,\dots,x_N$ given by
$$
 S_{\rh} (x_1,\dots,x_N)=\sum_{\lambda \in \GT_N} \rh (\lambda)
 \frac{s_\lambda(x_1,\dots,x_N)}{s_\lambda(1^N)}.
$$
In what follows we always assume that the measure $\rh$ is such that this (in
principle, formal) sum is uniformly convergent in an open neighborhood of $(1^N)$.
Note that the uniform convergence of such a series in a neighborhood of $(1^N)$
implies the uniform convergence in an open neighborhood of the $N$-dimensional torus
$\{(x_1, \dots, x_N): |x_i|=1, i=1, \dots, N \}$. Indeed, it follows from the
estimate $|s_{\la} (x_1, \dots, x_n)| \le s_{\la} (|x_1|, \dots, |x_n|)$ (which is
an immediate corollary of the combinatorial formula for Schur functions as a positive sum of monomials, see
\cite[Chapter I, Section 5, (5.12)]{M}).

The goal of this paper is to show how to extract information about $\rh$ with the help of $S_\rh (x_1,\dots,x_N)$.

\begin{definition}
\label{def:LLN-appr_schur} A sequence of symmetric functions $\{ \F_N (\vec{x})
\}_{N \ge 1}$ is called \textbf{LLN-appropriate} if there exists a collection of
reals $\{ \c_k \}_{k \ge 1}$ such that

\begin{itemize}

\item For any $N$ the function $\log F_N (\vec{x})$ is holomorphic in an open complex neighborhood of $(1^N)$.

\item
For any index $i$ and any $k \in \N$ we have
$$
\lim_{N \to \infty} \left. \frac{ \pa_i^k \log F_N (\vec{x})}{N} \right|_{\vec{x}= (1^N)} = \c_k.
$$

\item
For any $ s \in \N$ and any indices $i_1, \dots, i_s$ such that there are at least
two distinct indices among them we have
$$
\lim_{N \to \infty} \left. \frac{ \pa_{i_1} \dots \pa_{i_s} \log F_N (\vec{x})}{N} \right|_{\vec{x}=1^N} = 0.
$$

\item
The power series
$$
\sum_{k=1}^{\infty} \frac{\c_k}{(k-1)!} (x-1)^{k-1}
$$
converges in a neighborhood of the unity.

\end{itemize}
\end{definition}

\begin{definition}
\label{def:LLN-appr_mes} A sequence $\rho = \{ \rho_N \}_{N \ge 1}$, where $\rho_N$
is a probability measure on $\GT_N$, is called \textbf{LLN-appropriate} if the
sequence $\{ S_{\rho_N} \}_{N \ge 1}$ of its Schur generating functions is
\textit{LLN-appropriate}. For such a sequence we define a function $F_{\rho} (x)$
via
$$
F_{\rho} (x) := \sum_{k=1}^{\infty} \frac{\c_k}{(k-1)!} (x-1)^{k-1},
$$
where $\{ \c_i \}_{i \ge 1}$ are the coefficients from Definition \ref{def:LLN-appr_schur}.
\end{definition}

\begin{genEx}
\label{exam:lln}
Assume that the Schur generating functions of a sequence of probability measures $\rho = \{ \rho_N \}_{N \ge 1}$, where $\rho_N$ is a probability measure on $\GT_N$, satisfies the condition
$$
\lim_{N \to \infty} \frac{ \pa_1 \log S_{\rho_N} (x_1, \dots, x_k, 1^{N-k})}{N} = U (x_1), \qquad \mbox{for any $\ k \ge 1$},
$$
where $U (x)$ is a holomorphic function, and the convergence is uniform in a complex
neighborhood of $(1^k)$. Then $\rho_N$ is a LLN-appropriate sequence with $F_{\rho}
(x) = U (x)$.
\end{genEx}
Indeed, for a uniform limit of holomorphic functions the order of taking derivatives and limit can be interchanged, which shows that the example above is correct. In applications studied in this paper all LLN-appropriate measures will come from the construction of Example \ref{exam:lln}. However, we prefer to prove general theorems in a slightly more general setting of Definition \ref{def:LLN-appr_mes}.

For a signature $\la \in \GT_N$ consider the measure on $\R$
\begin{equation}
\label{eq_signature_measure} m[ \la] := \frac{1}{N} \sum_{i=1}^N \delta \left(
\frac{ \la_i + N -i}{N} \right).
\end{equation}
The pushforward of a measure $\rh$ on $\GT_N$ with respect to the map $\la \to
m[\la]$ defines a random probability measure on $\R$ which we denote by $m[\rh]$.

The following theorem is essentially \cite[Theorem 5.1]{BG}. In Section \ref{sec:4-2} we comment on the slight difference between this formulation and the one given in \cite{BG}.

\begin{theorem}
\label{theorem:LLN-general}
 Suppose that a sequence of probability measures $\rho = \{ \rho(N) \}_{N \ge 1}$, where $\rho(N)$ is a probability measure on $\GT_N$, is LLN-appropriate, and $k \in \N$.
Then the random measures $m[\rho(N)]$ converge as
$N\to\infty$ in probability, in the sense of moments to a \emph{deterministic} measure $\mes$ on
$\mathbb R$, such that its $k$th moment equals
\begin{equation}
\label{eq:LLN-theorem-statem}
\int_{\R} x^k d \mes(x) = \frac{1}{2 \pi \ii (k+1)} \oint_{|z|= \ep} \frac{dz}{1+z} \left( \frac{1}{z} +1 + (1+z) F_{\rho} (1+z) \right)^{k+1},
\end{equation}
where $\ep \ll 1$.
\end{theorem}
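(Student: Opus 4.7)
The plan is to use the moments method built on the differential operator $\mathcal D_m$ from \eqref{eq_moment_computation}. The eigenvalue property $\mathcal D_m s_\la = \bigl(\sum_i (\la_i + N - i)^m\bigr) s_\la$, applied term-by-term to the Schur generating function, gives
$$
\E \left[ \int_{\R} x^m\, dm[\rho(N)](x) \right] \;=\; \frac{1}{N^{m+1}}\, \bigl[\mathcal D_m S_{\rho(N)}\bigr]_{\vec{x} = (1^N)}.
$$
The task is thus reduced to finding the $N\to\infty$ asymptotic of the right-hand side and matching it to \eqref{eq:LLN-theorem-statem}; convergence in probability will then follow from a parallel computation of the second moment.

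I would next rewrite $\mathcal D_m = V_N^{-1} \sum_i (x_i \pa_i)^m V_N$ by setting $G_N := \log S_{\rho(N)}$ and using the Leibniz rule to distribute the $m$ derivatives among the Vandermonde factor and the exponential $e^{G_N}$. A standard partial fraction identity rewrites sums of the form $\sum_i g(x_i)/\prod_{j\ne i}(x_i - x_j)$ as contour integrals $\frac{1}{2\pi \ii}\oint g(z)\,dz\,/\prod_i(z - x_i)$, and the confluent limit $\vec{x}\to(1^N)$ collapses this contour to a small circle around $z = 1$. Substituting $z = 1 + w$ with $|w| = \ep$, the Vandermonde derivatives assemble into the factor $\frac{1}{w} + 1$, while the derivatives landing on $G_N$ assemble, at leading order, into $(1+w)\, F_\rho(1+w)$; the additional factor $(1+w)^{-1}$ and the exponent $m+1$ in \eqref{eq:LLN-theorem-statem} arise from how the sum over $i$ interacts with the residue calculus.

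The LLN-appropriate hypothesis plays a double role. The scaling $\pa_i^k G_N(1^N) = N \c_k + o(N)$ identifies the coefficients of $F_\rho$, and the vanishing of mixed derivatives $\pa_{i_1}\cdots\pa_{i_s} G_N(1^N) = o(N)$ (with at least two distinct indices) ensures that every Leibniz summand whose derivatives on $G_N$ touch two or more \emph{distinct} coordinates is of strictly smaller order in $N$ and can be discarded. Equivalently, at leading order only the one-variable specialization $S_{\rho(N)}(x_1, 1, \dots, 1)$ contributes, which is precisely the phenomenon noted after \eqref{eq_Schur_gen_intro}. A careful power count then yields $[\mathcal D_m S_{\rho(N)}](1^N) = N^{m+1} I_m + o(N^{m+1})$ with $I_m$ equal to the right-hand side of \eqref{eq:LLN-theorem-statem}.

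The main obstacle is the combinatorial bookkeeping in the Leibniz expansion of $\sum_i (x_i \pa_i)^m (V_N S_{\rho(N)})$: one must classify each monomial by the number of distinct coordinates receiving derivatives of $G_N$, verify that only the $s \le 1$ terms survive, and check that the remaining sum over $i$ converges to the claimed contour integral after the substitution $z = 1+w$. For convergence in probability, the second moment $\E[(\int x^m\, dm[\rho(N)])^2] = N^{-2m-2} [\mathcal D_m^2 S_{\rho(N)}](1^N)$ is treated in the same way: the leading contribution factorizes as $I_m^2$, while every cross term that pairs a derivative in one copy of $\mathcal D_m$ with a derivative in the other on $G_N$ picks up a mixed partial derivative and is therefore $o(N^{2m+2})$, so the variance tends to zero.
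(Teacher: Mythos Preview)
Your proposal is correct and follows essentially the same approach as the paper, which in turn defers the details to \cite[Theorem 5.1]{BG}: apply the operator $\mathcal D_m$ to $S_{\rho(N)}$, expand $(x_i\pa_i)^m(V_N e^{G_N})$ by Leibniz, use the LLN-appropriate hypothesis to kill all terms where derivatives of $G_N$ touch two or more distinct coordinates, and identify the surviving contribution with the contour integral \eqref{eq:LLN-theorem-statem}. One minor presentational difference is that the paper organizes the confluent evaluation at $\vec{x}=(1^N)$ through symmetrization identities of the type in Lemma~\ref{lem:basic-1} and the $N$-degree bookkeeping of Section~\ref{sec:5-2}, converting to a contour integral only at the very end via the Cauchy formula, whereas you suggest passing to a contour integral earlier via a partial-fraction identity; both routes lead to the same computation.
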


\subsection{Main result: CLT for one level}
\label{sec:2-2}

\begin{definition}
\label{def:appr-func}
We say that a sequence of symmetric functions $\{ F_N (x_1, \dots, x_N) \}_{N \ge 1}$ is \textbf{appropriate} (or \textbf{CLT-appropriate}) if there exist two collections of reals $\{ \c_k \}_{k \ge 1}$, $\{ \d_{k,l} \}_{k,l \ge 1}$, such that
\begin{itemize}

\item For any $N$ the function $\log F_N (\vec{x})$ is holomorphic in an open complex neighborhood of $(1^N)$.

\item
For any index $i$ and any $k \in \N$ we have
$$
\lim_{N \to \infty} \left. \frac{ \pa_i^k \log F (\vec{x}) }{N} \right|_{\vec{x}=1} = \c_k,
$$

\item
For any distinct indices $i,j$ and any $k,l \in \N$ we have
$$
\lim_{N \to \infty} \left. \pa_i^k \pa_j^l \log F_N (\vec{x}) \right|_{\vec{x}=1} = \d_{k,l},
$$

\item
For any $s \in \N$ and any indices $i_1, \dots, i_s$ such that there are at least three distinct numbers among them we have
$$
\lim_{N \to \infty} \left. \partial_{i_1} \partial_{i_2} \dots \partial_{i_s} \log F_N (\vec{x}) \right|_{\vec{x}=1} = 0,
$$

\item
The power series
$$
\sum_{k=1}^{\infty} \frac{\c_k}{(k-1)!} (x-1)^{k-1}, \qquad \sum_{k=1; l=1}^{\infty} \frac{\d_{k,l}}{(k-1)! (l-1)!} (x-1)^{k-1} (y-1)^{l-1},
$$
converge in an open neighborhood of $x=1$ and $(x,y)=(1,1)$, respectively.

\end{itemize}
\end{definition}

%Let $\rho_N$ be a probability measure on $\GT_N$ and let $S_{\rho} (x_1, \dots, x_N)$ be a Schur generating function of this measure, see Section \ref{sec:LLN-formulation}.

\begin{definition}
\label{def:main} We say that a sequence of measures $\rho = \{ \rho_N \}_{N \ge 1}$
is \textbf{appropriate} (or \textbf{CLT-appropriate}) if the sequence of its Schur
generating functions $\{ S_{\rho_N} (x_1, \dots, x_N) \}_{N \ge 1}$ is appropriate.
For such a sequence we define functions
$$
F_{\rho} (x) = \sum_{k=1}^{\infty} \frac{\c_k}{(k-1)!} (x-1)^{k-1}, \qquad G_{\rho}
(x,y) = \sum_{k=1; l=1}^{\infty} \frac{\d_{k,l}}{(k-1)! (l-1)!} (x-1)^{k-1}
(y-1)^{l-1},
$$
$$
Q_{\rho} (x,y) = G_{\rho} (1+x,1+y) + \frac{1}{(x-y)^2}.
$$
\end{definition}

\begin{genEx}
\label{exam:clt}
Assume that the Schur generating function of a sequence of probability measures $\rho = \{ \rho_N \}_{N \ge 1}$ on $\GT_N$ satisfies the conditions
$$
\lim_{N \to \infty} \frac{ \pa_1 \log S_{\rho_N} (x_1, \dots, x_k, 1^{N-k})}{N} = U_1 (x_1), \qquad \mbox{for any $\ k \ge 1$},
$$
$$
\lim_{N \to \infty} \pa_1 \pa_2 \log S_{\rho} (x_1, \dots, x_k, 1^{N-k}) = U_2 (x_1, x_2), \qquad \mbox{for any $\ k \ge 1$},
$$
where $U_1(x), U_2(x,y)$ are holomorphic functions, and the convergence is uniform in a complex neighborhood of unity. Then $\rho$ is a (CLT-)appropriate sequence of measures with $F_{\rho} (x) = U_1(x)$, $G_{\rho} (x,y) = U_2 (x,y)$.
\end{genEx}

Indeed, for a uniform limit of holomorphic functions the order of taking derivatives and limit can be interchanged, which shows that the example above is correct. In applications studied in this paper all CLT-appropriate measures will come from the construction of Example \ref{exam:clt}. However, we prefer to prove theorems for a slightly more general setting of Definition \ref{def:main}.

Let $\rho_N$ be a probability measure on $\GT_N$. Set
$$
p_k^{(N)} := \sum_{i=1}^N \left( \la_i +N-i \right)^k, \qquad k=1,2, \dots, \quad \mbox{$\lambda=(\la_1, \dots, \la_N)$ is $\rho_N$-distributed.}
$$

The following theorem is the main result of this paper.

\begin{theorem}
\label{theorem:main-one-level}
Let $\rho = \{ \rho_N \}_{N \ge 1}$ be an appropriate sequence of measures on signatures with limiting functions $F_{\rho}(x)$ and $Q_{\rho} (x,y)$ (see Definition \ref{def:main}).

Then the collection
$$
\{ N^{-k} ( p_{k}^{(N)} - \E p_{k}^{(N)} ) \}_{k \in \N}
$$
converges, as $N \to \infty$, in the sense of moments, to the Gaussian vector with zero mean and covariance
\begin{multline*}
\lim_{N \to \infty} \frac{\cov(p_{k_1}^{(N)}, p_{k_2}^{(N)})}{N^{k_1+k_2}} = \frac{1}{(2 \pi \ii)^2} \oint_{|z|=\ep} \oint_{|w|=2 \ep} \left( \frac{1}{z} +1 + (1+z) F_{\rho} (1+z) \right)^{k_1} \\ \times \left( \frac{1}{w} +1 + (1+w) F_{\rho} (1+w) \right)^{k_2} Q_{\rho} (z, w) dz dw,
\end{multline*}
where the $z$- and $w$-contours of integration are counter-clockwise and $\ep \ll 1$.
\end{theorem}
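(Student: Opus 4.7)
The plan is to use the moment method built on the differential operator $\mathcal{D}_m = V_N^{-1}\bigl(\sum_{i=1}^N (x_i\partial_i)^m\bigr)V_N$ introduced in the overview. Since $\mathcal{D}_m$ is diagonal on Schur polynomials with eigenvalue $p_m^{(N)}(\lambda)$, the joint moments of the power sums satisfy
$$
\E\left[\prod_{j=1}^n p_{k_j}^{(N)}\right] = \left.\left(\mathcal{D}_{k_1}\cdots\mathcal{D}_{k_n} S_{\rho_N}\right)\right|_{\vec{x}=(1^N)}.
$$
Convergence of $\{N^{-k}(p_k^{(N)}-\E p_k^{(N)})\}_k$ to a centered Gaussian vector in the sense of moments reduces, via joint cumulants, to two claims: first, that $N^{-k_1-k_2}\cov(p_{k_1}^{(N)},p_{k_2}^{(N)})$ converges to the asserted double contour integral; and second, that for every $n\ge 3$ the joint cumulant of $(p_{k_j}^{(N)})_{j=1}^n$ is of order $o(N^{k_1+\cdots+k_n})$, so that after the $N^{-k_j}$ rescaling all cumulants of order $\ge 3$ vanish.

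The core computation is to expand $\mathcal{D}_{k_1}\cdots\mathcal{D}_{k_n}S_{\rho_N}/S_{\rho_N}$ at $(1^N)$ as a sum over combinatorial diagrams that prescribe how each $(x_i\partial_i)$ operator distributes its derivatives onto $V_N$, onto $S_{\rho_N}$, and onto other $(x_j\partial_j)$-factors. Conjugation by $V_N$ creates singular terms $(x_i-x_j)^{-1}$ that must be regularized using the antisymmetry of the resulting sums; this is where the generalization of \eqref{eq_moment_computation} promised in Section \ref{sec:4} is needed. The CLT-appropriateness condition controls the scaling of each surviving piece: single-index derivatives $\partial_i^k\log S_{\rho_N}$ contribute a factor of $N$ with limit $\c_k$, two-index mixed derivatives $\partial_i^k\partial_j^l\log S_{\rho_N}$ are $O(1)$ with limit $\d_{k,l}$, and log-derivatives touching three or more distinct indices are $o(1)$. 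Consequently the $N$-scaling of every diagram can be read off directly from the partition of its endpoints among these three categories.

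For $\E p_k^{(N)}$ only the fully disconnected diagrams contribute to leading order, recovering the LLN moments of Theorem \ref{theorem:LLN-general} encoded by $F_\rho$. For the covariance one isolates the singly-connected diagrams linking $\mathcal{D}_{k_1}$ to $\mathcal{D}_{k_2}$: pairs joined via a mixed derivative $\partial_i\partial_j\log S_{\rho_N}$ assemble into the $G_\rho$ part of $Q_\rho$, while pairs joined via the Vandermonde conjugation (essentially through the factors produced by the noncommutation of the $(x_i\partial_i)^m$ sums with $V_N$) assemble into the universal kernel $(z-w)^{-2}$. The resulting double combinatorial series is then rearranged into a double generating function and recognized, after the change of variables $x_i=1+z$, $x_j=1+w$ and a residue expansion, as the claimed integral, with the inner factor $z^{-1}+1+(1+z)F_\rho(1+z)$ inherited from the one-variable generating function appearing in \eqref{eq:LLN-theorem-statem}; the nested radii $|z|=\epsilon$, $|w|=2\epsilon$ record the prescription for handling the $(z-w)^{-2}$ singularity. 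The step I expect to be the main obstacle is the vanishing of higher cumulants: one must show that every diagram with three or more distinct inter-operator connections is suppressed by the requisite power of $N$, which demands careful bookkeeping of how the $(x_i-x_j)^{-1}$ factors interact with nested $(x_i\partial_i)$-derivatives, together with the analytic input that derivatives of $\log S_{\rho_N}$ touching three or more indices are $o(1)$ rather than merely $O(1)$.
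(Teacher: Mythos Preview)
Your proposal is correct and takes essentially the same approach as the paper: the paper's intermediate functions $\FF_{(l)}$ and $\GG_{(l_1,l_2)}$ together with its notion of $N$-degree are the precise versions of your diagram bookkeeping, and Gaussianity is established by showing the centered moments satisfy Wick's formula directly via an inclusion-exclusion cancellation (Lemma~\ref{lem:covar-general} and Proposition~\ref{prop:gener-gaussianity-final}) rather than by bounding cumulants of order $\ge 3$, though the two framings are equivalent. Your identification of the two covariance sources---$G_\rho$ from the mixed log-derivatives $\partial_i\partial_j\log S_N$ and the universal $(z-w)^{-2}$ from the Vandermonde interaction---matches exactly the case split in Proposition~\ref{prop:main-covar-comput}.
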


This theorem serves as a model example of our approach. However, for applications it
is often required to study the joint distributions of several random particle
systems. Our approach can be applied to (some of) these cases as well: We deal with
them in the next sections.

\subsection{General setting for several levels}
\label{Section_multilevel_general}

Let us introduce a general construction of Markov chains which are analyzable by our methods.

For a positive integer $m$ and $\eps>0$ let $\Lambda^{m}_{\eps}$ be the space of
analytic symmetric functions in the region
$$
\{ (z_1, \dots, z_m) \in \C^m : 1+\eps > |z_1|>1-\eps, 1+\eps > |z_2|>1-\eps, \dots,
1+\eps > |z_m|>1-\eps \}.
$$
We consider $\Lambda^{m}_{\eps}$ as a topological space with topology of uniform convergence in this region.

Consider $\Lambda^{m}:= \cup_{\eps>0} \Lambda^m_{\eps}$ endowed with the topology of the inductive limit. Note that for $\mathfrak{f} \in \Lambda^{m}$ the function $\mathfrak{f} (x_1, x_2, \dots, x_m) \prod_{1 \le i<j \le m} (x_i-x_j)$ is an (antisymmetric) analytic function. Therefore, it can be written as an absolutely convergent sum of monomials $x_1^{l_1} \dots x_m^{l_m}$, where $l_i \in \mathbb{Z}$, $i=1,2, \dots, m$. Dividing both sides of such a sum by $\prod_{1 \le i<j \le m} (x_i-x_j)$, we obtain that each element of $\Lambda^{m}$ can be written in a unique way as an absolutely convergent sum
$$
\sum_{\la \in \GT_m} \mathfrak{c}_{\la} s_{\la} (x_1, \dots, x_m),  \qquad \mathfrak{c}_{\la} \in \C,
$$
in some neighborhood of the $m$-dimensional torus.

We consider a map $\mathfrak{p}_{m, n} : \Lambda^m \to \Lambda^n$ with the following properties:

1) $\mathfrak{p}_{m, n}$ is a linear continuous map.

2) For every $\la \in \GT_m$ we have
$$
\mathfrak{p}_{m, n} \left( \frac{s_{\la} (x_1, \dots, x_m)}{s_{\la} (1^m)} \right) = \sum_{\mu \in \GT_n} \mathfrak{c}_{\la,\mu}^{\mathfrak{p}_{m, n}} \frac{s_{\mu} (x_1, \dots, x_n)}{s_{\mu} (1^n)}, \qquad \mathfrak{c}_{\la,\mu}^{\mathfrak{p}_{m, n}} \in \R_{\ge 0}.
$$
This property says that the coefficients $\mathfrak{c}_{\la,\mu}^{\mathfrak{p_{m, n}}}$ must be nonnegative reals (rather than arbitrary complex numbers). Note that the sum in the right-hand side is absolutely convergent due to the definition of $\mathfrak{p_{m, n}}$.

3) For any $\mathfrak{f} \in \Lambda^m$ we have
$$
\mathfrak{f} (1^m) = \mathfrak{p}_{m, n} ( \mathfrak{f} ) (1^n).
$$
In words, this property asserts that our map should preserve the value at unity.

It follows from conditions 2) and 3) that
$$
\sum_{\mu \in \GT_n} \mathfrak{c}_{\la,\mu}^{\mathfrak{p}_{m, n}} =1.
$$
Since these coefficients are nonnegative reals one can consider them as transitional probabilities of a Markov chain. In more details, let $n_1, \dots, n_s$ be positive integers, and let $\mathfrak{p}_{n_2, n_1}$, ..., $\mathfrak{p}_{n_{s}, n_{s-1}}$ be maps satisfying conditions above. Let $\rho$ be a probability measure on $\GT_{n_s}$. Define the probability measure on the set
$$
\GT_{n_1} \times \GT_{n_2} \times \dots \times \GT_{n_s}
$$
via
\begin{equation}
\label{eq:gener-form-measure-markov}
\mathrm{Prob} (\la^{(1)}, \la^{(2)}, \dots, \la^{(s)}) = \rho (\la^{(s)} ) \prod_{i=2}^{k} \mathfrak{c}_{\la^{(i)},\la^{({i-1})}}^{\mathfrak{p_{n_i, n_{i-1}}}}.
\end{equation}
In Section \ref{sec:4} we prove a formula for the expectation of \textit{joint} moments of signatures $\{ \la^{(i)} \}_{i=1, \dots, s}$ distributed according to this measure.

\subsection{Main result: CLT for several levels}
\label{Section_multilevel_specified} In this section we state three multi-level
generalisations of Theorem \ref{theorem:main-one-level}. They are mainly shaped to
the applications studied in the present paper. With the use of the construction from
Section \ref{Section_multilevel_general} it is possible to produce many other
similar multi-level generalisations of Theorem \ref{theorem:main-one-level}; this
should be regulated by applications that one has in mind.

We consider the following particular examples of the map $\mathfrak{p_{m, n}}$ from
Section \ref{Section_multilevel_general}. In the first case, it is given by $f (x_1,
\dots,x_m) \to f (x_1,\dots, x_n, 1^{m-n})$, for $m>n$. In the second case, it is
given by $ s_{\la} (x_1, \dots,x_m) \to g (x_1, \dots, x_m) s_{\la} (x_1,
\dots,x_m)$, for $m=n$ and a function $g (x_1, \dots, x_m)$ which is a Schur
generating function of a probability measure on $\GT_m$. Finally, in the third case
we combine the two previous ones.

In an attempt to make the exposition more explicit, we repeat the construction of
Section \ref{Section_multilevel_general} in all three cases below.

\textbf{Example 1)} For $\la \in \GT_{k_1}, \mu \in \GT_{k_2}$, with $k_1 \ge k_2$, let us introduce the coefficients $\pr_{k_1 \to k_2} (\la \to \mu)$ via
\begin{equation}
\label{eq:Schur-branching-pr-coef}
\frac{s_{\lambda} (x_1, \dots, x_{k_2}, 1^{k_1-k_2})}{s_{\la} (1^{k_1})} = \sum_{\mu \in \GT_{k_2}} \mathrm{pr}_{k_1 \to k_2} (\la \to \mu) \frac{s_{\mu} (x_1, \dots, x_{k_2})}{s_{\mu} (1^{k_2})}.
\end{equation}
The branching rule for Schur functions asserts that the coefficients $\pr_{k_1 \to
k_2} (\la \to \mu)$ are non-negative for all $\la$, $\mu$ (see \cite[Chapter
1.5]{M}). Plugging in $x_1=\dots=x_k=1$, we see that $\sum_{\mu \in \GT_{k_2}}
\mathrm{pr}_{k_1 \to k_2} (\la \to \mu)=1$.

Let $0<a_1 \le a_2 \le \dots \le a_n=1$ be fixed positive reals, and let $\rho_{N}$ be a probability measure on $\GT_{N}$.

Let us introduce the probability measure on the set $\GT_{[a_1 N]} \times \GT_{[a_2 N]} \times \dots \times \GT_{[a_n N]}$ via
\begin{equation}
\label{eq:def-projections-general}
\mathrm{Prob} ( \la^{(1)}, \la^{(2)}, \dots, \la^{(n)} ) := \rho_{N} ( \la^{(n)} ) \prod_{i=1}^{n-1} \pr_{[a_{i+1} N] \to [a_i N]} ( \la^{(i+1)} \to \la^{(i)}), \qquad \la^{(i)} \in \GT_{[a_i N]},
\end{equation}
(the fact that all these weights are summed up to $1$ can be straightforwardly checked by induction over $n$.)

We are interested in the \textit{joint} distributions of random signatures of this random array.
For $t =1,2,\dots,n$, let $p_k^{[a_t N]}$ be a (shifted) power sum of coordinates of signatures defined by the formula
$$
p_k^{[a_t N]} := \sum_{i=1}^{[a_t N]} \left( \la_i^{(t)} + [a_t N] -i \right)^k, \qquad k=1,2, \dots,
$$
where $\la^{(t)}$ is distributed according to the measure \eqref{eq:def-projections-general}.

\begin{theorem}
\label{theorem:main-projections}
Assume that $\rho = \{ \rho_{N} \}$ is an appropriate sequence of probability measures on $\GT_{N}$, $N=1,2,\dots$, in the sense of Definition \ref{def:main} and corresponding to functions $F_{\rho}$ and $Q_{\rho}$. Let us consider the probability measure on the sets of signatures defined by \eqref{eq:def-projections-general}. In the notations above,
the collection of random variables
$$
\left\{ N^{-k} \left( p_{k}^{[a_t N]} - \E p_{k}^{[a_t N]} \right) \right\}_{t=1, \dots,n; k \ge 1}
$$
converges, as $N \to \infty$, in the sense of moments, to the Gaussian vector with zero mean and covariance:
\begin{multline*}
\lim_{N \to \infty} \frac{ \cov \left( p_{k_1}^{[a_{t_1} N]}, p_{k_2}^{[a_{t_2} N]} \right)}{N^{k_1+k_2}} = \frac{a_{t_1}^{k_1} a_{t_2}^{k_2} }{(2 \pi \ii)^2} \oint_{|z|=\ep} \oint_{|w| = 2 \ep} \left( \frac{1}{z} +1 + \frac{(1+z) F_{\rho}(1+z)}{a_{t_1}} \right)^{k_1} \\ \times \left( \frac{1}{w} +1 + \frac{(1+w) F_{\rho} (1+w)}{a_{t_2}} \right)^{k_2} Q_{\rho} (z, w) dz dw,
\end{multline*}
where $1 \le t_1 \le t_2 \le n$ and $\ep \ll 1$.
\end{theorem}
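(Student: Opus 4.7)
The plan is to reduce Theorem \ref{theorem:main-projections} to Theorem \ref{theorem:main-one-level} at the level of single-time marginals, and to handle cross-level covariances via a multi-level extension of \eqref{eq_moment_computation}. The structural input is the branching rule \eqref{eq:Schur-branching-pr-coef}: summing \eqref{eq:def-projections-general} over the intermediate coordinates shows that the marginal of $\la^{(t)}$ on $\GT_{[a_t N]}$ has Schur generating function
\[
S_{\rho_N^{(t)}}(x_1,\dots,x_{[a_t N]}) = S_{\rho_N}\bigl(x_1,\dots,x_{[a_t N]},\, 1^{N-[a_t N]}\bigr).
\]
A single-variable derivative $\pa_i^k \log S_{\rho_N^{(t)}}$ at $\vec x=1^{[a_t N]}$ equals $\pa_i^k \log S_{\rho_N}$ at $\vec x=1^N$, so after dividing by $[a_t N]\sim a_t N$ the limiting constants become $\c_k^{(t)}=\c_k/a_t$, giving $F_{\rho^{(t)}}(x)=F_\rho(x)/a_t$; mixed derivatives $\pa_i^k \pa_j^l$ carry no $N$-scaling, so $\d_{k,l}^{(t)}=\d_{k,l}$ and $Q_{\rho^{(t)}}=Q_\rho$. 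Applying Theorem \ref{theorem:main-one-level} to the marginal sequence at level $t$ (indexed by $[a_t N]$) and renormalising by the factor $([a_t N]/N)^{k_1+k_2}\to a_t^{k_1+k_2}$ recovers exactly the diagonal case $t_1=t_2=t$ of the claimed covariance formula.

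For the off-diagonal terms with $t_1<t_2$, set $M_j=[a_{t_j}N]$ and use the tower property. Since $p_{k_1}^{[a_{t_1}N]}$ depends only on $\la^{(t_1)}$, the joint expectation becomes $\E\!\left[p_{k_2}^{[a_{t_2}N]}\,\E\bigl[p_{k_1}^{[a_{t_1}N]}\mid\la^{(t_2)}\bigr]\right]$. Conditionally on $\la^{(t_2)}$, the law of $\la^{(t_1)}$ has Schur generating function $s_{\la^{(t_2)}}(x_1,\dots,x_{M_1},1^{M_2-M_1})/s_{\la^{(t_2)}}(1^{M_2})$, so applying the operator $\mathcal D_{k_1}$ in $M_1$ variables at $\vec x=1$ produces $\E[p_{k_1}^{[a_{t_1}N]}\mid\la^{(t_2)}]$; combining with the eigenrelation $\mathcal D_{k_2}s_\la = p_{k_2}(\la)s_\la$ in $M_2$ variables one obtains the multi-level analogue of \eqref{eq_moment_computation}, derived in full generality in Section \ref{sec:4}, expressing $\E(p_{k_1}^{[a_{t_1}N]} p_{k_2}^{[a_{t_2}N]})$ as an iterated application of $\mathcal D_{k_2}^{(M_2)}$ and $\mathcal D_{k_1}^{(M_1)}$, interspersed with partial substitutions of $1$, to the marginal $S_{\rho_N}(\vec x,1^{N-M_2})$. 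The asymptotic analysis of the right-hand side then parallels the one-level proof: conjugation by the Vandermondes isolates the mixed second derivatives governed by $G_\rho$; the $(x-y)^{-2}$ piece of $Q_\rho$ arises from the Vandermonde denominator; single-variable derivatives contribute powers of $F_\rho$; and the rescalings $M_j/N\to a_{t_j}$ produce both the $a_{t_j}^{k_j}$ prefactors and the factor $1/a_{t_j}$ inside each $k_j$-th power. Joint asymptotic Gaussianity of the full collection follows by the same machinery applied to arbitrary products $\prod_r \mathcal D_{m_r}^{(M_{t_r})}$, since only Wick-paired contributions survive at the prescribed scale.

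The principal difficulty, just as in the one-level case, is the combinatorial control of the operator expansion. After a Leibniz expansion a large number of terms appear, the operators $\mathcal D_m^{(k)}$ at different $k$ do not commute simply, and one must show that precisely the Wick-paired contributions at the correct scale survive, with the contours $|z|=\ep\ll|w|=2\ep$ chosen so as to avoid spurious residues when the two contours would otherwise cross. The uniform estimates on $S_{\rho_N}$ and its derivatives near $1^N$ required for this are the Schur-function asymptotics recalled in Section \ref{sec:asymp-Schur-func}; assembling them into the clean contour-integral form of the theorem, respecting the correct order of operator applications and partial substitutions, is what constitutes the bulk of the technical work.
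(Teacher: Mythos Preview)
Your proposal is correct and follows essentially the same route as the paper: the multi-level moment identity you invoke is precisely Proposition~\ref{prop:gen-formula-moments-markov}, and your description of the asymptotic analysis (restricted index ranges producing the $a_{t_j}$ factors, with the one-level combinatorics otherwise unchanged) matches Sections~\ref{sec:6-3} and~\ref{sec:7-2}. Your reduction of the diagonal case $t_1=t_2$ to Theorem~\ref{theorem:main-one-level} via the observation $F_{\rho^{(t)}}=F_\rho/a_t$, $Q_{\rho^{(t)}}=Q_\rho$ is a clean shortcut the paper does not isolate, but the off-diagonal and joint-Gaussianity arguments are the same.
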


\textbf{Example 2)} Let us start with the following classical fact. For $\la, \mu \in \GT_N$ there is a decomposition of the product of two Schur functions into a linear combination of Schur functions:
\begin{equation}
\label{eq_Littlewood_Richardson} s_{\la} (x_1, \dots, x_N) s_{\mu} (x_1, \dots, x_N)
= \sum_{\eta \in \GT_N} c^{\eta}_{\la \mu} s_{\eta} (x_1, \dots, x_N).
\end{equation}
The coefficients $c_{\la \mu}^{\eta}$ are well-known under the name of Littlewood-Richardson coefficients. It is known that
for arbitrary $\la, \mu, \eta$ they are nonnegative (see, e.g., \cite[Chapter 1.9]{M}).

%Let $\rho^{(1)}$ and $\rho^{(2)}$ be probability measures on $\GT_N$, and assume that the series for theirs Schur generating functions absolutely converge in an open neighborhood of $(1^{N})$. Note that
%\begin{multline}
%\label{eq:abs-conv}
%S_{\rho^{(1)}} (\vec{x}) S_{\rho^{(2)}} (\vec{x}) = \left( \sum_{\la \in \GT_N} \rho^{(1)} (\la) \frac{s_{\la} (\vec{x})}{s_{\la} (1^N)} \right) \left( \sum_{\la \in \GT_N} \rho^{(2)} (\mu) \frac{s_{\mu} (\vec{x})}{s_{\mu} (1^N)} \right) \\ = \sum_{\eta \in \GT_N} \left( \sum_{\la, \mu \in \GT_N} c^{\la \mu}_{\eta} \frac{s_{\eta} (1^N) \rho^{(1)} (\la) \rho^{(2)} (\mu) }{s_{\la} (1^N) s_{\mu} (1^N)} \right) \frac{s_{\eta} (\vec{x})}{s_{\eta} (1^N)}.
%\end{multline}
%Due to nonnegativity of Littlewood-Richardson coefficients and an absolute convergence we obtain that the product $S_{\rho^{(1)}} (\vec{x}) S_{\rho^{(2)}} (\vec{x})$ is a Schur-generating function for some probability measure on $\GT_N$. Moreover, this Schur-generating function is a series which is absolutely convergent in an open neighborhood of $(1^N)$.

Let $\rho = \{ \rho_N \}$, $\{ \rh^{(1)}_N \}$, $\{ \rh^{(2)}_N \}$, ..., $\{ \rh^{(n-1)}_N \}$ be sequences of appropriate measures, where $\rho_N$, $ \rh^{(1)}_N$, ..., $\rh^{(n-1)}_N $ are probability measures on $\GT_N$.
Let $g_1^{(N)} (\vec{x}), \dots, g_{n-1}^{(N)} (\vec{x})$ be the Schur generating functions of $\rh^{(1)}_N$, $\rh^{(2)}_N$, ..., $\rh^{(n-1)}_N$, respectively.

Define the coefficients $\st_{(g_r)}^{(N)} (\la \to \mu)$, for $\la \in \GT_N$, $\mu \in \GT_N$, $1 \le r \le (n-1)$, via
\begin{equation}
\label{eq:expansion-g}
g_r^{(N)} (\vec{x}) \frac{s_{\la} (\vec{x})}{s_{\lambda} (1^N)} = \sum_{\mu \in \GT_N} \st_{(g_r)}^{(N)} (\la \to \mu) \frac{s_{\mu} (\vec{x})}{s_{\mu} (1^N)}.
\end{equation}
Note that the series in the right-hand side is absolutely convergent and the coefficients $\st_{(g_r)}^{(N)}$ are nonnegative. Using \eqref{eq_Littlewood_Richardson}, one can write an explicit formula for them:
$$
\st_{(g_r)}^{(N)} (\la \to \mu) = \sum_{\eta \in \GT_N} \frac{s_{\mu} (1^N)}{s_{\la} (1^N) s_{\eta} (1^N)} c^{\mu}_{\la \eta} \rh^{(r)}_N (\eta).
$$

Let us define a probability measure on the set
\begin{equation}
\label{eq:many-GTn}
\underbrace{\GT_N \times \GT_N \times \dots \times \GT_N}_{\mbox{$n$ factors}}.
\end{equation}
We define the probability of the configuration
$$
(\la^{(1)}, \la^{(2)}, \dots, \la^{(n)}) \in \GT_N \times \GT_N \times \dots \times \GT_N
$$
via
\begin{equation}
\label{eq:gener-def-multiplications}
\mathrm{Prob} (\la^{(1)}, \la^{(2)}, \dots, \la^{(n)}) := \rho_N (\la^{(n)}) \prod_{i=1}^{n-1} \st_{g_i}^{(N)} (\la^{(i+1)} \to \la^{(i)}).
\end{equation}

Let $p_{k;s}^{(N)}$ be the $k$th shifted power sum of $\la^{(s)}$:
$$
p_{k;s}^{(N)} := \sum_{i=1}^{N} \left( \la_i^{(s)} + N -i \right)^k, \qquad k=1,2, \dots, \qquad \mbox{$(\la^{(1)}, \dots, \la^{(n)})$ is \eqref{eq:gener-def-multiplications}-distributed.}
$$

Let
\begin{equation}
\label{eq:h-func-def-mult}
H_n^{(N)} (\vec{x}) := S_{\rho_N} (\vec{x}), \ \ H_{n-1}^{(N)} (\vec{x}) := H_n^{(N)} (\vec{x}) g_{n-1}^{(N)} (\vec{x}), \ ... \ , \ H_1^{(N)} (\vec{x}) := H_{2}^{(N)} (\vec{x}) g_{1}^{(N)} (\vec{x}).
\end{equation}
It can be directly shown by induction that the functions $H_s^{(N)} (\vec{x})$, are Schur generating functions of $\la^{(s)}$, $s=1, \dots, n$. Moreover, they are appropriate (in the sense of Definition \ref{def:appr-func}) because $g_i^{(N)}$ and $S_{\rho_N}$ are appropriate sequences of functions.

Let us denote the corresponding to $\{ H_s^{(N)} \}_{N \ge 1}$ limit functions from Definition \ref{def:main} by $F_{\rho;(s)} (x)$, $G_{\rho;(s)} (x,y)$, and $Q_{\rho, (s)} (x,y)$.

\begin{theorem}
\label{theorem:main-multiplication}
Assume that $\rho = \{ \rho_N \}$, $\{ \rh^{(1)}_N \}$, $\{ \rh^{(2)}_N \}$, ..., $\{ \rh^{(n-1)}_N \}$ are appropriate sequences of probability measures, and let $g_1(\vec{x})$, $\dots$, $g_{n-1} (\vec{x})$, $F_{\rho,(s)}(\vec{x})$, $Q_{\rho, (s)} (\vec{x})$, $p_{k;s}$, $s=1, \dots, n$, be as above.
Then the collection of random variables
$$
\left\{ N^{-k} \left( p_{k;s}^{(N)} - \E p_{k;s}^{(N)} \right) \right\}_{k \ge 1; s=1, \dots, n}
$$
converges, in the sense of moments, to the Gaussian vector with zero mean and covariance:
\begin{multline} \label{eq_covariance_multiplication}
\lim_{N \to \infty} \frac{\cov \left( p_{k_1;s_1}^{(N)}, p_{k_2;s_2}^{(N)} \right)}{N^{k_1+k_2}} = \frac{1}{(2 \pi \ii)^2} \oint_{|z|=\ep} \oint_{|w| = 2 \ep} \left( \frac{1}{z} +1 + (1+z) F_{\rho, (s_1)} (1+z) \right)^{k_1} \\ \times \left( \frac{1}{w} +1 + (1+w) F_{\rho, (s_2)} (1+w) \right)^{k_2} Q_{\rho, (s_2)} (z, w) dz dw,
\end{multline}
where $1 \le s_1 \le s_2 \le n$ and $\ep \ll 1$.
\end{theorem}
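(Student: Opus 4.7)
The plan is to reduce the joint-moment computation across levels to nested applications of the operators $\mathcal D_m$ on the marginal Schur generating functions $H_s^{(N)}$, and then re-run the asymptotic analysis from Theorem \ref{theorem:main-one-level} in this expanded setting.

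The first step is a multi-level analogue of \eqref{eq_moment_computation}. By the Markov structure of \eqref{eq:gener-def-multiplications} together with \eqref{eq:expansion-g}, the marginal Schur generating function of $\lambda^{(s)}$ is exactly $H_s^{(N)}$ from \eqref{eq:h-func-def-mult}; more generally, the conditional Schur generating function of $\lambda^{(s_1)}$ given $\lambda^{(s_2)}$ (for $s_1\le s_2$) equals $\bigl(\prod_{i=s_1}^{s_2-1}g_i^{(N)}\bigr)\, s_{\lambda^{(s_2)}}(\vec{x})/s_{\lambda^{(s_2)}}(1^N)$. Iterating the eigenrelation $\mathcal D_m s_\lambda = p_m(\lambda+\delta_N) s_\lambda$ yields, for any $1\le s_1\le\dots\le s_r\le n$,
\[
\E\Bigl[\prod_{j=1}^r p_{k_j;s_j}^{(N)}\Bigr] \;=\; \mathcal D_{k_1}\Bigl[\mathcal D_{k_2}\bigl[\cdots \mathcal D_{k_r}[H_{s_r}^{(N)}]\cdot G_{s_r,s_{r-1}}\cdots\bigr]\cdot G_{s_2,s_1}\Bigr]\Bigr|_{\vec{x}=(1^N)},
\]
where $G_{a,b}:=\prod_{i=b}^{a-1}g_i^{(N)}$ so that $H_a^{(N)}\cdot G_{a,b}=H_b^{(N)}$, and each $\mathcal D_{k_j}$ acts only on the bracketed subexpression in which it is inserted. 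This identity is the content of Section~\ref{sec:4}.

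For the covariance ($r=2$, $s_1\le s_2$), subtracting the product of one-level expectations yields
\[
\cov\bigl(p_{k_1;s_1}^{(N)},p_{k_2;s_2}^{(N)}\bigr) \;=\; \mathcal D_{k_1}\Bigl[\bigl(\mathcal D_{k_2}-\E p_{k_2;s_2}^{(N)}\bigr)[H_{s_2}^{(N)}]\cdot G_{s_2,s_1}\Bigr]\Bigr|_{\vec{x}=(1^N)},
\]
and I would analyze this with the same combinatorial expansion of $\mathcal D_m$ used in the proof of Theorem \ref{theorem:main-one-level}: expand each $\mathcal D_{k_j}$ as a combinatorial sum of products of partial derivatives, use the appropriateness of $H_{s_2}^{(N)}$ and of $H_{s_1}^{(N)}$ to control the orders of $\partial_i^a\log H^{(N)}_s|_{(1^N)}$, of $\partial_i\partial_j\log H^{(N)}_s|_{(1^N)}$, and of higher multi-index derivatives, and extract the limit as a double contour integral near $z=w=0$. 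The single-variable parts produce the factors involving $F_{\rho,(s_1)}$ and $F_{\rho,(s_2)}$ at the outer and inner positions respectively, because single-variable log-derivatives of $H_{s_j}^{(N)}$ at $(1^N)$ aggregate the contributions from $H_{s_r}^{(N)}$ and the intermediate $g_i^{(N)}$'s into $F_{\rho,(s_j)}$. The kernel is $Q_{\rho,(s_2)}$ rather than $Q_{\rho,(s_1)}$ because the mixed two-variable derivatives responsible for the kernel arise from the \emph{inner} operator $\mathcal D_{k_2}$ applied to $H_{s_2}^{(N)}$ before multiplication by $G_{s_2,s_1}$.

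For joint moments of order $r\ge 3$, the same nested operator formula combined with a Wick-type argument yields joint Gaussianity: under appropriateness only the pair-contractions survive at the leading $N$-order, all higher-connected contributions being subleading. The main technical obstacle is the careful combinatorial bookkeeping in the asymptotic expansion: one must show that the nested $\mathcal D$'s, when crossed with the multiplicative factors $G_{s_j,s_{j-1}}$, reorganize at $\vec{x}=(1^N)$ into the specific double contour integrand with kernel $Q_{\rho,(s_2)}$ (and, for $r\ge 3$, that the pair-contractions assemble into sums of products of two-variable kernels indexed by the larger level of each pair, while single-variable drifts are correctly indexed level by level). This is essentially a refinement, to the multi-level setting, of the $\mathcal D_m$-expansion lemmas underlying Theorem \ref{theorem:main-one-level}.
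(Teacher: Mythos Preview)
Your proposal is correct and follows essentially the same route as the paper: the nested operator identity from Section~\ref{sec:4}, the observation that the outer operator sees $H_{s_1}^{(N)}=H_{s_2}^{(N)}\cdot G_{s_2,s_1}$ while the inner one sees $H_{s_2}^{(N)}$ (so the single-variable drifts are $F_{\rho,(s_1)}$ and $F_{\rho,(s_2)}$ and the mixed second derivative producing the kernel is $\partial_i\partial_j\log H_{s_2}^{(N)}\to G_{\rho,(s_2)}$), and the Wick-type reduction for higher joint moments. The paper carries this out by reusing the one-level lemmas of Sections~\ref{sec:5}--\ref{sec:6} with $\log S_N$ replaced by $\log H_{s_1}^{(N)}$ and $\log H_{s_2}^{(N)}$ at the appropriate places (see Section~\ref{sec:6-3} and Section~\ref{sec:7-2}), which is exactly the ``refinement'' you describe.
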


\textbf{Example 3)} Now let us turn to a case which unites the two previous ones. Let $n$ be a positive integer and let $0 < a_1 \le \dots \le a_n$ be reals. Let $\{ \rh^{(1)}_N \}$, $\{ \rh^{(2)}_N \}$, ..., $\{ \rh^{(n-1)}_N \}$, $\{ \rh^{(n)}_N \}$ be appropriate sequences of probability measures such that $\rh^{(i)}_N$ is a probability measure on $\GT_{[a_{i} N]}$.
Let $g_1^{(N)} (x_1, \dots, x_{[a_1 N]})$, $\dots$, $g_{n-1}^{(N)} (x_1, \dots, x_{[a_{n-1} N]})$, $g_{n}^{(N)} (x_1, \dots, x_{[a_n N]})$ be Schur generating functions of $\rh^{(1)}_N$, $ \rh^{(2)}_N $, ..., $ \rh^{(n-1)}_N $, $ \rh^{(n)}_N $, respectively.

Define the coefficients $\st_{(g_r);r+1 \to r}^{(N)} (\la \to \mu)$, for $\la \in \GT_{[a_{r+1} N]}$, $\mu \in \GT_{[a_{r} N]}$, $1 \le r \le (n-1)$, via
\begin{multline}
\label{eq:expansion-g2}
g_{r}^{(N)} (x_1, \dots, x_{[a_{r} N]}) \frac{s_{\la} (x_1, \dots, x_{[a_{r} N]}, 1^{[a_{r+1} N]-[a_{r} N]})}{s_{\lambda} (1^{[a_{r+1} N]})} \\ = \sum_{\mu \in \GT_N} \st_{(g_r);r+1 \to r}^{(N)} (\la \to \mu) \frac{s_{\mu} (x_1, \dots, x_{[a_r N]})}{s_{\mu} (1^{[a_r N]})}.
\end{multline}
Note that the series in the right-hand side is absolutely convergent and the coefficients $\st_{(g_r)}^{(N)}$ are nonnegative. Using \eqref{eq:Schur-branching-pr-coef} and \eqref{eq_Littlewood_Richardson}, one can write an explicit formula for them:
\begin{multline*}
\st_{(g_r);r+1 \to r}^{(N)} (\la \to \mu) = \sum_{\eta_1 \in \GT_{[a_r N]}} \sum_{\eta_2 \in \GT_{[a_r N]}} \frac{s_{\mu} (1^{[a_r N]})}{s_{\eta_1} (1^{[a_{r} N]}) s_{\eta_2} (1^{[a_r N]})} \\ \times c^{\mu}_{\eta_1 \eta_2} \mathrm{pr}_{[a_{r+1} N] \to [a_r N]} (\la \to \eta_1) \rh^{(r)}_N (\eta_2).
\end{multline*}

In this definition we combine two operations on appropriate Schur generating functions which we use
in the two previous cases: The substitution of $1$'s into some variables and multiplication by a
function.

Let us define the probability measure on the set
$$
\GT_{[a_1 N]} \times \GT_{[a_{2} N]} \times \dots \times \GT_{[a_{n-1} N]} \times \GT_{[a_n N]}.
$$

That is, we want to define the probability of a collection of signatures
$$
(\la^{(1)}, \la^{(2)}, \dots, \la^{(n-1)}, \la^{(n)}),
$$
where $\la^{(i)}$ is a signature of length $[a_i N]$, $i=1,2, \dots, n$. Let us do this in the following way.
We define this probability via
\begin{multline}
\label{eq:mes-time-space}
\mathrm{Prob} (\la^{(1)}, \la^{(2)}, \dots, \la^{(n-1)}, \la^{(n)}) := \rho_{[a_n N]} (\la^{(n)}) \prod_{i=1}^{n-1} \st^{(N)}_{(g_i);r+1 \to r} ( \la^{(i+1)} \to \la^{(i)}).
\end{multline}

Let $p_{k;t}^{[a_t N]}$ be the $k$th shifted power sum of $\la^{(t)}$:
$$
p_{k;t}^{[a_t N]} := \sum_{i=1}^{[a_t N]} \left( \la_i^{(t)} + [a_t N] -i \right)^k, \qquad k=1,2, \dots, \qquad \mbox{$(\la^{(1)}, \dots, \la^{(n)})$ is \eqref{eq:mes-time-space}-distributed.}
$$

Let
\begin{align*}
H_n^{(N)} (x_1, \dots, x_{[a_n N]}) := g_n^{(N)} (x_1, \dots, x_{[a_n N]}), \\
%H_n^{\upsilon} (x_1, \dots, x_{[a_n N]}) := H_n^{\la} (x_1, \dots, x_{[a_n N]}) g_{n-1} (x_1, \dots, x_{[a_n N]}), \\
H_{n-1}^{(N)} (x_1, \dots, x_{[a_{n-1} N]}) := H_{n}^{(N)} (x_1, \dots, x_{[a_{n-1} N]}, 1^{[a_n N]-[a_{n-1} N]}) g_{n-1}^{(N)} (x_1, \dots, x_{[a_{n-1} N]}), \\
%H_{n-1}^{\upsilon} (x_1, \dots, x_{[a_{n-1} N]}) := H_{n-1}^{\la} (x_1, \dots, x_{[a_{n-1} N]}) g_{n-2} (x_1, \dots, x_{[a_{n-1} N]}), \\
\dots \dots \dots \dots \dots \\
H_{1}^{(N)} (x_1, \dots, x_{[a_1 N]}) := H_{2}^{(N)} (x_1, \dots, x_{[a_2 N]}, 1^{[a_{2} N]-[a_1 N]}) g_{1}^{(N)} (x_1, \dots, x_{[a_{n-1} N]}).
\end{align*}
It can be directly shown by induction that the functions $H_t^{(N)} (x_1, \dots, x_{[a_t N]})$, are Schur generating functions of $\la^{(t)}$, $t=1, \dots, n$. Moreover, they are appropriate (in the sense of Definition \ref{def:appr-func}) because $g_i^{(N)}$ and $S_{\rho_N}$ are appropriate sequences of functions. Let us denote the corresponding limit functions by $F_{\rho;(t)} (x)$, $G_{\rho;(t)} (x,y)$, $Q_{\rho, (t)} (x,y)$, for $t=1,2, \dots, n$.

\begin{theorem}
\label{th:general-for-domino}
In the notations above, the collection of random functions
$$
\left\{ N^{-k} \left( p_{k;t}^{[a_t N]} - \E p_{k;t}^{[a_t N]} \right) \right\}_{t=1, \dots, n; k \in \N}
$$
is asymtoticaly Gaussian with the limit covariance
\begin{multline*}
\lim_{N \to \infty} \frac{ \cov \left( p_{k_1;t_1}^{[a_{t_1} N]}, p_{k_2;t_2}^{[a_{t_2} N]} \right) }{N^{k_1+k_2} } = \frac{a_{t_1}^{k_1} a_{t_2}^{k_2} }{(2 \pi \ii)^2} \oint_{|z|=\ep} \oint_{|w| = 2 \ep} \left( \frac{1}{z} +1 + (1+z) F_{\rho, (t_1)} (1+z) \right)^{k_1} \\ \times \left( \frac{1}{w} +1 + (1+w) F_{\rho, (t_2)} (1+w) \right)^{k_2} Q_{\rho, (t_2)} (z, w) dz dw,
\end{multline*}
where $1 \le t_1 \le t_2 \le s$ and $\ep \ll 1$.
\end{theorem}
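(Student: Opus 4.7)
The plan is to identify $H_t^{(N)}$ as the Schur generating function of the random signature $\la^{(t)}$, and then apply a multi-level moment identity from Section \ref{sec:4} in order to reduce joint moments of the shifted power sums to differential operations on the $H_t^{(N)}$'s. The identification proceeds by induction on $t$ descending from $n$: inspection of \eqref{eq:expansion-g2} shows that the Markov kernel $\st^{(N)}_{(g_r);r+1 \to r}$ acts on a Schur generating function by first restricting variables $x_{[a_rN]+1}, \dots, x_{[a_{r+1}N]}$ to $1$ (the projection appearing in \eqref{eq:Schur-branching-pr-coef}) and then multiplying by $g_r^{(N)}$, which is exactly the recursive rule defining $H_t^{(N)}$. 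Consequently, the marginal CLT for each individual $\la^{(t)}$ reduces to Theorem \ref{theorem:main-one-level} applied to $\{H_t^{(N)}\}_{N \ge 1}$, and the Law of Large Numbers follows from Theorem \ref{theorem:LLN-general}.

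The substantive new ingredient is the formula for cross-level joint moments. Section \ref{sec:4} furnishes a multi-level analogue of \eqref{eq_moment_computation} that expresses $\E \bigl( \prod_i p_{k_i;t_i}^{[a_{t_i}N]} \bigr)$ in terms of differential operators applied to the $H_t^{(N)}$'s in an order dictated by the Markov chain. Concretely, for a pair $(t_1, t_2)$ with $t_1 < t_2$, one applies $\mathcal{D}_{k_2}$ (at level $[a_{t_2}N]$) to $H_{t_2}^{(N)}$, then performs the restriction--and--multiplication that transforms $H_{t_2}^{(N)}$ into $H_{t_1}^{(N)}$, then applies $\mathcal{D}_{k_1}$ (at level $[a_{t_1}N]$), and finally evaluates at the point $(1,1,\dots,1)$. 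The diagonal moments $(t_1=t_2)$ collapse to the single-level computation already handled in Theorem \ref{theorem:main-one-level}.

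To pass to the limit, we invoke the combinatorial and residue analysis developed for Theorem \ref{theorem:main-one-level}. Appropriateness of each sequence $\{H_t^{(N)}\}$ delivers the required control on logarithmic derivatives at $(1^{[a_t N]})$, and the residue computation then produces the contour integral in the statement. The prefactors $a_{t_1}^{k_1} a_{t_2}^{k_2}$ arise because level $t$ carries $[a_t N] \sim a_t N$ particles, so the relevant operator $\mathcal{D}_{k_i}$ at level $t_i$ contributes a factor scaling as $(a_{t_i}N)^{k_i}$. The main obstacle will be the combinatorial bookkeeping in the multi-level moment formula: one has to isolate the terms contributing at leading order and show that the two-variable kernel controlling the covariance must be $Q_{\rho,(t_2)}$ --- the one associated with the \emph{higher} (larger) level --- rather than $Q_{\rho,(t_1)}$. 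This asymmetry reflects the Markov property, namely that cross-level correlations are created at the upper level and then transported downward by the $\st^{(N)}_{(g_r);r+1 \to r}$ kernels; it should fall out of the prescribed ordering of derivatives, but verifying this requires careful tracking of cross-terms through the alternating restriction and multiplication steps. Once the covariance is in hand, asymptotic Gaussianity follows from a Wick-type moment argument, performed uniformly across levels in the same manner as for Theorem \ref{theorem:main-one-level}.
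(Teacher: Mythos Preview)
Your proposal is correct and follows essentially the same route as the paper. The paper's own argument (Sections~\ref{sec:6-3} and~\ref{sec:7-2}) likewise starts from the multi-level moment identity of Proposition~\ref{prop:gen-formula-moments-markov}, replaces the single-level objects $\FF_{(l)}$, $\GG_{(l_1,l_2)}$, $E_l$ by their level-indexed analogues $\FF_{(l);t_q}$, $\GG_{(l_1,l_2);t_q,t_w}$, $E_{l;t_q}$, and then reruns the combinatorial $N$-degree bookkeeping of Sections~\ref{sec:5}--\ref{sec:6-2} verbatim; your identification of $Q_{\rho,(t_2)}$ as arising from the higher level through the ordering of the differential operators is exactly the mechanism at work. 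One small sharpening: the prefactors $a_{t_1}^{k_1}a_{t_2}^{k_2}$ do not come from a rescaling of $\mathcal D_{k_i}$ itself but rather from the fact that the index-set summations $\sum_{\{a_1,\dots,a_{r+1}\}\subset[a_{t_i}N]}$ contribute $\sim (a_{t_i}N)^{r+1}/(r+1)!$ terms instead of $\sim N^{r+1}/(r+1)!$, which inserts factors $a_{t_i}^q$, $a_{t_i}^r$ inside the sums of \eqref{eq:differ-ind-res1} and \eqref{eq:last-formula-extr-char} and is then absorbed via Lemma~\ref{lem:extremal-equality}.
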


%\begin{remark}
%Theorem \ref{th:general-for-domino} is a generalisation of Theorems \ref{theorem:main-projections} and \ref{theorem:main-multiplication}. However, we prefer to state them separately due to the heavy notations involved in Theorem \ref{th:general-for-domino}. Despite these heavy notations, all three these theorems are only slight variations of Theorem \ref{theorem:main-one-level}.
%\end{remark}

\section{Applications}

\label{Section_Applications}

In this section we state several applications of general theorems from Sections
\ref{sec:2-2} and \ref{Section_multilevel_specified}. The theorems are split into
two parts: Sections \ref{Section_Tensor_statement}, \ref{Section_extreme_characters}
are devoted to problems of asymptotic representation theory, while Sections
\ref{sec:restrictions}, \ref{Section_Aztec_statement}, \ref{Section_noncol} deal
with 2d lattice models of statistical mechanics.

\subsection{Preliminary definitions}
\label{sec:statem-requir-formulae}

In a considerable part of our theorems an input is given by a sequence
$\lambda(N)\in \GT_N$, $N=1,2,\dots$ of signatures. Depending on the context, they
encode irreducible representations, boundary conditions in statistical mechanics
models or initial conditions of Markov chains.

In our asymptotic results we are going to make the following technical assumption on
the behavior of $\lambda(N)$ as $N$ becomes large.
\begin{definition}
\label{definition_regularity}
 A sequence of signatures $\lambda(N) \in \GT_N$ is called \emph{regular}, if there exists a
 piecewise--continuous function $f(t)$ and a constant $C$ such that
\begin{equation}
\label{eq_reg1}
 \lim_{N\to\infty} \frac{1}{N}\sum_{j=1\dots,N} \left|\frac{\lambda_j(N)}{N}-f(j/N)\right|=0
\end{equation}
and
\begin{equation}
\label{eq_reg2} \left|\frac{\lambda_j(N)}{N}-f(j/N)\right|<C,\quad \quad
j=1,\dots,N,\quad N=1,2,\dots.
\end{equation}
\end{definition}
\begin{remark}
Informally, the condition \eqref{eq_reg1} means that scaled by $N$ coordinates of
$\lambda(N)$ approach a limit profile $f$. The restriction that $f(t)$ is
piecewise--continuous is reasonable, since $f(t)$ is a limit of monotonous functions
and, thus, is monotonous (therefore, we only exclude the case of countably many
points of discontinuity for $f$). This restriction originates in the asymptotic
results of \cite{GP} and we believe that it, in fact, can be weakened for most
applications, cf.\  \cite{Novak}, \cite{Novak2}.
\end{remark}

It is clear that if the sequence $\la(N)$ is regular, then the sequence $m[\la(N)]$
(defined by \eqref{eq_signature_measure}) weakly converges to a probabilistic
measure on $\R$ with compact support. The complete information about such measure
can be encoded in several generating functions that we now define.

For a probability measure $\mes$ on $\R$ with compact support let us define the
Cauchy-Stieltjes transform $C_{\mes} (z)$ by

\begin{equation}
\label{eq:def-sm-function}
C_{\mes} (z) := \int_{\R} \frac{d \mes (x)}{z - x} = z^{-1} + z^{-2} \int_{\R} x d \mes (x) + z^{-3} \int_{\R} x^2 d \mes (x) + \dots.
\end{equation}
This is a power series in $z^{-1}$ which converges in a neighborhood of infinity.

Define $C_\mes^{(-1)}(z)$ to be the inverse series to $C_\mes(z)$, i.e.\ such that
 $$
  C_\mes^{(-1)}\bigl( C_\mes(z)\bigr)= C_\mes\bigl(C_\mes^{(-1)}(z)\bigr)=z,
 $$
(As a power series $C_\mes^{(-1)}(z)$ has a form $\frac{1}{z} + a_0 + a_1 z+ a_1 z^2
+ a_2 z^3+ \dots$). Further, set
\begin{equation}
\label{eq_Voiculescu_R}
R_\mes(z)= C_\mes^{(-1)}(z) -\frac{1}{z}.
\end{equation}
 The function $R_\mes(z)$ is
well-known in the free probability theory under the name of \emph{Voiculescu $R$--transform}, cf.\
\cite{VDN}, \cite{NS}.

Integrating $R_\mes(z)$, set
\begin{equation*}
H_\mes(z) := \int_0^{\ln(z)}R_\mes(t)dt+\ln\left(\frac{\ln(z)}{z-1}\right),
\end{equation*}
which should be understood as a holomorphic function in a neighborhood of $z=1$.

The derivative of $H_{\mes} (z)$ has a simpler form:
\begin{equation}
\label{eq:H-S-connection}
H'_{\mes} (z) = \frac{C_\mes^{(-1)}(\log(z))}{z} - \frac{1}{z-1}.
\end{equation}
The function $H'_{\mes} (z)$ plays an important role in the context of the
\textit{quantized free convolution}, see \cite{BG}.

\subsection{Asymptotic decompositions of representations of $U(N)$}
\label{Section_Tensor_statement}

Here we briefly recall some facts about representations of the unitary group (see
e.g. \cite{FH}, \cite{Weyl-book}, \cite{Zh}) and state a central limit theorem for
decompositions of their tensor products and restrictions.

Let $U(N)$ be the group of all unitary $N \times N$ matrices. It is a classical fact
that the irreducible representations of $U(N)$ are parameterized by signatures of
length $N$. Let us denote by $\pi^{\la}$ the irreducible representation of $U(N)$
corresponding to the signature $\la$ ($\la$ is the \textit{highest weight} of this
representation), and let $\dim(\la)$ denote the dimension of this representation.

%The conjugacy classes in $U(N)$ are determined by the set of eigenvalues $\{ x_1,
%x_2, \dots, x_N \}$ of a matrix. The value of the character of $\pi^{\la}$ on this
%conjugacy class is given by the \textit{Schur function} $s_{\la} (x_1,\dots,x_N)$.

\smallskip

Consider a \emph{reducible} finite-dimensional representation $T_N$ of  $U(N)$ and
let
$$
T_N = \bigoplus_{\la \in \GT_N} c_{\la} \pi^{\la}
$$
be a decomposition of $T_N$ into irreducibles.

One of the basic ideas of asymptotic representation theory is to associate with
$T_N$ a \textit{probability measure} on the set of labels of irreducible
representations. In the case of the unitary group this results into the definition
of the probability measure $\rho_{T_N}$:
\begin{equation}
\label{eq:repr-meas-sign-def} \rho_{T_N} (\la) := \frac{c_{\la} \dim
(\pi^{\la})}{\dim (T_N)}, \qquad \la \in \GT_N.
\end{equation}
We reduce the study of the asymptotic behavior of such probability measures to their
moments defined through
\begin{equation}
\label{eq_moments_of_decomposition} p_k^{T_N} := \sum_{i=1}^N (\la_i+N-i)^k, \qquad
k=1,2, \dots, \qquad \mbox{$(\la_1,\dots, \la_N)$ is $\rho_{T(N)}$-distributed}.
\end{equation}

One basic operation which creates reducible representations is tensor product. The
decomposition of the (Kronecker) tensor product $\pi^\lambda\otimes \pi^\mu$ into
irreducibles can be written with the use of classical \textit{Littlewood-Richardson
coefficients} $c^{\eta}_{\la \mu}$:
$$
\pi^{\la} \otimes \pi^{\mu} = \bigoplus_{\eta \in \GT_N} c^{\eta}_{\la \mu}
\pi^{\eta}, \qquad \la, \mu \in \GT_N,
$$
with an equivalent definition being \eqref{eq_Littlewood_Richardson}. The Law of
Large Numbers for tensor products was proven in \cite{BG}, and here is the Central
Limit Theorem.

%We say that signatures $\la = \la_1 \ge \dots \ge \la_N$ and $\mu = \mu_1 \ge \dots
%\ge \mu_{N-1}$ \textit{interlace} (notation $\mu \prec \la$) if the inequality
%$\la_i \ge \mu_i \ge \la_{i+1}$ holds for all $1 \le i \le N-1$.

%The restriction of the representation $\pi^{\la}$ to the subgroup $U(N-1) \subset
%U(N)$ is decomposed into irreducibles by
%$$
%\left. \pi^{\la} \right|_{U(N-1)} = \bigoplus_{\mu \in \GT_{N-1}: \mu \prec \la} \pi^{\mu}, \qquad \la \in \GT_N.
%$$

%These relations can be rewritten in terms of Schur functions:
%$$
%s_{\la} (x_1, \dots, x_N) s_{\mu} (x_1, \dots, x_N) = \sum_{\eta \in \GT_N} c_{\eta}^{\la \mu} s_{\eta} (x_1, \dots, x_N), \qquad \la, \mu \in \GT_N,
%$$
%$$
%s_{\la} (x_1, \dots, x_{N-1},1) = \sum_{\mu \in \GT_{N-1}: \mu \prec \la} s_{\mu} (x_1, \dots, x_{N-1}), \qquad \la \in \GT_N.
%$$

%At this moment we are interested in the choice $T = \pi^{\la^1} \otimes
%\pi^{\la^2}$, where $\la^1, \la^2$ are two signatures from $\GT_N$, and we consider
%the (Kronecker) tensor product of the corresponding irreducible representations.

For two probability measures $\mes^1$ and $\mes^2$ with compact support set
\begin{multline}
\label{eq:def-Q-function}
Q_{\mes^1, \mes^2}^{\otimes} (x,y) := \pa_x \pa_y \left( \log \left( 1 - x y \frac{(1+x) H'_{\mes^1}(1+x) - (1+y) H'_{\mes^1}(1+y)}{x-y} \right) \right. \\ \left. + \log \left( 1 - x y \frac{(1+x) H'_{\mes^2}(1+x) - (1+y) H'_{\mes^2}(1+y)}{x-y} \right) \right) + \frac{1}{(x-y)^2}.
\end{multline}

\begin{theorem}[Central Limit Theorem for tensor products]
\label{theorem:tensor}
 Suppose that $\lambda^1(N),\lambda^2 (N) \in \GT_N$, $N=1,2,\dots$, are regular sequences of signatures
 such that
 $$
  \lim_{N\to\infty} m[\lambda^i(N)]=\mes^i, \quad i=1, 2, \qquad \mbox{weak convergence.}
 $$
 Let
$T_N=\pi^{\lambda^1(N)}\otimes \pi^{\lambda^2(N)}$. Then, as $N\to\infty$, the
random vector of moments \eqref{eq_moments_of_decomposition}
$$
\left\{ N^{-k} \left( p_k^{T_N} - \E p_k^{T_N} \right) \right\}_{k \ge 1}
$$
converges, in the sense of moments, to the Gaussian vector with zero mean and covariance
\begin{multline}
\label{eq:th-tensor}
 \lim_{N \to \infty} \frac{\cov \left( p_{k_1}^{T_N}, p_{k_2}^{T_N} \right)}{N^{k_1+k_2}} \\ = \frac{1}{(2 \pi \ii)^2} \oint_{|z|=\ep} \oint_{|w|=2 \ep} \left( \frac{1}{z} +1 + (1+z) \left( H'_{\mes^1} (1+z)+H'_{\mes^2} (1+z) \right) \right)^{k_1} \\ \times \left( \frac{1}{w} + 1 + (1+w) \left( H'_{\mes^1} (1+w)+ H'_{\mes^2} (1+w) \right) \right)^{k_2} Q_{\mes^1, \mes^2}^{\otimes} (z, w) dz dw,
 \end{multline}
where $\ep \ll 1$, function $H'_{\mes}$ was defined in Section
\ref{sec:statem-requir-formulae}, and $Q_{\mes^1, \mes^2}^{\otimes}$ is defined in
\eqref{eq:def-Q-function}.
\end{theorem}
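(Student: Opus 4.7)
The plan is to reduce Theorem \ref{theorem:tensor} to the one-level CLT (Theorem \ref{theorem:main-one-level}) by recognizing that the Schur generating function of $\rho_{T_N}$ factorizes as a product of two normalized Schur characters, and then identifying the limiting functions $F_{\rho_{T_N}}$ and $Q_{\rho_{T_N}}$ via the asymptotics of Schur functions for regular sequences referenced in Section \ref{sec:asymp-Schur-func}.

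First, using $\dim(\pi^\lambda)=s_\lambda(1^N)$ and the Littlewood--Richardson expansion \eqref{eq_Littlewood_Richardson}, the definition \eqref{eq:repr-meas-sign-def} yields the factorization
\begin{equation*}
S_{\rho_{T_N}}(\vec x)=\sum_{\eta\in\GT_N}\rho_{T_N}(\eta)\frac{s_\eta(\vec x)}{s_\eta(1^N)}=\frac{s_{\lambda^1(N)}(\vec x)}{s_{\lambda^1(N)}(1^N)}\cdot\frac{s_{\lambda^2(N)}(\vec x)}{s_{\lambda^2(N)}(1^N)}.
\end{equation*}
In particular, $\log S_{\rho_{T_N}}$ is a sum of two single-Schur logarithms, each depending on only one of the two input signatures. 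This splits the verification of Definition \ref{def:appr-func} into two independent checks, one for each factor.

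Next, for each factor I would invoke the Schur function asymptotics for regular sequences, which give
\begin{equation*}
\lim_{N\to\infty}\frac{1}{N}\log\frac{s_{\lambda(N)}(x_1,\dots,x_k,1^{N-k})}{s_{\lambda(N)}(1^N)}=\sum_{i=1}^k H_\mes(x_i),
\end{equation*}
uniformly in a complex neighborhood of $(1,\dots,1)$, together with a subleading correction of order $1$ from which one extracts the mixed second derivative. Combining this with Example \ref{exam:clt} identifies, for each factor, the one-variable limit $F^{(i)}(x)=H'_{\mes^i}(x)$ and a two-variable limit $G^{(i)}(x,y)$ read off from the subleading term. Additivity of $\log$ then gives $F_{\rho_{T_N}}=H'_{\mes^1}+H'_{\mes^2}$, $G_{\rho_{T_N}}=G^{(1)}+G^{(2)}$, and hence $Q_{\rho_{T_N}}(x,y)=G^{(1)}(1+x,1+y)+G^{(2)}(1+x,1+y)+(x-y)^{-2}$.

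Finally, I would substitute these limits into Theorem \ref{theorem:main-one-level} to obtain the covariance formula, and match the result against the formulation in \eqref{eq:th-tensor}--\eqref{eq:def-Q-function}; the identification with the explicit logarithmic form of $Q^{\otimes}_{\mes^1,\mes^2}$ is what encodes the subleading Schur asymptotic. The main obstacle is precisely this second step: proving the refined two-variable asymptotic
\begin{equation*}
\log\frac{s_{\lambda(N)}(x_1,x_2,1^{N-2})}{s_{\lambda(N)}(1^N)}=N\bigl(H_\mes(x_1)+H_\mes(x_2)\bigr)+\log\Psi_\mes(x_1,x_2)+o(1)
\end{equation*}
with a closed form for $\Psi_\mes$ whose mixed logarithmic derivative reproduces $\partial_x\partial_y\log\bigl(1-xy\,\tfrac{(1+x)H'_\mes(1+x)-(1+y)H'_\mes(1+y)}{x-y}\bigr)$. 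This requires the contour-integral representation of $s_\lambda$ and a delicate steepest-descent/residue analysis of \cite{GP}; once it is in hand, the remainder of the argument reduces to bookkeeping of logarithmic derivatives at the point $(1^N)$.
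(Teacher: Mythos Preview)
Your proposal is correct and follows essentially the same route as the paper: the paper's proof is the one-line observation that the Schur generating function of $\rho_{T_N}$ is the product of the two normalized Schur characters, so Theorem \ref{theorem:tensor} is an immediate corollary of Theorem \ref{theorem:main-one-level} together with Proposition \ref{prop:Schur-are-approp}, which in turn is exactly the packaging of the two asymptotic statements (Theorems \ref{Theorem_character_asymptotics} and \ref{Theorem_character_asymtotics_2}) that you identify as the main ingredients. The ``main obstacle'' you describe is precisely Theorem \ref{Theorem_character_asymtotics_2}, and the paper proves it not via steepest descent but by a short computation starting from the determinantal formula of \cite[Theorem 3.7]{GP} and the already-known one-variable asymptotic; the formula you write for $\partial_x\partial_y\log\Psi_\mes$ matches \eqref{eq_limit_of_logarithm_2} exactly.
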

\begin{remark}
In this setting the operation of the tensor product of representations can be seen
as a quantization of the summation of independent random matrices. The degeneration
from representations to matrices is known as a \textit{semiclassical limit}, see
e.g. \cite[Section 1.3]{BG} and references therein for details. Under this limit
transition Theorem \ref{theorem:tensor} turns into the result for the spectra of the
sum of the Haar-distributed random Hermitian matrices with a fixed spectrum.  In
Section \ref{sec:9-4} we show that in this limit the covariance \eqref{eq:th-tensor}
turns into the covariance for the random matrix problem, which can be found in
\cite{PS}.
\end{remark}

\begin{remark}
In a similar way one can prove a central limit theorem for decomposition of
$\pi^{\la^{1}} \otimes \pi^{\la^{2}} \otimes \dots \otimes \pi^{\la^{s}}$ for
arbitrary positive integer $s$.
\end{remark}

\begin{remark}
There is an approach to decomposition of tensor products via
\textit{Perelomov-Popov} measures, see \cite{BG} for details. In this setting, one
obtains a direct relation of these measures and \textit{free probability}. It would
be interesting to relate Theorem \ref{theorem:tensor} and the concept of
\textit{second-order freeness} developed in \cite{MS}, \cite{MSS}.
\end{remark}

\begin{proof}[Proof of Theorem \ref{theorem:tensor}] Given that the character of $\pi^{\lambda}$ is precisely the Schur function
$s_\lambda$, and that taking tensor products corresponds to multiplying the
characters, Theorem \ref{theorem:tensor} is an immediate corollary of Theorem
\ref{theorem:main-one-level} and Proposition \ref{prop:Schur-are-approp}.
\end{proof}

We believe that Theorem \ref{theorem:tensor} is new. Yet, there are simpler tensor
products whose asymptotic decomposition were intensively studied before in the
context of the Schur--Weyl duality, cf.\ \cite{Biane},\cite{Meliot}. For that
consider a representation $W_{N,n}$ of $U(N)$ in vector space $(\C^N)^{\otimes n}$
via $ g (v_1 \otimes v_2 \otimes \dots \otimes v_n) = g(v_1) \otimes g(v_2) \otimes
\dots \otimes g(v_n)$, $g \in U(N)$. The decomposition of $W_{N,n}$ into
irreducibles is governed by the \emph{Schur--Weyl measure}, while its $N\to\infty$
limit (when $n$ is kept fixed) is the celebrated \emph{Plancherel measure} of the
symmetric group $S(n)$.

\begin{theorem} [Central Limit Theorem for Schur--Weyl measures] \label{Theorem_Schur_Weyl}
 Assume that $n=\lfloor c N^2\rfloor$ for $c>0$ and let $T_N=W_{N,n}$,
 $N=1,2,\dots$. Then, as $N\to\infty$, the
random vector of moments \eqref{eq_moments_of_decomposition}
$$
\left\{ N^{-k} \left( p_k^{T_N} - \E p_k^{T_N} \right) \right\}_{k \ge 1}
$$
converges, in the sense of moments, to the Gaussian vector with zero mean and
covariance
\begin{multline}
\label{eq:th-Shur-Weyl}
 \lim_{N \to \infty} \frac{\cov \left( p_{k_1}^{T_N}, p_{k_2}^{T_N} \right)}{N^{k_1+k_2}} = \frac{1}{(2 \pi \ii)^2} \oint_{|z|=\ep} \oint_{|w|=2 \ep} \left( \frac{1}{z} +1 + c + cz \right)^{k_1} \\ \times \left( \frac{1}{w} + 1 + c + cw \right)^{k_2} \left( -c + \frac{1}{(z-w)^2} \right) dz dw,
 \end{multline}
where $\ep \ll 1$.
\end{theorem}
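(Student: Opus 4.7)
The plan is to identify the Schur generating function of $\rho_{W_{N,n}}$ explicitly, verify that the resulting sequence is CLT-appropriate in the sense of Definition \ref{def:main}, and then invoke Theorem \ref{theorem:main-one-level} directly. Since the character of the defining representation of $U(N)$ on $\C^N$ is $x_1+\dots+x_N$, the character of $W_{N,n}$ on $(\C^N)^{\otimes n}$ is $(x_1+\dots+x_N)^n$, and $\dim W_{N,n}=N^n$. Using the identity $S_{\rho_T}(\vec x)=\chi_T(\vec x)/\chi_T(1^N)$ that follows immediately from \eqref{eq:repr-meas-sign-def} and the Schur expansion $\chi_T=\sum c_\la s_\la$, I obtain
\[
S_{\rho_{W_{N,n}}}(x_1,\dots,x_N) \;=\; \left(\frac{x_1+\dots+x_N}{N}\right)^{\!n}.
\]

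The key observation is that $\log S_{\rho_{W_{N,n}}}=n\log(x_1+\dots+x_N)-n\log N$ depends on the variables only through their sum $s:=x_1+\dots+x_N$. Consequently, every mixed partial derivative of $\log S_{\rho_{W_{N,n}}}$ of total order $m\ge 1$ collapses to the same expression, namely $n\cdot\pa_s^{m}\log s = (-1)^{m-1}(m-1)!\,n\,s^{-m}$. Evaluating at $\vec x=(1^N)$ (so $s=N$) and inserting $n=\lfloor cN^2\rfloor$, every derivative of total order $m$ equals $(-1)^{m-1}(m-1)!\,c\,N^{2-m}$ up to vanishing corrections from the floor.

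Plugging this into the four conditions of Definition \ref{def:main}: single-variable derivatives of order $k$ divided by $N$ tend to $c$ for $k=1$ and to zero for $k\ge 2$; two-distinct-index derivatives of total order $k+l$ (not divided by $N$) tend to $-c$ when $k=l=1$ and to zero otherwise; derivatives involving three or more distinct indices have total order $\ge 3$ and hence already tend to $0$ before any renormalization. Therefore the sequence is CLT-appropriate with $\c_1=c$, all other $\c_k$ vanishing, $\d_{1,1}=-c$, and all other $\d_{k,l}$ vanishing. This gives $F_{\rho}(x)\equiv c$ and $G_{\rho}(x,y)\equiv -c$, whence
\[
(1+z)F_{\rho}(1+z)=c(1+z), \qquad Q_{\rho}(z,w)=-c+\frac{1}{(z-w)^2}.
\]

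Feeding these two functions into the covariance formula of Theorem \ref{theorem:main-one-level} immediately produces the claimed expression \eqref{eq:th-Shur-Weyl}, and the joint Gaussianity of the collection $\{N^{-k}(p_k^{T_N}-\E p_k^{T_N})\}_{k\ge 1}$ is part of the same conclusion. I do not anticipate a substantive obstacle: the entire argument is a direct specialization, and the only mild subtlety worth spelling out is that the quadratic scaling $n=\lfloor cN^2\rfloor$ is precisely the unique regime in which both $\c_1$ and $\d_{1,1}$ land at nontrivial finite limits while all higher-order contributions (and the cross-terms involving three or more distinct indices) decay.
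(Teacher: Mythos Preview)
Your proof is correct and follows essentially the same route as the paper: identify the Schur generating function as $\bigl((x_1+\dots+x_N)/N\bigr)^n$, compute the relevant partial derivatives of its logarithm to read off $F_\rho\equiv c$ and $G_\rho\equiv -c$, and then apply Theorem~\ref{theorem:main-one-level}. The only cosmetic difference is that the paper verifies appropriateness via the uniform-convergence criterion of General Example~\ref{exam:clt}, whereas you check the Taylor-coefficient conditions of Definition~\ref{def:main} directly; both are immediate here.
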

\begin{proof}
 It is easy to see that the
Schur generating function of $T_N$ is given by the normalized character of this
representation:
$$
S_{\rho_{N,n}} (x_1, \dots, x_N) = \frac{(x_1+x_2+ \dots + x_N)^n}{N^n}.
$$
 We have
$$
\lim_{N \to \infty} \frac{\pa_1 \log S_{\rho_{N,n}} (x_1, 1^{N-1}) }{N} = \lim_{N
\to \infty} \frac{c N^2 \pa_1 \left[ \log \left( \frac{x_1}{N} + \frac{N-1}{N}
\right) \right]}{N} = c,
$$
$$
\lim_{N \to \infty} \pa_1 \pa_2 \log S_{\rho_{N,n}} (x_1, x_2, 1^{N-2}) = \lim_{N
\to \infty} \pa_1 \pa_2 \left[ c N^2 \log \left( \frac{x_1}{N} + \frac{x_2}{N} +
\frac{N-2}{N} \right) \right] = -c.
$$
It remains to use Theorem \ref{theorem:main-one-level}. \end{proof} An earlier proof
of Theorem \ref{Theorem_Schur_Weyl} is given in \cite{Meliot}, while its $c\to 0$
version is the Kerov's Central Limit Theorem for the Plancherel measure, see
\cite{Kerov}, \cite{Ivanov_Olsh}.

\medskip

Another natural operation on representations of $U(N)$ is \emph{restriction} onto
the subgroup $U(M)\subset U(N)$, where $U(M)$ is identified with the subgroup of
$U(N)$ fixing the last $N-M$ coordinate vectors.

\begin{theorem}[Central Limit Theorem for restrictions]
\label{theorem_restrictions}
 Suppose that $\lambda(N) \in \GT_N$, $N=1,2,\dots$, is a regular sequence of signatures
 such that
 $$
  \lim_{N\to\infty} m[\lambda(N)]=\mes, \quad i=1, 2, \qquad \mbox{weak convergence.}
 $$
 Take $0<a<1$ and let
$T_N$ be a representation of $U(\lfloor \alpha N \rfloor)$ given by
$T_N=\pi^{\lambda(N)}\large|_{U(\lfloor \alpha N \rfloor)}$. Then, as $N\to\infty$,
the random vector of moments \eqref{eq_moments_of_decomposition}
$$
\left\{ N^{-k} \left( p_k^{T_N} - \E p_k^{T_N} \right) \right\}_{k \ge 1}
$$
converges, in the sense of moments, to the Gaussian vector with zero mean and
covariance
\begin{multline}
\lim_{N \to \infty} \frac{\cov \left( p_{k_1}^{T_N}, p_{k_2}^{T_N}
\right)}{N^{k_1+k_2}}  \\ = \frac{a^{k_1+k_2}}{(2 \pi \ii)^2} \oint_{|z|=\ep}
\oint_{|w|=2 \ep} \left( \frac{1}{z} +1 + \frac{(1+z) \mathbf H'_{\mes} (1+z)}{a}
\right)^{k_1}
\left( \frac{1}{w} +1 + \frac{(1+w) \mathbf H'_{\mes} (1+w)}{a} \right)^{k_2} \\
\times \left( \pa_z \pa_w \left[ \log \left( 1 - z w \frac{ (1+z) \mathbf H'_{\mes}
(1+z) - (1+w) \mathbf H'_{\mes} (1+w)}{z-w} \right) \right] + \frac{1}{(z-w)^2}
\right) dz dw,
\end{multline}

where $\ep \ll 1$ and function $H'_{\mes}$ was defined in Section
\ref{sec:statem-requir-formulae}.
\end{theorem}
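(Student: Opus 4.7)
The plan is to deduce Theorem \ref{theorem_restrictions} from Theorem \ref{theorem:main-projections} applied at a single level, taking the deterministic Dirac measure $\delta_{\lambda(N)}$ as the source measure. The key identification, coming from the branching rule \eqref{eq:Schur-branching-pr-coef}, is that the character of $\pi^{\lambda(N)}$ restricted to $U(\lfloor aN \rfloor)$ is $s_{\lambda(N)}(x_1,\dots,x_{\lfloor aN \rfloor}, 1^{N-\lfloor aN\rfloor})$, so that the Schur generating function of the restriction measure $\rho_{T_N}$ on $\GT_{\lfloor aN\rfloor}$ equals
$$
 S_{\rho_{T_N}}(x_1,\dots,x_{\lfloor aN\rfloor})=\frac{s_{\lambda(N)}(x_1,\dots,x_{\lfloor aN\rfloor},1^{N-\lfloor aN\rfloor})}{s_{\lambda(N)}(1^N)}.
$$
This is exactly the distribution appearing in \eqref{eq:def-projections-general} with $n=2$, $a_1=a$, $a_2=1$, and initial measure $\rho_N=\delta_{\lambda(N)}$.

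The next step is to verify that $\rho_N=\delta_{\lambda(N)}$ is a CLT-appropriate sequence with limiting functions $F_\rho(x)=H'_{\mes}(x)$ and $G_\rho(x,y)$ equal to the $\partial_x\partial_y\log(\cdots)$ expression appearing inside the target covariance. Since $\log S_{\delta_{\lambda(N)}}=\log s_{\lambda(N)}(\vec x)-\log s_{\lambda(N)}(1^N)$, this reduces to asymptotic statements about logarithmic derivatives of normalized Schur functions at $\vec x=(1^N)$. These are precisely the results of \cite{GP}, \cite{GM} as recorded in Section \ref{sec:asymp-Schur-func}: for a regular sequence $\lambda(N)$ with $m[\lambda(N)]\Rightarrow\mes$, the single-variable derivatives produce the power series coefficients of $H'_{\mes}(x)$, and the two-variable asymptotics produce the required $G_\rho$. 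The vanishing of mixed partials involving three or more distinct indices is also part of the same asymptotic statement.

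With these identifications in place, one invokes Theorem \ref{theorem:main-projections} specialized to $t_1=t_2=1$ and $a_{t_1}=a_{t_2}=a$. The prefactors $\bigl(\tfrac1z+1+\tfrac{(1+z)F_\rho(1+z)}{a}\bigr)^{k_1}$ match immediately with the claimed formula upon substituting $F_\rho=H'_{\mes}$, and the kernel $Q_\rho(z,w)=G_\rho(1+z,1+w)+\tfrac{1}{(z-w)^2}$ reproduces the bracketed expression of Theorem \ref{theorem_restrictions} by the definition of $G_\rho$. The scaling factor $a^{k_1+k_2}$ arises directly from the $a_{t}^{k}$ prefactors in Theorem \ref{theorem:main-projections}.

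The main obstacle in this plan is the second step: verifying that the deterministic measures $\delta_{\lambda(N)}$ satisfy Definition \ref{def:main} with the claimed $F_\rho$ and $G_\rho$. In particular, the two-variable asymptotic
$$
\lim_{N\to\infty}\partial_1\partial_2\log s_{\lambda(N)}(x_1,x_2,1^{N-2})\Big|_{x_1=x_2=1}
$$
and the vanishing of higher mixed derivatives are delicate statements; they are exactly what is extracted from the fine analysis of normalized Schur functions near the point $(1^N)$ established in \cite{GP}. Once that input is available, the assembly into the covariance formula is purely mechanical.
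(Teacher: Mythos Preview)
Your proposal is correct and follows essentially the same route as the paper: the restriction measure is identified with the projection \eqref{eq:def-projections-general} for $\rho_N=\delta_{\lambda(N)}$, the CLT-appropriateness of $\delta_{\lambda(N)}$ is verified via the Schur-function asymptotics of Theorems \ref{Theorem_character_asymptotics} and \ref{Theorem_character_asymtotics_2} (which supply exactly the $F_\rho=H'_\mes$ and the two-variable $G_\rho$ you need, including the vanishing of triple mixed derivatives), and then Theorem \ref{theorem:main-projections} is applied at the single level $a_{t_1}=a_{t_2}=a$; this is precisely how the paper derives Proposition \ref{prop:restr-moments-gauss}, of which Theorem \ref{theorem_restrictions} is the one-level specialization. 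The ``main obstacle'' you flag is not an obstacle here, since Theorem \ref{Theorem_character_asymtotics_2} is stated and proved in Section \ref{sec:asymp-Schur-func} and gives the required uniform limits directly.
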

Theorem \ref{theorem_restrictions} is a particular case of Theorem
\ref{th:ctl-proj-GFF-1}, where we also present a more elegant formula for the
limiting covariance, expressing it in terms of a section of the \emph{Gaussian Free
Field}, which we define next.

\subsection{Preliminaries: $2d$ Gaussian Free Field}
A {\it Gaussian family} is a collection of Gaussian random variables $\{ \xi_a \}_{a \in \Upsilon}$
indexed by an arbitrary set $\Upsilon$. We assume that all our random variables are centered, i.e.
\begin{equation*}
\mathbf E \xi_a = 0, \qquad \mbox{ for all } a \in \Upsilon.
\end{equation*}
Any Gaussian family gives rise to a \emph{covariance kernel}
$\Cov : \Upsilon \times \Upsilon \to \mathbb R$ defined by
\begin{equation*}
\Cov (a_1, a_2) = \mathbf E ( \xi_{a_1} \xi_{a_2} ).
\end{equation*}

Assume that a function $\tilde C : \Upsilon \times \Upsilon \to \mathbb R$
is such that for any $n\ge 1$ and $a_1, \dots, a_n \in \Upsilon$,
$[\tilde C (a_i,a_j)]_{i,j=1}^{n}$ is a symmetric and positive-definite matrix.
Then (see e.g. \cite{Car}) there exists a centered Gaussian family with the covariance
kernel $\tilde C$.

Let $\mathbb H := \{ z \in \mathbb C : \mathfrak I (z) >0 \}$ be the upper half-plane, and
let $C_0^\infty$ be the space of smooth real--valued compactly supported test functions on
$\mathbb H$.
Let us set
\begin{equation*}
\tilde G(z,w):= -\frac{1}{2 \pi} \ln \left| \frac{z-w}{z - \bar w} \right|, \qquad z,w \in \mathbb H,
\end{equation*}
and define a covariance kernel $C : C_0^\infty \times C_0^\infty \to \mathbb R$ via
\begin{equation*}
C (f_1, f_2) := \int_{\mathbb H} \int_{\mathbb H} f_1 (z) f_2 (w) \tilde G(z,w)
dz d \bar z dw d \bar w.
\end{equation*}

The \emph{Gaussian Free Field} (GFF) $\mathfrak G$ on $\mathbb{H}$ with zero boundary conditions can be
defined as a Gaussian family $\{ \xi_f \}_{ f \in C_0^\infty}$ with covariance kernel $C$.
The field $\mathfrak G$ cannot be defined as a random function on $\mathbb H$,
but one can make sense of the integrals $\int f(z) \mathfrak G(z) dz$ over finite contours
in $\mathbb{H}$ with continuous functions $f(z)$, see \cite{She}, \cite[Section 4]{D}, \cite[Section 2]{HMP} for more details.

In our results GFF will play a role of the universal limit object for
two-dimensional fluctuations of probabilistic models under consideration. In this
sense, GFF plays a similar role to Brownian motion and Gaussian distribution.

\subsection{Extreme characters of $U(\infty)$}
\label{Section_extreme_characters}

In this section we switch from $U(N)$ to its infinite--dimensional version. Consider
the tower of embedded unitary groups
\begin{equation*}
U(1) \subset U(2) \subset \dots \subset U(N) \subset U(N+1) \subset \dots, \qquad
U(N) = \{ u_{ij} \}_{i,j=1}^N,
\end{equation*}
where $U(N)$ is embedded into $U(N+1)$ as the subgroup fixing the last coordinate
vector. \emph{The infinite--dimensional unitary group} is the inductive limit of
these groups:
$$
U(\infty) := \bigcup_{N=1}^{\infty} U(N).
$$

Define a \emph{character} of the group $U(\infty)$ as a \emph{continuous} function
$\chi : U(\infty) \to \mathbb C$ that satisfies the following conditions:

\begin{itemize}
\item  $\chi(e)=1$, where $e$ is the identity element of $U(\infty)$ (normalization);

\item $\chi(g h g^{-1}) = \chi (h)$, where $g,h$ are any elements of $U(\infty)$
(centrality);

\item $[\chi( g_i g_j^{-1})]_{i,j=1}^n$ is an Hermitian and positive-definite matrix
for any $n\ge 1$ and $g_1, \dots, g_n \in U(\infty)$ (positive-definiteness);
\end{itemize}

The space of characters of $U(\infty)$ is obviously convex. The extreme points of
this space are called \emph{extreme} characters; they replace characters of
irreducible representations in this infinite-dimensional setting. The classification
of the extreme characters of $U(\infty)$ is known as the Edrei--Voiculescu theorem
(see \cite{Vo}, \cite{Edr}, \cite{VK}, \cite{OkoOls98}, \cite{BorOls}). It turns out
that the extreme characters can be parameterized by the set $\Omega=(\alpha^+,
\alpha^-, \beta^+,\beta^-, \gamma^+, \gamma^-)$, where
\begin{gather*}
\alpha^{\pm} = \alpha_1^{\pm} \ge \alpha_2^{\pm} \ge \dots \ge 0, \\
\beta^{\pm} = \beta_1^{\pm} \ge \beta_2^{\pm} \ge \dots \ge 0, \\
\gamma^{\pm} \ge 0, \ \ \ \sum_{i=1}^{\infty} (\alpha_i^{\pm}+\beta_i^{\pm}) \le
\infty, \ \ \ \beta_1^+ + \beta_1^- \le 1.
\end{gather*}

Each $\omega \in \Omega$ gives rise to a function $\Phi^{\omega} : \{ u \in \mathbb
C : |u|=1 \} \to \mathbb C$ via
\begin{equation}
\label{eq:Voic-formula} \Phi^{\omega} (u) := \exp( \gamma^+ (u-1) + \gamma^{-}
(u^{-1}-1) ) \prod_{i=1}^{\infty} \frac{(1+\beta_i^+ (u-1))}{(1-\alpha_i^+ (u-1))}
\frac{(1+\beta_i^- (u^{-1}-1))}{(1- \alpha_i^- (u^{-1}-1))}.
\end{equation}
Then the extreme character of $U(\infty)$ corresponding to $\omega \in \Omega$ is
$\chi^{\omega}$ given by
\begin{equation*}
\chi^{\omega} (U) := \prod_{u \in Spectrum(U)} \Phi^{\omega} (u), \qquad U \in
U(\infty),
\end{equation*}
(this product is essentially finite, because only finitely many of $u$'s are distinct from $1$).

\smallskip

Each character gives rise to a probabilistic object known as the \emph{central
measure on the Gelfand--Tsetlin} graph. Let us present the necessary definitions.

The \emph{Gelfand-Tsetlin graph} $\mathbb {GT}$ is defined by specifying its set of
vertices as $\bigcup_{N=0}^{\infty} \mathbb{GT}_N $ and putting an edge between any
two signatures $\lambda\in\GT_N$ and $\mu\in\GT_{N-1}$ such that they
\emph{interlace} $\mu \prec \lambda$, which means
$$
 \lambda_1\ge\mu_1\ge\lambda_2\ge\dots\ge \mu_{N-1}\ge\lambda_N.
$$
We agree that $\GT_0$ consists of a single empty signature $\varnothing$ joined by an
edge with each vertex of $\GT_1$. A \emph{path} between signatures $\kappa \in
\mathbb {GT}_K$ and $\upsilon \in \mathbb {GT}_N$, $K<N$, is a sequence
\begin{equation*}
\kappa = \lambda^{(K)} \prec \lambda^{(K+1)} \prec \dots \prec \lambda^{(N)} =
\upsilon, \qquad  \lambda^{(i)} \in \mathbb{GT}_i,\quad K\le i\le N.
\end{equation*}

An \emph{infinite path} is a sequence
\begin{equation*}
\varnothing \prec \lambda^{(1)} \prec \lambda^{(2)} \prec \dots \prec \lambda^{(k)}
\prec \lambda^{(k+1)} \prec \dots.
\end{equation*}
We denote by $\mathcal P_N$ the set of all paths starting in $\varnothing$ and of
length $N$. We denote by $\mathcal P$ the set of all infinite paths.

\smallskip

For any character $\chi$ of $U(\infty)$ one can associate a probability measure on paths $\mathcal
P$. Indeed, for any fixed $N$ let us define a probability measure $M_N^{\chi}$ on $\GT_N$ via the
linear decomposition
\begin{equation*}
{\chi |}_{U(N)} = \sum_{\lambda \in \mathbb{GT}_N} M_N^{\chi} (\lambda)
\frac{s_{\la} (u_1, \dots, u_N)}{s_{\la} (1^N)}.
\end{equation*}
Next, define a weight of a subset of $\mathcal P$ consisting of all paths
with prescribed members up to $\mathbb{GT}_N$ by
\begin{equation}
\label{mera-puti} P^{\chi} ( \lambda^{(1)}, \lambda^{(2)}, \dots, \lambda^{(N)} ) =
\frac{ M_N^{\chi} (\lambda^{(N)})}{ s_{\la} (1^N) }.
\end{equation}
Note that this weight depends on $\lambda^{(N)}$ only. It can be easily deduced from the branching
rules for characters of $U(N)$ that this definition is consistent and correctly defines a
probability measure $\mu_{\chi}$ on $\mathcal P$.

We will analyze the asymptotics of probability measures corresponding to certain
sequences of extreme characters
$$
\omega (N) = \{ \{ \al_{i}^+ (N) \}_{i \ge 1}, \{ \al_i^- (N) \}_{i \ge 1}, \{
\beta_i^+ (N) \}_{i \ge 1}, \{ \beta_i^- (N) \}_{i \ge 1}, \gamma^+(N), \gamma^-(N)
\}.
$$
In more detail, we will assume that a sequence $\omega (N)$ satisfies the following
condition.

\textbf{Condition.} We will consider sequences $\omega(N)$ such that, as $N \to
\infty$, we have
\begin{multline}
\label{eq:ext-basic-cond} \frac{1}{N} \sum_{i \ge 1} \delta (\al_i^+ (N)) \to
\mathcal A^+, \ \ \frac{1}{N} \sum_{i \ge 1} \delta (\beta_i^+ (N)) \to \mathcal
B^+, \ \ \lim_{N \to \infty} \frac{\gamma^+(N)}{N} = \Gamma^+, \\ \frac{1}{N}
\sum_{i \ge 1} \delta (\al_i^- (N)) \to \mathcal A^-, \ \ \frac{1}{N} \sum_{i \ge 1}
\delta (\beta_i^- (N)) \to \mathcal B^-, \ \ \lim_{N \to \infty} \frac{\gamma^-
(N)}{N} = \Gamma^-,
\end{multline}
where $\mathcal A^+$, $\mathcal A^-$, $\mathcal B^+$, $\mathcal B^-$ are arbitrary
finite (not necessarily probability) measures on $\R$ with compact support,
$\Gamma^+, \Gamma^-$ are two positive real numbers, and we consider the convergence
of finite measures in the weak sense. We will denote by $\mathbf{J}$ the sextuple
$(\mathcal A^+$, $\mathcal A^-$, $\mathcal B^+$, $\mathcal B^-, \Gamma^+, \Gamma^-)$
which consists of 4 finite measures and 2 real numbers.

\smallskip

A direct computation shows that if a sequence $\omega(N)$ satisfies the condition
\eqref{eq:ext-basic-cond}, then we have the following convergence of the Voiculescu functions
\eqref{eq:Voic-formula}
\begin{equation}
\label{eq:ext-main-cond} \lim_{N \to \infty} \frac{\pa_z \log \Phi^{\omega (N)}
(1+z)}{N} = \F(1+z), \qquad \mbox{uniformly in $|z|<\ep, \ \ep >0$},
\end{equation}
where $\F = \F_{\mathbf J}$ is determined by $\mathbf J$ with the use of the formula
\eqref{eq:limit-Voiculescu}; we do not need the explicit formula for it at this
moment.

%Now we want to introduce the \textit{complex structure} in this case.
The description of CLT for extreme characters involves the  following functions.
\begin{proposition}
\label{prop:comp-struct-ext} Let $\F(z) = \F_{\mathbf J} (z)$ be the function which
is obtained in the limit \eqref{eq:ext-main-cond}. For any $y \in \R$ and $\eta>0$
the equation
$$
\frac{1}{z} +1 + \frac{(1+z) \F (1+z)}{\eta} = \frac{y}{\eta}
$$
has at most one root $ z \in \HH$. Let $\mathbf D_{\F} \in \R^2$ be the set of pairs
$(y,\eta)$ such that this root exists. Then the map $\mathbf D_{\F} \to \HH$ from
such a pair to such a root is a diffeomorphism.
\end{proposition}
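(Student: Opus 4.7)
The plan is to construct the inverse map $\HH \to \mathbf{D}_\F$ explicitly and deduce both claims from an analysis of its analytic properties. Setting $G(z) := (1+z)\F(1+z)$, the defining equation reads
\[
G(z) + \eta\bigl(1+\tfrac{1}{z}\bigr) = y. \qquad (\ast)
\]
The first step is to verify that $G$ is a Herglotz--Nevanlinna function on $\HH$: namely $G(\HH) \subset \bar{\HH}$, with strict inequality when $\mathbf J$ is non-degenerate. This is a termwise computation on the explicit formula for $\F(1+z)$ obtained by differentiating $\log \Phi^{\omega(N)}(1+z)$ in \eqref{eq:Voic-formula} and passing to the limit: each summand of $(1+z)\F(1+z)$ --- for instance $\int (1+z)\alpha/(1-\alpha z)\, d\mathcal{A}^+(\alpha)$ and $\int (1+z)\beta/(1+\beta z)\, d\mathcal{B}^+(\beta)$ together with their $u^{-1}$-analogues --- has nonnegative imaginary part on $\HH$, using precisely the constraints $\alpha^{\pm}, \gamma^{\pm} \geq 0$ and $\beta^{\pm} \in [0,1]$.

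Given this, taking imaginary parts of $(\ast)$ (where $y, \eta$ are real) yields the explicit candidate inverse
\[
\eta_\F(z) := \frac{|z|^2\, \mathrm{Im}\, G(z)}{\mathrm{Im}\, z}, \qquad y_\F(z) := \mathrm{Re}\, G(z) + \mathrm{Re}\!\left[\eta_\F(z)\bigl(1+\tfrac{1}{z}\bigr)\right],
\]
which is a real-analytic map $\Psi : \HH \to \R \times \R_{>0}$ because $\mathrm{Im}\, G(z) > 0$. I would take $\mathbf{D}_\F := \Psi(\HH)$. Uniqueness of the root, i.e.\ injectivity of $\Psi$, is equivalent to univalence of the analytic function $H_\eta(z) := G(z) + \eta/z$ on the level curve $\{\eta_\F = \eta\} \subset \HH$, on which $H_\eta$ is real-valued. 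This in turn reduces to the nonvanishing of $H_\eta'(z) = G'(z) - \eta/z^2$ along the curve; using the Herglotz representation of $G$ and the defining identity $\eta\, \mathrm{Im}\, z = |z|^2\, \mathrm{Im}\, G(z)$, one obtains a sign argument ruling out simultaneous vanishing of the real and imaginary parts of $H_\eta'$.

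The diffeomorphism assertion then follows from the inverse function theorem: real-analyticity is clear, and the Jacobian of $\Psi$, which can be expressed in terms of $H_\eta'(z)$ via the Cauchy--Riemann equations for $G$, is nondegenerate on $\HH$ by the argument just sketched. The main obstacle I anticipate is this injectivity/nondegeneracy step --- the Herglotz structure of $G$ inherited from the Voiculescu formula is used in an essential way, since a generic Nevanlinna function need not give a univalent $H_\eta$. I expect to conclude by quantifying the pointwise inequalities that follow from the integral representation of $G$ together with the hypothesis $\eta > 0$.
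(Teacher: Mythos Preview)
Your approach is genuinely different from the paper's: you attempt a direct analysis via the Herglotz--Nevanlinna property of $G(z)=(1+z)\F(1+z)$, whereas the paper argues by a limit transition from the already-known diffeomorphism of Proposition~\ref{prop:comp-struct-tilings}. The Herglotz property you identify is correct and can indeed be checked term by term from the Voiculescu formula, and your explicit candidate for the inverse map is right.

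The gap is in the injectivity step. Nonvanishing of $H_\eta'(z)$ along the level curve $\{\eta_\F=\eta\}$ gives only \emph{local} injectivity of $H_\eta$ there; it neither rules out two distant points on the curve sharing the same value of $H_\eta$ nor controls the global topology of the curve. Your own caveat that ``a generic Nevanlinna function need not give a univalent $H_\eta$'' is to the point --- for instance $G(z)=-1/z$ is Herglotz yet gives $H_1\equiv 0$, so every $z\in\HH$ solves the equation for $(y,\eta)=(1,1)$ --- but you never say which extra feature of the Voiculescu-derived $G$ you will use, and the ``sign argument'' is left entirely unspecified. One clean way to close the gap within your framework: rewrite the equation as the fixed-point problem $z=-\eta/(G(z)-(y-\eta))$; when $G$ is strictly Herglotz the right-hand side is a holomorphic self-map of $\HH$, and the Schwarz lemma then forces at most one interior fixed point unless the map is the identity, which would require $G$ to have a pole at $0$ --- something the Voiculescu $G$ visibly lacks. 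The paper sidesteps all of this by constructing (Lemma~\ref{lem:predel-til-ext}) a family of measures $\mu_{\mathbf J;K}$ to which Proposition~\ref{prop:comp-struct-tilings} applies, rescaling and letting $K\to\infty$, and invoking a Hurwitz-type argument (the number of zeros in $\HH$ cannot increase under a uniform limit) to inherit uniqueness; the explicit inverse formulas are then read off as limits of the known ones.
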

We prove this proposition in Section \ref{sec:ext-GFF-proof}.

Let $z \to (y_{\F} (z), \eta_{\F} (z))$ be an inverse of the map given by Proposition \ref{prop:comp-struct-ext}.
 Proposition \ref{prop:comp-struct-ext} introduces coordinates in which the
fluctuations of extreme characters become a Gaussian Free Field.

In order to make this statement precise, let us introduce the \textit{height function} $H_N:
\mathbb R \times \mathbb R_{\ge 1} \times \mathcal P \to \mathbb Z_{\ge 0}$ given by the formula
\begin{equation} \label{eq_Height_function}
H_N (y,\eta, \{ \lambda^{(j)} \}_{j \ge 1} ) := \left|\left\{ 1 \le i \le \lfloor N
\eta\rfloor : \lambda_i^{(N\eta)} + \lfloor N \eta\rfloor - i \ge N y
\right\}\right|,
\end{equation}
where $\lambda_i^{(N\eta)}$ are the coordinates of the signature from $\GT_{\lfloor
N \eta \rfloor}$ in the path which belongs to $\mathcal P$.

Let us equip $\mathcal P$ with a probability measure $\mu_{\chi^\omega}$, where
$\omega = \omega(N)$ satisfies the condition \eqref{eq:ext-main-cond}. Then $H_N
(y,\eta):= H_N(y,\eta,\cdot)$ becomes a random function which describes a certain
random stepped surface.

Let us carry $H_N(y,\eta)$ over to $\mathbb H$ through
$$
H_N(z) := H_N ( y_{\F}(z), \eta_{\F} (z)), \qquad z \in \mathbb H.
$$
One might worry that some information is lost in this transformation, as the image
of the map $z\to( y_{\F}(z), \eta_{\F} (z))$ is smaller than $\mathbb R \times
\mathbb R_{\ge 0}$, yet the configuration is actually \emph{frozen} outside this
image and there are no fluctuations to study, cf.\ Figures \ref{Fig_Lozenge_1},
\ref{Fig_Domino_1}, where random tilings are frozen outside inscribed circles.

For $\eta>0$ and $k=1,2,\dots$ define a moment of the random height function as
$$
M_{\eta,k}^{\omega(N)} = \int_{-\infty}^{+\infty} y^k \left( H_N ( y, \eta) -
\mathbf E H_N ( y, \eta) \right) dy.
$$
Also define the corresponding moment of GFF via
$$
\mathcal M_{\eta,k}^{\F} = \int_{z \in \mathbb H\mid  \eta_{\F} (z)=\eta}
y_{\F}(z)^k \mathfrak G(z) \frac{d y_{\F}(z)}{dz} dz.
$$

\begin{theorem}[Central Limit Theorem for extreme characters]
\label{theorem:extr-char-gff-1}

Assume that the sequence of extreme characters $\omega(N)$ satisfies condition
\eqref{eq:ext-basic-cond}. Let $H_N (z)$ be a random height function on $\mathbb H$
corresponding to $\omega(N)$ as above. Then
$$
\sqrt{\pi}\left( H_N(z) - \mathbf E H_N (z) \right)  \xrightarrow[N \to \infty]{}
\mathfrak G (z).
$$
In more details, as $N \to \infty$, the collection of random variables $\{
\sqrt{\pi} M_{A,k}^{\omega(N)} \}_{A>0; k \in \mathbb Z_{\ge 0}}$ converges, in the
sense of moments, to $\{ \mathcal M_{A,k}^{\F} \}_{A>0; k \in \mathbb Z_{\ge 0}}$.
\end{theorem}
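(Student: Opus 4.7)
The plan is to derive Theorem \ref{theorem:extr-char-gff-1} from Theorem \ref{theorem:main-projections} and then recast the resulting covariance in GFF language.

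\textbf{Reduction to a multilevel Schur-branching model.} By the Edrei--Voiculescu formula \eqref{eq:Voic-formula}, the restriction of $\chi^{\omega(N)}$ to $U(M)$ equals $\prod_{i=1}^M \Phi^{\omega(N)}(u_i)$; comparing with the decomposition $\chi|_{U(M)} = \sum_\lambda M_M^{\chi}(\lambda)\, s_\lambda/s_\lambda(1^M)$ and with \eqref{mera-puti}, the Schur generating function of $M_M^{\chi^{\omega(N)}}$ equals exactly the factorised product $\prod_{i=1}^M \Phi^{\omega(N)}(x_i)$. Combined with the branching rule \eqref{eq:Schur-branching-pr-coef}, the joint law of the signatures at any finite collection of levels $\lfloor a_1 N\rfloor<\cdots<\lfloor a_n N\rfloor$ under $\mu_{\chi^{\omega(N)}}$ therefore takes precisely the form \eqref{eq:def-projections-general}, which places us inside the scope of Theorem \ref{theorem:main-projections}.

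\textbf{Verifying CLT-appropriateness and identifying the answer.} Because $\log\prod_i \Phi^{\omega(N)}(x_i)$ is a sum of one-variable functions, its mixed partial derivatives vanish identically. Example \ref{exam:clt} together with the uniform convergence \eqref{eq:ext-main-cond} then shows that the sequence is CLT-appropriate with $F_\rho(x) = \F(x)$, $G_\rho(x,y)\equiv 0$, so $Q_\rho(x,y) = 1/(x-y)^2$. A short integration by parts, using that $H_N(y,\eta)-\E H_N(y,\eta)$ has compact support in $y$, relates the height moments to shifted power sums:
$$
M_{\eta,k}^{\omega(N)} \;=\; \frac{1}{(k+1)\,N^{k+1}}\bigl(p_{k+1}^{[\eta N]} - \E p_{k+1}^{[\eta N]}\bigr).
$$
Plugging this into Theorem \ref{theorem:main-projections} and substituting $Q_\rho(z,w)=1/(z-w)^2$ proves that the family $\{M_{\eta,k}^{\omega(N)}\}$ is asymptotically a centered Gaussian family and expresses its limiting covariance as an explicit double contour integral.

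\textbf{Matching the Gaussian Free Field.} To identify the limiting covariance with that of the GFF, I would perform the change of variables supplied by Proposition \ref{prop:comp-struct-ext}: substitute $z\mapsto u_1$ via $\tfrac{1}{z}+1+(1+z)\F(1+z)/\eta_1 = y_\F(u_1)/\eta_1$, and similarly $w\mapsto u_2$ at level $\eta_2$. Under this change the integrand factors in Theorem \ref{theorem:main-projections} turn into $\bigl(y_\F(u_1)/\eta_1\bigr)^{k_1+1}$ and $\bigl(y_\F(u_2)/\eta_2\bigr)^{k_2+1}$, while the kernel $dz\,dw/(z-w)^2$ transforms into $\partial_{u_1}\partial_{u_2}\log(z(u_1)-w(u_2))\,du_1\,du_2$. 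Two integrations by parts on closed contours then give
$$
\lim_{N\to\infty}\cov\bigl(M_{\eta_1,k_1}^{\omega(N)},M_{\eta_2,k_2}^{\omega(N)}\bigr) = \frac{1}{(2\pi\ii)^2}\oint\oint y_\F(u_1)^{k_1} y_\F(u_2)^{k_2}\, y_\F'(u_1) y_\F'(u_2)\, \log\bigl(z(u_1)-w(u_2)\bigr)\,du_1\,du_2,
$$
and the closed $u_i$-contours can then be deformed into the (open) level curves $\eta_\F(u_i) = \eta_i$ in $\mathbb H$, the "lower" halves contributing through Schwarz reflection the term $-\log(u_1-\bar u_2)$ needed to assemble the GFF Green's function $-\tfrac{1}{2\pi}\log|(u_1-u_2)/(u_1-\bar u_2)|$; the scalar $\sqrt\pi$ in the statement precisely compensates the $1/(2\pi)$ normalisation of $\tilde G$. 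I expect the \emph{main obstacle} to lie in this last step: tracking the contours through the change of variables, locating the branch cuts and singularities of $\log(z(u_1)-w(u_2))$ in $u_1,u_2$ (whose positions depend nontrivially on $\F$), and justifying the deformation that produces the Schwarz-reflected $\bar u_2$-term. Once the covariances match, the Wick structure of the limit already provided by Theorem \ref{theorem:main-projections} upgrades covariance convergence to convergence of arbitrary joint moments, completing the proof.
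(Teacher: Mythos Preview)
Your proposal is correct and follows essentially the same route as the paper: apply Theorem \ref{theorem:main-projections} with $F_\rho=\F$ and $G_\rho\equiv 0$ (so $Q_\rho(z,w)=1/(z-w)^2$), convert height-function moments to shifted power sums by integration by parts, and then match the resulting double contour integral to the GFF covariance via the complex structure of Proposition \ref{prop:comp-struct-ext}. The paper carries out the final contour-matching step by pointing to the analogous and more detailed computation done for lozenge tilings (proof of Theorem \ref{th:ctl-proj-GFF-1} and Lemma \ref{lem:two-int-proj}), where an explicit change of variables sends the small $|z|=\epsilon$ circles to large circles that are then deformed to the closed level curves (upper half-plane arc together with its conjugate), after which the split into $\HH$ and $\overline{\HH}$ pieces produces $\log|(z-w)/(z-\bar w)|$; your description of this step is more heuristic but captures the same mechanism, and your identification of the contour tracking as the main technical hurdle is accurate.
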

\begin{remark}
 For explicit expressions for the covariance of $\{ \mathcal M_{A,k}^{\F} \}_{A>0; k
\in \mathbb Z_{\ge 0}}$ see \eqref{eq:extr-gauss-proof}.
\end{remark}
\begin{remark}
The condition \eqref{eq:ext-basic-cond} for the growth of extreme characters was
introduced and studied in \cite{BBO}, where the law of large numbers for this
probabilistic model was proven. Among other connections, the condition
\eqref{eq:ext-basic-cond} is related to the hydrodynamical limit of random surfaces
related to probabilistic particle systems with local interaction, see Section 3.3 of
\cite{BBO} for more details.
\end{remark}
The proof of Theorem \ref{theorem:extr-char-gff-1} is given in Section
\ref{sec:ext-GFF-proof}. We believe that this statement is new for general extreme
characters. For the very special case when the only non-zero parameter in
\eqref{eq:ext-basic-cond} is $\Gamma^+$ it was previously proven in \cite{BF},
\cite{Borodin_Bufetov}.

\smallskip

The paths of $\mathcal P_N$ and $\mathcal P$ can be identified with \emph{lozenge
tilings}, which leads us to statistical mechanics applications.

\subsection{Lozenge tilings}
\label{sec:restrictions}

Consider a (right) halfplane on the regular triangular lattice. We would like to
\emph{tile} this halfplane with lozenges (rhombuses) of three types: horizontal
{\scalebox{0.16}{\includegraphics{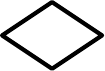}}}, and two others
{\scalebox{0.16}{\includegraphics{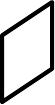}}},
{\scalebox{0.16}{\includegraphics{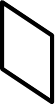}}}. Let $\widehat{\mathcal P}$
denote the set of complete tilings of the half--plane subject to two boundary
conditions: the lozenges become
{\scalebox{0.16}{\includegraphics{lozenge_v_up.pdf}}} as one goes far up and
{\scalebox{0.16}{\includegraphics{lozenge_v_down.pdf}}} as one goes far down, see
Figure \ref{Fig_Lozenge_extreme}.

\begin{figure}[t]
\begin{center}
 {\scalebox{0.7}{\includegraphics{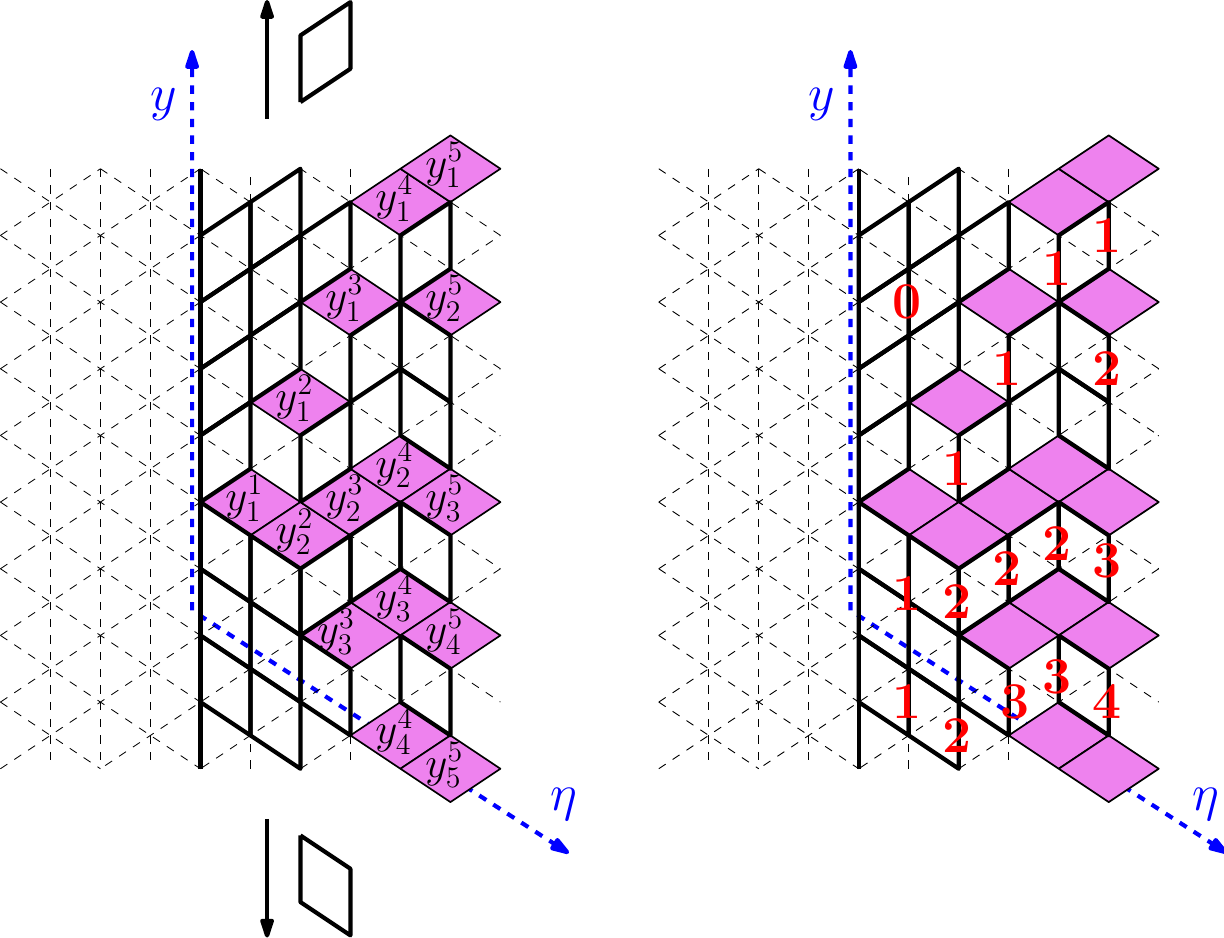}}}
\end{center}
 \caption{Lozenge tilings of halfplane corresponding to paths of $\mathcal P$.
 Left panel: Horizontal lozenges encode coordinates of signatures in path. Right
 panel: Some values of the height function.
 \label{Fig_Lozenge_extreme}}
\end{figure}

There is a natural bijection between $\widehat{\mathcal P}$ and the set $\mathcal P$
of paths in the Gelfand--Tsetlin graph. For that observe that due to combinatorial
constraints, there are precisely $N$ horizontal lozenges with horizontal coordinate
$N$ in a tiling of $\widehat{\mathcal P}$. Let $y_1^N>y_2^N>\dots>y_N^N$ denote the
coordinates of this lozenges, where the coordinate system is shown in Figure
\ref{Fig_Lozenge_extreme}. Then define $\lambda^{(N)}\in\GT_N$ through
\begin{equation}
\label{eq_shifts}
 y_i^N=\lambda^{(N)}_i+N-i,\quad 1\le i \le N.
\end{equation}
A direct check shows that then $\lambda^{(1)}\prec\lambda^{(2)}\prec\dots$ and
moreover \eqref{eq_shifts} is a one-to-one correspondence between $\widehat{\mathcal
P}$ and $\mathcal P$.

In terms of lozenge tilings, the height function $H_N(y,\eta,\cdot)$ has a very
transparent meaning: for a given $(y,\eta)$ it counts the number of horizontal
lozenges {\scalebox{0.16}{\includegraphics{lozenge_hor.pdf}}} \emph{above}
$(Ny,N\eta)$, cf.\ Figure \ref{Fig_Lozenge_extreme}\footnote{Many articles use
another definition, counting the number of lozenges of types
{\scalebox{0.16}{\includegraphics{lozenge_v_up.pdf}}},
{\scalebox{0.16}{\includegraphics{lozenge_v_up.pdf}}} below the point $(Ny,N\eta)$.
Two definitions of the height function are related by an affine transform, and so
the CLT for them is the same.}. In this way Theorem \ref{theorem:extr-char-gff-1}
can be restated as a Central Limit Theorem for certain probability measures on
lozenge tilings.

\smallskip
There is also a \emph{different} family of probability measures on lozenge tilings, which we can
analyze. The definition of these measures is purely combinatorial. Instead of tiling a half--plane,
let us take a \emph{strip} of width $N$, allowing $N$ horizontal lozenges to stick out of its
right--boundary, see Figure \ref{Fig_Lozenge_extreme} and left panel of Figure
\ref{Fig_Lozenge_1}. Note that if we fix the lozenges along the right--boundary, then the tiling is
deterministic outside a finite \emph{trapezoid}: above the trapezoid we observe only
{\scalebox{0.16}{\includegraphics{lozenge_v_up.pdf}}} lozenges, and below there are only
{\scalebox{0.16}{\includegraphics{lozenge_v_down.pdf}}} lozenges (such a trapezoid is also shown in
the left panel of Figure \ref{Fig_Lozenge_1}).

\begin{figure}[t]
\begin{center}
 {\scalebox{0.7}{\includegraphics{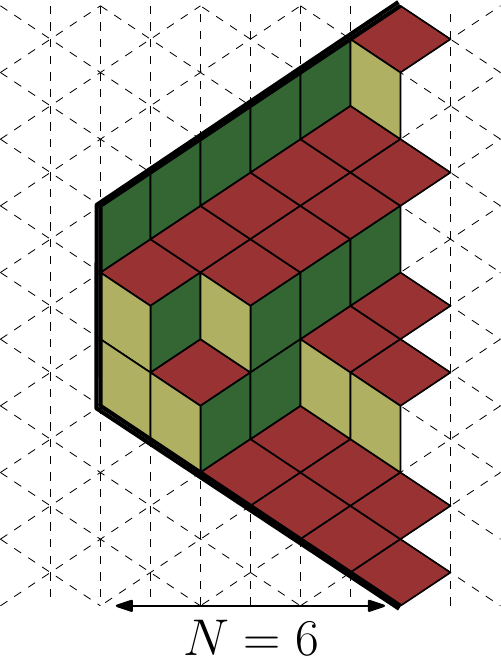}}}
 \quad \quad
  {\scalebox{0.19}[0.31]{\includegraphics{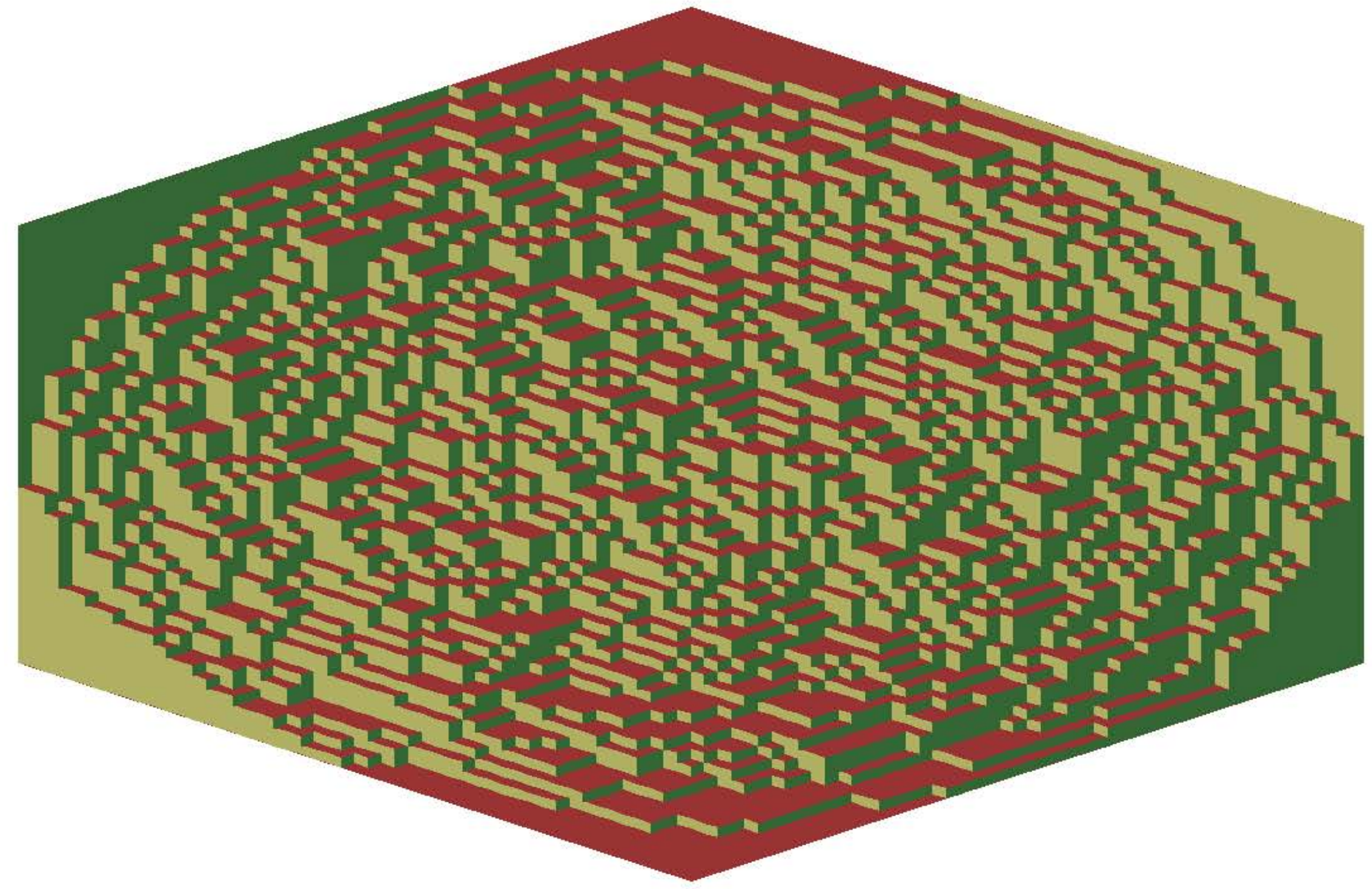}}}
\end{center}
 \caption{Left panel: Lozenge tiling of a trapezoid domain of width $N=6$. Right
 panel: Tilings of a hexagon can be identified with tilings of a specific trapezoid
 domain. Here a sample of uniformly random tiling of $50\times 50\times 50$ hexagon
 is shown.
 \label{Fig_Lozenge_1}}
\end{figure}

Repeating the bijection $\widehat {\mathcal P} \leftrightarrow \mathcal P$ we arrive
at a correspondence between paths from $\mathcal P_N$ and lozenge tilings of
trapezoids.

Let us fix $\la(N) \in \GT_N$ and consider the set $\mathcal P_N (\la(N)) \subset
\mathcal P_N$ of all paths between $\varnothing$ and $\la(N)$. This is a finite set.
Let us equip this set with a uniform probability measure. We are interested in the
asymptotic behavior of random paths distributed according to this measure. In terms
of lozenge tilings, we consider a uniformly random tiling of a trapezoid of width
$N$ with prescribed (deterministic) positions of horizontal lozenges along the right
boundary.

Repeating \eqref{eq_Height_function} we now define the (random) height function
$H^{\la(N)}(y,\eta)$ of such path. As before, in terms of a lozenge tiling, it
counts the number of horizontal lozenges
{\scalebox{0.16}{\includegraphics{lozenge_hor.pdf}}} above a point $Ny,N\eta$. Note
that we now have $0\le \eta \le 1$, as the tiling is not defined outside this range.

As in Section \ref{Section_extreme_characters}, the CLT for $H^{\la(N)}(y,\eta)$
involves a certain map to the upper half--plane $\mathbb H$. Let us introduce it.

For a probability measure $\mes$ with compact support on $\R$, we define a map $z
\to (y_{\mes} (z), \eta_{\mes} (z))$, $\HH \to \R \times \R$ via
$$
y_{\mes} (z) = z+  \frac{(z- \bar z) (\exp ( C_{\mes} ( \bar z) )-1) \exp( C_{\mes}
(z))}{\exp(C_{\mes} (z)) - \exp( C_{\mes} (\bar z))},
$$
$$
\eta_{\mes} (z) = 1 + \frac{(z- \bar z) (\exp ( C_{\mes} ( \bar z) )-1) ( \exp(
C_{\mes} (z))-1)}{\exp(C_{\mes} (z)) - \exp( C_{\mes} (\bar z))}.
$$
Note that the expressions on the right-hand side of the equations above are
invariant with respect to complex conjugations, so $y_{\mes} (z)$ and $\eta_{\mes}
(z)$ are indeed real for any $z$. Let $D_{\mes} \subset \mathbb R^2$ be the image of
this map. Also set
$$
\FF_{\mes; \eta} (z) := z + \frac{1-\eta}{\exp(- C_{\mes} (z)) -1}.
$$

\begin{proposition}
\label{prop:comp-struct-tilings} a) Assume that $\mes$ is a probability measure with
compact support and density $\le 1$ with respect to the Lebesgue measure. Then the
map $z \to (y_{\mes} (z), \eta_{\mes} (z))$ is a diffeomorphism between $\HH$ and
$D_{\mes}\subset \mathbb R \times [0,1]$.

b) This diffeomorphism can be defined in another way. For fixed $(y,\eta)\in\mathbb
R\times[0,1]$ consider the equation $\FF_{\mes; \eta} (z) =y$. Then this equation
has either 0 or 1 root in $\HH$. Moreover, there is a root in $\HH$ if and only if
$(y, \eta) \in D_{\mes}$, and if we put into correspondence to the pair $(y, \eta)
\in D_{\mes}$ the root from $\HH$ we obtain the inverse of the map $z \to (y_{\mes}
(z), \eta_{\mes} (z))$.
\end{proposition}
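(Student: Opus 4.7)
The two parts of the proposition are tightly linked, so the plan is to first establish the algebraic correspondence in part (b) at the formal level, and then bootstrap it with the hypothesis density $\le 1$ to get the diffeomorphism statement in part (a).

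\textbf{Step 1: Algebraic core.} Suppose $z \in \HH$ satisfies $\FF_{\mes;\eta}(z) = y$ for some real $y, \eta$. Using $C_\mes(\bar z) = \overline{C_\mes(z)}$, I would take the complex conjugate of this equation to obtain $\FF_{\mes;\eta}(\bar z) = y$. The pair of equations
$$y - z = \frac{1-\eta}{e^{-C_\mes(z)}-1}, \qquad y - \bar z = \frac{1-\eta}{e^{-C_\mes(\bar z)}-1}$$
is linear in $(y, 1-\eta)$; subtracting solves for $1-\eta$, after which substitution back yields $y$. A direct computation shows that the unique solution is exactly $(y,\eta) = (y_\mes(z), \eta_\mes(z))$ as defined in the proposition (the algebraic identities rearrange after multiplying by $e^{C_\mes(z)+C_\mes(\bar z)}$). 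Hence for $(y,\eta) \in \R\times[0,1]$, any root of $\FF_{\mes;\eta}(\cdot)=y$ in $\HH$ must lie on the map $z\mapsto(y_\mes(z),\eta_\mes(z))$, and conversely if $(y,\eta)=(y_\mes(z),\eta_\mes(z))$ then $z$ does solve $\FF_{\mes;\eta}(\cdot)=y$. This establishes the two formulations of the inverse in part (b) and shows the image of $(y_\mes,\eta_\mes)$ equals the set $D_\mes$.

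\textbf{Step 2: Density $\le 1$ and $\eta_\mes \in [0,1]$.} The Plemelj--Stieltjes formula gives $\Im C_\mes(x+i0) = -\pi\rho(x)$ for the Lebesgue density $\rho$ of $\mes$, so density $\le 1$ forces $-\pi \le \Im C_\mes(x+i0) \le 0$. By the Poisson representation of the harmonic function $\Im C_\mes$ on $\HH$, this extends to $-\pi < \Im C_\mes(z) < 0$ throughout $\HH$. Writing $w := -C_\mes(z)$, we have $0 < \Im w < \pi$, so $e^w\in \HH$ and $\Im\!\bigl(1/(e^w-1)\bigr) < 0$. Substituting into the formula for $\eta_\mes(z)$ and using that $(z-\bar z)$ is purely imaginary with positive imaginary part, I would verify by direct sign analysis that $0\le \eta_\mes(z)\le 1$.

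\textbf{Step 3: Injectivity via monotonicity.} The uniqueness of the root in part (b) is equivalent to injectivity of $(y_\mes,\eta_\mes)$ on $\HH$. I would argue as follows: on the level curve $\Gamma_\eta := \{z\in\HH : \Im \FF_{\mes;\eta}(z) = 0\}$, the formula
$$\Im \FF_{\mes;\eta}(z) = \Im z + (1-\eta)\,\Im\!\left(\frac{1}{e^{-C_\mes(z)}-1}\right)$$
combined with the sign computation above shows the second summand is strictly negative for $\eta<1$, so $\Gamma_\eta$ is the locus where a positive and a negative term balance. The holomorphy of $\FF_{\mes;\eta}$ and the non-vanishing of $\partial_z\FF_{\mes;\eta}$ on $\Gamma_\eta$ (which I would deduce from the same sign computation applied to the Cauchy--Riemann equations) give that $\Gamma_\eta$ is a smooth curve along which $\Re \FF_{\mes;\eta}$ is strictly monotonic, so $\FF_{\mes;\eta}$ is injective on $\Gamma_\eta$. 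This yields uniqueness of the root. The same calculation yields non-vanishing Jacobian of $(y_\mes,\eta_\mes)$, so the map is a local diffeomorphism; combined with injectivity and the identification of its image with $D_\mes$ from Step 1, this gives the global diffeomorphism asserted in part (a).

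\textbf{Main obstacle.} The delicate step is the global injectivity (and hence uniqueness of the root), which requires the hypothesis density $\le 1$ in an essential way: without it, one could have $\Im(-C_\mes(z))\ge \pi$, the sign of $\Im(1/(e^{-C_\mes(z)}-1))$ would flip in regions, and the level curve $\Gamma_\eta$ could split or allow non-monotonic behavior of $\Re\FF_{\mes;\eta}$. Carrying out the sign analysis and local-to-global argument precisely is the bulk of the proof; steps 1 and 2 are essentially computational.
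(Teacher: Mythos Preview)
The paper does not actually prove this proposition: it simply cites Theorem~2.1 of Duse--Metcalfe \cite{DM} and records the change of notation. So your attempt is not comparable to the paper's own argument, because there is none; you are sketching an independent proof of a result the authors import wholesale.

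Your Steps~1 and~2 are essentially correct and are indeed the computational part. In Step~1 the linear algebra is right: taking $z$ and $\bar z$ in $\FF_{\mes;\eta}(\cdot)=y$ gives two real equations for $(y,1-\eta)$ whose solution is exactly $(y_\mes(z),\eta_\mes(z))$. In Step~2 the inequality $\eta_\mes(z)\le 1$ follows from the sign analysis you describe (the numerator is $(2i\Im z)\cdot|e^{C_\mes(z)}-1|^2$ and the denominator is $2i\,\Im e^{C_\mes(z)}$ with $\Im e^{C_\mes(z)}<0$), but the bound $\eta_\mes(z)\ge 0$ is not a pure sign check: it amounts to an inequality of the form $\Im z\cdot|e^{C_\mes(z)}-1|^2 \le -\Im e^{C_\mes(z)}$, which needs the integral representation of $C_\mes$ and not just $\Im C_\mes\in(-\pi,0)$.

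The real gap, as you yourself flag, is Step~3. Your argument that $\Re\FF_{\mes;\eta}$ is strictly monotone along $\Gamma_\eta$ presupposes two things you do not establish: that $\Gamma_\eta$ is connected (a single smooth arc rather than several components), and that $\partial_z\FF_{\mes;\eta}\ne 0$ on it. Neither follows from Cauchy--Riemann plus the sign of $\Im(1/(e^{-C_\mes(z)}-1))$ alone; one needs a global argument (in \cite{DM} this is done via a careful analysis of the boundary behaviour of $\FF_{\mes;\eta}$ as $z$ approaches $\R$ and $\infty$, together with an argument-principle count of roots). Without that, multiple roots in $\HH$ are not excluded and the diffeomorphism claim does not follow. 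So your outline is the right shape, but the hard analytic content---which is precisely what \cite{DM} supplies---is missing.
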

\begin{proof}
This is Theorem 2.1 of \cite{DM}. Note that there is a slight difference in
notations: $\chi = y_{\mes} +1-\eta_{\mes}$ and $\eta_{\mes} = \eta$, where $(\chi,
\eta)$ is a notation from \cite{DM}.
\end{proof}
As in Section \ref{Section_extreme_characters} we carry the height function
$H^{\la(N)} (y,\eta)$ over to $\mathbb H$ through
$$
H_N (z) := H^{\la(N)} ( y_{\mes}(z), \eta_{\mes} (z)), \qquad z \in \HH.
$$
As before, we do not lose any information here, as the tiling is \emph{frozen}
outside $D_\mes$ and there are no fluctuations, cf.\ right panel of Figure
\ref{Fig_Lozenge_1}, where the lozenge tiling is frozen outside the circle inscribed
into the hexagon.

Define a moment of the random height function as
$$
M_{\eta,k}^{\la(N)} = \int_{-\infty}^{+\infty} y^k \left( H^{\la(N)} ( y,\eta) - \E
H^{\la(N)} ( y, \eta) \right) dy, \qquad 0 < \eta \le 1, \ k \in \N.
$$
Also define the corresponding moment of GFF via
$$
\mathcal M_{\eta,k}^{\mes} = \int_{z \in \mathbb H; \eta = \eta_{\mes} (z)} y_{\mes}
(z)^k \mathfrak G(z) \frac{d y_{\mes}(z)}{dz} dz, \qquad 0 < \eta \le 1, \ k \in \N.
$$
\begin{theorem}[Central Limit Theorem for lozenge tilings]
\label{th:ctl-proj-GFF-1} Suppose that $\la(N)\in \GT_N$, $N=1,2,\dots$, is a
regular sequence of signatures such that
\begin{equation}
\label{eq:signat-to-meas-proj}
  \lim_{N\to\infty} m[\la(N)]=\mes, \qquad \mbox{weak convergence,}
\end{equation}
and let $H_N(z)$ be the height function for the uniformly random element of
$\mathcal P_N(\lambda(N))$. Then
 $$
 \sqrt{\pi}\left(H_N (z) - \mathbf E H_N (z) \right) \xrightarrow[N \to \infty]{} \mathfrak G (z), \qquad z \in \mathbb H,
 $$
in the sense that, as $N \to \infty$, the collection of random variables
$\{\sqrt{\pi} M_{\eta,k}^{\lambda(N)} \}_{\eta>0; k \in \mathbb Z_{\ge 0}}$
converges, in the sense of moments, to $\{ \mathcal M_{\eta,k}^{\mes} \}_{\eta>0; k
\in \mathbb Z_{\ge 0}}$.
\end{theorem}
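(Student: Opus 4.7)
The plan is to realize the uniform measure on $\mathcal P_N(\la(N))$ as the output of the multilevel construction of Theorem \ref{theorem:main-projections} started from the delta-mass at $\la(N)$, verify CLT-appropriateness via known Schur-function asymptotics, and then identify the resulting covariance with that of the Gaussian Free Field under the parametrization of Proposition \ref{prop:comp-struct-tilings}.

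\emph{Step 1 (setup).} By the branching rule \eqref{eq:Schur-branching-pr-coef}, the projection $\mathfrak p_{k,k-1}\colon f(x_1,\dots,x_k)\mapsto f(x_1,\dots,x_{k-1},1)$ applied to $s_\la/s_\la(1^k)$ is the uniform mixture over $\mu\prec\la$ of $s_\mu/s_\mu(1^{k-1})$, which is the one-step uniform transition along $\mathcal P_N$. Iterating, the marginal on level $[a_tN]$ of the uniform measure on $\mathcal P_N(\la(N))$ has Schur generating function $s_{\la(N)}(x_1,\dots,x_{[a_tN]},1^{N-[a_tN]})/s_{\la(N)}(1^N)$, placing us exactly in case \textbf{1)} of Section \ref{Section_multilevel_specified}, with initial measure $\rho_N=\delta_{\la(N)}$.

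\emph{Step 2 (appropriateness).} Under the regularity assumption and \eqref{eq:signat-to-meas-proj}, the asymptotic expansion of normalized Schur functions at points with finitely many non-unit coordinates from \cite{GM}, \cite{GP} yields, uniformly in a complex neighborhood of unity, the limits of $N^{-1}\pa_1\log s_{\la(N)}(x_1,1^{N-1})$ and of $\pa_1\pa_2\log s_{\la(N)}(x_1,x_2,1^{N-2})$, both being explicit holomorphic functions of the limit profile $\mes$. This is precisely the hypothesis of General Example \ref{exam:clt}, so the sequence $\{S_{\rho_N}\}$ is CLT-appropriate; comparing with \eqref{eq_Voiculescu_R}--\eqref{eq:H-S-connection} one reads off $F_\rho(1+z)=H'_\mes(1+z)$ and the corresponding $Q_\rho$.

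\emph{Step 3 (apply Theorem \ref{theorem:main-projections} and pass to the height function).} The multilevel theorem then gives joint convergence, as $N\to\infty$, of $\{N^{-k}(p_k^{[a_tN]}-\E p_k^{[a_tN]})\}_{t,k}$ to a centered Gaussian vector with the covariance written as the double contour integral from Theorem \ref{theorem:main-projections}. Integration by parts in \eqref{eq_Height_function} expresses the height-function moment $M_{\eta,k}^{\la(N)}$ as $\tfrac{1}{k+1}N^{-k-1}(p_{k+1}^{[\eta N]}-\E p_{k+1}^{[\eta N]})$ up to a deterministic correction and a negligible remainder, so the joint CLT for $\{M_{\eta,k}^{\la(N)}\}$ follows from that for the power sums.

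\emph{Step 4 (identification with GFF; the main obstacle).} The heart of the argument is to verify that the covariance produced in Step 3, after multiplication by $\pi$, matches the double integral of $\tilde G(z,w)=-\frac{1}{2\pi}\log|(z-w)/(z-\bar w)|$ against $dy_\mes(z)\,dy_\mes(w)$ along the level curves $\eta_\mes(z)=\eta_1$, $\eta_\mes(w)=\eta_2$. The route is to change variables in the double contour integral using Proposition \ref{prop:comp-struct-tilings}(b): the equation $\FF_{\mes;\eta}(z)=y$ identifies the integration variable with a point of $\HH$ and converts the factor $\bigl(1/z+1+(1+z)F_\rho(1+z)/a\bigr)^k$ into $y_\mes(z)^k/a^k$ times the Jacobian $dy_\mes/dz$. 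After this substitution the contours, originally small circles around the unity, must be deformed across the support of $\mes$; residues picked up at the complex-conjugate critical points produce the $z-\bar w$ contribution in the GFF kernel, which is the reflection of the boundary condition on $\pa\HH$ already encoded in the Schwarz-symmetric map $z\mapsto(y_\mes(z),\eta_\mes(z))$. The remaining step is a computation using the explicit form of $H'_\mes$ showing that the regular part of $Q_\rho$ collapses to $-\frac{1}{2\pi}\pa_z\pa_w\log|z-w|$. The main technical work therefore lies in this contour-deformation-and-change-of-variables identity; once it is in place the two expressions for the covariance coincide and the theorem follows.
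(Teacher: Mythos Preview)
Your Steps 1--3 match the paper's argument (this is Proposition \ref{prop:restr-moments-gauss} together with the integration-by-parts identity $M_{\eta,k}^{\la(N)}=\frac{1}{k+1}N^{-(k+1)}(p_{k+1}^{[N\eta]}-\E p_{k+1}^{[N\eta]})$). Step 4 has the right destination but the mechanism you describe is not the one that works, and would mislead you if you tried to implement it.

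Two specific issues. First, the change of variables is not carried out via Proposition \ref{prop:comp-struct-tilings}(b). The paper first rewrites the kernel: using $\pa_z\pa_w\log(z-w)=1/(z-w)^2$ one absorbs the singular part of $Q_\rho$ into the $G_\rho$ term, obtaining $\pa_z\pa_w\log\bigl(\FFF(w)-\FFF(z)\bigr)$ with $\FFF(z)=1/z+1+(1+z)H'_\mes(1+z)$. Then the substitution $\tilde z=1/C_\mes^{(-1)}(\log(1+z))$ (Lemma \ref{lem:two-int-proj}), which exploits \eqref{eq:H-S-connection}, collapses the \emph{entire} kernel to a clean $1/(\tilde z-\tilde w)^2$ and turns the powers into $\FF_{\mes;a}(\tilde z)^k$, now integrated over large circles. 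Only after this algebraic reduction does Proposition \ref{prop:comp-struct-tilings} enter, to identify the level sets $\{\eta_\mes=a\}$ as the natural target contours.

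Second, and more importantly, no residues are picked up when the large circles are deformed to the level curves $\{\eta_\mes=a_i\}$: the deformation avoids all poles of the integrand. The $z-\bar w$ factor in the GFF kernel does not come from residues at conjugate critical points. It comes from the Schwarz symmetry of the level curve: each closed contour $\{\eta_\mes=a\}$ is invariant under conjugation and $y_\mes$ is real on it, so the closed-contour integral splits into an upper-half-plane piece and its complex conjugate. The resulting four cross terms recombine, via $2\log|(z-w)/(z-\bar w)|=\log\frac{(z-w)(\bar z-\bar w)}{(z-\bar w)(\bar z-w)}$, into exactly the GFF covariance kernel. Your residue picture would not produce this structure.
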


\begin{remark}
 For explicit expressions for the covariance of $\{ \mathcal M_{\eta,k}^{\mes} \}_{\eta>0; k
\in \mathbb Z_{\ge 0}}$, see Lemma \ref{lem:two-int-proj}.
\end{remark}

The proof of Theorem \ref{th:ctl-proj-GFF-1} is given in Section
\ref{Section_tilings_GFF}, and we believe that in this generality it is new.

The convergence to the Gaussian Free Field for certain lozenge tiling models was
first obtained by Kenyon \cite{Ken}. Theorem \ref{th:ctl-proj-GFF-1} is closely
related to the result obtained by Petrov \cite{Pet2}. There are two differences:
First, in \cite{Pet2} the convergence is obtained only for measures $\mes$ which
consist of finitely many segments with density 1. In Theorem \ref{th:ctl-proj-GFF-1}
an arbitrary measure $\mes$ with compact support is allowed. The second difference
is that, though the limit object is the same, the convergence is proved for
different sets of observables.

\subsection{Domino tilings}
\label{Section_Aztec_statement}

\begin{figure}[t]
\begin{center}
 {\scalebox{0.95}{\includegraphics{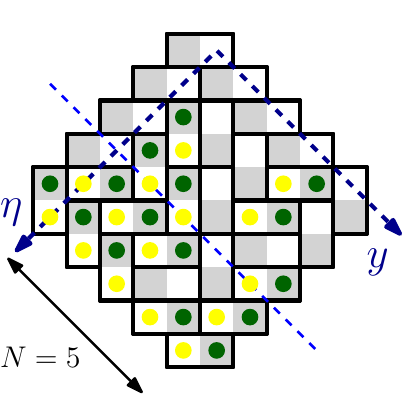}}}
  {\scalebox{0.15}{\includegraphics{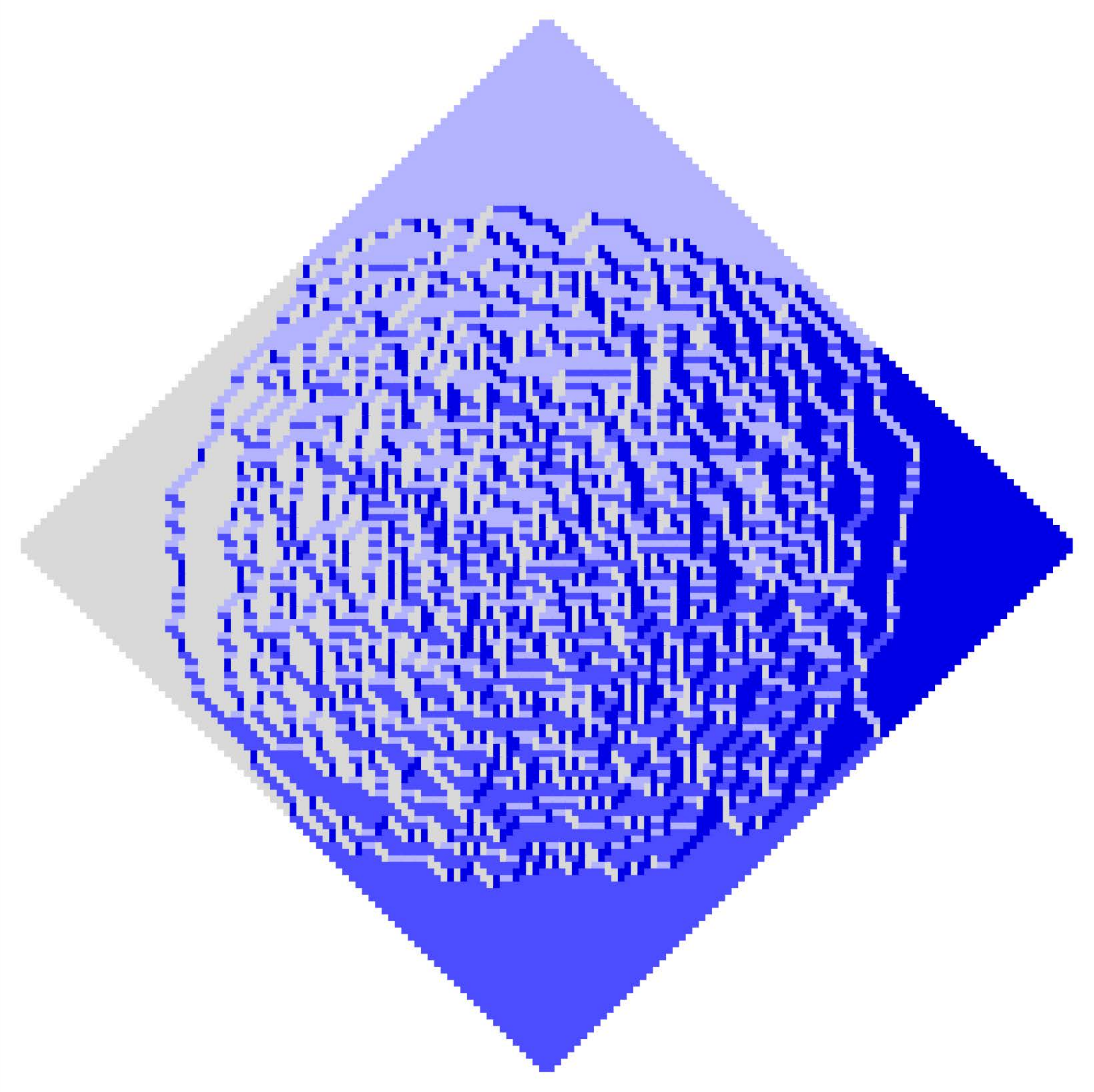}}}
\end{center}
 \caption{Left panel: Domino tiling of Aztec diamond of size $5$ and corresponding
 particle system. Right panel: Uniformly random domino tiling of Aztec diamond of
 size $80$.
 \label{Fig_Domino_1}}
\end{figure}

In this section we switch from the triangular grid to the square grid and replace
lozenges by \emph{dominos}. Consider an Aztec diamond of size $N$, which is the side
$N$ ``sawtooth'' rhombus drawn on the square grid, as shown in Figure
\ref{Fig_Domino_1}. Following \cite{EKLP}, we consider tilings of this rhombus with
vertical and horizontal $2\times 1$ dominos. For a positive real $\mathfrak{q}$ it is known that
$$
 \sum_{\Omega \text { is a domino tiling of size } N \text{ Aztec diamond}} \mathfrak{q}^{\frac{1}{2}(\text{number
 of horizontal dominos in }\Omega)}=(1+\mathfrak{q})^{N(N+1)/2}.
$$
Let us pick a random tiling of size $N$ Aztec diamond according to the probability
measure $\mathfrak{q}^{\frac{1}{2}(\text{number of horizontal dominos})} \cdot {(1+\mathfrak{q})^{-N(N+1)/2}}$. A
sample from this measure for $\mathfrak{q}=1$ is shown in the right panel of Figure
\ref{Fig_Domino_1}.

Similarly to Section \ref{sec:restrictions}, we can identify domino tilings with
sequences of signatures, although the construction is more delicate this time.
Coloring the grid in the checkerboard order, we can distinguish four types of
dominos: two vertical ones
{\scalebox{0.2}{\includegraphics{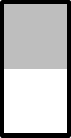}}},
{\scalebox{0.2}{\includegraphics{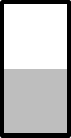}}} and two horizontal
ones {\scalebox{0.2}{\includegraphics{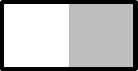}}},
{\scalebox{0.2}{\includegraphics{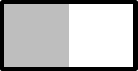}}}. We further choose
one of the horizontal types and one of the vertical types; for the sake of being
definite let us choose {\scalebox{0.2}{\includegraphics{domino_vert_gray_top.pdf}}}
and {\scalebox{0.2}{\includegraphics{domino_hor_gray_right.pdf}}}. We stick to these
two types and put \emph{green} particles on the gray squares (of the checkerboard
coloring) and \emph{yellow} particles on the white squares, as shown in the left
panel of Figure \ref{Fig_Domino_1}.

Reading the yellow particle configuration from up--right to down--left, we observe
$N$ slices with $1$, $2$, \dots, $N$ particles, respectively; a $3$--particle slice
is shown in Figure \ref{Fig_Domino_1}. The particles of the $t$--particle slice have
coordinates $y^1_t>y^2_t>\dots>y^t_t$, which we identify with a signature
$\lambda^{(t)}\in\GT_t$ through
$$
 y^t_i=\lambda^{(t)}_i+t-i,\quad 1\le i \le t \le N.
$$
We can now define the height function $H_N(y,\eta)$ of uniformly random domino
tiling of the size $N$ Aztec diamond through the very same formula
\eqref{eq_Height_function} as before. In terms of tilings, the height function
counts the yellow particles in the down--right direction on the given diagonal (of fixed $\eta$ and
growing $y$) from the point $(Ny,N\eta)$.

%and for that we need to introduce a map from the liquid region to the upper half--plane.

As before we would like to carry the height function to the upper half--plane. For that we need the following proposition.
\begin{proposition}
\label{prop:aztec-struct} For any $y \in \R$ and $\eta \in (0;1]$ the equation
\begin{equation}
\label{eq_Aztec_equation}
z^2 (\mathfrak{q} - y \mathfrak{q})+ z ( \eta \mathfrak{q} + \eta + \mathfrak{q} -y (1+\mathfrak{q})) + \eta (1+\mathfrak{q})=0,
\end{equation}
has 0 or 1 root in $\HH$. It has a root in $\HH$ if and only if the pair $(y,\eta)$
lies in the ellipse inscribed in the Aztec diamond
$$
\mathbf D_{\mathbf{A}} = \{ (y,\eta): \left( \frac{(y-\eta)^2}{\mathfrak{q}}+(y+\eta-1)^2 \right) (1+\mathfrak{q}) \le 1 \}.
$$
The map $\mathbf D_{\mathbf{A}} \to \HH$ given by this root is a diffeomorphism. We
denote by $z \to (y_{\mathbf A} (z), \eta_{\mathbf{A}}(z))$ the inverse of this map.
\end{proposition}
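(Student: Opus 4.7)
The plan is to treat the displayed equation as both a real quadratic in $z$ (controlling how many roots lie in $\HH$) and as an affine function of $(y,\eta)$ (yielding an explicit inversion). Write the equation as $Az^2+Bz+C=0$ with
\[
A=\mathfrak{q}(1-y),\qquad B=(1+\mathfrak{q})(\eta-y)+\mathfrak{q},\qquad C=\eta(1+\mathfrak{q}).
\]
Since $A,B,C\in\R$, the two roots are either both real or form a complex conjugate pair, which immediately produces the dichotomy ``$0$ or $1$ root in $\HH$'' and reduces the characterization of which case occurs to determining the sign of the discriminant $B^2-4AC$.

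The first substantive step is to verify by direct expansion the identity
\[
B^2-4AC=(1+\mathfrak{q})\bigl[(y-\eta)^2+\mathfrak{q}(y+\eta-1)^2\bigr]-\mathfrak{q}.
\]
Here the change of variables $u=y-\eta$, $v=y+\eta-1$ (so that $1-y=\tfrac12(1-u-v)$ and $\eta=\tfrac12(1-u+v)$) causes every cross-term to cancel and reduces the check to a short computation. The right-hand side is exactly the defining expression of $\mathbf{D}_{\mathbf A}$ (multiplied through by $\mathfrak{q}$ and shifted by $-\mathfrak{q}$), so the discriminant is strictly negative precisely on the open interior $\mathbf{D}_{\mathbf A}^{\circ}$, vanishes on $\partial\mathbf{D}_{\mathbf A}$, and is positive outside. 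This simultaneously proves the counting statement and the ``if and only if'' characterization of the region where a root in $\HH$ exists.

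For the diffeomorphism assertion, I exploit that the equation is affine in $(y,\eta)$. Writing $z=a+\mathbf{i}\,b$ with $b>0$ and separating real and imaginary parts gives two real-linear equations in $(y,\eta)$; after dividing the imaginary part by $b\neq 0$, the resulting $2\times 2$ system has a nonvanishing coefficient determinant and can be inverted explicitly, yielding smooth real-valued functions $y(z)$ and $\eta(z)$ on $\HH$. By construction $z$ is a root of the corresponding quadratic, so the identity of the previous paragraph forces $(y(z),\eta(z))\in\mathbf{D}_{\mathbf A}^{\circ}$ automatically, and elementary bounds on the closed-form formulas give $\eta(z)\in(0,1)$. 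Conversely, on $\mathbf{D}_{\mathbf A}^{\circ}$ the discriminant is strictly negative, so the quadratic formula produces a smooth map back into $\HH$. The two constructions are mutual inverses by design, hence the map is a diffeomorphism.

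The main technical obstacle is the discriminant identity in the second paragraph: the raw expansion is unpleasant, but the $(u,v)$-substitution makes it transparent in a few lines. Once that identity is in hand, the remaining verifications are linear algebra and elementary positivity estimates.
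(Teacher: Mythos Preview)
Your argument is correct and self-contained, whereas the paper does not actually prove this proposition at all: it simply states that the result coincides with Lemma~5.1 of \cite{CJY} and refers the reader there. So rather than a different route, you are supplying a proof where the paper gives only a citation.

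Your discriminant identity is exactly right (the $(u,v)$ substitution is the clean way to see it), and the observation that the equation is affine in $(y,\eta)$ is the natural route to the explicit inverse. One small point worth making explicit: the case $A=\mathfrak{q}(1-y)=0$, i.e.\ $y=1$, should be handled separately since the equation then degenerates to a linear one; but in that case $B^2-4AC=B^2\ge 0$, so $(1,\eta)$ lies on or outside the ellipse and the unique (real) root is not in $\HH$, consistent with your conclusion. With that caveat noted, your proof is complete.
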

This proposition coincides with Lemma 5.1 from \cite{CJY}.

Let us carry $H_N(y,\eta)$ over to $\mathbb H$ --- define
$$
H^{\mathbf{A};N} (z) := H_N (y_{\mathbf{A}}(z), \eta_{\mathbf{A}} (z)), \qquad z \in
\mathbb H.
$$
For  $0<\eta \le 1$ and $k=1,2,\dots$, define a moment of the random height function
as
$$
M_{\eta,k}^{\mathbf{A},N} = \int_{-\infty}^{+\infty} y^k \left( H_N ( y, \eta) -
\mathbf E H_N (y, \eta) \right) dy.
$$
Also define the corresponding moment of GFF via
$$
\mathcal M_{\eta,k}^{\mathbf{A}} = \int_{z \in \mathbb H\mid  \eta_{\mathbf{A}}
(z)=\eta} y_{\mathbf{A}}(z)^k \mathfrak G(z) \frac{d y_{\mathbf{A}}(z)}{dz} dz.
$$
\begin{theorem}[Central Limit Theorem for the domino tilings of the Aztec diamond]
\label{theorem:extr-char-aztec}

Let $H^{\mathbf{A};N} (z)$ be a random function corresponding to the uniformly random domino tiling of the Aztec diamond in the way described above. Then
$$
\sqrt{\pi}\left(H^{\mathbf{A};N} (z) - \mathbf E H^{\mathbf{A};N} (z) \right)
\xrightarrow[N \to \infty]{} \mathfrak G (z).
$$
In more details, as $N \to \infty$ the collection of random variables $\{\sqrt{\pi}
M_{\eta,k}^{\mathbf{A},N} \}_{0<\eta<1; k \in \mathbb Z_{\ge 0}}$ converges, in the
sense of finitely-dimensional distributions, to $\{ \mathcal M_{\eta,k}^{\mathbf{A}}
\}_{0<\eta<1; k \in \mathbb Z_{\ge 0}}$.
\end{theorem}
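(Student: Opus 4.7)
The plan is to realize the random domino tiling of the Aztec diamond as a Markov chain on interlacing signatures fitting the multilevel framework of Theorem \ref{th:general-for-domino}, and then to convert the resulting power-sum CLT into a statement about the height function, using integration by parts together with the change of variables from Proposition \ref{prop:aztec-struct} to identify the limit with the pullback of the Gaussian Free Field.

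First, I would use the bijection of Section \ref{Section_Aztec_statement} between tilings and interlacing sequences $\lambda^{(1)}\prec\cdots\prec\lambda^{(N)}$, $\lambda^{(t)}\in\GT_t$. A Cauchy-type identity (essentially the Schur-process description of domino shuffling going back to \cite{EKLP}) shows that the induced probability measure is of the form \eqref{eq:mes-time-space} with $a_t = t/N$, trivial initial measure $\rho_N$ supported on the signature determined by the top-right boundary of the diamond, and multiplicative factors
$$
g_t^{(N)}(x_1,\dots,x_t) \;=\; \prod_{i=1}^{t}\frac{1+\mathfrak{q}\,x_i}{1+\mathfrak{q}}.
$$
The aggregated Schur generating functions $H_t^{(N)}$ of \eqref{eq:h-func-def-mult} are then explicit products of one-variable factors, so each sequence $\{H_t^{(N)}\}_N$ is CLT-appropriate by Example \ref{exam:clt}. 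A direct computation of partial derivatives at $(1^{[a_tN]})$ yields the limiting $F_{\rho,(t)}(1+z)$ explicitly in terms of $\mathfrak{q}$ and $t/N$; because each $H_t^{(N)}$ factors into single-variable pieces, the mixed $G_{\rho,(t)}$ vanishes and $Q_{\rho,(t)}(z,w) = (z-w)^{-2}$.

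Applying Theorem \ref{th:general-for-domino} then gives the limit covariance of $\{N^{-k}(p_{k;t}^{[a_tN]}-\E p_{k;t}^{[a_tN]})\}_{k,t}$ as a Gaussian vector whose covariance is an explicit double contour integral involving $F_{\rho,(\cdot)}$ and $(z-w)^{-2}$. A straightforward integration by parts (integrating the test function $y^k$ against the derivative in $y$ of the height function) translates power-sum covariances into covariances of the height-function moments $M_{\eta,k}^{\mathbf{A},N}$, up to deterministic boundary contributions that are absorbed into the expectation and do not affect the CLT statement.

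The main obstacle is the final step: recognizing this explicit contour integral as the pullback of the GFF covariance $-\frac{1}{2\pi}\log\bigl|(z-w)/(z-\bar w)\bigr|$ through $z\mapsto (y_{\mathbf{A}}(z),\eta_{\mathbf{A}}(z))$. The plan is to use Proposition \ref{prop:aztec-struct} to deform the $z$- and $w$-contours so that they trace images of contours in $\HH$, to make the substitution via the quadratic equation \eqref{eq_Aztec_equation}, and to verify that the Jacobian of this change of variables together with the kernel $(z-w)^{-2}$ reorganizes into the Cauchy-type kernel needed to produce the GFF log-kernel upon integration in $y$. This residue-style computation is the core analytic content of the proof; it mirrors the matching argument carried out for lozenge tilings in Section \ref{Section_tilings_GFF}, with the uniformizing map supplied by the quadratic \eqref{eq_Aztec_equation} playing the role that $\FF_{\mes;\eta}$ plays in the lozenge setting.
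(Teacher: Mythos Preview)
Your proposal is correct and follows essentially the same route as the paper: both realize the tiling measure as a Markov chain with product-form multiplicative factors $\prod_i\frac{1+\mathfrak{q}x_i}{1+\mathfrak{q}}=\prod_i(1-\beta+\beta x_i)$ (with $\beta=\mathfrak{q}/(1+\mathfrak{q})$), apply Theorem \ref{th:general-for-domino} to obtain Gaussian fluctuations with kernel $(z-w)^{-2}$, and then match the covariance with the GFF pullback via Proposition \ref{prop:aztec-struct} by the same contour-deformation argument carried out for lozenge tilings in Section \ref{Section_tilings_GFF}. The paper records the key intermediate covariance as Proposition \ref{prop:aztec-gff-2} and then notes that the quadratic \eqref{eq_Aztec_equation} is exactly the level-curve equation $\frac{1}{z}+1+\frac{(1+z)(1-a)\beta}{a(1-\beta+\beta(1+z))}=\frac{y}{\eta}$, after which the remainder of the argument is declared identical to the lozenge case; your outline anticipates precisely this.
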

\begin{remark}
 For the explicit expression for the covariance of $\{ \mathcal
M_{\eta,k}^{\mathbf{A}} \}$ see \eqref{eq:aztec-moment-formula}.
\end{remark}

Theorem \ref{theorem:extr-char-aztec} was first announced in \cite{CJY} without
technical details. Our proof is given in Section \ref{Section_Aztec_proof}.
Moreover, Theorem \ref{theorem:extr-char-aztec} can be extended to random
domino tilings of more general domains, as shown in \cite{BK}.

%Proposition \ref{prop:aztec-struct} was proved in \cite{CJY}.

\subsection{Noncolliding random walks}
\label{Section_noncol}
 We proceed to our final application. Here the general framework is to study $N$
 independent identical random walks on $\mathbb Z$ conditioned to have no collisions
 with each other. This model is quite general, as one can start from different
 random walks, and also the initial configuration for the conditional process might
 vary.
\smallskip

Here we stick to three simplest random walks (but it is natural to expect that the
results generalize far beyond that). Let $R$ be one of the following:
\begin{itemize}
\item The continuous time Poisson random walk $R=R_\gamma$ of intensity $\gamma>0$.
\item The discrete time Bernoulli random walk $R=R_\beta$, where at each moment the
particle can either jump to the right by one with probability $0<\beta<1$ or stay
put with probability $1-\beta$.
\item The discrete time geometric random walk $R=R_\alpha$, where for $\alpha>0$ at each
moment the particle jumps to the right $i$ steps with probability $(1-\alpha)
\alpha^i$, $i=0,1,2,\dots$.
\end{itemize}

We now define for each $N=1,2,\dots$ the $N$--dimensional noncolliding process $X^{N;R}$. We fix an
arbitrary initial condition $X^{N;R}_1(0)>\dots>X^{N;R}_N(0)$, take $N$ independent identically
$R$--distributed random walks started from points $X^{N;R}_1(0)$,\dots $X^{N;R}_N(0)$ and define
$X^{N;R}(t)$ as the conditional process given that the trajectories of these random walks do not
intersect (at all times $t\ge 0$), cf.\ Figure \ref{Fig_Poisson_non}. Note that the condition has
probability zero, and so one needs to make sense of it. One way here is to start with considering
distinct ordered speeds (which correspond to the parameters $\gamma$, $\beta$ or $\alpha$), and
then make them all equal through a limit transition. We refer to \cite{OCon}, \cite{KOR} for the
details of the construction. The result is that $X^{N;R}$ is a Markov process, which fits into the
formalism of Section \ref{Section_multilevel_general}, more specifically, the maps $\mathfrak
p_{N,N}$ are given by the multiplication, as in Example 2 of Section
\ref{Section_multilevel_specified}.

\begin{figure}[t]
\begin{center}
 {\scalebox{1.0}{\includegraphics{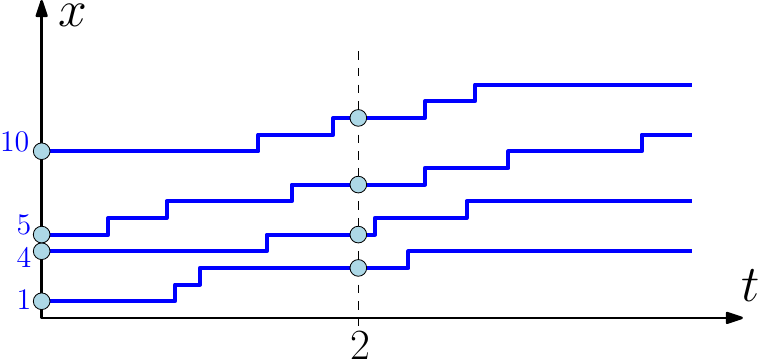}}}
\end{center}
 \caption{Four noncolliding Poisson random walks started at configuration
 $(1,4,5,10)$ and the positions of walkers at time $t=2$.
 \label{Fig_Poisson_non}}
\end{figure}

Let us identify the points of $X^{N;R}$ with a signature $\lambda$ through
\begin{equation}
\label{eq_particles_to_signatures}
 X^{N;R}_i=\lambda_i+N-i.
\end{equation}
In this notation, if $R=R_\gamma$ and $\lambda^{(1)}$,\dots $\lambda^{(n)}$ describe $X^{N;R}(t)$
at times $t_1>t_2>\dots>t_n=0$, then  (in the notations of Example 2 in Section
\ref{Section_multilevel_specified}),
\begin{equation}
\label{eq_transition_gamma}
 g_k=\exp\left(- N \gamma(t_{k}-t_{k+1})+ \gamma(t_{k}-t_{k+1}) \sum_{i=1}^{N} x_i
 \right),\quad k=1,\dots,n-1.
\end{equation}
If $R=R_\beta$, then (this time $t_k$ should be integers)
\begin{equation}
\label{eq_transition_beta}
 g_k=\prod_{i=1}^N \bigl(1+\beta(x_i-1)\bigr)^{t_{k}-t_{k+1}},\quad k=1,\dots,n-1.
\end{equation}
If $R=R_\alpha$, then (again $t_k$ are integers)
\begin{equation}
\label{eq_transition_alpha}
 g_k=\prod_{i=1}^N \left(\frac{1}{1-\alpha(x_i-1)}\right)^{t_{k}-t_{k+1}},\quad k=1,\dots,n-1.
\end{equation}
We are in a position to consider the large $N$-limit of these models. For that assume
that $X^{N;R}(0)$ is given through \eqref{eq_particles_to_signatures} by a signature
$\lambda(N)$, and as $N\to\infty$ these signatures are regular in the sense of
Definition \ref{definition_regularity}. Let us choose some $k$ times
$\tau_1>\tau_2>\dots>\tau_k>0$ and consider $X^{N;R}(t)$ at
$t=N\tau_1,N\tau_2,\dots,N\tau_k$. Then using Theorems
\ref{Theorem_character_asymptotics}, \ref{Theorem_character_asymtotics_2} for the
asymptotic of Schur generating function for $\lambda(N)$, and explicit formulas
 \eqref{eq_transition_gamma}, \eqref{eq_transition_beta}, \eqref{eq_transition_alpha}
we can use Theorem \ref{theorem:main-multiplication} and obtain the Central Limit
Theorem for the global fluctuations of $X^{N;R}(N\tau_1), \dots, X^{N;R}(N\tau_N)$.
The fluctuations are asymptotically Gaussian with covariance given by the double
contour integral \eqref{eq_covariance_multiplication}. It is plausible that the
covariance structure can be again described in terms of the Gaussian Free Field, as
in Sections \ref{Section_extreme_characters}, \ref{sec:restrictions},
\ref{Section_Aztec_statement}, but we do not address this question in the present
paper.

As far as we know, the CLT for global fluctuations was not addressed
before in this generality. The situation is different for a special case of densely
packed initial condition $\lambda(N)=(0^N)$. Then for $R=R_\gamma$ the CLT
(and identification with the Gaussian Free Field) was previously addressed in
\cite{BF} by the technique of determinantal point processes and in
\cite{Borodin_Bufetov}, \cite{Kuan_noncom} by computations in the universal enveloping algebra of $U(N)$.
Further, for all three cases $R=R_\gamma,R_\beta,R_\alpha$ (and still
$\lambda(N)=(0^N)$) the CLT for global fluctuations was established in
\cite{Duits_nonc} by employing recurrence relations for orthogonal polynomials.

\section{Formula for moments}
\label{sec:4}

Our method of proof is based on the fact that given the knowledge of Schur generating function of a probability measure, one can compute its moments. In order to do this, one can apply a certain family of differential operators such that the Schur functions are eigenfunctions of these operators. In more details, it is a straightforward computation that for a probability measure $\rho_N$ on $\GT_N$ with the Schur generating function $S_N (\vec{x})$ we have
$$
\left. \frac{1}{V_N (\vec{x})} \sum_{i=1}^N (x_i \pa_i)^k V_N (\vec{x}) S_N (\vec{x}) \right|_{\vec{x}=1} = \sum_{\la \in \GT_N} \rho_N (\la) \sum_{i=1}^N (\la_i +N -i)^k = \E \sum_{i=1}^N (\la_i + N-i)^k.
$$
More generally, we have
\begin{equation}
\label{eq:method-of-proof}
\left. \frac{1}{V_N (\vec{x})} \sum_{i=1}^N (x_i \pa_i)^k \sum_{j=1}^N (x_j \pa_j)^l V_N (\vec{x}) S_N (\vec{x}) \right|_{\vec{x}=1} = \E \left( \sum_{i=1}^N (\la_i + N-i)^k \sum_{i=1}^N (\la_i + N-i)^l \right),
\end{equation}
and the similar formulas hold for the joint moments of several power sums of coordinates.

Let us address now a general case of Markov chains introduced in Section
\ref{Section_multilevel_general}. In Proposition
\ref{prop:gen-formula-moments-markov} below we prove a general formula for moments
in this setting. Similar formulas for Macdonald processes can be found in
\cite[Section 4]{BCGS}.

We will need the following technical lemmas.

\begin{lemma}
\label{lem:shur-shod-2}
Assume that the sum
\begin{equation}
\label{eq:monomy-sym}
\sum_{l_1, \dots, l_N \in \mathbb{Z}} c_{l_1, \dots, l_N} x_1^{l_1} x_2^{l_2} \dots x_N^{l_N}, \qquad c_{l_1, \dots, l_N} \in \R,
\end{equation}
absolutely converges in an open neighborhood of the $N$-dimensional torus $\{ (x_1, \dots, x_N): |x_1|=1, \dots, |x_N|=1 \}$.
Let $\{ m_{l_1, \dots, l_N} \}_{l_1, \dots, l_N \in \mathbb{Z}}$ be a sequence of reals such that $|m_{l_1, \dots, l_N}| \le C \left( |l_1|^k+ \dots + |l_N|^k \right)$, where $C$ is a positive real.
Then the sum
$$
\sum_{l_1, \dots, l_N \in \mathbb{Z}} m_{l_1, \dots, l_N} c_{l_1, \dots, l_N} x_1^{l_1} x_2^{l_2} \dots x_N^{l_N}, \qquad
$$
absolutely converges in an open neighborhood of the $N$-dimensional torus.
\end{lemma}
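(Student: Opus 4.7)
\textbf{Proof plan for Lemma \ref{lem:shur-shod-2}.}

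My plan is to show that the hypothesis gives a geometric--type bound on the coefficients $c_{l_1,\dots,l_N}$ in terms of $R^{|l_1|+\cdots+|l_N|}$ for some $R>1$, and then absorb the polynomial weight $m_{l_1,\dots,l_N}$ by paying a small fraction of that $R$. Concretely, by hypothesis there is $\delta>0$ such that \eqref{eq:monomy-sym} converges absolutely at every point $(x_1,\dots,x_N)$ with $1-\delta<|x_i|<1+\delta$. Evaluating the series of absolute values at the $2^N$ polyradii $(r_1,\dots,r_N)$ where each $r_i$ is either $1+\delta/2$ or $(1+\delta/2)^{-1}$, and noting that for each multi--index $(l_1,\dots,l_N)$ one can choose the corresponding signs so that $r_i^{l_i}=(1+\delta/2)^{|l_i|}$, I obtain the single uniform bound
\begin{equation*}
\sum_{l_1,\dots,l_N\in\mathbb Z}|c_{l_1,\dots,l_N}|\,(1+\delta/2)^{|l_1|+\cdots+|l_N|}\;<\;\infty.
\end{equation*}

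Next, I use the elementary fact that for any $\alpha>1$ and any $k$ there is a constant $K=K(\alpha,k)$ with $n^k\le K\,\alpha^{n}$ for all $n\ge 0$. Applying this with $\alpha=1+\delta/4$ yields
\begin{equation*}
|m_{l_1,\dots,l_N}|\;\le\;C\sum_{i=1}^{N}|l_i|^{k}\;\le\;C\,N\,K\,(1+\delta/4)^{|l_1|+\cdots+|l_N|}.
\end{equation*}
Now choose $\delta'>0$ so small that $(1+\delta')(1+\delta/4)<1+\delta/2$. For any point $(x_1,\dots,x_N)$ with $1-\delta'<|x_i|<1+\delta'$ one has $|x_i^{l_i}|\le(1+\delta')^{|l_i|}$, and therefore
\begin{equation*}
\sum_{l_1,\dots,l_N}|m_{l_1,\dots,l_N}\,c_{l_1,\dots,l_N}\,x_1^{l_1}\cdots x_N^{l_N}|\;\le\;CNK\sum_{l_1,\dots,l_N}|c_{l_1,\dots,l_N}|\bigl[(1+\delta')(1+\delta/4)\bigr]^{|l_1|+\cdots+|l_N|},
\end{equation*}
which is finite by the uniform bound above. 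Hence the new series converges absolutely on the polyannulus $\{1-\delta'<|x_i|<1+\delta'\}$, which is the required open neighborhood of the torus.

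There is no real obstacle here; the only subtle point is the initial step of passing from ``absolute convergence in a neighborhood of the torus'' to a bound controlling both positive and negative exponents simultaneously. That step is handled by the $2^N$--corner trick with adaptive signs above, after which the rest is a routine comparison of polynomial growth against geometric decay.
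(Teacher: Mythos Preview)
Your proof is correct and follows essentially the same approach as the paper's own argument: both use the $2^N$--corner trick (summing the absolute series over all choices of radii $1\pm\eps$, respectively $(1+\delta/2)^{\pm 1}$) to obtain a bound of the form $\sum |c_{l_1,\dots,l_N}|\,D^{|l_1|+\cdots+|l_N|}<\infty$ for some $D>1$, and then absorb the polynomial weight into the geometric decay. Your write-up is in fact more explicit than the paper's about the final comparison step.
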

\begin{proof}
Let $\eps>0$ be a real number such that the series \eqref{eq:monomy-sym} absolutely converges in $\{ (x_1, \dots, x_N): 1-\eps \le |x_i| \le 1+\eps, i=1,\dots, N \}$. Consider a series
\begin{equation}
\label{eq:monom-sym-2}
\sum_{l_1, \dots, l_N \in \mathbb{Z}} \sum_{signs} |c_{l_1, \dots, l_N}| |1 \pm \eps|^{l_1} |1 \pm \eps|^{l_2} \dots |1 \pm \eps|^{l_N},
\end{equation}
where the sum over signs contains $2^N$ terms corresponding to different choices of signs in $\pm$ inside the arguments. Note that this series is convergent. Assume that $l_1, \dots, l_s$ are positive, and $l_{s+1}, \dots, l_N$ are negative. Then
\begin{equation}
\label{eq:monom-sym-3}
(1+\eps)^{l_1} \dots (1+\eps)^{l_s} (1-\eps)^{l_{s+1}} \dots (1-\eps)^{l_N} \ge |x_1|^{l_1} |x_2|^{l_2} \dots |x_N|^{l_N}.
\end{equation}
Since for any $l_i$'s there is a term of the form \eqref{eq:monom-sym-3} in the summation \eqref{eq:monom-sym-2}, we obtain
that there exists $D>1$ such that the series
$$
\sum_{l_1, \dots, l_N \in \mathbb{Z}} |c_{l_1, \dots, l_N}| D^{|l_1|+\dots+|l_N|}
$$
is convergent. This implies the statement of the lemma.
\end{proof}

\begin{lemma}
\label{lem:shur-shod-3}
Assume that the series
$$
\sum_{\la \in \GT_N} c_{\la} s_{\la} (x_1, \dots, x_N), \qquad c_{\la} \in \R_{\ge 0},
$$
absolutely converges in an open neighborhood of the $N$-dimensional torus $\{ (x_1, \dots, x_N): |x_1|=1, \dots, |x_N|=1 \}$.
Let $\{ m_{\la} \}_{\la \in \GT_N}$ be a sequence of reals such that $|m_{\la}| \le C \left( |\la_1|^k+ \dots + |\la_N|^k \right)$, where $\la = (\la_1, \dots, \la_N)$ and $C$ is a positive real.
Then the sum
$$
\sum_{\la \in \GT_N} m_{\la} c_{\la} s_{\la} (x_1, \dots, x_N), \qquad c_{\la} \in \R_{\ge 0}
$$
absolutely converges in an open neighborhood of the $N$-dimensional torus.
\end{lemma}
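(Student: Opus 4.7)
My strategy is to reduce Lemma \ref{lem:shur-shod-3} to its monomial counterpart, Lemma \ref{lem:shur-shod-2}, by clearing the Vandermonde denominator in the bialternant formula. Multiplying the hypothesis by $V_N(\vec{x})$ and expanding each determinant as a signed sum over $S_N$ gives
$$
V_N(\vec{x}) \sum_{\la \in \GT_N} c_\la\, s_\la(\vec{x}) \;=\; \sum_{l_1,\dots,l_N \in \mathbb{Z}} \tilde c_{l_1,\dots,l_N}\, x_1^{l_1}\cdots x_N^{l_N},
$$
where $\tilde c_{l_1,\dots,l_N} = \mathrm{sgn}(\sigma)\, c_\la$ whenever the tuple $(l_1,\dots,l_N)$ is a permutation of $(\la_1+N-1,\la_2+N-2,\dots,\la_N)$ (via a unique pair $(\la,\sigma)$), and $\tilde c_{l_1,\dots,l_N}=0$ if the $l_i$ are not pairwise distinct. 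Since $V_N$ is a Laurent polynomial, this rearranged monomial series converges absolutely in a neighborhood of the torus as soon as the Schur series does.

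Then I would invoke Lemma \ref{lem:shur-shod-2} with the multiplier $\tilde m_{l_1,\dots,l_N}:=m_\la$. For any $(l_i)$ in the support, $|\la_j|\le \max_i|l_i|+N$, so the hypothesis $|m_\la|\le C(|\la_1|^k+\cdots+|\la_N|^k)$ yields a bound of the form $|\tilde m_l| \le C'(|l_1|^{k'}+\cdots+|l_N|^{k'})$ for a slightly larger exponent $k'$, with the additive $N^k$-terms absorbed. Lemma \ref{lem:shur-shod-2} then delivers absolute convergence, in a (possibly smaller) neighborhood of the torus, of
$$
\sum_{l_1,\dots,l_N} \tilde m_l\, \tilde c_{l_1,\dots,l_N}\, x_1^{l_1}\cdots x_N^{l_N} \;=\; V_N(\vec{x}) \sum_{\la \in \GT_N} m_\la c_\la\, s_\la(\vec{x}).
$$

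The final step is to strip off the $V_N$ factor. Using the bound $|s_\la(\vec{x})|\le s_\la(|x_1|,\dots,|x_N|)$ stated in the introduction, it suffices to show $\sum_\la|m_\la|\,c_\la\,s_\la(\vec{y})<\infty$ for positive real $\vec{y}$ in a neighborhood of $(1^N)$. By symmetry of $s_\la$ and an arbitrarily small positive perturbation I may assume $y_1>y_2>\cdots>y_N>0$, so that $V_N(\vec{y})>0$ is bounded below. At such $\vec{y}$ the absolute convergence of the monomial series above reads
$$
\sum_{\la \in \GT_N} |m_\la|\,c_\la\,\mathrm{perm}\bigl[y_i^{\la_j+N-j}\bigr]_{i,j=1}^N < \infty,
$$
and combining this with $s_\la(\vec{y})\,V_N(\vec{y}) = \det\bigl[y_i^{\la_j+N-j}\bigr] \le \mathrm{perm}\bigl[y_i^{\la_j+N-j}\bigr]$ (valid for positive entries) and then dividing through by $V_N(\vec{y})$ yields the required finiteness.

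The main obstacle lies in this last step: absolute convergence of the monomial series is a priori stronger than that of the Schur series, because the determinant exploits cancellations that the permanent ignores. The permanent--determinant inequality on the strictly ordered positive orthant, combined with the nonvanishing of $V_N$ there, is what sidesteps this obstruction; symmetry of $s_\la$ and the pointwise bound $|s_\la(\vec{x})|\le s_\la(|x_1|,\dots,|x_N|)$ then extend the conclusion to a genuine complex neighborhood of the torus.
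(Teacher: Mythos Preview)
Your first two steps coincide with the paper's: multiply by $V_N$ to obtain an antisymmetric analytic function, expand it as an absolutely convergent Laurent series, and invoke Lemma~\ref{lem:shur-shod-2} with the transported multipliers. (A small remark: you do not need a larger exponent $k'$. Since for distinct integers $l_1,\dots,l_N$ one has $\sum_i|l_i|^k\ge 1$, the additive $N^k$ terms are absorbed into the constant $C'$.)

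The difficulty is in your last step, and the ``arbitrarily small positive perturbation'' does not resolve it. Your permanent--determinant bound gives
\[
\sum_{\la}|m_\la|\,c_\la\,s_\la(\vec{y})\ \le\ \frac{1}{V_N(\vec{y})}\sum_{\la}|m_\la|\,c_\la\,\mathrm{perm}\bigl[y_i^{\la_j+N-j}\bigr],
\]
which is finite only when the coordinates of $\vec{y}$ are pairwise distinct; the right-hand side blows up as $\vec{y}$ approaches any point with a repeated coordinate, so a limiting or Fatou argument cannot recover finiteness there. Combined with $|s_\la(\vec{x})|\le s_\la(|x_1|,\dots,|x_N|)$, your argument therefore yields absolute convergence of the Schur series only on the set $\{\vec{x}:|x_1|,\dots,|x_N|\text{ pairwise distinct}\}$. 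But the $N$-torus lies entirely in the complementary locus $|x_1|=\dots=|x_N|=1$, so the region you have produced is in fact \emph{disjoint} from the torus, not a neighborhood of it.

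The paper's final step is different and avoids this trap. It does not pass to real moduli; instead it divides by $V_N(\vec{x})$ in the complex domain, obtaining absolute convergence of the Schur series on $\{|V_N(\vec{x})|>\delta\}$ for every $\delta>0$. This region already contains all of the torus except the complex diagonal $\{x_i=x_j\}$, a set of real codimension two. The extension across that thin set is then carried out by analyticity (``the Cauchy integral formula''): the antisymmetric Laurent series is divisible by $V_N$ as an analytic function, and sub-mean-value estimates for $|s_\la|$ over small balls (together with local integrability of $1/|V_N|$) push the absolute convergence across $\{V_N=0\}$. Your reduction to positive real arguments discards precisely the complex-analytic structure that makes this extension possible.
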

\begin{proof}
Set
$$
\mathfrak{f} (x_1, \dots, x_N) := \sum_{\la \in \GT_N} c_{\la} s_{\la} (x_1, \dots, x_N), \qquad c_{\la} \in \R_{\ge 0}.
$$
Then $\mathfrak{f} (x_1, \dots, x_N)$ is an analytic symmetric function in a neighborhood of the $N$-dimensional torus. Therefore, $\mathfrak{f} (x_1, \dots, x_N) \prod_{1\le i < j \le N} (x_i-x_j)$ is an analytic antisymmetric function and can be written as an absolutely convergent sum of monomials:
$$
\mathfrak{f} (x_1, \dots, x_N) \prod_{1\le i < j \le N} (x_i-x_j) = \sum_{l_1, \dots, l_N \in \mathbb{Z}} c_{l_1, \dots, l_N} x_1^{l_1} \dots x_N^{l_N}.
$$
Due to antisymmetry we can consider only terms with $l_1 >l_2 > \dots >l_N$.
Then Lemma \ref{lem:shur-shod-2} shows that the sum
$$
\sum_{l_1 > \dots > l_N \in \mathbb{Z}} m_{(l_1-N+1, l_2-N+2, \dots, l_N)} c_{l_1, \dots, l_N} x_1^{l_1} \dots x_N^{l_N}
$$
is absolutely convergent in some neighborhood of the $N$-dimensional torus. Multiplying this series by the inverse of the Vandermond determinant, we obtain that the desired series
\begin{equation}
\label{eq:vand-polyn}
 \sum_{\la \in \GT_N} m_{\la} c_{\la} s_{\la} (x_1, \dots, x_N),
\end{equation}
absolutely converges in the region $\prod_{i<j} (x_i-x_j) > \delta$ for any $\delta>0$. Since the series \eqref{eq:vand-polyn} consists of analytic functions, the use of the Cauchy integral formula gives the absolute convergence in a neighborhood of the torus.
\end{proof}

For a positive integers $m,n$ set
$$
\mathcal D_m^{(n)} := \prod_{1\le i<j \le n} \frac{1}{ x_i-x_j} \left( \sum_{i=1}^n \left (x_i
\partial_i\right)^m \right) \prod_{1\le i<j\le n} (x_i-x_j).
$$

\begin{proposition}
\label{prop:gen-formula-moments-markov} In notations of Section
\ref{Section_multilevel_general} let $m_1, \dots, m_k$ be positive integers, let
$n_1, \dots, n_k$, $\mathfrak{p}_{n_2, n_1}$, ..., $\mathfrak{p}_{n_{k}, n_{k-1}}$
be as in Section \ref{Section_multilevel_general}, and let $\rho$ be a probability
measure on $\GT_{n_k}$ with the Schur generating function $S_{\rho} \in
\Lambda^{n_k}$. Assume that $(\la^{(1)}, \dots, \la^{(k)})$ is distributed according
to \eqref{eq:gener-form-measure-markov}. Then
\begin{multline}
\label{eq:prop-gen-form-moments}
\left. \mathcal D_{m_1}^{(n_1)} \mathfrak{p}_{n_2, n_1} \mathcal D_{m_2}^{(n_2)} \mathfrak{p}_{n_3, n_2} \dots \mathfrak{p}_{n_k, n_{k-1}} \mathcal D_{m_k}^{(n_k)} S_{\rho} (x_1, \dots, x_{n_k}) \right|_{x=1} \\ = \mathbf E \left( \sum_{i_1=1}^{n_1} (\la_i^{(1)} + n_1-i_1)^{m_1} \sum_{i_2=1}^{n_2} (\la_i^{(2)} + n_2-i_2)^{m_2} \dots \sum_{i_k=1}^{n_k} (\la_i^{(k)} + n_k-i_k)^{m_k} \right),
\end{multline}
where in the left-hand side we set to 1 all variables after applying all differential operators.
\end{proposition}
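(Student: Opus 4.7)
The plan is to unwind the left-hand side of \eqref{eq:prop-gen-form-moments} inside-out, exploiting the fact that the normalized Schur functions are eigenfunctions of each $\mathcal{D}_m^{(n)}$ while the maps $\mathfrak{p}_{n_{j+1}, n_j}$ act on them by a Markov-type expansion governed by the coefficients $\mathfrak{c}^{\mathfrak{p}_{n_{j+1},n_j}}_{\lambda,\mu}$. The one algebraic input needed is the identity
$$
\mathcal{D}_m^{(n)}\!\left[\frac{s_\lambda(\vec{x})}{s_\lambda(1^n)}\right] = \left(\sum_{i=1}^n (\lambda_i + n - i)^m\right) \frac{s_\lambda(\vec{x})}{s_\lambda(1^n)}, \qquad \lambda \in \GT_n,
$$
which is immediate from $V_n s_\lambda = \det[x_i^{\lambda_j+n-j}]$ and the fact that $x_i \partial_i$ acts on the monomial $x_i^{\lambda_j+n-j}$ as multiplication by $\lambda_j+n-j$.

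Starting from the Schur expansion $S_\rho = \sum_{\mu \in \GT_{n_k}} \rho(\mu)\, s_\mu/s_\mu(1^{n_k})$, which converges in a neighborhood of the $n_k$-torus, I would apply $\mathcal{D}_{m_k}^{(n_k)}$ term by term, producing an absolutely convergent Schur series with coefficient $\rho(\mu)\sum_i(\mu_i + n_k - i)^{m_k}$ in front of $s_\mu/s_\mu(1^{n_k})$. Next, I would apply $\mathfrak{p}_{n_k, n_{k-1}}$ term by term (using linearity and continuity from property 1)), and rewrite each image via property 2) as a series of normalized Schur functions indexed by $\GT_{n_{k-1}}$. Iterating this alternation of $\mathcal{D}$'s and $\mathfrak{p}$'s down to level $1$, the last application of $\mathcal{D}_{m_1}^{(n_1)}$ yields
$$
\sum_{\lambda^{(1)}, \ldots, \lambda^{(k)}} \rho(\lambda^{(k)}) \prod_{j=2}^k \mathfrak{c}^{\mathfrak{p}_{n_j,n_{j-1}}}_{\lambda^{(j)},\lambda^{(j-1)}} \prod_{j=1}^k \left(\sum_{i=1}^{n_j}(\lambda^{(j)}_i + n_j - i)^{m_j}\right) \frac{s_{\lambda^{(1)}}(\vec{x})}{s_{\lambda^{(1)}}(1^{n_1})}.
$$
Specializing $\vec{x} = (1^{n_1})$ collapses each normalized Schur factor to $1$, and by the definition \eqref{eq:gener-form-measure-markov} of the Markov chain measure the result is exactly the right-hand side of \eqref{eq:prop-gen-form-moments}.

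The main difficulty is analytic rather than algebraic: each of the $2k-1$ intermediate steps must land in the appropriate space $\Lambda^{n_j}$, so that the next operator may be applied term by term. For the differentiation steps, the coefficients in front of the normalized Schur functions get multiplied by the polynomially growing eigenvalue $\sum_i(\lambda^{(j)}_i + n_j - i)^{m_j}$, and Lemma \ref{lem:shur-shod-3} is tailored precisely to guarantee that this operation preserves absolute convergence of a Schur expansion on a (possibly smaller) neighborhood of the torus. For the applications of $\mathfrak{p}_{n_j,n_{j-1}}$, continuity in the inductive-limit topology on $\Lambda^m$ (property 1)) ensures one can pass the map term by term through the absolutely convergent series, and property 2) produces the explicit Schur expansion of each image. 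Combining these observations, the induction closes and the identity follows.
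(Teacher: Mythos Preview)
Your proposal is correct and follows essentially the same approach as the paper: the paper proves the case $k=2$ explicitly by applying $\mathcal D_{m_2}^{(n_2)}$ to the Schur expansion of $S_\rho$, invokes Lemma~\ref{lem:shur-shod-3} to ensure the result lies in $\Lambda^{n_2}$, then applies $\mathfrak p_{n_2,n_1}$ term by term and $\mathcal D_{m_1}^{(n_1)}$, and finally specializes at $(1^{n_1})$, declaring the general $k$ analogous. Your write-up is simply the same argument carried out in full for arbitrary $k$, with the same analytic justifications (Lemma~\ref{lem:shur-shod-3} for the differentiation steps, continuity of $\mathfrak p$ for the projection steps).
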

\begin{proof}
We will prove this proposition for $k=2$; the proof for general $k$ is analogous. We have
$$
\mathcal D_{m_2}^{(n_2)} S_{\rho} (x_1, \dots, x_{n_2}) = \sum_{\la \in \GT_{n_2}} \mathrm{Prob} (\la^{(2)} = \la) \frac{s_{\la} (x_1, \dots, x_{n_2})}{s_{\la} (1^{n_2})} \sum_{i_2=1}^{n_2} \left( \la_{i_2} +n_2-i_2 \right)^{m_2}.
$$

Lemma \ref{lem:shur-shod-3} shows that this sum is absolutely convergent in an open neighborhood of the $n_2$-dimensional torus, and, therefore, belongs to $\Lambda^{n_2}$.

Thus, one can apply $\mathfrak{p}_{n_2, n_1}$ and obtain
\begin{multline*}
\mathcal D_{m_1}^{(n_1)} \mathfrak{p}_{n_2, n_1} \mathcal D_{m_2}^{(n_2)} S_{\rho} (x_1, \dots, x_{n_2}) \\ = \mathcal D_{m_1}^{(n_1)} \sum_{\la \in \GT_{n_2}} \mathrm{Prob} (\la^{(2)} = \la) \left( \sum_{\mu \in \GT_{n_1}} \mathfrak{c}_{\la,\mu}^{\mathfrak{p}_{n_2, n_1}} \frac{s_{\mu} (x_1, \dots, x_{n_1})}{s_{\mu} (1^{n_1})} \right) \sum_{i_2=1}^{n_2} \left( \la_{i_2} +n_2-i_2 \right)^{m_2}
\\ = \sum_{\la \in \GT_{n_2}} \sum_{\mu \in \GT_{n_1}} \mathrm{Prob} (\la^{(2)} = \la) \mathfrak{c}_{\la,\mu}^{\mathfrak{p}_{n_2, n_1}} \frac{s_{\mu} (x_1, \dots, x_{n_1})}{s_{\mu} (1^{n_1})} \sum_{i_1=1}^{n_1} (\la_i^{(1)} + n_1-i_1)^{m_1} \\ \times \sum_{i_2=1}^{n_2} \left( \la_{i_2}^{(2)} +n_2-i_2 \right)^{m_2}.
\end{multline*}
Plugging $(x_1,\dots, x_{n_1})=(1^{n_1})$ and using \eqref{eq:gener-form-measure-markov} we obtain the statement of the proposition.
\end{proof}

Therefore, our goal is to compute the asymptotics of the expressions in the left-hand side of \eqref{eq:method-of-proof} and \eqref{eq:prop-gen-form-moments}. This computation is the content of Sections \ref{sec:5}, \ref{sec:6}, \ref{sec:7}.

\section{Technical lemmas}
\label{sec:5}

This section contains the technical ingredients for the proofs of our main theorems.

\subsection{Preliminary definitions and lemmas}

For any $N \ge 1$ let $F_N (\vec{x})$ be a function of $N$ variables $\vec{x}$. For an integer $D$ we will say that a sequence of analytic complex functions $\{F_N (\vec{x})\}_{N=1}^{\infty}$ \textit{has an $N$-degree at most $D$} if for any $s \ge 0$ (not depending on $N$) and any indices $i_1, \dots, i_s$ we have
\begin{equation}
\label{eq:degree-def}
\lim_{N \to \infty} \frac{1}{N^D} \left. \pa_{i_1} \dots \pa_{i_s} F_N (\vec{x}) \right|_{\vec{x}=1} = c_{i_1, \dots, c_{i_s}},
\end{equation}
for some constants $c_{i_1, \dots, i_s}$. In particular, the limit
$$
\lim_{N \to \infty} \frac{1}{N^D} \left. F_N (\vec{x}) \right|_{\vec{x}=1}
$$
should exist (this corresponds to $s=0$).

Similarly, we will say that a sequence of analytic complex functions $\{F_N (\vec{x})\}_{N=1}^{\infty}$ \textit{has $N$-degree less than $D$} if for any $s \ge 0$ (not depending on $N$) and any indices $i_1, \dots, i_s$ we have
$$
\lim_{N \to \infty} \frac{1}{N^D} \left. \pa_{i_1} \dots \pa_{i_s} F_N (\vec{x}) \right|_{\vec{x}=1} = 0.
$$

Our main source of such functions is the following lemma.

\begin{lemma}
Assume that for $D \in \N$, a sequence of functions $\{F_N (\vec{x})\}_{N=1}^{\infty}$ satisfies the following condition: For any $k \in \N$ there exists $\eps = \eps (k) >0$ such that
$$
\lim_{N \to \infty} \frac{1}{N^D} F_N (x_1, \dots, x_k, 1^{N-k}) = \mathbb G (x_1, \dots, x_k),
$$
where $\mathbb G (x_1, \dots, x_k)$ is an analytic function in the neighborhood of $(x_1, \dots, x_k) = (1^k)$, and the convergence is uniform in the region $|x_i - 1| < \eps, \ i=1,2, \dots, k$. Then $\{F_N (\vec{x})\}_{N=1}^{\infty}$ has a $N$-degree at most $D$. If the function $\mathbb G (x_1, \dots, x_k)$ equals 0, then $\{F_N (\vec{x})\}_{N=1}^{\infty}$ has a $N$-degree less than $D$.
\end{lemma}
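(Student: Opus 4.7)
The plan is to reduce the claim to the classical Weierstrass theorem that uniform convergence of analytic functions on an open set passes to all partial derivatives on compact subsets. Fix $s$ and a tuple of indices $i_1,\dots,i_s$. Using the symmetry of $F_N$ (which is the relevant case, since in every application $F_N$ is either a Schur generating function or the logarithm of one), only the multiset of indices matters when we evaluate at $\vec x=(1^N)$, so we may relabel and assume $\{i_1,\dots,i_s\}\subseteq\{1,2,\dots,s\}$. For $N\ge s$ the value
$$
\bigl(\partial_{i_1}\cdots\partial_{i_s} F_N\bigr)\bigl(1^N\bigr)
=\bigl(\partial_{i_1}\cdots\partial_{i_s}\widetilde F_N^{(s)}\bigr)\bigl(1^s\bigr),
\qquad
\widetilde F_N^{(s)}(x_1,\dots,x_s):=F_N\bigl(x_1,\dots,x_s,1^{N-s}\bigr),
$$
since differentiating in the first $s$ variables and then setting them to $1$ never sees the remaining $N-s$ variables.

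Next I would invoke the hypothesis with $k=s$, which gives $\eps=\eps(s)>0$ such that $N^{-D}\widetilde F_N^{(s)}(x_1,\dots,x_s)$ converges uniformly to $\mathbb G(x_1,\dots,x_s)$ on the polydisk $\Delta_\eps=\{|x_i-1|<\eps\}$. The limit $\mathbb G$ is analytic there and, by the Weierstrass theorem (equivalently the Cauchy integral formula $\partial_{i_1}\cdots\partial_{i_s}f(1)=\frac{1}{(2\pi\ii)^s}\oint\cdots\oint\frac{f(\vec x)\,d\vec x}{\prod(x_{i_j}-1)^{m_j+1}}$ applied to the difference), all partial derivatives of $N^{-D}\widetilde F_N^{(s)}$ converge uniformly on compact subsets of $\Delta_\eps$ to the corresponding derivatives of $\mathbb G$. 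Evaluating at $\vec x=(1^s)$ yields
$$
\lim_{N\to\infty}\frac{1}{N^D}\bigl(\partial_{i_1}\cdots\partial_{i_s} F_N\bigr)\bigl(1^N\bigr)
=\bigl(\partial_{i_1}\cdots\partial_{i_s}\mathbb G\bigr)\bigl(1^s\bigr)=:c_{i_1,\dots,i_s},
$$
which is exactly the definition \eqref{eq:degree-def} of $N$-degree at most $D$. The second assertion is then immediate: if $\mathbb G\equiv 0$ on $\Delta_\eps$, every derivative $\partial_{i_1}\cdots\partial_{i_s}\mathbb G$ vanishes identically, so the limits all equal $0$ and the $N$-degree is strictly less than $D$.

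There is essentially no hard step; the only thing to be careful about is making sure the reduction from $\vec x=(1^N)$ to the restricted function $\widetilde F_N^{(s)}$ is legitimate, which is a one-line symmetry argument. The analytic input is the standard fact that the map $f\mapsto \partial^{\alpha} f(1^s)$ is continuous with respect to the topology of uniform convergence on a neighborhood of $(1^s)$, and that is precisely what the Cauchy integral representation delivers.
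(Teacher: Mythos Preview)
Your proposal is correct and follows essentially the same route as the paper: freeze all but finitely many variables at $1$ before differentiating, then invoke that uniform convergence of holomorphic functions passes to derivatives. The only difference is cosmetic --- rather than relabeling via symmetry (which the lemma does not assume), the paper simply takes $k=\max(i_1,\dots,i_s)$ and sets $x_i=1$ for $i>k$, which avoids any appeal to symmetry.
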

\begin{proof}
Let $i_1, \dots, i_s$ be indices from \eqref{eq:degree-def}. For computing the expression $\pa_{i_1} \dots \pa_{i_s} F_N (\vec{x})$ we can set to 1 all variables $x_i$ such that $i> \max(i_1, \dots, i_s)$ prior to the differentiation. After this, let us recall that the uniform convergence of complex analytic functions implies
$$
\lim_{N \to \infty} \frac{1}{N^D} \pa_{i_1} \dots \pa_{i_s} F_N (x_1, \dots, x_k, 1^{N-k}) = \pa_{i_1} \dots \pa_{i_s} \mathbb G (x_1, \dots, x_k).
$$
\end{proof}

Let $F_N^{(1)} (\vec{x})$ have $N$-degree at most $D_1$, and let $F_N^{(2)} (\vec{x})$ have $N$-degree at most $D_2$. Then it is easy to see that for any index $i$ the function $\pa_i F_N^{(1)} (\vec{x})$ has $N$-degree at most $D_1$, $F_N^{(1)} (\vec{x}) + F_N^{(2)} (\vec{x})$ has $N$-degree at most $\max(D_1, D_2)$, and $F_N^{(1)} (\vec{x}) F_N^{(2)} (\vec{x})$ has $N$-degree at most $D_1+D_2$.

\begin{lemma}
\label{lem:funct-raznosti}
Assume that for each $N=1,2,\dots$, $F_N (\vec{x})$ is a symmetric analytic function in an open neighborhood of $(1^N)$. Then for any indices $a_1, \dots, a_{q+1}$ the function
\begin{equation}
\label{eq:funct-raznosti}
Sym_{a_1, a_2, \dots, a_{q+1}}  \left( \frac{ F_N (\vec{x})}{(x_{a_1}-x_{a_2}) \dots (x_{a_1}-x_{a_{q+1}})} \right)
\end{equation}
is analytic in a (possibly smaller) open neighborhood of $1^N$. If $\{ F_N (\vec{x}) \}$ has $N$-degree at most $D$ (less than $D$), then the sequence \eqref{eq:funct-raznosti} has $N$-degree at most $D$ (less than $D$).
\end{lemma}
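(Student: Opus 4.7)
The plan is to observe that under the symmetry hypothesis on $F_N$, the symmetrization in \eqref{eq:funct-raznosti} collapses to a Lagrange-interpolation-type sum which vanishes identically for $q\geq 1$ and trivially equals $F_N$ itself when $q=0$. Consequently both analyticity near $(1^N)$ and preservation of the $N$-degree are automatic.

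First I would unfold the definition of $Sym$. Writing $y_i := x_{a_i}$ for brevity and using that $F_N$ is symmetric in all of $x_1,\dots,x_N$ (in particular in $y_1,\dots,y_{q+1}$), every permutation $\sigma\in S_{q+1}$ leaves the numerator invariant, so $F_N(\vec{x})$ factors out of the sum:
\begin{equation*}
Sym_{a_1,\dots,a_{q+1}} \frac{F_N(\vec{x})}{\prod_{j=2}^{q+1}(x_{a_1}-x_{a_j})} = F_N(\vec{x}) \cdot \frac{1}{(q+1)!}\sum_{\sigma\in S_{q+1}} \frac{1}{\prod_{j=2}^{q+1}(y_{\sigma(1)}-y_{\sigma(j)})}.
\end{equation*}
Because the factor $\prod_{j=2}^{q+1}(y_{\sigma(1)}-y_{\sigma(j)})$ depends only on $\sigma(1)$ and on the \emph{set} $\{\sigma(2),\dots,\sigma(q+1)\}$, grouping the $\sigma$-sum by $i:=\sigma(1)$ and summing over the $q!$ orderings of the remaining positions collapses the expression to
\begin{equation*}
F_N(\vec{x})\cdot \frac{1}{q+1}\sum_{i=1}^{q+1}\frac{1}{\prod_{j\neq i}(y_i - y_j)}.
\end{equation*}

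The main ingredient is then the classical Lagrange identity $\sum_{i=1}^{q+1}1/\prod_{j\neq i}(y_i-y_j)=0$ valid for $q\geq 1$. It follows from partial fractions: write $\frac{1}{\prod_i(y-y_i)}=\sum_i A_i/(y-y_i)$ with $A_i = 1/\prod_{j\neq i}(y_i-y_j)$, and compare Laurent expansions at $y=\infty$; the left-hand side starts at order $y^{-(q+1)}$, so the coefficient of $y^{-1}$ on the right, which equals $\sum_i A_i$, must vanish whenever $q\geq 1$. Hence \eqref{eq:funct-raznosti} is identically zero for $q\geq 1$, while the degenerate case $q=0$ is trivial (the denominator is an empty product and $Sym$ over a single variable acts as the identity), so the expression equals $F_N(\vec{x})$ itself.

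In either case the symmetrized object is analytic on the same open neighborhood of $(1^N)$ on which $F_N$ is analytic, and if $\{F_N\}$ has $N$-degree at most $D$ (respectively less than $D$), then the result, being either $0$ or $F_N$, satisfies the same bound trivially. There is no real analytic obstacle here; the one place to be careful is the combinatorial bookkeeping that collapses the sum over $S_{q+1}$ onto the $q+1$ Lagrange terms, after which the Lagrange identity finishes the argument.
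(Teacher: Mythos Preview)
Your argument is correct for the lemma as literally stated: if $F_N$ is symmetric in \emph{all} of $x_1,\dots,x_N$, then in particular it is symmetric in $x_{a_1},\dots,x_{a_{q+1}}$, so it factors out of the symmetrization, and the Lagrange identity $\sum_{i}\prod_{j\ne i}(y_i-y_j)^{-1}=0$ kills what remains whenever $q\ge 1$. Thus, read verbatim, the lemma is vacuous for $q\ge 1$ and tautological for $q=0$.

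This should make you suspicious rather than satisfied. Look at how the lemma is actually invoked downstream (for instance in the proof of Lemma~\ref{lem:sum-Schur-Vand-afterDiff} and repeatedly in Section~\ref{sec:6}): the numerators there have the shape
\[
c_0\,x_{a_1}^{l-s_0}\bigl(\partial_{a_1}^{s_1}[\log S_N]\bigr)^{d_1}\cdots\bigl(\partial_{a_1}^{s_t}[\log S_N]\bigr)^{d_t},
\]
which depends \emph{asymmetrically} on $x_{a_1}$ relative to $x_{a_2},\dots,x_{a_{q+1}}$. So the full symmetry hypothesis in the statement is almost certainly a slip; the version that is actually used requires only that $F_N$ be analytic near $(1^N)$ with a given $N$-degree, with no symmetry among the $x_{a_i}$. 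Your Lagrange-identity proof does not touch that version at all.

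The paper's proof, by contrast, does establish the useful version. It cites \cite[Lemma~5.4]{BG} for analyticity (the symmetrization makes the apparent poles at $x_{a_i}=x_{a_j}$ removable regardless of any symmetry of the numerator --- the $q=1$ case already shows this, since $Sym_{1,2}\,\frac{F(x_1,x_2)}{x_1-x_2}=\frac{F(x_1,x_2)-F(x_2,x_1)}{2(x_1-x_2)}$), and then argues directly on Taylor expansions: each Taylor coefficient of \eqref{eq:funct-raznosti} at $(1^N)$ is a linear combination of finitely many (independently of $N$) Taylor coefficients of $F_N$, with a shift in degree bounded by $(q+s+1)^2$. That bookkeeping is what carries the $N$-degree bound from $F_N$ over to the symmetrized expression, and it is precisely the content you would still need to supply to prove the form of the lemma that the rest of the paper relies on.
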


\begin{proof}
\cite[Lemma 5.4]{BG} implies the first claim.

We need to prove that for any indices $i_1, \dots, i_s$ the limit
\begin{multline}
\label{eq:diff-raznosti}
\lim_{x_1,\dots, x_N=1} \pa_{i_1} \dots \pa_{i_s} Sym_{a_1, a_2, \dots, a_{q+1}}  \left( \frac{ F_N (\vec{x})}{(x_{a_1}-x_{a_2}) \dots (x_{a_1}-x_{a_{q+1}})} \right)
\end{multline}
has the same $N$-degree as the function $F_N$. Note that we can immediately specialize to $1$ all variables except for $x_{i_1}, \dots, x_{i_s}$, $x_{a_1}, \dots, x_{a_{q+1}}$. After this, we deal with a statement about the functions with finite (not depending on $N$) number of variables.

Any coefficient of Taylor expansion of \eqref{eq:funct-raznosti} can be written as a finite (not depending on $N$) combination of the Taylor coefficients of the function $F_N$. Indeed, it was shown in the proof of Lemma 5.4 of \cite{BG} (see formula (5.4)) that the Taylor coefficient of $F_N$ of the term of $N$-degree $M$ from can contribute to the Taylor coefficients of \eqref{eq:funct-raznosti} of $N$-degree $d$ with $M-(q+s+1)^2 \le d \le M+ (q+s+1)^2$, where $q+s$ appears because this is the number of variables which were not immediately set to 1.

Therefore, the $N$-degree of \eqref{eq:diff-raznosti} is at most that of $F_N$.
\end{proof}

Let $\mathbf F(x)$ be a complex analytic function of one variable at the neighborhood of the unity. Let us introduce the notation for the coefficients in its Taylor expansion
$$
\mathbf F(x) =: \mathbf a_0 + \mathbf a_1 (x-1) + \mathbf a_2 (x-1)^2 + \dots + \mathbf a_n (x-1)^n + \dots.
$$

\begin{lemma}
\label{lem:basic-1}
For a function $\mathbf F(x)$ and positive integer $r$ we have
$$
\left. Sym_{x_1, \dots, x_{r+1}} \left( \frac{\mathbf F(x_1)}{(x_1-x_2) \dots (x_1-x_{r+1})} \right) \right|_{\vec{x}=1} = \left. \frac{1}{(r+1)!} \pa_x^r F(x) \right|_{\vec{x}=1} = \frac{\mathbf a_r}{r+1}.
$$
\end{lemma}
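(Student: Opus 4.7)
The plan is to recognize that the symmetrization collapses to a classical divided difference, whose value at coincident arguments is well known.

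First, I would simplify the symmetrization. For any permutation $\sigma \in S_{r+1}$, the term $\frac{\mathbf F(x_{\sigma(1)})}{(x_{\sigma(1)}-x_{\sigma(2)}) \cdots (x_{\sigma(1)}-x_{\sigma(r+1)})}$ depends only on $\sigma(1)$, because the product in the denominator is over an \emph{unordered} set of differences $x_{\sigma(1)} - x_{\sigma(j)}$, $j \ne 1$. There are exactly $r!$ permutations sending $1$ to each fixed index $i \in \{1, \dots, r+1\}$. Hence
\begin{equation*}
Sym_{x_1, \dots, x_{r+1}} \left( \frac{\mathbf F(x_1)}{(x_1-x_2) \cdots (x_1-x_{r+1})} \right) = \frac{1}{r+1} \sum_{i=1}^{r+1} \frac{\mathbf F(x_i)}{\prod_{j \ne i}(x_i - x_j)}.
\end{equation*}

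Next, I would identify the right-hand side as the classical divided difference $\mathbf F[x_1,\dots,x_{r+1}]$ of $\mathbf F$. It is a standard fact (provable, e.g., by induction on $r$ using the recurrence $\mathbf F[x_1,\dots,x_{r+1}] = \frac{\mathbf F[x_1,\dots,x_r] - \mathbf F[x_2,\dots,x_{r+1}]}{x_1 - x_{r+1}}$, or directly by contour integration against $\mathbf F(z)\prod(z-x_i)^{-1}$) that for $\mathbf F$ analytic near $1$,
\begin{equation*}
\lim_{x_1,\dots,x_{r+1} \to 1} \sum_{i=1}^{r+1} \frac{\mathbf F(x_i)}{\prod_{j \ne i}(x_i - x_j)} = \frac{\mathbf F^{(r)}(1)}{r!}.
\end{equation*}
Combined with the previous display, this gives the first equality of the lemma:
\begin{equation*}
\left. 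Sym_{x_1, \dots, x_{r+1}} \left( \frac{\mathbf F(x_1)}{(x_1-x_2) \cdots (x_1-x_{r+1})} \right) \right|_{\vec{x}=1} = \frac{1}{(r+1)!}\, \mathbf F^{(r)}(1) = \left.\frac{1}{(r+1)!}\partial_x^r \mathbf F(x)\right|_{x=1}.
\end{equation*}

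Finally, expanding $\mathbf F(x) = \sum_{n\ge 0} \mathbf a_n (x-1)^n$ gives $\mathbf F^{(r)}(1) = r!\, \mathbf a_r$, so the expression equals $\frac{r!\, \mathbf a_r}{(r+1)!} = \frac{\mathbf a_r}{r+1}$, which is the second equality. No step here poses a real obstacle; the only point requiring any care is justifying the coincident-limit formula for divided differences, but this is an elementary computation (apply the integral representation $\mathbf F[x_1,\dots,x_{r+1}] = \frac{1}{2\pi \mathbf i}\oint \frac{\mathbf F(z)\,dz}{\prod_i(z-x_i)}$ for a contour enclosing all $x_i$ and then send $x_i \to 1$, or use Taylor expansion of $\mathbf F$ term by term and the identity $\sum_i \frac{x_i^n}{\prod_{j\ne i}(x_i-x_j)} = h_{n-r}(x_1,\dots,x_{r+1})$).
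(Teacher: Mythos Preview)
Your argument is correct: reducing the symmetrization to the divided difference $\mathbf F[x_1,\dots,x_{r+1}]$ and then invoking the coincident-point formula $\mathbf F[1,\dots,1]=\mathbf F^{(r)}(1)/r!$ gives exactly the claimed identities. The paper itself does not supply a proof at all---it simply cites Lemma~5.5 of \cite{BG}---so your self-contained derivation is more than what the paper provides here, and the divided-difference/contour-integral justification you sketch is precisely the standard route.
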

\begin{proof}
This is Lemma 5.5 in \cite{BG}.
\end{proof}

\begin{lemma}
\label{lem:basic-2}
We have
\begin{multline}
\label{eq:lemma-basic-2}
\lim_{x_2, \dots, x_{r+1} \to 1} \left( \frac{\mathbf F(x_1)}{(x_1-x_2) (x_1-x_3) \dots (x_1-x_{r+1})} + \frac{\mathbf F(x_2)}{(x_2-x_1) (x_2-x_3) \dots (x_2-x_{r+1})} + \dots \right. \\ \left. + \frac{\mathbf F(x_{r+1})}{(x_{r+1}-x_1) (x_{r+1}-x_2) \dots (x_{r+1}-x_{r})} \right)  = \frac{\mathbf F(x_1) - \mathbf a_0 - \mathbf a_1 (x_1-1) - \dots - \mathbf a_{r-1} (x_1-1)^{r-1}}{(x_1-1)^r}.
\end{multline}
Note that we do not set the value of the variable $x_1$ in the left-hand side.
\end{lemma}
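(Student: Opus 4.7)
The plan is to recognize the left-hand side as a classical divided difference. Indeed, the sum inside the limit is exactly
\[
\mathbf F[x_1,x_2,\ldots,x_{r+1}]:=\sum_{i=1}^{r+1}\frac{\mathbf F(x_i)}{\prod_{j\ne i}(x_i-x_j)},
\]
which, although each summand appears to have poles, is a well-known symmetric analytic function of its arguments on any polydisk where $\mathbf F$ is analytic. This is already the content used to establish Lemma \ref{lem:funct-raznosti} (in turn quoting \cite[Lemma~5.4]{BG}). In particular the limit $x_2,\ldots,x_{r+1}\to1$ exists and defines an analytic function of $x_1$ in a neighborhood of $x_1=1$.

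Next I would verify the right-hand side is likewise analytic near $x_1=1$: the numerator $\mathbf F(x_1)-\sum_{k=0}^{r-1}\mathbf a_k(x_1-1)^k$ is the $r$-th Taylor remainder, hence vanishes to order $r$ at $x_1=1$, so the quotient equals the convergent series $\sum_{k\ge r}\mathbf a_k(x_1-1)^{k-r}$. So it suffices to prove the identity
\[
\mathbf F[x_1,\underbrace{1,\ldots,1}_{r}]=\sum_{k\ge r}\mathbf a_k(x_1-1)^{k-r}.
\]

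I would prove this by induction on $r$. The base case $r=1$ is just $\mathbf F[x_1,1]=(\mathbf F(x_1)-\mathbf a_0)/(x_1-1)=\sum_{k\ge 1}\mathbf a_k(x_1-1)^{k-1}$. For the step, I use the standard recursion (for the symmetric analytic extension)
\[
\mathbf F[x_1,\underbrace{1,\ldots,1}_{r+1}]=\frac{\mathbf F[x_1,\underbrace{1,\ldots,1}_{r}]-\mathbf F[\underbrace{1,\ldots,1}_{r+1}]}{x_1-1},
\]
together with the confluent evaluation $\mathbf F[\underbrace{1,\ldots,1}_{r+1}]=\mathbf F^{(r)}(1)/r!=\mathbf a_r$ (which itself follows by taking $x_1\to1$ in the inductive hypothesis, or directly from Lemma \ref{lem:basic-1}). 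Substituting the inductive hypothesis $\mathbf F[x_1,\underbrace{1,\ldots,1}_{r}]=\sum_{k\ge r}\mathbf a_k(x_1-1)^{k-r}$ and subtracting $\mathbf a_r$ produces $\sum_{k\ge r+1}\mathbf a_k(x_1-1)^{k-r}$, and dividing by $x_1-1$ yields precisely $\sum_{k\ge r+1}\mathbf a_k(x_1-1)^{k-r-1}$, completing the induction.

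The only delicate step is the passage to the limit: collapsing $x_2,\ldots,x_{r+1}$ to $1$ one at a time must be justified, since in the raw Newton form the individual summands blow up. I would handle this by appealing to Lemma \ref{lem:funct-raznosti} (equivalently \cite[Lemma~5.4]{BG}), which exactly guarantees that the symmetric combination extends analytically, so the iterated limit agrees with the simultaneous limit. Alternatively, one can combine the terms over the common denominator $\prod_{i<j}(x_i-x_j)$, observe that the numerator is antisymmetric and so divisible by the Vandermonde, and then pass to the limit using L'H\^opital. This bookkeeping is the main (though mild) technical nuisance; once it is in place, the algebraic identification with the stated right-hand side is immediate.
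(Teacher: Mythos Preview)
Your proof is correct but takes a different route from the paper. The paper proceeds by a single direct computation: it separates the first summand, which has the obvious limit $\mathbf F(x_1)/(x_1-1)^r$, and observes that the remaining $r$ summands form exactly the divided difference of the one-variable function $G(x):=\mathbf F(x)/(x-x_1)$ at the nodes $x_2,\ldots,x_{r+1}$; Lemma~\ref{lem:basic-1} applied to $G$ then gives $\frac{1}{(r-1)!}\,\partial_x^{r-1}\bigl[\mathbf F(x)/(x-x_1)\bigr]\big|_{x=1}$, and a Leibniz expansion of this derivative produces the Taylor-polynomial correction terms in closed form.

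Your argument instead identifies the whole expression as the confluent divided difference $\mathbf F[x_1,1,\ldots,1]$ and proves the claimed value by induction on $r$ via the standard recursion, using Lemma~\ref{lem:basic-1} only for the fully confluent node $\mathbf F[1,\ldots,1]=\mathbf a_r$. This is perfectly valid and arguably more conceptual; the paper's approach is slightly shorter (one step, no induction) and avoids having to justify the recursion in the confluent limit. One small quibble: your appeal to Lemma~\ref{lem:funct-raznosti} for analyticity is not quite on point, since that lemma is stated for symmetric functions $F_N(\vec x)$ of all $N$ variables, whereas here the numerator $\mathbf F(x_{a_1})$ is not symmetric in $x_{a_1},\ldots,x_{a_{r+1}}$. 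The analyticity of the divided difference is of course classical (and is what underlies Lemma~\ref{lem:basic-1} itself), so this does not affect correctness.
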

\begin{proof}
In the left-hand side of \eqref{eq:lemma-basic-2} the first term has a limit as $x_2, \dots, x_{r+1} \to 1$, and the sum of other terms has a limit by Lemma \ref{lem:basic-1} applied to the function $\mathbf F(x) / (x-x_1)$. We obtain that the left-hand side of \eqref{eq:lemma-basic-2} equals
\begin{multline*}
\frac{\mathbf F(x_1)}{(x_1-1)^r} + \frac{1}{(r-1)!} \pa_x^{r-1} \left. \left[ \frac{\mathbf F (x)}{x-x_1} \right] \right|_{x=1} = \frac{\mathbf F(x_1)}{(x_1-1)^r} +  \sum_{k=0}^{r-1} \binom{r-1}{k} \\ \left. \times \frac{ \pa_x^k \left[ \mathbf F(x) \right] (-1)^{r-1-k} (r-1-k)!} {(x-x_1)^{r-1-k}} \right|_{x=1} = \frac{\mathbf F(x_1) - \mathbf a_0 - \mathbf a_1 (x_1-1) - \dots - \mathbf a_{r-1} (x_1-1)^{r-1}}{(x_1-1)^r}.
\end{multline*}
\end{proof}

\subsection{Expectation-contributing terms}
\label{sec:5-2}

Let us introduce notations which we will use in the rest of Section \ref{sec:5}. Let $\rho = \{ \rho_N \}$ be an appropriate sequence of measures on $\GT_N$ with the Schur generating function $S_N = S_{\rho_N} (x_1, \dots, x_N)$, and limiting functions $F_{\rho}(x)$, $G_{\rho} (x,y)$, and $Q_{\rho} (x,y)$ (see Definition \ref{def:main}). In this section we will analyze expressions which eventually contribute to the leading order of the expectation of the moments of the measure $\rho_N$.

For an integer $l>0$ let us introduce the notation
\begin{equation}
\label{eq:def-Fl}
\FF_{(l)} (\vec{x}) := \frac{1}{S_N (\vec{x}) V_N (\vec{x}) } \sum_{i=1}^N \left( x_i \pa_i \right)^{l} V_N (\vec{x}) S_N (\vec{x}).
\end{equation}

\begin{lemma}
\label{lem:sum-Schur-Vand-afterDiff}
The following statements hold:

a) The functions $\FF_{(l)} (\vec{x})$ have $N$-degree at most $l+1$.

b) For any index $i$ the functions $\pa_i \FF_{(l)} (\vec{x})$ have $N$-degree at most $l$.

c) For any indices $i \ne j$ the functions $\pa_i \pa_j \FF_{(l)} (\vec{x})$ have $N$-degree less than $l$.

\end{lemma}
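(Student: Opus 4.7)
The plan is to expand $(x_i\partial_i)^l(V_N S_N)/(V_N S_N)$ via the complete-Bell-polynomial identity
\[
\frac{(x_i\partial_i)^l f}{f} \;=\; \mathcal B_l\bigl((x_i\partial_i)\log f,\,(x_i\partial_i)^2\log f,\,\ldots,\,(x_i\partial_i)^l\log f\bigr),
\]
applied with $f = V_N S_N$ and split via $\log(V_N S_N)=\log V_N+\log S_N$. Summing over $i$ and expanding yields a finite $\mathbb Z$-linear combination of terms
\[
T_{A,B} \;:=\; \sum_{i=1}^N \prod_{a\in A}(x_i\partial_i)^{a}\!\log V_N \,\cdot\,\prod_{b\in B}(x_i\partial_i)^{b}\!\log S_N,
\qquad \sum_{a\in A}\! a + \sum_{b\in B}\! b = l,
\]
so it suffices to estimate the $N$-degree of each $T_{A,B}$ and of its first and second partial derivatives at $\vec x=(1^N)$, giving parts (a)--(c) respectively.

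For pure-$S$ terms ($A=\emptyset$) I would use the Stirling expansion $(x\partial)^b=\sum_k S(b,k)\,x^k\partial^k$ together with Definition \ref{def:appr-func}: the partial $\partial_{j_1}\cdots\partial_{j_s}\partial_i^k\log S_N|_{\vec x=1}$ is $O(N)$ when all indices coincide, $O(1)$ when exactly two distinct indices appear, and $o(1)$ as soon as three or more distinct indices appear. A Leibniz count then gives $T_{\emptyset,B}|_{\vec x=1}\lesssim N^{|B|+1}\le N^{l+1}$; applying $\partial_j$ loses one power of $N$ because every summand with $i\ne j$ is forced to produce a mixed two-index partial of $\log S_N$, while summands with $i=j$ have one fewer summation index; and applying $\partial_j\partial_k$ with $j\ne k$ loses strictly more than one power of $N$, because an extra three-or-more-index partial of $\log S_N$ appears and is $o(1)$. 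This handles the pure-$S$ contributions in all three parts.

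The $V$-factors $(x_i\partial_i)^a\log V_N=\sum_{j\ne i}R_a(x_i,x_j)$ (with $R_a$ an explicit rational kernel with a pole on $x_i=x_j$) cannot be evaluated individually at $\vec x=(1^N)$; only the full symmetric sum is analytic there. I would group the Bell-polynomial terms so that each group is a symmetrization of the form handled by Lemma \ref{lem:funct-raznosti}, which simultaneously produces an analytic representative and bounds its $N$-degree by that of the associated $V$-free function. Equivalently, conjugating by $V_N$ as $\widetilde D_i:=x_i\partial_i+\sum_{j\ne i}x_i/(x_i-x_j)$ and expanding $\sum_i\widetilde D_i^l S_N/S_N$ directly packages the cross-index poles into exactly the symmetric form covered by Lemma \ref{lem:funct-raznosti}. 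The main obstacle is the combinatorial bookkeeping in this pole cancellation, and in particular tracking how the external derivatives $\partial_j$ and $\partial_j\partial_k$ in parts (b)--(c) interact with the internal sums $\sum_{j\ne i}$ appearing in the $V$-factors without spoiling the symmetrization structure that lets Lemma \ref{lem:funct-raznosti} apply.
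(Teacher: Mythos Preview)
Your proposal is correct and follows essentially the same route as the paper: expand via logarithmic derivatives (your $\widetilde D_i$ conjugation is exactly the paper's viewpoint), group the resulting $1/(x_i-x_{a_j})$ factors by symmetrization over $\{i,a_1,\ldots,a_r\}$, and apply Lemma~\ref{lem:funct-raznosti} together with the appropriateness bounds on partials of $\log S_N$ to count $N$-degrees. The ``main obstacle'' you flag---how the external $\partial_j$ or $\partial_j\partial_k$ interact with the internal symmetrized sums---is resolved in the paper by exactly the case split you anticipate, namely whether $j$ (and $k$) lie inside or outside the set $\{a_1,\ldots,a_{r+1}\}$ being symmetrized over (cases (b1)--(b2) and (c1)--(c4) in the paper's proof).
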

\begin{proof}
Since $S_N (1^N)=1$, the function $\log S_N$ is well-defined in a neighborhood of $(1^N)$
and we can rewrite \eqref{eq:def-Fl} in the following form:
\begin{equation*}
\FF_{(l)} (x_1, \dots, x_N) := \frac{1}{S_N V_N} \sum_{i=1}^N \left( x_i \pa_i \right)^{l} V_N \exp( \log S_N).
\end{equation*}
We will write the result of the application of the differential operator $\pa_i$ to $\exp( \log S_N)$ in the form
\begin{equation}
\label{eq:log-differenciruem}
\pa_i S_N = \pa_i \exp( \log S_N) = \pa_i [ \log S_N] \exp( \log S_N).
\end{equation}
After the application of all differential operators in \eqref{eq:def-Fl} in this fashion we can cancel $S_N$ in the numerator and the denominator and write $\FF_{(l)} (\vec{x})$ as a large sum of factors of the form
\begin{equation}
\label{eq:term-example}
\frac{c_0 x_i^{l-s_0} (\pa_i^{s_1} [ \log S_N ])^{d_1} \dots (\pa_i^{s_t} [ \log S_N ])^{d_t}}{(x_i-x_{a_1}) \dots (x_i-x_{a_r})},
\end{equation}
where $i, a_1, \dots, a_r$ are distinct indices, $\{ s_j \}$ and $\{ d_j \}$ are nonnegative integers such that $s_1 < s_2 <\dots < s_t$ and
\begin{equation}
\label{eq:comb-assump-L55}
r+s_0 + s_1 d_1 + \dots + s_t d_t = l,
\end{equation}
and $c_0$ depends on $r, \{ s_j \}, \{ d_j \}$, but does not depend on $N$ or $a_1, \dots, a_r$.
Since the operator $\sum_{i=1}^N (x_i \pa_i)^l$ is symmetric, all terms obtained from \eqref{eq:term-example} by permuting variables $x_i, x_{a_1}, \dots, x_{a_r}$ are present in our sums. Therefore, $\FF_{(l)} (\vec{x})$ can be represented as a sum
\begin{multline}
\label{eq:F-l-sum-all-terms}
\FF_{(l)} (\vec{x}) = \sum_{r, \{ s_j \}, \{ d_j \} } (r+1)! \\ \times \sum_{\{a_1, \dots, a_{r+1}\} \subset [N]} Sym_{a_1, \dots, a_{r+1}} \left( \frac{c_0 x_{a_1}^{l-s_0} (\pa_{a_1}^{s_1} [ \log S_N ])^{d_1} \dots (\pa_{a_1}^{s_t} [ \log S_N ])^{d_t}}{(x_{a_1}-x_{a_2}) \dots (x_{a_1}-x_{a_{r+1}})} \right),
\end{multline}
where the first sum is subject to \eqref{eq:comb-assump-L55}, and we omitted the dependence of $c_0$ on $r, \{ s_j \}, \{ d_j \}$.

Let us now prove three 3 statements of Lemma \ref{lem:sum-Schur-Vand-afterDiff}.

{\bf a}) First, let us consider the asymptotics of the expression
\begin{equation*}
\left. Sym_{a_1, \dots, a_{r+1}} \left( \frac{c_0 x_{a_1}^{l-s_0} (\pa_{a_1}^{s_1} [ \log S_N ])^{d_1} \dots (\pa_{a_1}^{s_t} [ \log S_N ])^{d_t}}{(x_{a_1}-x_{a_2}) \dots (x_{a_1}-x_{a_{r+1}})} \right) \right|_{\vec{x}=1}.
\end{equation*}
Note that each factor $\left( \pa_{a_1}^{s_1} [ \log S_N ] \right)^{d_1}$ has $N$-degree at most $d_1$, since $\rho_N$ is an appropriate sequence. Therefore, Lemma \ref{lem:funct-raznosti} and equality \eqref{eq:comb-assump-L55} imply that this function has $N$-degree $l-r$ at most. The expression
$$
\left. \sum_{\{a_1, \dots, a_{r+1}\} \subset [N] } Sym_{a_1, \dots, a_{r+1}} \left( \frac{c_0 x_{a_1}^{l-s_0} (\pa_{a_1}^{s_1} [ \log S_N ])^{d_1} \dots (\pa_{a_1}^{s_t} [ \log S_N ])^{d_t}}{(x_{a_1}-x_{a_2}) \dots (x_{a_1}-x_{a_{r+1}})} \right) \right|_{\vec{x}=1}
$$
contains $O(N^{r+1})$ terms of this form; therefore, it has $N$-degree at most $N^{l+1}$.

{\bf b}) We are interested in the asymptotics of the expression
\begin{equation}
\label{eq:term-symmetr}
\pa_i  \sum_{\{a_1, \dots, a_{r+1}\} \subset [N]} \left. Sym_{a_1, \dots, a_{r+1}} \left( \frac{c_0 x_{a_1}^{l-s_0} (\pa_{a_1}^{s_1} [ \log S_N ])^{d_1} \dots (\pa_{a_1}^{s_t} [ \log S_N ])^{d_t}}{(x_{a_1}-x_{a_2}) \dots (x_{a_1}-x_{a_{r+1}})} \right) \right|_{\vec{x}=1}.
\end{equation}
Let us consider two cases.

b1) First, consider a term
\begin{equation}
\label{eq:Fl-one-deriv}
\pa_i \left. Sym_{a_1, \dots, a_{r+1}} \left( \frac{c_0 x_{a_1}^{l-s_0} (\pa_{a_1}^{s_1} [ \log S_N ])^{d_1} \dots (\pa_{a_1}^{s_t} [ \log S_N ])^{d_t}}{(x_{a_1}-x_{a_2}) \dots (x_{a_1}-x_{a_{r+1}})} \right) \right|_{\vec{x}=1}
\end{equation}
with $i \notin \{a_1, \dots, a_{r+1} \}$. Note that there are $O( N^{r+1})$ such terms. We need to apply the operator $\pa_i$ to one of the factors $\pa_{a_1}^{s_q} \log S_N$, because only these factors depend on $x_i$ in this case. Note that
\begin{equation*}
\pa_i (\pa_{a_1}^{s_q} \log S_N)^{d_q} = d_q (\pa_{a_1}^{s_q} \left[ \log S_N \right])^{d_q -1} \pa_i \left[ \pa_{a_1}^{s_q} \log S_N \right]
\end{equation*}
has $N$-degree at most $d_q-1$.
Assume that the operator $\pa_i$ is applied to $(\pa_{a_1}^{s_1} [ \log S_N ])^{d_1}$ (other cases can be considered analogously). Then the expression can be written as a sum of terms of the form
\begin{equation*}
\left. Sym_{a_1, \dots, a_{r+1}} \left( \frac{c_0 x_{a_1}^{l-s_0} \pa_i \left[ (\pa_{a_1}^{s_1} [ \log S_N ])^{d_1} \right] \dots (\pa_{a_1}^{s_t} [ \log S_N ])^{d_t}}{(x_{a_1}-x_{a_2}) \dots (x_{a_1}-x_{a_{r+1}})} \right) \right|_{\vec{x}=1}.
\end{equation*}
Lemma \ref{lem:funct-raznosti} asserts that this sum has $N$-degree at most $(d_1-1) +d_2 + d_3 + \dots + d_t$. Recall that $r+s_0 + s_1 d_1 + \dots + s_t d_t = l$. We see that the maximum of $N$-degree is achieved at $t=1$, $s_0=0$, $s_1=1$, $d_1=l-r$. It follows that the expression \eqref{eq:Fl-one-deriv} has $N$-degree at most $l-r-1$. Taking into account that there are $O(N^{r+1})$ terms of such a form, we obtain that the sum has $N$-degree at most $l$.

b2) Now let us consider the term of the form \eqref{eq:Fl-one-deriv} with $i \in \{a_1, \dots, a_{r+1} \}$. Since $i$ is fixed, there are $O(N^r)$ terms of such form. Note that since the function
$$
\left. Sym_{a_1, \dots, a_{r+1}} \left( \frac{c_0 x_{a_1}^{l-s_0} (\pa_{a_1}^{s_1} [ \log S_N ])^{d_1} \dots (\pa_{a_1}^{s_t} [ \log S_N ])^{d_t}}{(x_{a_1}-x_{a_2}) \dots (x_{a_1}-x_{a_{r+1}})} \right) \right|_{\vec{x}=1}
$$
has $N$-degree at most $l-r$, then its derivative also has degree at most $l-r$. Therefore, the sum of all terms of such a form has $N$-degree at most $l$, which concludes the proof of the claim b).

{\bf c}) We are interested in the asymptotics of the expression
\begin{equation}
\label{eq:term-symmetr}
\pa_i \pa_j \sum_{\{a_1, \dots, a_{r+1}\} \subset [N]} \left. Sym_{a_1, \dots, a_{r+1}} \left( \frac{c_0 x_{a_1}^{l-s_0} (\pa_{a_1}^{s_1} [ \log S_N ])^{d_1} \dots (\pa_{a_1}^{s_t} [ \log S_N ])^{d_t}}{(x_{a_1}-x_{a_2}) \dots (x_{a_1}-x_{a_{r+1}})} \right) \right|_{\vec{x}=1}.
\end{equation}
Again, let us consider several cases related to whether indices $i$ and $j$ are from $\{a_1, \dots, a_{r+1}\}$ or not.

c1) If both indices $i$ and $j$ are outside of $\{a_1, \dots, a_{r+1}\}$, and both differentitations $\pa_i$ and $\pa_j$ are applied to same $\log S_N$. Since $\pa_{a} \pa_i \pa_j \log S_N$ has $N$-degree less than 0, the same considerations as in the case b1) imply the statement of proposition.

c2) If both indices are outside of $\{a_1, \dots, a_{r+1}\}$, and these differentiations are applied to different $\pa_{a_1} [\log S_N]$. It is easy to see that in this case all terms have $N$-degree at most $l-1$ which is even stronger than we need.

c3) If $i \in \{a_1, \dots, a_{r+1}\}$ and $j$ is outside of this set, then we lose one degree of $N$ in the summation over sets of indices and another degree when we differentiate $\log S_N$. Therefore, all these terms have $N$-degree at most $l-1$, what is stronger than we need.

c4) If $i,j \in \{a_1, \dots, a_{r+1}\}$, then we lose two degrees in the summation over sets of
indices. Again, all such terms give contribution $N$-degree ${l-1}$ at most. This concludes the
proof of the lemma.
\end{proof}

\begin{remark}
Note that we have
\begin{multline}
\label{eq:Fl-main-terms}
\pa_i \FF_{(l)} (\vec{x}) = \pa_i \left[ \sum_{r=0}^{l} \binom{l}{r} (r+1)! \right. \\ \left. \times \sum_{\{a_1, \dots, a_{r+1}\} \subset [N]} Sym_{a_1, \dots, a_{r+1}} \left( \frac{x_{a_1}^{l} (\pa_{a_1} [ \log S_N ])^{l-r}}{(x_{a_1}-x_{a_2}) \dots (x_{a_1}-x_{a_{r+1}})} \right) \right] + \hat T_{(l)} (\vec{x}),
\end{multline}
where the function $\hat T_{(l)} (\vec{x})$ has $N$-degree less than $l$. Indeed, the proof of Lemma \ref{lem:sum-Schur-Vand-afterDiff} shows that the highest $N$-degree is obtained in the case $s_1=1$, $d_1+r=l$, $s_2=s_3= \dots = 0$. A coefficient $\binom{l}{r}$ appears because we need to apply $l-r$ differentiations to $\exp ( \log (S_N))$ and $r$ differentiations to $V_N$.
\end{remark}

\subsection{Covariance-contributing terms}

For positive integers $l_1, l_2$ let us define one more function by
\begin{multline}
\label{eq:def-GGfunc}
\GG_{(l_1, l_2)} (\vec{x}) := l_1 \sum_{r=0}^{l_1-1} \binom{l_1-1}{r} \sum_{\{a_1, \dots, a_{r+1} \} \subset [N] } (r+1)! \\ \times Sym_{a_1, \dots, a_{r+1}} \frac{ x_{a_1}^{l_1} \pa_{a_1} \left[ \FF_{(l_2)} \right] \left( \pa_{a_1} \left[ \log S_N \right] \right)^{l_1-1-r}} {(x_{a_1} -x_{a_2} ) \dots (x_{a_1}-x_{a_{r+1}})}.
\end{multline}

The meaning of this function is given by the next lemma; essentially, this lemma describes the covariance in our probability models.

\begin{lemma}
\label{lem:two-diff-terms}
For any positive integers $l_1, l_2$ we have
\begin{equation}
\label{eq:two-diff-terms}
\frac{1}{V_N S_N} \sum_{i_1=1}^N (x_{i_1} \pa_{i_1})^{l_1} \sum_{i_2=1}^N (x_{i_2} \pa_{i_2})^{l_2} \left[ V_N S_N \right] = \FF_{(l_1)} (\vec{x}) \FF_{(l_2)} (\vec{x}) + \GG_{(l_1, l_2)} (\vec{x}) + \tilde T (\vec{x}),
\end{equation}
where $\GG_{(l_1, l_2)} (\vec{x})$ has $N$-degree at most $l_1+l_2$, and $\tilde T (\vec{x})$ has $N$-degree less than $l_1+l_2$.
\end{lemma}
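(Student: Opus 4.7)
My plan is to expand the LHS of \eqref{eq:two-diff-terms} using commutativity of the operators $x_{i_1}\pa_{i_1}$ and $x_{i_2}\pa_{i_2}$ together with the Leibniz rule, then identify the leading contributions term by term via the expansion machinery developed in the proof of Lemma \ref{lem:sum-Schur-Vand-afterDiff}. First, using the definition \eqref{eq:def-Fl}, I would rewrite
\begin{equation*}
\sum_{i_2=1}^N (x_{i_2}\pa_{i_2})^{l_2}[V_N S_N] \;=\; V_N S_N\cdot\FF_{(l_2)}(\vec{x}),
\end{equation*}
so that the LHS of \eqref{eq:two-diff-terms} becomes $(V_N S_N)^{-1}\sum_{i=1}^N(x_i\pa_i)^{l_1}[V_N S_N\cdot\FF_{(l_2)}]$. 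Since $x_i\pa_i$ is a first-order differential operator satisfying the Leibniz rule, its iterate obeys the binomial expansion
\begin{equation*}
\text{LHS} \;=\; \sum_{k=0}^{l_1}\binom{l_1}{k}\sum_{i=1}^N\frac{(x_i\pa_i)^{l_1-k}[V_N S_N]}{V_N S_N}\cdot (x_i\pa_i)^k\FF_{(l_2)}(\vec{x}),
\end{equation*}
and the $k=0$ summand is precisely $\FF_{(l_1)}(\vec{x})\FF_{(l_2)}(\vec{x})$ by definition of $\FF_{(l_1)}$.

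Next I would isolate $\GG_{(l_1,l_2)}$ from the $k=1$ summand. Following the proof of Lemma \ref{lem:sum-Schur-Vand-afterDiff}, the function $(x_i\pa_i)^{l_1-1}[V_N S_N]/(V_N S_N)$ decomposes (for fixed $i$) into a sum of terms
\begin{equation*}
\frac{c_0\,x_i^{l_1-1-s_0}\prod_{q}(\pa_i^{s_q}\log S_N)^{d_q}}{(x_i-x_{a_1})\cdots(x_i-x_{a_r})},\qquad r+s_0+\sum_q s_q d_q = l_1-1,
\end{equation*}
with $\{a_1,\dots,a_r\}$ ranging over $r$-subsets of $[N]\setminus\{i\}$. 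The leading-in-$N$ contribution corresponds to $s_0=0$, $s_1=1$, $d_1=l_1-1-r$ (all other $d_q=0$). Multiplying by $l_1\cdot x_i\pa_i\FF_{(l_2)}$ (so that $x_i^{l_1-1}\cdot x_i = x_i^{l_1}$), summing over $i$, and absorbing $i$ into a symmetrization over the $(r+1)$-subset $\{i\}\cup\{a_1,\dots,a_r\}=\{a_1,\dots,a_{r+1}\}$, a direct check of combinatorial coefficients reproduces $\GG_{(l_1,l_2)}(\vec{x})$ as in \eqref{eq:def-GGfunc}. The $N$-degree bound for $\GG_{(l_1,l_2)}$ follows from Lemma \ref{lem:sum-Schur-Vand-afterDiff}(b) (so that $\pa_{a_1}\FF_{(l_2)}$ has $N$-degree at most $l_2$), Lemma \ref{lem:funct-raznosti} (symmetrized Vandermonde combinations preserve $N$-degree), and the $O(N^{r+1})$ size of $\sum_{\{a_1,\dots,a_{r+1}\}\subset[N]}$: the total is $l_2+(l_1-1-r)+(r+1)=l_1+l_2$.

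Everything else goes into $\tilde T(\vec{x})$: the subleading part of $k=1$ (terms with $s_0>0$ or some $s_q\ge 2$) and all $k\ge 2$ summands. For the subleading $k=1$ terms the per-index $N$-degree is $r+\sum_q d_q < l_1-1$, and multiplication by $x_i\pa_i\FF_{(l_2)}$ (of $N$-degree at most $l_2$) followed by summation over $i$ yields total $N$-degree less than $l_1+l_2$. For $k\ge 2$, I would first establish, by iterating the case analysis of the proof of Lemma \ref{lem:sum-Schur-Vand-afterDiff}(b), the extension that $\pa_i^j\FF_{(l_2)}$ has $N$-degree at most $l_2+1-j$ for $j\ge 1$: each additional $\pa_i$ either reduces a factor $(\pa_{a_1}\log S_N)^{d_1}$ by one (and the new $\pa_i\pa_{a_1}^{s_1}\log S_N$ has $N$-degree $0$ by Definition \ref{def:appr-func}, since it involves two distinct indices) or forces $i$ into the $(r+1)$-subset and thus costs one $N$-factor in the subset sum. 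Expanding $(x_i\pa_i)^k$ as a linear combination of $x_i^j\pa_i^j$ for $1\le j\le k$ (via Stirling numbers of the second kind) then gives $(x_i\pa_i)^k\FF_{(l_2)}$ of $N$-degree at most $l_2$; combined with $(x_i\pa_i)^{l_1-k}[V_N S_N]/(V_N S_N)$ of $N$-degree at most $l_1-k$ per fixed $i$ and the extra $N$-factor from the outer sum over $i$, the total is at most $l_1+l_2-k+1\le l_1+l_2-1$. The main obstacle is precisely this extended bound on $\pa_i^j\FF_{(l_2)}$: the case $j=1$ is part (b) of Lemma \ref{lem:sum-Schur-Vand-afterDiff}, but for $j\ge 2$ one must reexamine the subset-sum structure of $\FF_{(l_2)}$ and verify that every additional same-variable derivative indeed saves an $N$-factor, requiring careful bookkeeping of the interaction between multiple $\pa_i$'s, the $\log S_N$ factors, and the Vandermonde poles.
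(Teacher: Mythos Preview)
Your approach is the paper's: rewrite the left side as $(V_NS_N)^{-1}\sum_i(x_i\pa_i)^{l_1}[V_NS_N\cdot\FF_{(l_2)}]$, expand by Leibniz, and classify terms by how many derivatives land on $\FF_{(l_2)}$. The paper carries this out without the intermediate binomial splitting---it parametrizes each resulting term by $(s_0,s_1,s_2,d_2,\dots,s_t,d_t,r)$ with $s_1$ the order of $\pa_{a_1}$ on $\FF_{(l_2)}$---but the organization is equivalent to yours.

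Two corrections. First, the bound you propose, that $\pa_i^j\FF_{(l_2)}$ has $N$-degree at most $l_2+1-j$ for $j\ge 2$, is \emph{false}. After one $\pa_i$ creates a factor $\pa_i\pa_{a_1}\log S_N$ (of $N$-degree $0$), a second $\pa_i$ hitting that same factor yields $\pa_i^2\pa_{a_1}\log S_N$, which still involves only the two distinct indices $i,a_1$ and so by Definition~\ref{def:appr-func} still has $N$-degree $0$: there is no further drop. Fortunately you do not need this. The weaker bound $\pa_i^j\FF_{(l_2)}$ has $N$-degree $\le l_2$ for all $j\ge 1$ follows immediately from Lemma~\ref{lem:sum-Schur-Vand-afterDiff}(b) plus the fact that derivatives never raise $N$-degree, and that is precisely what the paper uses (``$\pa_{a_1}^{s_1}[\FF_{(l_2)}]$ has $N$-degree at most $l_2$ since $s_1\ge 1$''); your final count $l_2+(l_1-k)+1\le l_1+l_2-1$ for $k\ge2$ then matches the paper's $l_2+d_2+\dots+d_t+(r+1)\le l_1+l_2-s_1+1$.

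Second, your ``$N$-degree at most $l_1-k$ per fixed $i$'' for $(x_i\pa_i)^{l_1-k}[V_NS_N]/(V_NS_N)$ is not well-posed: for a fixed $i$ this expression has genuine poles at $\vec{x}=1^N$ coming from the Vandermonde factors, so the $N$-degree is undefined before symmetrization. The cure is the one the paper applies throughout: multiply by the $\FF_{(l_2)}$-piece, sum over $i$, absorb $i$ as $a_1$ into the $(r{+}1)$-subset, and \emph{then} invoke Lemma~\ref{lem:funct-raznosti} on the symmetrized expression. With that adjustment your argument goes through.
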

\begin{proof}
The left-hand side of \eqref{eq:two-diff-terms} can be written as
$$
\frac{1}{V_N S_N} \sum_{i_1=1}^N (x_{i_1} \pa_{i_1})^{l_1} \left[ V_N S_N \FF_{(l_2)} (\vec{x}) \right].
$$
Applying differentiations $\pa_{i_1}$ with the use of \eqref{eq:log-differenciruem}, we can rewrite it as the sum of terms of the form
$$
Sym_{a_1, \dots, a_{r+1}} \frac{ c_0 x_{a_1}^{l_1-s_0} \pa_{a_1}^{s_1} \left[ \FF_{(l_2)} \right] \left( \pa_{a_1}^{s_2} \left[ \log S_N \right] \right)^{d_2} \dots \left( \pa_{a_1}^{s_t} \left[ \log S_N \right] \right)^{d_t}}{(x_{a_1} -x_{a_2} ) \dots (x_{a_1}-x_{a_{r+1}})},
$$
for nonnegative integers $r$, $s_0, s_1, \dots, s_t$, $d_2, \dots, d_t$, such that $s_2 < s_3 < \dots < s_t$ and
\begin{equation}
\label{eq:tech-est-3}
s_0+s_1 + s_2 d_2 + \dots + s_t d_t + r = l_1.
\end{equation}
From terms with $s_1=0$ we obtain $FF_{(l_1)} (\vec{x}) \FF_{(l_2)} (\vec{x})$. Let us deal with other terms.

Let us estimate $N$-degree of all terms with fixed collection of numbers $r$, $s_0, s_1, \dots, s_t$, $d_2, \dots, d_t$. Lemma \ref{lem:sum-Schur-Vand-afterDiff} asserts that $\pa_{a_1}^{s_1} \left[ \FF_{(l_2)} \right]$ has $N$-degree at most $l_2$ since $s_1 \ge 1$. Therefore, the total $N$-degree of these terms is at most $l_2 +d_2 + \dots + d_t + (r+1)$ (as usual, we apply Lemma \ref{lem:funct-raznosti} here). Given \eqref{eq:tech-est-3} and $s_1 \ge 1$, it is clear that this number is maximal for $s_0=0$, $s_1=1$, $s_2=1$, $d_2=l_1-1-r$; for this choice of parameters our sum of terms has $N$-degree at most $l_1 + l_2$, and for all other terms the expression $l_2 +d_2 + \dots + d_t + (r+1)$ is smaller and the total contribution of all other terms have $N$-degree less than $l_1+l_2$.

The terms with $s_0=0$, $s_1=1$, $s_2=1$, $d_2=l_1-1-r$ are exactly those which are present in the expression \eqref{eq:def-GGfunc}.
\end{proof}

\begin{lemma}
\label{lem:deg-Gfunc}
The function $\GG_{(l_1,l_2)} (\vec{x})$ has $N$-degree at most $l_1+l_2$. For any index $i$ the function $ \pa_i \GG_{(l_1,l_2)} (\vec{x})$ has $N$-degree less than $l_1+l_2$.
\end{lemma}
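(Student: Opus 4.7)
The plan is to mirror the structure of the proof of Lemma~\ref{lem:sum-Schur-Vand-afterDiff}, treating each summand of $\GG_{(l_1,l_2)}$ as a symmetrized quotient to which Lemma~\ref{lem:funct-raznosti} applies termwise. The crucial external inputs will be parts (b) and (c) of Lemma~\ref{lem:sum-Schur-Vand-afterDiff}, which say respectively that $\pa_{a_1}[\FF_{(l_2)}]$ has $N$-degree at most $l_2$ and that for $i\neq a_1$ the mixed derivative $\pa_i\pa_{a_1}[\FF_{(l_2)}]$ has $N$-degree strictly less than $l_2$. Coupled with the CLT-appropriate bounds on derivatives of $\log S_N$, these will let me reduce everything to a book-keeping exercise.

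For part (a), I first estimate the $N$-degree of a single summand indexed by $\{a_1,\dots,a_{r+1}\}$. The pre-symmetrized numerator $x_{a_1}^{l_1}\,\pa_{a_1}[\FF_{(l_2)}]\,(\pa_{a_1}[\log S_N])^{l_1-1-r}$ has $N$-degree at most $l_2+(l_1-1-r)$: the $\FF_{(l_2)}$-derivative contributes at most $l_2$ by Lemma~\ref{lem:sum-Schur-Vand-afterDiff}(b), and each of the $l_1-1-r$ factors $\pa_{a_1}[\log S_N]$ contributes $1$ since $\pa_i\log S_N/N\to\c_1$. Lemma~\ref{lem:funct-raznosti} preserves this bound through the $Sym_{a_1,\dots,a_{r+1}}$--and--divide-by-Vandermonde operation, and the summation over the $\binom{N}{r+1}=O(N^{r+1})$ subsets produces an extra factor $N^{r+1}$, giving a total $N$-degree of $l_1+l_2$ that is independent of $r$.

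For part (b) I would split the analysis according to whether the external index $i$ is inside or outside the summation set $\{a_1,\dots,a_{r+1}\}$. If $i\notin\{a_1,\dots,a_{r+1}\}$, then $\pa_i$ cannot act on $x_{a_1}^{l_1}$ and must fall on either $\pa_{a_1}[\FF_{(l_2)}]$ or on the product $(\pa_{a_1}[\log S_N])^{l_1-1-r}$. In the first case Lemma~\ref{lem:sum-Schur-Vand-afterDiff}(c) drops the $N$-degree of that factor strictly below $l_2$; in the second case the product rule produces a factor $\pa_i\pa_{a_1}\log S_N$ which has $N$-degree $0$ rather than $1$, exactly by the CLT-appropriate hypothesis $\pa_i\pa_j\log S_N\to\d_{1,1}$ (no $1/N$ scaling), so one degree is again lost. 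Either way the symmetrized summand sits at $N$-degree at most $l_1+l_2-2-r$, and summing over $O(N^{r+1})$ subsets yields total $N$-degree at most $l_1+l_2-1$. If instead $i\in\{a_1,\dots,a_{r+1}\}$, there are only $O(N^r)$ such subsets; since differentiation at a point does not raise $N$-degree (uniform convergence of analytic functions lets one interchange $\pa_i$ with the $N\to\infty$ limit), each summand still has $N$-degree at most $l_1+l_2-1-r$, and the total is at most $l_1+l_2-1$.

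The main obstacle is really only the careful accounting of degrees in part (b), and the single key observation driving the strict inequality there is the asymmetry in Definition~\ref{def:appr-func} between single-variable derivatives $\pa_i^k\log S_N$ (which grow like $N$) and mixed derivatives $\pa_i\pa_j\log S_N$ in distinct indices (which are bounded). This is precisely the extra content of the CLT-appropriate hypothesis beyond the LLN-appropriate one, and it is what forces $\pa_i\GG_{(l_1,l_2)}$ to be subleading, ensuring that $\GG_{(l_1,l_2)}$ genuinely encodes the leading covariance at scale $N^{l_1+l_2}$ without polluting it with deterministic contributions at that scale.
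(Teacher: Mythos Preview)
Your proof is correct and follows essentially the same approach as the paper: the same case split on whether $i\in\{a_1,\dots,a_{r+1}\}$, the same appeal to Lemma~\ref{lem:sum-Schur-Vand-afterDiff}(c) for $\pa_i\pa_{a_1}[\FF_{(l_2)}]$, and the same use of the mixed-derivative bound on $\log S_N$ from Definition~\ref{def:appr-func}. One minor wording point: in the $i\notin\{a_1,\dots,a_{r+1}\}$ case when $\pa_i$ hits $\pa_{a_1}[\FF_{(l_2)}]$, Lemma~\ref{lem:sum-Schur-Vand-afterDiff}(c) gives $N$-degree \emph{less than} $l_2$, not at most $l_2-1$, so the correct bound on the summand is ``less than $l_1+l_2-1-r$'' rather than ``at most $l_1+l_2-2-r$''; this does not affect the conclusion after summing over $O(N^{r+1})$ subsets.
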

\begin{proof}
The first statement was proven in the previous lemma. We know that $\GG_{(l_1,l_2)} (\vec{x})$ is the sum of terms
$$
Sym_{a_1, \dots, a_{r+1}} \frac{ x_{a_1}^{l_1} \pa_{a_1} \left[ \FF_{(l_2)} \right] \left( \pa_{a_1} \left[ \log S_N \right] \right)^{l_1-1-s}} {(x_{a_1} -x_{a_2} ) \dots (x_{a_1}-x_{a_{r+1}})},
$$
over $r =0,1, \dots, l_1-1$, and all sets $\{a_1, \dots, a_{r+1} \} \subset \{1,\dots, N \}$. When we differentiate the sum of these terms by $\pa_i$, we need to consider two cases. First, the terms with $i$ inside $\{a_1, \dots, a_{r+1} \}$ has $N$-degree at most $l_1+l_2-1$, because the index $i$ is fixed and the total number of terms has smaller order in $N$. Second, if $i$ is outside of $\{a_1, \dots, a_{r+1} \}$, then $\pa_i$ should be applied to $\pa_{a_1} \left[ \FF_{(l_2)} \right]$ or $\pa_{a_1} \left[ \log S_N \right]$. By Lemma \ref{lem:sum-Schur-Vand-afterDiff} $\pa_i \pa_{a_1} \left[ \FF_{(l_2)} \right]$ has $N$-degree less than $l_2$, and our conditions on $\log S_N$ imply that $\pa_1 \pa_{a_1} \left[ \log S_N \right]$ has $N$-degree less than 1. Therefore, for these terms the $N$-degree also decreases due to this differentiation; we obtain that the total $N$-degree of the expression is less than $l_1+l_2$.
\end{proof}

\begin{remark}
\label{rem:terms-for-covariance}
The proof of Lemma \ref{lem:two-diff-terms} shows that
$$
\frac{1}{V_N (\vec{x}) S_N ( \vec{x})} l_1 \sum_{i=1}^N \left( x_i \pa_i \left[ \FF_{(l_2)} (\vec{x}) \right] \right) \left( x_i \pa_i \right)^{l_1-1} \left[ V_N (\vec{x}) S_N ( \vec{x}) \right] = \GG_{(l_1, l_2)} (\vec{x}) + \bar T_{(l_1+l_2)} (\vec{x}),
$$
where $\bar T_{(l_1+l_2)} (\vec{x})$ has $N$-degree less than $l_1+l_2$.
\end{remark}

\subsection{Product of several moments}
\label{sec:5-4}

For a positive integer $s$ and a subset $\{ j_1, \dots, j_p \} \in [s]$ we denote by $\mathcal P^s_{j_1, \dots, j_p}$ the set of \textit{all pairings} of the set $\{1,2, \dots, s \} \backslash \{j_1, \dots, j_p\}$. In particular, this set is empty if $\{1,2, \dots, s \} \backslash \{j_1, \dots, j_p\}$ has odd number of elements. We will also need the notation $\mathcal P^{2;s}_{j_1, \dots, j_p}$ which stands for the set of all pairings of $\{2, \dots, s \} \backslash \{j_1, \dots, j_p\}$. For a pairing $P$ we denote by $\prod_{(a,b) \in P}$ the product over all pairs $(a,b)$ from this pairing.

\begin{proposition}
\label{lem:gauss-many-Schur}
For any positive integer $s$ and any positive integers $l_1, \dots, l_s$ we have
\begin{multline}
\label{eq:gauss-many-Schur}
\frac{1}{V_N S_N} \sum_{i_1=1}^N (x_{i_1} \pa_{i_1})^{l_1} \sum_{i_2=1}^N (x_{i_2} \pa_{i_2})^{l_2} \dots \sum_{i_s=1}^N (x_{i_s} \pa_{i_s})^{l_s} \left[ V_N S_N \right] \\ = \sum_{p=0}^s \sum_{ \{ j_1, \dots, j_p \} \in [s] } \FF_{(l_{j_1})} (\vec{x}) \dots \FF_{(l_{j_p})} (\vec{x}) \left( \sum_{ P \in \mathcal P^s_{j_1, \dots, j_p} } \prod_{(a,b) \in P} \GG_{(l_a, l_b)} (\vec{x}) + \tilde T_{j_1, \dots, j_p}^{1;s} (\vec{x}) \right),
\end{multline}
where $\tilde T_{j_1, \dots, j_p}^{1;s} (\vec{x})$ has $N$-degree less than $\sum_{i=1}^s l_i - \sum_{i=1}^p l_{j_i}$.
\end{proposition}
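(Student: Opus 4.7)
The plan is to prove \eqref{eq:gauss-many-Schur} by induction on $s$. The base cases $s=1$ and $s=2$ are the definition \eqref{eq:def-Fl} and Lemma~\ref{lem:two-diff-terms}, respectively. For the inductive step, I set
\[
\Psi(\vec x) := \frac{1}{V_N S_N}\sum_{i_2=1}^N(x_{i_2}\partial_{i_2})^{l_2}\cdots\sum_{i_s=1}^N(x_{i_s}\partial_{i_s})^{l_s}\bigl[V_N S_N\bigr],
\]
so that the left-hand side of \eqref{eq:gauss-many-Schur} equals $(V_NS_N)^{-1}\sum_{i_1}(x_{i_1}\partial_{i_1})^{l_1}[V_N S_N\,\Psi]$, and expand $\Psi$ via the inductive hypothesis as a sum over $J'\subset [2;s]$ of $\bigl(\prod_{j\in J'}\FF_{(l_j)}\bigr)\bigl(\sum_{P'\in\mathcal P^{2;s}_{J'}}\prod\GG+\tilde T^{2;s}_{J'}\bigr)$.

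The second step is to distribute the operator $(x_{i_1}\partial_{i_1})^{l_1}$ over the product $V_N S_N\cdot\Psi$ using the one-variable Leibniz identity $(x\partial)^l(fg)=\sum_{k=0}^l\binom{l}{k}(x\partial)^k f\cdot(x\partial)^{l-k}g$. Two classes of terms contribute at the leading $N$-degree. First, the class $k=l_1$, where all $l_1$ derivatives act on $V_NS_N$, produces a factor $\FF_{(l_1)}$ (after summation in $i_1$ and division by $V_NS_N$) multiplying the full expansion of $\Psi$; via the bijection $J'\leftrightarrow J=J'\cup\{1\}$ this yields exactly the right-hand side contributions of \eqref{eq:gauss-many-Schur} corresponding to subsets $J$ containing $1$, transporting the inductive error $\tilde T^{2;s}_{J'}$ into $\tilde T^{1;s}_{J'\cup\{1\}}$. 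Second, the class $k=l_1-1$, with binomial coefficient $l_1$, has one derivative acting on $\Psi$; by the product rule it lands on some factor $\FF_{(l_j)}$ of $\Psi$, and the resulting expression
\[
l_1\,\sum_{i_1}\frac{(x_{i_1}\partial_{i_1})^{l_1-1}[V_N S_N]}{V_N S_N}\,(x_{i_1}\partial_{i_1}\FF_{(l_j)})
\]
produces $\GG_{(l_1,l_j)}$ modulo lower-order by Remark~\ref{rem:terms-for-covariance}. Summed over $j\in J'$ this realizes, via $(J',P',j)\leftrightarrow(J'\setminus\{j\},\,P'\cup\{(1,j)\})$, every right-hand side contribution in which $1$ is paired with some element of $[s]\setminus J$, each produced exactly once.

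The third step is to check that all remaining contributions are absorbed into $\tilde T^{1;s}_{j_1,\dots,j_p}$ with the required $N$-degree strictly less than $\sum_{i=1}^s l_i-\sum_{q=1}^p l_{j_q}$. There are three sources: the subcases $k\le l_1-2$, where each extra derivative shared with $\Psi$ loses the corresponding power of $N$ by Lemma~\ref{lem:sum-Schur-Vand-afterDiff}(b,c); the subcase $k=l_1-1$ where the derivative hits a $\GG$-factor, where Lemma~\ref{lem:deg-Gfunc} gives a strict drop in $N$-degree of that factor; and the subcase $k=l_1-1$ where the derivative hits the inductive error $\tilde T^{2;s}_{J'}$, which is already below the required threshold. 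Summing $N$-degrees in each case and comparing with the exponents coming from the $\FF_J$ prefactor and the $\GG_P$ pairings yields the required strict inequality.

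The main obstacle is the last step: an accurate case-by-case comparison of $N$-degrees, keeping track of how each power of $N$ is produced by a summation over an index and lost by a derivative acting on an $\FF$ or $\GG$ factor. These estimates are uniform in $(J,P,j)$ and rely essentially on the sharp bounds of Lemma~\ref{lem:sum-Schur-Vand-afterDiff}(b,c) and Lemma~\ref{lem:deg-Gfunc}, together with the observation that every Leibniz term has shape analogous to those arising in the proofs of Lemmas~\ref{lem:sum-Schur-Vand-afterDiff} and~\ref{lem:two-diff-terms} and so can be controlled by the same symmetrization--and--difference arguments based on Lemma~\ref{lem:funct-raznosti}. Once the error estimates are matched against the target $\sum l_i-\sum l_{j_q}$, a clean combinatorial identification of the dominant terms with $(J,P)$-contributions and the subdominant ones with $\tilde T^{1;s}_J$ completes the induction.
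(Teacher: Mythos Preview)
Your proposal is correct and follows essentially the same inductive strategy as the paper: the base cases $s=1,2$ are the same, and for the step you apply $\sum_{i_1}(x_{i_1}\partial_{i_1})^{l_1}$ to $V_NS_N\Psi$, isolate the two leading contributions $\FF_{(l_1)}\cdot(\text{old})$ and $\GG_{(l_1,l_j)}\cdot(\text{old with }j\text{ removed})$, and absorb everything else into $\tilde T$. The only difference is organizational: you package the splitting via the Leibniz identity for $(x\partial)^{l_1}$, whereas the paper expands directly into symmetrized fractions indexed by $(r,d_1,f_1,\dots,f_p,h_0)$ and counts $N$-degrees termwise; both routes land on the same case analysis and the same appeals to Lemma~\ref{lem:sum-Schur-Vand-afterDiff}, Lemma~\ref{lem:deg-Gfunc}, and Remark~\ref{rem:terms-for-covariance}.
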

\begin{proof}
We will prove this statement by induction over $s$. For $s=1$ the statement follows from definition \eqref{eq:def-Fl}. For $s=2$ it follows from Lemma \ref{lem:two-diff-terms}. Assume that we already proved it for $s-1$. Let us apply the operators $ \sum_{i_s=1}^N (x_{i_s} \pa_{i_s})^{l_s}$, $\dots$, $\sum_{i_2=1}^N (x_{i_2} \pa_{i_2})^{l_2}$, and use the induction assumption.
We need to analyze the expression
\begin{multline*}
\frac{1}{V_N S_N} \left( \sum_{i_1=1}^N (x_{i_1} \pa_{i_1})^{l_1} \right) \left[ V_N S_N \sum_{p=0}^{s-1} \sum_{ \{ j_1, \dots, j_p \} \in [2;s] } \FF_{(j_1)} (\vec{x}) \dots \FF_{(j_p)} (\vec{x})  \right. \\ \left. \times \left( \sum_{ P \in \mathcal P^{[2;s]}_{j_1, \dots, j_p} } \prod_{(a,b) \in P} \GG_{(k_a, k_b)} (\vec{x}) + \tilde T_{j_1, \dots, j_p}^{2;s} (\vec{x}) \right) \right],
\end{multline*}
for any choice of the set of indices $J_{old} := \{ j_1, \dots, j_p \} \subset [2;s]$. Note that an induction hypothesis asserts that $\tilde T_{j_1, \dots, j_p}^{2;s}$ has $N$-degree less than $\sum_{i=2}^s l_i - \sum_{i=1}^p l_{j_i}$.
Let us consider several cases to analyze all arising terms.

{ \bf 1}) All differentiations $\pa_{i_1}$ are applied to $V_N S_N$ or $x_{i_1}$ from $(x_{i_1} \pa_{i_1})^{l_1}$. By definition, these terms give rise to the function $\FF_{(l_1)}$. The terms obtained in this way have the required form with the set of indices $J_{new} := J_{old} \cup \{1 \}$.

{ \bf 2}) One differentiation $\pa_{i_1}$ is applied to the function $\FF_{j_w}$ for some $w$, and all other differentiations $\pa_{i_1}$ are applied to $V_N S_N$. Using Remark \ref{rem:terms-for-covariance}, we see that these terms have the required form with $J_{new} := J_{old} \backslash \{ w \}$ and the arising function $\GG_{(l_1, l_{j_w})} (\vec{x})$ in the product of the pairings.

{ \bf 3}) Consider all other terms. We will show that they do not contribute to the leading order.
We fix the set $\{ j_1, \dots, j_p \} \subset [2;s]$.
Let us define the function
$$
\tilde H_{j_1, \dots, j_p} (\vec{x}) :=  \left( \sum_{ P \in \mathcal P^{[2;s]}_{j_1, \dots, j_p} } \prod_{(a,b) \in P} \GG_{(l_a, l_b)} (\vec{x}) + \tilde T_{j_1, \dots, j_p}^{2;s} (\vec{x}) \right).
$$
From Lemma \ref{lem:deg-Gfunc} it follows that $\tilde H := \tilde H_{j_1, \dots, j_p} (\vec{x})$ has $N$-degree at most $\sum_{i=2}^s l_i - \sum_{i=1}^p l_{j_p}$, but for any index $a$ the function $\pa_a \tilde H$ has a $N$-degree less than $\sum_{i=2}^s l_i - \sum_{i=1}^p l_{j_p}$.

We analyze the expression
$$
\frac{1}{V_N S_N} \left( \sum_{i_1=1}^N (x_{i_1} \pa_{i_1})^{l_1} \right) V_N S_N \FF_{(j_1)} (\vec{x}) \dots \FF_{(j_p)} (\vec{x}) \tilde H (\vec{x}).
$$

As before, we can write the result of the application of our differential operator as a sum of terms of the form
$$
Sym_{a_1, \dots, a_{r+1}} \frac{ x_{a_1}^{k_1-s_0} \left( \pa_{a_1}^{s_1} \left[ \log S_N \right] \right)^{d_1} \dots \left( \pa_{a_1}^{s_t} \left[ \log S_N \right] \right)^{d_t} \pa_{a_1}^{f_1} \left[ \FF_{(k_{j_1})} \right] \dots \pa_{a_1}^{f_p} \left[ \FF_{(k_{j_p})} \right] \pa_{a_1}^{h_0} \left[ \tilde H (\vec{x}) \right] }{(x_{a_1} -x_{a_2} ) \dots (x_{a_1}-x_{a_{r+1}})}.
$$
Since $\left( \pa_{a_1}^{s_1} \left[ \log S_N \right] \right)^{d_1} \dots \left( \pa_{a_1}^{s_t} \left[ \log S_N \right] \right)^{d_t}$ has $N$-degree at most $d_1+d_2+\dots+d_t$, it is easy to see that the highest $N$-degree terms are present in the expression
\begin{equation}
\label{eq:term-form-gauss}
Sym_{a_1, \dots, a_{r+1}} \frac{ x_{a_1}^{l_1} \left( \pa_{a_1} \left[ \log S_N \right] \right)^{d_1} \pa_{a_1}^{f_1} \left[ \FF_{(k_{j_1})} \right] \dots \pa_{a_1}^{f_p} \left[ \FF_{(k_{j_p})} \right] \pa_{a_1}^{h_0} \left[ \tilde H (\vec{x}) \right] }{(x_{a_1} -x_{a_2} ) \dots (x_{a_1}-x_{a_{r+1}})},
\end{equation}
where
\begin{equation}
\label{eq:equal-ind-gauss}
d_1 + f_1 + \dots + f_p + h_0 + r = l_1.
\end{equation}
Let us estimate the $N$-degree of this expression for fixed $d_1,f_1,\dots,f_p,h_0,r$.

Let $B$ be the set of indices $i \in \{1, \dots, p\}$ such that $f_i = 0$. Then this term is the product of $\prod_{i \in B} \FF_{(l_i)}$ and a certain symmetric function. Our goal is to show that the $N$-degree of this symmetric function can be estimated as less than $\sum_{i=1}^s l_i - \sum_{i \in B} l_{i}$, with the exception of cases 1) and 2) considered above, which means that this symmetric function is a part of $\tilde T_B (\vec{x})$.

The function $\pa_{a_1} \left[ \log S_N \right]^{d_1}$ has $N$-degree at most $d_1$. The summation
over indices contributes the $N$-degree $r+1$. If $f_i \ne 0$, then $\pa_{a_1}^{f_i} \left[
\FF_{(l_{j_i})} \right]$ has $N$-degree at most $l_{j_i}$. This and \eqref{eq:equal-ind-gauss}
means that if two different $f_i$ are not equal to 0, then the result has $N$-degree at most
$\sum_{i=1}^s l_i - \sum_{i \in B} l_{i} - 1$. However, if $h_0$ is greater than 0, then we obtain
the total $N$-degree less than $\sum_{i=1}^s l_i - \sum_{i \in B} l_{i}$. Therefore, if the term
\eqref{eq:term-form-gauss} contributes to the degree $\sum_{i=1}^s l_i - \sum_{i \in B} l_{i}$,
then $h_0=0$ and only one of the indices $f_i$ can be equal to non zero. This leaves out only two
possibilities: if all $f_i$ are equal to 0, then we are in the case 1) considered above, and if one
of $f_i$ is not equal to 0, then we are in the case 2) considered above. This concludes the proof
of the proposition.
\end{proof}

\subsection{Gaussian behavior}
\label{sec:5-5}

For a positive integer $l$ let us set:
\begin{equation}
\label{eq:def-e-l-const}
E_l := \FF_{(l)} (1^N) = \left. \frac{1}{V_N S_N} \sum_{i=1}^N (x_{i} \pa_{i})^{l} V_N S_N \right|_{\vec{x}=1}.
\end{equation}
This is the expectation of the $l$th moment of the probability measure with the Schur generating function $S_N$.

\begin{lemma}
\label{lem:covar-general}
For any positive integer $s$ and any positive integers $l_1, \dots, l_s$ we have
\begin{multline}
\label{eq:covar-general}
\frac{1}{V_N S_N} \left( \sum_{i_1=1}^N (x_{i_1} \pa_{i_1})^{l_1} - E_{l_1} \right) \left( \sum_{i_2=1}^N (x_{i_2} \pa_{i_2})^{l_2} - E_{l_2} \right) \\ \left. \times \dots \left( \sum_{i_s=1}^N (x_{i_s} \pa_{i_s})^{l_s} - E_{l_s} \right) V_N S_N \right|_{\vec{x}=1} = \left. \sum_{ P \in \mathcal P^{s}_{\emptyset} } \prod_{(a,b) \in P} \GG_{(l_a, l_b)} (\vec{x}) \right|_{\vec{x}=1} + \left. \tilde T_{\emptyset} (\vec{x}) \right|_{\vec{x}=1},
\end{multline}
where $\tilde T_{\emptyset} (\vec{x})$ has $N$-degree less than $\sum_{i=1}^s l_i$.
\end{lemma}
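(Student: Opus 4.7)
The natural strategy is to combine the expansion of the centered operator product with Proposition~\ref{lem:gauss-many-Schur}, reading the combinatorial identity
$$\prod_{k \in K}\bigl(\FF_{(l_k)}(\vec{x}) - E_{l_k}\bigr) = \sum_{S^c \subset K}(-1)^{|S^c|}\prod_{k \in S^c}E_{l_k}\prod_{k \in K\setminus S^c}\FF_{(l_k)}(\vec{x})$$
backwards. First, I would expand
$$\prod_{k=1}^s \Bigl(\sum_{i_k}(x_{i_k}\pa_{i_k})^{l_k} - E_{l_k}\Bigr) = \sum_{S \subset [s]} (-1)^{|S^c|} \Bigl(\prod_{k \in S^c} E_{l_k}\Bigr)\prod_{k \in S}\sum_{i_k}(x_{i_k}\pa_{i_k})^{l_k},$$
then apply Proposition~\ref{lem:gauss-many-Schur} to each $S$-indexed block, producing an expansion of LHS$(\vec{x})$ as a sum over $(S,J)$ with $J\subset S$, whose summand factors as $\prod_{S^c} E_{l_k}\cdot\prod_J\FF_{(l_k)}(\vec{x})$ times a pairing sum over $S\setminus J$ plus an error $\tilde T^S_J(\vec{x})$ of $N$-degree less than $\sum_{k\in S\setminus J}l_k$.

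Next, I would reorganize the pairing contributions by the disjoint union $K = S^c\sqcup J$. Since $\sum_{P\in\mathcal P^S_J}\prod\GG_{(l_a,l_b)}(\vec{x})$ depends only on $S\setminus J = [s]\setminus K$, the inner sum over $S^c\subset K$ collapses via the identity above, yielding the clean decomposition
$$\text{LHS}(\vec{x}) \;=\; \sum_{K\subset[s]} \Bigl(\sum_{P\in\mathcal P^s_K}\prod_{(a,b)\in P}\GG_{(l_a,l_b)}(\vec{x})\Bigr)\prod_{k\in K}\bigl(\FF_{(l_k)}(\vec{x})-E_{l_k}\bigr) + \text{Err}(\vec{x}),$$
where $\mathcal P^s_K$ denotes pairings of $[s]\setminus K$. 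By definition $\FF_{(l_k)}(1^N)=E_{l_k}$, so every factor of the form $\FF_{(l_k)}(\vec{x})-E_{l_k}$ vanishes at $\vec{x}=(1^N)$, and the main sum reduces to the single $K=\emptyset$ term, which is exactly the Wick-type pairing formula on the right-hand side of \eqref{eq:covar-general}.

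The delicate part — and the main obstacle — is controlling the error
$$\text{Err}(1^N) = \sum_{S\subset[s]}\sum_{J\subset S}(-1)^{|S^c|}\Bigl(\prod_{k\in S^c\cup J}E_{l_k}\Bigr)\tilde T^S_J(1^N).$$
A naive bound on individual summands using $E_{l_k}=O(N^{l_k+1})$ and $\tilde T^S_J(1^N)=o(N^{\sum_{S\setminus J}l_k})$ only produces $o(N^{\sum l_k + |S^c|+|J|})$, which can exceed $N^{\sum l_k}$ when $(S,J)\neq([s],\emptyset)$, so essential cancellations must occur among the errors. I would handle this by induction on $s$: using the moments-to-cumulants identity $T_S = \sum_{J\subset S}\prod_{S\setminus J}E_{l_k}\cdot\text{Cent}_J$ together with the expansion of $T_{[s]}$ from Proposition~\ref{lem:gauss-many-Schur}, one obtains
$$\text{Cent}_{[s]}(1^N) \;=\; \text{pair}_{[s]}(1^N) + \sum_{K\subsetneq[s]}\prod_{[s]\setminus K}E_{l_k}\bigl(\text{pair}_K(1^N)-\text{Cent}_K(1^N)\bigr) + \tilde T^{[s]}_\emptyset(1^N),$$
and the inductive hypothesis $\text{pair}_K(1^N) - \text{Cent}_K(1^N) = -\text{err}_K$ with $\text{err}_K$ of strictly smaller $N$-degree than $\sum_K l_k$ can, after refining the bookkeeping of $N$-degrees to track how the differentiation structure from Lemma~\ref{lem:sum-Schur-Vand-afterDiff}(b,c) interacts with the $E_{l_k}$ factors, be matched against the Proposition's error. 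Closing the induction will require carefully propagating the strict inequality in the $N$-degree estimate, rather than merely the nonstrict one.
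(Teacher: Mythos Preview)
Your decomposition and the reorganization by $K=S^c\sqcup J$ are exactly what the paper does, and you correctly identify that the pairing part collapses because $\sum_{P\in\mathcal P^S_J}\prod\GG_{(l_a,l_b)}$ depends only on $S\setminus J$. The gap is in your treatment of the error: you separate $\tilde T^S_J$ from the pairing sum and then try to bound the resulting alternating sum of errors by an inductive argument that you yourself flag as delicate. This induction is not needed, and the difficulty you describe is an artifact of the separation.

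The missing observation is that the \emph{entire} bracket
\[
A_{S\setminus J}(\vec{x}) \;:=\; \sum_{P\in\mathcal P^S_J}\prod_{(a,b)\in P}\GG_{(l_a,l_b)}(\vec{x}) \;+\; \tilde T^S_J(\vec{x})
\]
produced by Proposition~\ref{lem:gauss-many-Schur} depends only on the complement $S\setminus J$, not on $S$ and $J$ individually. (This is visible from the inductive construction in the proof of that proposition: the coefficient of $\prod_{j\in J}\FF_{(l_j)}$ is built entirely out of the operators indexed by $S\setminus J$.) Once you use this, you do not need to split off the error at all: after evaluating at $\vec{x}=1^N$ the $\FF_{(l_j)}(1^N)$ become $E_{l_j}$, each summand is $(-1)^{|S^c|}\prod_{m\in K}E_{l_m}\cdot A_{[s]\setminus K}(1^N)$, and for fixed $K$ the inner sum over $S^c\subset K$ is $\sum_{w=0}^{|K|}(-1)^w\binom{|K|}{w}=0$ unless $K=\emptyset$. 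The sole surviving term is $A_{[s]}(1^N)=\sum_{P\in\mathcal P^s_\emptyset}\prod\GG_{(l_a,l_b)}(1^N)+\tilde T^{[s]}_\emptyset(1^N)$, which is precisely the right-hand side of \eqref{eq:covar-general}. The paper's proof is exactly this one-line binomial cancellation applied to the full bracket; your proposed induction on $s$ to control the error cross-terms is unnecessary.
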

\begin{proof}
We use \eqref{eq:gauss-many-Schur} to compute \eqref{eq:covar-general}, and our goal is to show that the appearance of $E_{l_i}$'s cancels out all terms from the right-hand side of \eqref{eq:gauss-many-Schur} with the non-empty set $J = \{ j_1, j_2, \dots, j_p \}$, and the right-hand side of \eqref{eq:covar-general} comes from the term with the empty set $J$.

We use the following notation: Let $\{a_1, \dots, a_w\}$ be a subset of $[s]$; we denote by $\{b_1, \dots, b_{s-w}\}$ the complimentary subset such that $\{a_1, \dots, a_w\} \cup \{b_1, \dots, b_{s-w} \} = [s]$. Analogously, for $\{ j_1, \dots, j_p \} \subset \{b_1, \dots, b_{s-w} \}$ we denote by $\{ k_1, \dots, k_{s-w-p} \}$ the complementary subset such that $ \{ j_1, \dots, j_p \} \cup \{ k_1, \dots, k_{s-w-p} \} = \{b_1, \dots, b_{s-w} \}$.

Proposition \ref{lem:gauss-many-Schur} yields
\begin{multline}
\label{eq:gauss-many-Schur-2}
\frac{1}{V_N S_N} \sum_{i_{b_1}=1}^N (x_{i_{b_1}} \pa_{i_{b_1}})^{l_{b_1}} \sum_{i_{b_2}=1}^N (x_{i_{b_2}} \pa_{i_{b_2}})^{l_{b_2}} \dots \sum_{i_{b_{s-w}}=1}^N (x_{i_{b_{s-w}}} \pa_{i_{b_{s-w}}})^{l_{b_{s-w}}} V_N S_N \\ = \sum_{p=0}^s \sum_{ \{ j_1, \dots, j_p \} \subset \{b_1, \dots, b_{s-w} \} } \FF_{(l_{j_1})} (\vec{x}) \dots \FF_{(l_{j_p})} (\vec{x}) \mathcal A_{k_1, \dots, k_{s-w-p} },
\end{multline}
where
$$
A_{k_1, \dots, k_{s-w-p} } :=  \sum_{ P \in \mathcal P^{b_1,\dots, b_{s-w} }_{j_1, \dots, j_p} } \prod_{(a,b) \in P} \GG_{(l_a, l_b)} (\vec{x}) + \tilde T_{j_1, \dots, j_p}^{b_1,\dots, b_{s-w} } (\vec{x});
$$
we use an additional superscript here (in comparison with Proposition \ref{lem:gauss-many-Schur}) because we apply Proposition \ref{lem:gauss-many-Schur} to a different set of indices.
Note that $A_{k_1, \dots, k_{s-w-p} }$ does not depend on the choice of $\{j_1, \dots, j_p \}$: It depends on $\{ k_1, \dots, k_{s-w-p}  \}$ only.

%where by $A_{j_1, \dots, j_p}$ we denote the other terms from Proposition \ref{lem:gauss-many-Schur}.

Opening the parenthesis in the left-hand side of \eqref{eq:covar-general}, we write it as
\begin{multline}
\label{eq:gauss-intermed}
\sum_{ \{a_1, \dots, a_w\} \in [s] } (-1)^w E_{l_{a_1}} E_{l_{a_2}} \dots E_{l_{a_w}} \frac{1}{V_N S_N} \\ \left. \times \sum_{i_{b_1}=1}^N (x_{i_{b_1}} \pa_{i_{b_1}})^{l_{b_1}} \sum_{i_{b_2}=1}^N (x_{b_{a_2}} \pa_{i_{b_2}})^{l_{b_2}} \dots \sum_{i_{b_{s-w}}=1}^N (x_{i_{b_{s-w}}} \pa_{i_{b_{s-w}}})^{l_{b_{s-w}}} V_N S_N \right|_{\vec{x}=1}
\end{multline}
Applying \eqref{eq:gauss-many-Schur-2} and substituting $\vec{x}=(1^N)$, we see that \eqref{eq:gauss-intermed} turns into the sum of terms of the form
\begin{equation}
\label{eq:gauss-one-term-cancel}
(-1)^w E_{m_1} E_{m_2} \dots E_{m_{w+p}} \mathcal A_{k_1, \dots, k_{s-w-p} } ( 1^N),
\end{equation}
where $\{ m_1, m_2, \dots, m_{w+p} \} = \{ a_1, a_2, \dots, a_w \} \cup \{ j_1, j_2, \dots, j_p \}$, and $\{ m_1, m_2, \dots, m_{w+p} \} \cup \{ k_1, \dots, k_{s-w-p} \} = [s]$. The summation goes over all possible choices of $\{ a_1, a_2, \dots, a_w \}$ and $\{ j_1, j_2, \dots, j_p \}$.

Let us fix the set $\{ M_1, \dots, M_W \} = \{ m_1, m_2, \dots, m_{w+p} \}$. Note that the same term \eqref{eq:gauss-one-term-cancel} can be obtained for all possible choices of $a$'s and $j$'s such that the union of these sets of indices is $\{ M_1, \dots, M_W \}$; the only difference is the sign $(-1)^w$. Collecting all terms of this form, we see that the total coefficient is
$$
\binom{W}{0}- \binom{W}{1} + \dots + (-1)^{w+p} \binom{W}{W},
$$
which is always 0 unless $W=0$. Therefore, the only term which survives all cancellations in \eqref{eq:gauss-intermed} is the term with $w=0$ and $p=0$ which in combination with Proposition \ref{lem:gauss-many-Schur} implies Lemma \ref{lem:covar-general}.
\end{proof}

\begin{proposition}
\label{prop:gener-gaussianity-final}
Let $\rho_N$ be an appropriate sequence of measures on $\GT_N$, $N=1,2,\dots$. Recall that $\FF_{(l)}$ is defined in \eqref{eq:def-Fl} , $\GG_{k,l}$ is defined in \eqref{eq:def-GGfunc}, $\prod_{(a,b) \in P}$ for $P \in \mathcal P^{s}_{\emptyset}$ is defined in the beginning of Section \ref{sec:5-4}, and $E_l$ is defined in \eqref{eq:def-e-l-const}. Then
for any positive integer $s$ and any positive integers $l_1, \dots, l_s$ we have
\begin{multline}
\lim_{N \to \infty} \frac{1}{N^{l_1+\dots + l_s}} \frac{1}{V_N S_N} \left( \sum_{i_1=1}^N (x_{i_1} \pa_{i_1})^{l_1} - E_{l_1} \right) \left( \sum_{i_2=1}^N (x_{i_2} \pa_{i_2})^{l_2} - E_{l_2} \right) \\ \left. \times \dots \left( \sum_{i_s=1}^N (x_{i_s} \pa_{i_s})^{l_s} - E_{l_s} \right) V_N S_N \right|_{\vec{x}=1} = \lim_{N \to \infty} \frac{1}{N^{l_1+\dots + l_s}} \left. \sum_{ P \in \mathcal P^{s}_{\emptyset} } \prod_{(a,b) \in P} \GG_{(l_a, l_b)} (\vec{x}) \right|_{\vec{x}=1}.
\end{multline}
\end{proposition}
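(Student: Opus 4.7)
The plan is to recognize that Proposition \ref{prop:gener-gaussianity-final} is essentially an immediate corollary of Lemma \ref{lem:covar-general} after dividing by $N^{l_1+\cdots+l_s}$ and letting $N\to\infty$. All of the hard combinatorial and analytic work has been carried out already in the preceding sequence of lemmas.

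More precisely, I would first quote Lemma \ref{lem:covar-general} to rewrite the quantity
$$
\frac{1}{V_N S_N} \prod_{j=1}^{s} \left(\sum_{i_j=1}^N (x_{i_j}\partial_{i_j})^{l_j} - E_{l_j}\right) V_N S_N \Big|_{\vec{x}=1}
$$
as
$$
\sum_{P \in \mathcal P^{s}_{\emptyset}} \prod_{(a,b)\in P} \GG_{(l_a,l_b)}(\vec{x})\Big|_{\vec{x}=1} + \tilde T_{\emptyset}(\vec{x})\Big|_{\vec{x}=1},
$$
where $\tilde T_{\emptyset}$ has $N$-degree strictly less than $l_1+\cdots+l_s$. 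Next, I would invoke the $s=0$ case of the definition of having $N$-degree less than $D$ (from Section \ref{sec:5}), which says exactly that
$$
\lim_{N\to\infty}\frac{\tilde T_{\emptyset}(1^N)}{N^{l_1+\cdots+l_s}} = 0.
$$
Dividing the displayed identity by $N^{l_1+\cdots+l_s}$ and passing to the limit then yields the claim.

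There is no real obstacle left to overcome at this stage. All substantive input has already been produced earlier: Lemma \ref{lem:sum-Schur-Vand-afterDiff} controls the $N$-degree of $\FF_{(l)}$ and its partials; Lemma \ref{lem:two-diff-terms} and Lemma \ref{lem:deg-Gfunc} identify $\GG_{(l_1,l_2)}$ as the leading two-point object and control its derivatives; Proposition \ref{lem:gauss-many-Schur} organizes the full $s$-fold differentiation into the sum $\sum_{\{j_1,\dots,j_p\}} \FF_{(l_{j_1})}\cdots \FF_{(l_{j_p})}(\sum_{P}\prod \GG + \tilde T)$; and finally the binomial cancellation $\sum_{w=0}^{W}(-1)^w\binom{W}{w}=0$ in the proof of Lemma \ref{lem:covar-general} is what eliminates every non-empty subset $\{j_1,\dots,j_p\}$ once one subtracts the expectations $E_{l_j}$. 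The only remaining ingredient---killing the subleading error after normalization---is built into the very definition of ``$N$-degree less than $D$.'' Thus the proof of Proposition \ref{prop:gener-gaussianity-final} should consist of two or three lines citing Lemma \ref{lem:covar-general} and the definition.
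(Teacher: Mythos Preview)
Your proposal is correct and matches the paper's own proof essentially verbatim: the paper simply says ``Passing to the limit in the equation \eqref{eq:covar-general} and using the definition of the $N$-degree of a function, we obtain from Lemma \ref{lem:covar-general} the statement of the proposition.'' Your additional commentary about the roles of Lemmas \ref{lem:sum-Schur-Vand-afterDiff}, \ref{lem:two-diff-terms}, \ref{lem:deg-Gfunc}, and Proposition \ref{lem:gauss-many-Schur} is accurate background but not needed in the proof itself.
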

\begin{proof}
Passing to the limit in the equation \eqref{eq:covar-general} and using the definition of the $N$-degree of a function, we obtain from Lemma \ref{lem:covar-general} the statement of the proposition.
\end{proof}

\section{Computation of covariance}
\label{sec:6}

In this section we will compute the covariance in the setting of Theorems \ref{theorem:main-one-level}, \ref{theorem:main-projections}, \ref{theorem:main-multiplication}, \ref{th:general-for-domino}.

\subsection{Covariance for extreme characters }
\label{sec:cov-extr}
In this section we compute the covariance in the setting of Theorem \ref{theorem:main-one-level} for a special class of Schur generating functions (see equation \eqref{eq:covar-cond-extr} below). All computations of this section will be extensively used in the proof of the general result as well.

Let $F(x)$ be a complex analytic function in a neighborhood of the unity, and let
\begin{equation}
\label{eq:Alr-def}
x^l F(x)^{l-r} = \mathbf a_{0}^{l,r} + \mathbf a_{1}^{l,r} (x-1) + \dots + \mathbf a_{n}^{l,r} (x-1)^n + \dots.
\end{equation}
be the Taylor expansion of $x^l F(x)^{l-r}$ at $x=1$.

\begin{lemma}
\label{lem:a-l-r-integrals}
Assume that $x \ne 0$ is a complex number. With the above notations, we have
\begin{equation}
\label{eq:lem-a-l-r-integrals}
\sum_{r=0}^{l} \sum_{i=0}^{r-1} \binom{l}{r} \mathbf a_i^{l,r} x^{i-r} = \frac{1}{2 \pi \ii} \oint_{|y|=\eps} \frac{1}{x-y} \left( 1+ \frac{1}{y} + (1+y) F(1+y) \right)^l dy,
\end{equation}
where $\ep \ll \min(1, |x|)$.
\end{lemma}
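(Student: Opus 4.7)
The plan is to evaluate the contour integral on the right-hand side by expanding the integrand as a Laurent series in $y$ and applying the residue theorem (equivalently, extracting the coefficient of $y^{-1}$). The key observation that makes everything algebraically clean is the factorization
\[
 1 + \frac{1}{y} + (1+y)F(1+y) \;=\; \frac{1+y}{y} + (1+y)F(1+y) \;=\; \frac{(1+y)\bigl(1 + yF(1+y)\bigr)}{y},
\]
so the integrand's $l$th power becomes $y^{-l}(1+y)^l\bigl(1 + yF(1+y)\bigr)^l$.

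Next, I would expand $\bigl(1 + yF(1+y)\bigr)^l$ via the binomial theorem as $\sum_{k=0}^{l} \binom{l}{k} y^k F(1+y)^k$, and observe that after substituting $x = 1+y$ the factor $(1+y)^l F(1+y)^k$ coincides with $x^l F(x)^k$, whose Taylor expansion at $x=1$ is by definition \eqref{eq:Alr-def}, namely $\sum_{n\ge 0}\mathbf a_n^{l,\,l-k}\,y^n$. This yields
\[
 (1+y)^l\bigl(1+yF(1+y)\bigr)^l \;=\; \sum_{k=0}^{l}\binom{l}{k}\sum_{n\ge 0}\mathbf a_n^{l,\,l-k}\,y^{n+k}.
\]

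Since $\eps \ll |x|$, on the contour $|y|=\eps$ the geometric series expansion $\frac{1}{x-y}=\sum_{j\ge 0}\frac{y^j}{x^{j+1}}$ converges uniformly, so the integral reduces to a sum of coefficient extractions:
\[
 \frac{1}{2\pi\ii}\oint_{|y|=\eps}\frac{dy}{x-y}\cdot\frac{(1+y)^l\bigl(1+yF(1+y)\bigr)^l}{y^l}
 \;=\; \sum_{j\ge 0}\frac{1}{x^{j+1}}\,[y^{l-j-1}]\bigl[(1+y)^l\bigl(1+yF(1+y)\bigr)^l\bigr].
\]
Substituting the previous expansion, the coefficient $[y^{l-j-1}]$ picks out pairs $(k,n)$ with $n+k=l-j-1$, giving $\sum_{k=0}^{l}\binom{l}{k}\mathbf a_{l-j-1-k}^{l,\,l-k}$, where terms with negative index vanish. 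Finally I would reindex by $r=l-k$ and $i=r-j-1$ (so $j=r-i-1$ runs from $0$ to $r-1$ as $i$ runs from $0$ to $r-1$), which transforms the double sum into exactly $\sum_{r=0}^{l}\binom{l}{r}\sum_{i=0}^{r-1}\mathbf a_i^{l,r}\,x^{i-r}$, matching the left-hand side.

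There is no real obstacle here; the statement is a bookkeeping identity between Taylor coefficients of $x^l F(x)^{l-r}$ and a residue integral. The only place to be careful is the reindexing in the last step and the range of summation (the constraint $i<r$, equivalently $j\ge 0$, is precisely what is enforced by the pole at $y=0$ being of order $l-j\ge 1$ in the coefficient-extraction step).
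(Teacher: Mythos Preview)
Your proof is correct. The approach differs from the paper's in its direction and key manipulation. The paper starts from the left-hand side: it writes each $\mathbf a_i^{l,r}$ as a Cauchy integral, sums the finite geometric series $\sum_{i=0}^{r-1} x^i/y^{i+1} = (1 - x^r y^{-r})/(y-x)$, then applies the binomial theorem in $r$ to obtain a difference of two contour integrals, one of which has no pole inside $|y|\le\eps$ and therefore vanishes, leaving the right-hand side. You instead start from the right-hand side: your key step is the algebraic factorization $1 + 1/y + (1+y)F(1+y) = (1+y)\bigl(1+yF(1+y)\bigr)/y$, after which the binomial expansion, the geometric series for $1/(x-y)$, and a reindexing deliver the left-hand side directly. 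Your route avoids the ``one term has no pole'' observation at the cost of spotting the factorization; the paper's route is perhaps more systematic (it would work even without the factorization being available) but requires recognizing the vanishing term. Both are short and elementary.
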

\begin{proof}
The Cauchy integral formula yields
$$
\mathbf a_i^{l,r} = \frac{1}{2 \pi \ii} \oint_{|y|=\ep} \frac{(1+y)^l F(1+y)^{l-r}}{y^{i+1}} dy.
$$
Substituting this into the left-hand side of \eqref{eq:lem-a-l-r-integrals} and using the equalities
$$
\sum_{i=0}^{r-1} \frac{x^i}{y^{i+1}} = \frac{1 - x^r y^{-r}}{y-x},
$$
and
$$
\sum_{r=0}^l \binom{l}{r} \frac{1 - x^r y^{-r}}{x^r F(1+y)^r} = \left(1+ \frac{1}{x F(1+y)} \right)^l - \left( 1 + \frac{1}{y F(1+y)} \right)^l,
$$
we arrive at the formula
\begin{multline*}
\sum_{r=0}^{l} \sum_{i=0}^{r-1} \binom{l}{r} \mathbf a_i^{l,r} x^{i-r} = \oint_{|y|=\ep} \frac{dy }{y-x} \\ \times \left( \left( (1+y) F(1+y) +\frac{1+y}{x} \right)^l - \left( 1+ \frac{1}{y} + (1+y) F(1+y) \right)^l \right).
\end{multline*}
Note that the first term in the right-hand side has no pole inside $|y| \le \ep$ and, therefore, is equal to 0, while the second term coincides with the right-hand side of \eqref{eq:lem-a-l-r-integrals}.
\end{proof}

Let $F_1(x)$, $F_2(x)$ be analytic complex functions in a neighborhood of the unity. Let $\mathbf a_{i,[2]}^{l,r}$ we denote the coefficients determined by \eqref{eq:Alr-def} with $F(x) = F_2(x)$. Let us define the functions
\begin{equation}
\label{eq:def-Blr}
B_{l,r} (x) := \frac{x^l F_2(x)^{l-r} - \mathbf a_{0,[2]}^{l,r} - \mathbf a_{1,[2]}^{l,r} \cdot (x-1) - \dots - \mathbf a_{r-1,[2]}^{l,r} \cdot (x-1)^{r-1}}{(x-1)^r}.
\end{equation}
$$
\FFF_1 (z):= \frac{1}{z} +1 + (1+z) F_1(1+z), \qquad \FFF_2 (z):= \frac{1}{z} +1 + (1+z) F_2(1+z).
$$

\begin{lemma}
\label{lem:extremal-equality}
With the above notations, we have
\begin{multline}
\label{eq:extremal-equality}
\left. k \sum_{q=0}^{k-1} \sum_{r=0}^{l} \binom{l}{r} \binom{k-1}{q} \frac{1}{(q+1)!} \pa^q_x \left( x^k  F_1(x)^{k-1-q} B'_{l,r} (x) \right) \right|_{x=1} \\ = \frac{1}{(2 \pi \ii)^2} \oint_{|z|=\ep} \oint_{|w|=2 \ep} \FFF_1 (z)^k \FFF_2 (w)^l \frac{1}{(z-w)^2} dz dw,
\end{multline}
where the contours of integration are counter-clockwise and $\ep \ll 1$.
\end{lemma}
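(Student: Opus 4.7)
The plan is to reduce both sides to a common double contour integral by recognizing Cauchy-type representations. The crucial identity driving the computation is
\[
x\Bigl(F(x)+\tfrac{1}{x-1}\Bigr)\Big|_{x=1+z} \;=\; 1+\tfrac{1}{z}+(1+z)F(1+z),
\]
so that $x^k(F_1(x)+1/(x-1))^k|_{x=1+z} = \FFF_1(z)^k$, and analogously for $F_2$. This is what converts the natural integrands into $\FFF_1,\FFF_2$.

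First I would handle the $r$-sum. Using the expansion $\frac{1}{w-x}=\sum_{n\ge 0}\frac{(x-1)^n}{(w-1)^{n+1}}$ together with the Cauchy formula $\mathbf{a}_{j,[2]}^{l,r}=\frac{1}{2\pi\ii}\oint \frac{w^l F_2(w)^{l-r}}{(w-1)^{j+1}}dw$, I would establish
\[
B_{l,r}(x)\;=\;\frac{1}{2\pi\ii}\oint_{|w-1|=\eps_0}\frac{w^l F_2(w)^{l-r}}{(w-1)^r(w-x)}\,dw,\qquad |x-1|<\eps_0.
\]
Differentiating in $x$, summing against $\binom{l}{r}$ via the binomial theorem, and substituting $w=1+w'$ gives
\[
h(1+z):=\sum_{r=0}^{l}\binom{l}{r}B'_{l,r}(1+z)\;=\;\frac{1}{2\pi\ii}\oint_{|w'|=\eps_0}\frac{\FFF_2(w')^l}{(w'-z)^2}\,dw',\qquad |z|<\eps_0.
\]

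Next I would handle the $q$-sum. For any $g$ analytic near $1$,
\[
\frac{1}{(q+1)!}\pa_x^q g(x)\big|_{x=1}\;=\;\frac{1}{q+1}\cdot\frac{1}{2\pi\ii}\oint_{|x-1|=\delta}\frac{g(x)}{(x-1)^{q+1}}\,dx.
\]
Using the identity $k\binom{k-1}{q}/(q+1)=\binom{k}{q+1}$ and applying the binomial theorem to the $j=q+1$ sum, the LHS becomes
\[
\frac{1}{2\pi\ii}\oint_{|x-1|=\delta} x^k h(x)\sum_{j=1}^{k}\binom{k}{j}\frac{F_1(x)^{k-j}}{(x-1)^j}dx =\frac{1}{2\pi\ii}\oint x^k h(x)\Bigl[\bigl(F_1(x)+\tfrac{1}{x-1}\bigr)^k-F_1(x)^k\Bigr]dx.
\]
The subtracted term $x^k h(x) F_1(x)^k$ is analytic inside $|x-1|=\delta$, so its integral vanishes by Cauchy's theorem. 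Substituting $x=1+z$ and invoking the identity from the first paragraph yields
\[
\text{LHS}\;=\;\frac{1}{2\pi\ii}\oint_{|z|=\delta}\FFF_1(z)^k\,h(1+z)\,dz\;=\;\frac{1}{(2\pi\ii)^2}\oint_{|z|=\delta}\oint_{|w|=\eps_0}\frac{\FFF_1(z)^k\,\FFF_2(w)^l}{(w-z)^2}\,dw\,dz,
\]
with $\delta<\eps_0$. Since $(w-z)^2=(z-w)^2$, choosing $\delta=\eps$ and $\eps_0=2\eps$ matches the RHS exactly.

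The main obstacle is not conceptual but bookkeeping: one must arrange contour radii so that $|z|<|w|$ throughout (to validate the Cauchy expansions for both $B_{l,r}$ and for $h(1+z)$), and one must verify the analyticity of $h$ and of $x^k F_1(x)^k h(x)$ at $x=1$ to drop the $F_1^k$ term. All interchanges of summation and integration are harmless because the sums in $r,j,q$ are finite and everything in sight is analytic.
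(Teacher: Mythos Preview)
Your proof is correct and follows the same overall strategy as the paper (Cauchy integral formula plus binomial theorem to convert the $q$- and $r$-sums into the functions $\FFF_1,\FFF_2$), but your organization is slightly cleaner. The paper performs the $q$-sum first, then computes $\sum_r\binom{l}{r}B_{l,r}(1+z)$ by combining the explicit definition \eqref{eq:def-Blr} with Lemma~\ref{lem:a-l-r-integrals}; this produces two terms, $\FFF_2(z)^l-\frac{1}{2\pi\ii}\oint_{|w|=\ep/2}\frac{\FFF_2(w)^l}{z-w}\,dw$, and the paper must then deform the $w$-contour from $|w|=\ep/2$ to $|w|=2\ep$, picking up a residue at $w=z$ that cancels the first term. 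You instead derive the single integral representation $B_{l,r}(x)=\frac{1}{2\pi\ii}\oint\frac{w^lF_2(w)^{l-r}}{(w-1)^r(w-x)}\,dw$ with the $w$-contour already outside $x$, so after the binomial summation in $r$ the nested contours come out in the right order from the start and no residue manipulation is needed. Both routes are equivalent; yours trades the contour-swap step for the (easy) verification that $B_{l,r}$ admits this integral form.
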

\begin{proof}

By the Cauchy integral formula we have
$$
\left. \pa^q_x \left( x^k F_1(x)^{k-1-q} B'_{l,r} (x) \right) \right|_{x=1} = \frac{q!}{2 \pi \ii} \oint_{|z|=\ep} \frac{(1+z)^{k} F_1 (1+z)^{k-1-q} B'_{l,r} (1+z)}{ z^{q+1}} dz.
$$

Therefore, the left-hand side of \eqref{eq:extremal-equality} can be written as
\begin{equation}
\label{eq:transform-tr-tr}
\sum_{r=0}^{l} \binom{l}{r} \frac{ k }{2 \pi \ii} \oint_{|z|=\ep} (1+z)^{k} F_1 (1+z)^{k} B'_{l,r} (1+z) \sum_{q=0}^{k-1} \binom{k-1}{q} \frac{1}{(q+1) F_1 (1+z)^{q+1} z^{q+1} } dz.
\end{equation}

The binomial theorem gives
\begin{equation}
\label{eq:binom-theor-add}
\sum_{q=0}^{k-1} \binom{k-1}{q} \frac{\left( F_1 (1+z)^{-1} z^{-1} \right)^{q+1}}{q+1} = \frac{\left( 1+F_1 (1+z)^{-1} z^{-1} \right)^{k}}{ k} - \frac{1}{k}.
\end{equation}
Plugging this expression into \eqref{eq:transform-tr-tr} and observing that the term with $-1/k$ gives zero contribution (because $(1+z)^{k} F_1 (1+z)^{k} B'_{l,r} (1+z)$ does not have a pole at zero), we obtain that the left-hand side of \eqref{eq:extremal-equality} equals
$$
\frac{1}{2 \pi \ii} \oint_{|z|=\ep} \FFF_1 (z)^k \sum_{r=0}^l \binom{l}{r} B'_{l,r}(1+z) dz.
$$
The definition \eqref{eq:def-Blr} implies that
$$
\sum_{r=0}^{l} \binom{l}{r} B_{l,r} (1+z) = \sum_{r=0}^l \binom{l}{r} \frac{(1+z)^l F_2 (1+z)^l}{z^r F_2 (1+z)^r} - \sum_{r=0}^l \binom{l}{r} \sum_{i=0}^{r-1} \frac{\mathbf a_{i,[2]}^{l,r} z^i}{z^r}.
$$
The binomial theorem and Lemma \ref{lem:a-l-r-integrals} allows to rewrite this expression in the form
$$
\FFF_2 (z)^l - \frac{1}{2 \pi \ii} \oint_{|w|=\ep/2} \frac{1}{z-w} \FFF_2 (w)^l dw.
$$

Therefore, the left-hand side of \eqref{eq:extremal-equality} can be expressed as a sum of two terms
\begin{equation}
\label{eq:covar-for-two-func-after}
\frac{1}{2 \pi \ii} \oint_{|z|=\ep} \FFF_1 (z)^k \pa_z \left[ \FFF_2 (z)^l \right] dz -
\frac{1}{(2 \pi \ii)^2} \oint_{|z|=\ep} \oint_{|w|=\ep/2} \FFF_1 (z)^k \pa_z \left[ \frac{1}{z-w} \FFF_2 (w)^l \right].
\end{equation}
Note that the second term in \eqref{eq:covar-for-two-func-after} equals
\begin{equation}
\label{eq:covar-lemma-second-term}
\frac{1}{(2 \pi \ii)^2} \oint_{|z|=\ep} \oint_{|w|=\ep/2} \FFF_1 (z)^k \FFF_2 (w)^l \frac{dz dw}{(z-w)^2}.
\end{equation}
Let us move the contour $|w|=\ep/2$ to the contour $|w|= 2 \ep$ in \eqref{eq:covar-lemma-second-term}.
In the process we get the residue at $z=w$ which cancels with the first term from \eqref{eq:covar-for-two-func-after}.

Thus, the left-hand side of \eqref{eq:extremal-equality} equals
\begin{multline*}
- \frac{1}{(2 \pi \ii)^2} \oint_{|z|=\ep} \oint_{|w|=2 \ep} \FFF_1 (z)^k \pa_z \left[ \frac{1}{z-w} \FFF_2 (w)^l \right] \\ = \frac{1}{(2 \pi \ii)^2} \oint_{|z|=\ep} \oint_{|w|=2 \ep} \FFF_1 (z)^k \FFF_2 (w)^l \frac{1}{(z-w)^2} dz dw,
\end{multline*}
which concludes the proof.
\end{proof}

We now consider a special case of Theorem \ref{theorem:main-one-level}. Let $\rho = \{ \rho_N \}$ be a sequence of probability measures, where $\rho_N$ is a probability measure on $\GT_N$, and let $\{ \c_k \}_{k \ge 1}$ be reals such that the function
$$
F(x) := \sum_{k=1}^{\infty} \frac{\c_k}{(k-1)!} (x-1)^{k-1}
$$
is well defined in a neighborhood of unity. We assume that the Schur generating function $S_N (\vec{x}) := S_{\rho_N} (\vec{x})$ has the form
\begin{equation}
\label{eq:covar-cond-extr}
S_N (\vec{x}) = \exp \left( N \sum_{i=1}^N F_N (x_i) \right),
\end{equation}
where $\{ F_N (x) \}_{N \ge 1}$ is a sequence of holomorphic functions such that
$$
\lim_{N \to \infty} \pa_x^{k} F_N (x) = \c_k, \qquad \mbox{for any $k \in \N$}.
$$
Clearly, such a Schur generating function is appropriate in the sense of Section \ref{sec:2-2} with $F_{\rho} (x) = F(x)$, $G_{\rho} (x,y) = 0$, and $Q(x,y)=\frac{1}{(x-y)^2}$.

\begin{proposition}
\label{prop:cov-extr}
Under the assumptions and in the notations of Theorem \ref{theorem:main-one-level} and additional assumption \eqref{eq:covar-cond-extr}, we have
\begin{equation*}
\lim_{N \to \infty} \frac{\cov \left( p_k^{(N)}, p_l^{(N)} \right)}{N^{k+l}} = \oint_{|z|=\ep} \oint_{|w|=2 \ep} \FFF (z)^k \FFF (w)^l \frac{1}{(z-w)^2} dz dw,
\end{equation*}
where $\FFF (z) := \frac{1}{z} +1 + (1+z) F(1+z)$.
\end{proposition}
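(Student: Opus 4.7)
The plan is to specialise the machinery of Section \ref{sec:5} to the factorised Schur generating function \eqref{eq:covar-cond-extr} and then invoke Lemma \ref{lem:extremal-equality}. Proposition \ref{prop:gener-gaussianity-final} with $s=2$, $l_1=k$, $l_2=l$ reduces the covariance to the limit of $\GG_{(k,l)}(\vec{x})|_{\vec{x}=(1^N)}/N^{k+l}$, since $\mathcal{P}^2_\emptyset$ contains the single pairing $\{(1,2)\}$. The factorised hypothesis gives $\pa_i \log S_N = N F_N'(x_i)$ with $F_N'(x)\to F(x)$ uniformly near $x=1$, so every relevant quantity ultimately depends on a single-variable function.

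The key intermediate step is to extract the leading $N$-order of $\pa_{a_1'}\FF_{(l)}$ when restricted to configurations with all but finitely many coordinates equal to $1$. By the remark following Lemma \ref{lem:sum-Schur-Vand-afterDiff} one may replace $\pa_{a_1'}\FF_{(l)}$ by $\pa_{a_1'} M_l$, modulo terms of $N$-degree less than $l$. For a fixed finite subset $A'\ni a_1'$ and with $x_j=1$ for $j\notin A'$, only subsets $A=\{a_1,\dots,a_{r+1}\}$ containing $a_1'$ contribute to $\pa_{a_1'}M_l$, and the subsets in which the remaining $r$ elements lie outside $A'$ dominate in $N$ (they are counted by $\binom{N-|A'|}{r}\sim N^r/r!$). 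Lemma \ref{lem:basic-2} evaluates the corresponding symmetric sum in the limit as those $r$ outside variables tend to $1$, yielding $N^{l-r} B_{l,r}(x_{a_1'})/(r+1)$ after cancelling $(r+1)!$ against the Sym normalisation, where $B_{l,r}$ is the function from \eqref{eq:def-Blr} with $F_2 = F$. Summing over $r$, differentiating, and dividing by $N^l$ produces the uniform limit
\[
\frac{1}{N^l}\,\pa_{x_{a_1'}}\FF_{(l)}(\vec{x})\Big|_{x_j=1,\;j\notin A'} \xrightarrow[N\to\infty]{} \sum_{r=0}^{l}\binom{l}{r} B'_{l,r}(x_{a_1'}),
\]
a function of $x_{a_1'}$ alone.

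Substituting this limit and $(\pa_{a_1'}\log S_N)^{k-1-q}\sim N^{k-1-q}F(x_{a_1'})^{k-1-q}$ into the definition \eqref{eq:def-GGfunc} of $\GG_{(k,l)}$, the expression inside the Sym becomes, to leading order, the ratio of the single-variable function $\mathbf F_q(x):= x^k F(x)^{k-1-q} \sum_r \binom{l}{r} B'_{l,r}(x)$ by the Vandermonde-type denominator. For such a ratio Lemma \ref{lem:basic-1} evaluates Sym at $\vec{x}=(1^N)$ as $1/(q+1)$ times the $q$-th Taylor coefficient of $\mathbf F_q$ at $x=1$. Combining this with the $\binom{N}{q+1}(q+1)!\sim N^{q+1}$ factor from the outer subset sum and the $N^{k-1-q}$ from the powers of $N F_N'$, the total $N$-power assembles into $N^{k+l}$ and one obtains
\[
\lim_{N\to\infty}\frac{\GG_{(k,l)}(\vec{x})|_{\vec{x}=(1^N)}}{N^{k+l}} = k\sum_{q=0}^{k-1}\sum_{r=0}^{l}\binom{k-1}{q}\binom{l}{r}\frac{1}{(q+1)!}\,\pa_x^q\!\left[x^k F(x)^{k-1-q} B'_{l,r}(x)\right]\!\Big|_{x=1}.
\]
This is precisely the left-hand side of \eqref{eq:extremal-equality} with $F_1 = F_2 = F$, so Lemma \ref{lem:extremal-equality} completes the proof. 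The principal technical obstacle is to verify that subsets $A\ni a_1'$ whose other elements partially overlap $A'$ contribute at strictly lower $N$-order, so that the limit of $\pa_{a_1'}\FF_{(l)}/N^l$ is genuinely a pure function of $x_{a_1'}$ and no cross-terms pollute the application of Lemma \ref{lem:basic-1}; the uniformity and Taylor bounds developed in Lemmas \ref{lem:shur-shod-2}--\ref{lem:funct-raznosti} provide the necessary framework.
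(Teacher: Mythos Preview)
Your proposal is correct and follows essentially the same route as the paper's own proof: both reduce the covariance to $\lim_{N\to\infty}\GG_{(k,l)}(1^N)/N^{k+l}$, exploit the factorisation $\pa_i\log S_N = N F_N'(x_i)$, apply Lemma \ref{lem:basic-2} to the inner (\,$b$\,-index) symmetrisation to produce the functions $B_{l,r}$, then Lemma \ref{lem:basic-1} to the outer (\,$a$\,-index) symmetrisation, and finish with Lemma \ref{lem:extremal-equality} at $F_1=F_2=F$. The only organisational difference is that you isolate the limit of $N^{-l}\pa_{a_1'}\FF_{(l)}$ as a standalone computation before substituting into $\GG_{(k,l)}$, whereas the paper carries both index-set sums simultaneously and phrases the lower-order estimate as a case analysis on $|\{a_1,\dots,a_{q+1}\}\cap\{b_1,\dots,b_{r+1}\}|$; your ``technical obstacle'' about partial overlap with $A'$ is exactly that case analysis.
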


\begin{proof}

We denote by $\approx$ the equality of highest $N$-degree.

As explained in Section \ref{sec:4}, we have
\begin{multline}
\label{eq:expect1}
\mathbf E \left( p_k^{(N)} p_l^{(N)} \right) = \left.
\frac{1}{V_N(\vec{x})} \sum_{i=1}^N (x_i \pa_i)^k \sum_{j=1}^N (x_j \pa_j)^l V_N (\vec{x}) S_N (\vec{x}) \right|_{\vec{x}=1} \\ \left. = \frac{1}{V_N(\vec{x})} \sum_{i=1}^N (x_i \pa_i)^k \sum_{j=1}^N (x_j \pa_j)^l V_N (\vec{x}) \exp \left( N \sum_{i=1}^{N} F_N (x_i) \right) \right|_{\vec{x}=1}.
\end{multline}

Therefore, Lemma \ref{lem:two-diff-terms} implies that
\begin{equation}
\label{eq:covar-ext-lim-beg}
\lim_{N \to \infty} \frac{\cov \left( p_k^{(N)}, p_l^{(N)} \right)}{N^{k+l}} = \lim_{N \to \infty} \frac{\GG_{(k,l)} (1^N)}{N^{k+l}}.
\end{equation}
Let us compute the right-hand side of this formula. By definition \eqref{eq:def-GGfunc},
\begin{multline}
\label{eq:covar-exact-form1}
\GG_{(k,l)} (1^N) \\ = k \left. \sum_{q=0}^{k-1} \sum_{\{a_1, \dots, a_{q+1} \} \subset [N] } \binom{k-1}{q} (q+1)! \ Sym_{a_1, \dots, a_{q+1}} \frac{ x_{a_1}^{k} \pa_{a_1} \left[ \FF_{(l)} \right] \left( \pa_{a_1} \left[ \log S_N \right] \right)^{k-1-q}} {(x_{a_1} -x_{a_2} ) \dots (x_{a_1}-x_{a_{q+1}})} \right|_{\vec{x}=1} \\ \approx k \sum_{q=0}^{k-1} \sum_{\{a_1, \dots, a_{q+1} \} \subset [N] } \binom{k-1}{q} (q+1)! \ Sym_{a_1, \dots, a_{q+1}} \frac{ x_{a_1}^{k} \left( \pa_{a_1} \left[ \log S_N \right] \right)^{k-1-q}} {(x_{a_1} -x_{a_2} ) \dots (x_{a_1}-x_{a_{q+1}})} \\ \left. \times \pa_{a_1} \left[ \sum_{r=0}^{l} \sum_{\{b_1, \dots, b_{r+1} \} \subset [N] } \binom{l}{r} (r+1)! \ Sym_{b_1, \dots, b_{r+1}} \frac{x_{b_1}^l \left( \pa_{b_1} \left[ \log S_N \right] \right)^{l-r}}{(x_{b_1}- x_{b_2}) \dots (x_{b_1}- x_{b_{r+1}})} \right] \right|_{\vec{x}=1}.
\end{multline}
The right-hand side of the approximate equality in \eqref{eq:covar-exact-form1} contains only leading terms from $\pa_{a_1} \left[ \FF_{(l)} \right]$, see \eqref{eq:Fl-main-terms}; it is proven by following the same arguments as in Section \ref{sec:5-2}.

Now we will use the special form \eqref{eq:covar-cond-extr} of our function $S_N$. In this case we see that $\pa_{a_1} \left[ \log S_N \right] = N F(x_{a_1})$. Therefore,
\begin{multline}
\label{eq:covar-exact-form3}
\eqref{eq:covar-exact-form1} \approx k \sum_{q=0}^{k-1} \sum_{\{a_1, \dots, a_{q+1} \} \subset [N] } \binom{k-1}{q} (q+1)! \ Sym_{a_1, \dots, a_{q+1}} \frac{ x_{a_1}^{k} F (x_{a_1})^{k-1-q} N^{k-1-q} } {(x_{a_1} -x_{a_2} ) \dots (x_{a_1}-x_{a_{q+1}})} \\ \left. \times \pa_{a_1} \left[ \sum_{r=0}^{l} \sum_{\{b_1, \dots, b_{r+1} \} \subset [N] } \binom{l}{r} (r+1)! \ Sym_{b_1, \dots, b_{r+1}} \frac{x_{b_1}^l F (x_{b_1})^{l-r} N^{l-r} }{(x_{b_1}- x_{b_2}) \dots (x_{b_1}- x_{b_{r+1}})} \right] \right|_{\vec{x}=1}.
\end{multline}
Let us analyze this expression for different $a$'s and $b$'s.
Note that we must have $a_1 \in \{b_1, \dots, b_r \}$ in order to get a non-zero contribution. Also we see that if $| \{a_1, \dots, a_{q+1} \} \cap \{b_1, \dots, b_r \}| \ge 2$, then the total $N$-degree is not greater than $(k-1-q)+(l-r)+(q+1)+(r+1)-2 = k+l-1$ ( $k-1-q$ and $l-r$ come from the power of $N$, $q+1$, $r+1$, and $-2$ come from the summation over sets of indices); therefore, these terms do not contribute to the $N$-degree $k+l$. We obtain that only terms with $\{a_1, \dots, a_{q+1} \} \cap \{b_1, \dots, b_r \} = \{a_1\}$ contribute to the limit.

For these terms we use Lemma \ref{lem:basic-2} for the symmetrization over $b$'s and obtain:
\begin{multline}
\label{eq:covar-exact-form4}
\eqref{eq:covar-exact-form3} \approx N^{k+l-q-r-1} k \sum_{q=0}^{k-1} \sum_{\{a_1, \dots, a_{q+1} \} \subset [N] } \binom{k-1}{q} (q+1)! \\ \left. \times Sym_{a_1, \dots, a_{q+1}} \frac{ x_{a_1}^{k} F_{a_1} (x_{a_1})^{k-1-q} } {(x_{a_1} -x_{a_2} ) \dots (x_{a_1}-x_{a_{q+1}})} \pa_{a_1} \left[ \sum_{r=0}^{l} r! \sum_{\{b_2, b_3, \dots, b_{r+1} \} \subset [N] } \binom{l}{r} B_{l,r} (x_{a_1}) \right] \right|_{\vec{x}=1}
\\ = N^{k+l-q-1} k \sum_{q=0}^{k-1} \sum_{\{a_1, \dots, a_{q+1} \} \subset [N] } \binom{k-1}{q} (q+1)! \\ \left. \times Sym_{a_1, \dots, a_{q+1}} \frac{ x_{a_1}^{k} F_{a_1} (x_{a_1})^{k-1-q} } {(x_{a_1} -x_{a_2} ) \dots (x_{a_1}-x_{a_{q+1}})} \left[ \sum_{r=0}^{l} \binom{l}{r} B'_{l,r} (x_{a_1}) \right] \right|_{\vec{x}=1},
\end{multline}
where we use the notation \eqref{eq:def-Blr} with $F_2 (x) = F(x)$. We also use that the summation $\sum_{\{a_1, \dots, a_{q+1} \} \subset [N] }$ contains $\approx N^{q+1} / (q+1)!$ terms. For the symmetrization over $a$'s it is enough to apply Lemma \ref{lem:basic-1}. We obtain
\begin{equation}
\label{eq:last-formula-extr-char}
\eqref{eq:covar-exact-form4} \approx N^{k+l} k \left. \sum_{q=0}^{k-1} \sum_{r=0}^{l} \binom{l}{r} \binom{k-1}{q} \frac{1}{(q+1)!} \partial_x^q \left(x^k F'(x)^{k-1-q} B'_{l,r}(x) \right) \right|_{x=1}.
\end{equation}
Thus,
\begin{equation*}
\lim_{N \to \infty} \frac{\cov \left( p_k^{(N)}, p_l^{(N)} \right)}{N^{k+l}} = k \left. \sum_{q=0}^{k-1} \sum_{r=0}^{l} \binom{l}{r} \binom{k-1}{q} \frac{1}{(q+1)!} \partial_x^q \left(x^k F'(x)^{k-1-q} B'_{l,r}(x) \right) \right|_{x=1}.
\end{equation*}
Now Lemma \ref{lem:extremal-equality} with $F_1(x)=F(x)$ and $F_2(x)=F(x)$ implies the statement of the proposition.
\end{proof}

\subsection{Computation of one-level covariance in the general case}
\label{sec:6-2}

Here we compute the covariance in Theorem \ref{theorem:main-one-level}. We use computations and arguments from the special case considered in the previous section.

\begin{proposition}
\label{prop:main-covar-comput}
Let $\rho = \{\rho_N \}$ be an appropriate sequence of measures on $\GT_N$, $N=1,2,\dots$, and corresponding to functions $F_{\rho}(x)$ and $Q_{\rho} (x,y)$. In notations of Theorem \ref{theorem:main-one-level} we have
\begin{multline*}
\lim_{N \to \infty} \frac{\cov (p_k^{(N)}, p_l^{(N)} )}{N^{k+l}} =
\frac{1}{(2 \pi \ii)^2} \oint_{|z|=\eps} \oint_{|w|=2 \eps} \left( \frac{1}{z} +1 + (1+z) F_{\rho} (1+z) \right)^k \\ \times \left( \frac{1}{w} +1 + (1+w) F_{\rho} (1+w) \right)^l  Q_{\rho} (z, w) dz dw,
\end{multline*}
\end{proposition}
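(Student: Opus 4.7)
The proof plan is to generalize the calculation of Proposition \ref{prop:cov-extr} by tracking the additional contribution coming from the mixed derivatives $\partial_i\partial_j \log S_N$ with $i\ne j$, which vanished in the factorized special case \eqref{eq:covar-cond-extr}. By Proposition \ref{prop:gener-gaussianity-final}, it suffices to evaluate $\lim_{N\to\infty} N^{-(k+l)}\GG_{(k,l)}(1^N)$. Substituting the leading-order expansion \eqref{eq:Fl-main-terms} of $\partial_{a_1}\FF_{(l)}$ into the definition \eqref{eq:def-GGfunc}, the derivative $\partial_{a_1}$ eventually acts on a factor $(\partial_{b_1} \log S_N)^{l-r}$, producing either $\partial_{a_1}^2 \log S_N$ (when $b_1=a_1$, of $N$-degree $1$) or $\partial_{a_1}\partial_{b_1}\log S_N$ (when $b_1 \ne a_1$, of $N$-degree $0$). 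The same $N$-degree accounting as in the proofs of Lemmas \ref{lem:sum-Schur-Vand-afterDiff} and \ref{lem:two-diff-terms} shows that the two outer index sets $\{a_1,\dots,a_{q+1}\}$ and $\{b_1,\dots,b_{r+1}\}$ must meet only in the single element $a_1$; all other configurations lose at least one factor of $N$ and drop out.

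After this reduction, the contribution splits naturally into two pieces. The \emph{diagonal} piece, in which the new derivative yields $\partial_{a_1}^2\log S_N \sim N\cdot F_\rho'(x_{a_1})$, repeats verbatim the chain of Taylor-coefficient manipulations in the proof of Proposition \ref{prop:cov-extr}: after the symmetrizations over the $b$'s (via Lemma \ref{lem:basic-2}) and over the $a$'s (via Lemma \ref{lem:basic-1}), one recovers precisely the left-hand side of \eqref{eq:extremal-equality} with $F_1=F_2=F_\rho$, hence by Lemma \ref{lem:extremal-equality} the double integral of $\FFF(z)^k \FFF(w)^l\cdot (z-w)^{-2}$. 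The \emph{off-diagonal} piece, where one picks up the bounded quantity $\partial_{a_1}\partial_{b_1}\log S_N$ with $a_1\ne b_1$, is structurally identical except that the final symmetrized expression carries an extra factor which, by Definition \ref{def:main}, converges to the analytic function $G_\rho(x_{a_1}, x_{b_1})$ evaluated near $(1,1)$.

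To convert this off-diagonal contribution into the claimed integral, I would establish a two-variable analog of Lemma \ref{lem:extremal-equality}: expand $G_\rho(1+z,1+w)$ in its double Taylor series and run the same Cauchy-integral/binomial-identity argument (as in Lemmas \ref{lem:a-l-r-integrals} and \ref{lem:extremal-equality}) \emph{twice}, once for the $z$-variable using the symmetrization over the $a$'s and once for the $w$-variable using the symmetrization over the $b$'s. The summations decouple because the index sets only intersect in $a_1$, and the resulting identity yields
\[
\frac{1}{(2\pi\ii)^2}\oint_{|z|=\eps}\oint_{|w|=2\eps}\FFF(z)^k\,\FFF(w)^l\, G_\rho(1+z,1+w)\,dz\,dw.
\]
Adding the two pieces and recognising $G_\rho(1+z,1+w)+(z-w)^{-2}=Q_\rho(z,w)$ proves the proposition. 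The main obstacle is the bookkeeping required to justify this decoupling of the two symmetrizations and to make explicit the two-variable analog of Lemma \ref{lem:extremal-equality}: one must check that mixing terms (e.g.\ $\partial_{a_1}^2\log S_N$ combined with additional mixed derivatives on other $b_j$'s, or higher-order mixed derivatives involving three or more distinct indices) are all genuinely lower $N$-degree, so that only the clean "diagonal $+$ single off-diagonal pair" configurations survive in the limit.
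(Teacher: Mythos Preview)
Your overall split into a ``diagonal'' piece yielding $(z-w)^{-2}$ and an ``off-diagonal'' piece yielding $G_\rho(1+z,1+w)$ matches the paper exactly, but your index-set bookkeeping is miscounted and this leads you to overcomplicate the second computation. The paper organizes by the size of $\{a_1,\dots,a_{q+1}\}\cap\{b_1,\dots,b_{r+1}\}$. Overlap exactly $1$ (say $a_1=b_1$) is your diagonal piece: the summation over index sets contributes $N^{q+r+1}$, the factors contribute $N^{k-1-q}\cdot N^{l-r}$, and the analysis reduces verbatim to Proposition~\ref{prop:cov-extr} (note that $\partial_{a_1}$ here hits the \emph{entire} $b$-symmetrized expression, denominators included, not just a $\log S_N$ factor). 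Overlap $0$ is the off-diagonal piece: now $\partial_{a_1}$ can only reach the $b$-block through $\log S_N$, producing $\partial_{a_1}\partial_{b_1}\log S_N$ with $a_1\ne b_1$; summation contributes $N^{q+r+2}$ and the factors $N^{k-1-q}\cdot N^{l-r-1}\cdot N^0$, again $N^{k+l}$. Your claim that the sets ``meet only in the single element $a_1$'' is therefore wrong for the off-diagonal piece---if they did meet in $a_1$ while $b_1\ne a_1$ you would lose one power of $N$ in the index summation and the term would drop out.

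Correcting this actually makes the off-diagonal computation the \emph{easier} of the two, not the harder one you anticipate: with disjoint index sets the symmetrizations over the $a$'s and $b$'s decouple completely, Lemma~\ref{lem:basic-1} applies independently to each (no Lemma~\ref{lem:basic-2}, and no two-variable analogue of Lemma~\ref{lem:extremal-equality} is needed), and a single Cauchy/binomial step as in \eqref{eq:differ-ind-res1}--\eqref{eq:covar-first-case-rez} gives the $G_\rho$ integral directly. The paper explicitly remarks on this asymmetry in difficulty after \eqref{eq:covar-second-case-rez}.
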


\begin{proof}

For an integer $n$ we denote by $\tilde T_{(n)} (\vec{x})$ any function of $N$ variables which has $N$-degree less than $n$, and which can change from line to line.

We start our analysis with formulas \eqref{eq:covar-ext-lim-beg} and \eqref{eq:covar-exact-form1} for covariance.
Let us fix indices $\{a_1, \dots, a_{q+1} \}$ and $\{b_1, \dots, b_{r+1} \}$, and consider several cases.

{\bf 1}) Assume that $\{a_1, \dots, a_{q+1} \} \cap \{b_1, \dots, b_{r+1} \} = \varnothing$. Then
\begin{multline*}
\pa_{a_1} \left[ \sum_{r=0}^{l} \sum_{\{b_1, \dots, b_{r+1} \} \subset [N] } \binom{l}{r} (r+1)! \ Sym_{b_1, \dots, b_{r+1}} \frac{x_{b_1}^l \left( \pa_{b_1} \left[ \log S_N \right] \right)^{l-r}}{(x_{b_1}- x_{b_2}) \dots (x_{b_1}- x_{b_{r+1}})} \right]
\\ =  \sum_{r=0}^{l} \sum_{\{b_1, \dots, b_{r+1} \} \subset [N] } \binom{l}{r} (l-r) (r+1)! \ Sym_{b_1, \dots, b_{r+1}} \frac{x_{b_1}^l \left( \pa_{b_1} \left[ \log S_N \right] \right)^{l-r-1} \pa_{a_1} \pa_{b_1} \left[ \log S_N \right] }{(x_{b_1}- x_{b_2}) \dots (x_{b_1}- x_{b_{r+1}})}.
\end{multline*}
Note that the definition of an appropriate sequence of Schur generating functions implies that $\left( \pa_{b_1} \left[ \log S_N \right] \right)^{l-r-1}$ has $N$-degree at most $l-r-1$, and $\pa_{a_1} \pa_{b_1} \left[ \log S_N \right]$ has $N$-degree at most $0$. Moreover, we have
\begin{equation}
\label{eq:equiv-approp}
\left( \pa_{b_1} \left[ \log S_N \right] \right)^{l-r-1} = N^{l-r-1} F_{\rho} (x_{b_1})^{l-r-1} + \tilde T_{ (l-r-1)}, \qquad \pa_{a_1} \pa_{b_1} \left[ \log S_N \right] = G_{\rho} (x_{a_1}, x_{b_1}) + \tilde T_{(0)}.
\end{equation}
Using these equalities and Lemma \ref{lem:funct-raznosti}, we get
\begin{multline}
\label{eq:symmetr-b-equiv}
Sym_{b_1, \dots, b_{r+1}} \frac{x_{b_1}^l \left( \pa_{b_1} \left[ \log S_N \right] \right)^{l-r-1} \pa_{a_1} \pa_{b_1} \left[ \log S_N \right] }{(x_{b_1}- x_{b_2}) \dots (x_{b_1}- x_{b_{r+1}})}
\\ = Sym_{b_1, \dots, b_{r+1}} \frac{x_{b_1}^l N^{l-r-1} F_{\rho} (x_{b_1})^{l-r-1} G_{\rho} (x_{a_1}, x_{b_1}) }{(x_{b_1}- x_{b_2}) \dots (x_{b_1}- x_{b_{r+1}})} + \tilde T_{(l-r-1)}.
\end{multline}
Note that the first term in the right-hand side of \eqref{eq:symmetr-b-equiv} depends on $r+2$ variables, not on $N$ variables. The dependence on all $N$ variables is present only in $\tilde T_{(l-r-1)}$; our notion of $N$-degree and Lemma \ref{lem:funct-raznosti} guarantee that eventually this function does not contribute to the covariance.

Using \eqref{eq:equiv-approp}, \eqref{eq:symmetr-b-equiv} and Lemma \ref{lem:funct-raznosti} again, we further obtain
\begin{multline}
\label{eq:brrr1}
Sym_{a_1, \dots, a_{q+1}} \frac{ x_{a_1}^{k} \left( \pa_{a_1} \left[ \log S_N \right] \right)^{k-1-q}} {(x_{a_1} -x_{a_2} ) \dots (x_{a_1}-x_{a_{q+1}})} \\
 \times Sym_{b_1, \dots, b_{r+1}} \frac{(l-r) x_{b_1}^l \left( \pa_{b_1} \left[ \log S_N \right] \right)^{l-r-1} \pa_{a_1} \pa_{b_1} \log S_N }{(x_{b_1}- x_{b_2}) \dots (x_{b_1}- x_{b_{r+1}})} \\=
Sym_{a_1, \dots, a_{q+1}} \frac{ x_{a_1}^{k} N^{k-1-q} F_{\rho} (x_{a_1})^{k-1-q}} {(x_{a_1} -x_{a_2} ) \dots (x_{a_1}-x_{a_{q+1}})}
 \\ \times Sym_{b_1, \dots, b_{r+1}} \frac{(l-r) x_{b_1}^l N^{l-r-1} F_{\rho} (x_{b_1})^{l-r-1} G(x_{a_1}, x_{b_1}) }{(x_{b_1}- x_{b_2}) \dots (x_{b_1}- x_{b_{r+1}})} + \tilde T_{(k+l-r-q-2)} (\vec{x}).
\end{multline}
The summation over non-intersecting sets $\{a_1, \dots, a_{q+1} \}$ and $\{b_1, \dots, b_{r+1} \}$ in \eqref{eq:covar-exact-form1} contributes the $N^{q+r+2}$ terms. Applying Lemma \ref{lem:basic-1} to \eqref{eq:brrr1} and using equality $(l-r) \binom{l}{r} = l \binom{l-1}{r}$, we see that the case of non-intersecting indices contributes the term
\begin{multline}
\label{eq:differ-ind-res1}
N^{k+l} \sum_{q=0}^{k-1} \sum_{r=0}^{l-1} \frac{k l}{(q+1)! (r+1)! } \binom{l-1}{r} \binom{k-1}{q} \\ \left. \times \pa_1^q \left[ \pa_2^r G_{\rho} (x_1, x_2) x_{1}^k F_{\rho} (x_1)^{k-1-q} x_{2}^l F_{\rho} (x_2)^{l-1-r} \right] \right|_{x_1=1, x_2=1}
\end{multline}
into the leading order. With the use of the Cauchy integral formula and the binomial theorem ( which is applied in the same way as in \eqref{eq:binom-theor-add}) one can write it in the form
\begin{multline}
\label{eq:covar-first-case-rez}
\frac{N^{k+l}}{(2 \pi \ii)^2} \oint_{|z|=\eps} \oint_{|w|=2 \eps} \left( \frac{1}{z} +1 + (1+z) F_{\rho} (1+z) \right)^k \left( \frac{1}{w} +1 + (1+w) F_{\rho} (1+w) \right)^l \\ \times G_{\rho} (1+z,1+w) dz dw.
\end{multline}

{\bf 2}) Assume that $|\{a_1, \dots, a_{q+1} \} \cap \{b_1, \dots, b_{r+1} \}|=1$. Without loss of generality we can assume that $a_1=b_1$, and all other indices are distinct. Similarly to the case 1), one can use equality \eqref{eq:equiv-approp} to show that
\begin{multline}
\label{eq:covar-333}
Sym_{a_1, \dots, a_{q+1}} \frac{ x_{a_1}^{k} \left( \pa_{a_1} \left[ \log S_N \right] \right)^{k-1-q}} {(x_{a_1} -x_{a_2} ) \dots (x_{a_1}-x_{a_{q+1}})} \\ \times \pa_{a_1} \left[ Sym_{b_1, \dots, b_{r+1}} \frac{x_{b_1}^l \left( \pa_{b_1} \left[ \log S_N \right] \right)^{l-r} }{(x_{b_1}- x_{b_2}) \dots (x_{b_1}- x_{b_{r+1}})} \right] =
Sym_{a_1, \dots, a_{q+1}} \frac{ x_{a_1}^{k} N^{k-1-q} F_{\rho} (x_{a_1})^{k-1-q}} {(x_{a_1} -x_{a_2} ) \dots (x_{a_1}-x_{a_{q+1}})} \\ \times \pa_{a_1} \left[ Sym_{b_1, \dots, b_{r+1}} \frac{x_{b_1}^l N^{l-r} F_{\rho} (x_{b_1})^{l-r} }{(x_{b_1}- x_{b_2}) \dots (x_{b_1}- x_{b_{r+1}})} \right] + \tilde T_{(k+l-r-q-1)} (\vec{x}).
\end{multline}
Note that the summation over indices produces order $N^{r+q+1}$ terms in this case in \eqref{eq:covar-exact-form1}, so the function $\tilde T_{(k+l-r-q-1)} (\vec{x})$ does not contribute to $N$-degree $k+l$. The first term in the right-hand side of \eqref{eq:covar-333} gives rise to exactly the same computation as in Proposition \ref{prop:cov-extr}. As we proved in Proposition \ref{prop:cov-extr}, the contribution of this term to $N$-degree $k+l$ can be written as
\begin{multline}
\label{eq:covar-second-case-rez}
\frac{N^{k+l}}{(2 \pi \ii)^2} \oint_{|z|=\eps} \oint_{|w|=2 \eps} \left( \frac{1}{z} +1 + (1+z) F'_{\rho} (1+z) \right)^k \left( \frac{1}{w} +1 + (1+w) F'_{\rho} (1+w) \right)^l \\ \times \frac{1}{(z-w)^2} dz dw.
\end{multline}
It is interesting to note that while this term has very similar form to \eqref{eq:covar-first-case-rez}, we obtain it as a result of rather lengthy computations of the whole Section \ref{sec:cov-extr}, though the computation behind \eqref{eq:covar-first-case-rez} in case 1) is much simpler.

{\bf 3}) Assume that $|\{a_1, \dots, a_{q+1} \} \cap \{b_1, \dots, b_{r+1} \}| \ge 2$. Then the same argument as in case 2) shows that for the fixed indices the function in the left-hand side of \eqref{eq:covar-333} has a $N$-degree not greater than $(k+l-r-q-1)$, while the summation over all such indices contributes only $N^{r+q}$. Therefore, all such terms do not contribute to $N^{k+l}$.

\medskip

It remains to conclude that the contribution to the $N$-degree $k+l$ is given by the sum of expression from \eqref{eq:covar-first-case-rez} and \eqref{eq:covar-second-case-rez}. Therefore, recalling the definition of $Q_{\rho}$ given in Definition \ref{def:main}, we are done.

\end{proof}

\subsection{Covariance in Theorems \ref{theorem:main-projections}, \ref{theorem:main-multiplication}, \ref{th:general-for-domino} }
\label{sec:6-3}

The arguments of Section \ref{sec:6-2} need only minor modifications in order to compute the covariance in Theorems \ref{theorem:main-projections}, \ref{theorem:main-multiplication}, \ref{th:general-for-domino}. In each case, we start with a general formula for moments \eqref{eq:prop-gen-form-moments} and analyze it in the same way as in the case of one level.

%This lemma guarantees that we can apply Proposition \ref{prop:gen-formula-moments-markov} in the setting of Theorems \ref{theorem:main-projections}, \ref{theorem:main-multiplication}, \ref{th:general-for-domino}.

\textit{Covariance in Theorem \ref{theorem:main-projections}}.

In this case the joint moments on different levels are given by the following differential operators
$$
\mathbf E \left( p_{k_1}^{[a_{t_1} N]} p_{k_2}^{[a_{t_2} N]} \right) = \left. \frac{1}{V_N (\vec{x})} \sum_{i=1}^{[a_{t_1} N]} (x_i \pa_i)^{k_1} \sum_{j=1}^{[a_{t_2} N]} (x_j \pa_j)^{k_2} V_N (\vec{x}) S_N (\vec{x}) \right|_{\vec{x}=1}.
$$

The only difference with computations in Sections \ref{sec:cov-extr} and \ref{sec:6-2} is that in \eqref{eq:covar-exact-form1} the set $\{a_1, \dots, a_{q+1}\}$ is the subset of $\{1,2, \dots, [a_{t_1} N] \}$ and the set $\{b_1, \dots, b_{r+1}\}$ is the subset of $\{1,2, \dots, [a_{t_2} N] \}$.
This leads to the appearance of the factor $a_{t_1}^q a_{t_2}^r$ inside of summations in \eqref{eq:differ-ind-res1} and \eqref{eq:last-formula-extr-char}. The arising modification of computations is given by Lemmas \ref{lem:a-l-r-integrals} and \ref{lem:extremal-equality} with $F_1(x) = \frac{F (x)}{a_{t_1}}$ and $F_2(x) = \frac{F (x)}{a_{t_2}}$.
This gives rise to two functions
$$
\FFF_{1} (z) := \frac{1}{z}+ 1 + \frac{(1+z) F (1+z)}{a_{t_1}}, \qquad  \FFF_{2} (z) := \frac{1}{z}+ 1 + \frac{(1+z) F (1+z)}{a_{t_2} },
$$
instead of one function $\FFF(z)$ (as before, we identify the function $F(x)$ from Section \ref{sec:cov-extr} and $F_{\rho} (x)$). In the end, we obtain
\begin{multline*}
\lim_{N \to \infty} \frac{\cov (p_k^{[a_{t_1} N]}, p_l^{[a_{t_2} N]})}{N^{k+l}} = \frac{a_{t_1}^k a_{t_2}^l}{(2 \pi \ii)^2} \oint_{|z|=\ep} \oint_{|w| = 2 \ep} \FFF_{1} (z)^k \FFF_{2} (w)^l \\ \times \left( G_{\rho} (z,w) + \frac{1}{(z-w)^2} \right) dz dw.
\end{multline*}

\textit{Covariance in Theorem \ref{theorem:main-multiplication}}.

In this case the moments of power sums are given by the following differential operators
$$
\mathbf E \left( p_{k_1;s_1}^{(N)} p_{k_2;s_2}^{(N)} \right) = \left. \frac{1}{V_N (\vec{x}) } \sum_{i=1}^N ( x_i \pa_i)^{k_1} \prod_{r=s_1}^{s_2-1} g_r (\vec{x}) \sum_{j=1}^N (x_j \pa_j)^{k_2} V_N (\vec{x}) H_{s_2} (\vec{x}) \right|_{\vec{x}=1}.
$$
Therefore, the right-hand side of \eqref{eq:covar-exact-form1} has a form
\begin{multline}
\label{eq:covar-exact-form-mult}
k \sum_{q=0}^{k-1} \sum_{\{a_1, \dots, a_{q+1} \} \subset [N] } \binom{k-1}{q} Sym_{a_1, \dots, a_{q+1}} \frac{ x_{a_1}^{k} \left( \pa_{a_1} \left[ \log H_{s_1} \right] \right)^{k-1-q}} {(x_{a_1} -x_{a_2} ) \dots (x_{a_1}-x_{a_{q+1}})} \\ \left. \times \pa_{a_1} \left[ \sum_{r=0}^{l} \sum_{\{b_1, \dots, b_{r+1} \} \subset [N] } \binom{l}{r} Sym_{b_1, \dots, b_{r+1}} \frac{x_{b_1}^l \left( \pa_{b_1} \left[ \log H_{s_2} \right] \right)^{l-r}}{(x_{b_1}- x_{b_2}) \dots (x_{b_1}- x_{b_{r+1}})} \right] \right|_{\vec{x}=1},
\end{multline}
(recall that the functions $H_s$ are defined in \eqref{eq:h-func-def-mult}).

The analysis of this expression goes in exactly the same way as before. The only difference is that
instead of \eqref{eq:equiv-approp} we need to plug
\begin{eqnarray*}
%\label{eq:equiv-approp-2}
\left( \pa_{b_1} \left[ \log H_{s_1} \right] \right)^{l-r} &=& N^{l-r} F_{\rho; (s_1)} (x_{b_1})^{l-r} + \tilde T_{ (l-r)}, \\
\left( \pa_{a_1} \left[ \log H_{s_2} \right] \right)^{k-q-1} &=& N^{k-q-1} F_{\rho; (s_2)} (x_{a_1})^{k-q-1} + \tilde T_{ (k-q-1)}, \\
\pa_{a_1} \pa_{b_1} \left[ \log H_{s_2} \right] &=& G_{\rho; (s_2)} (x_{a_1}, x_{b_1}) + \tilde
T_{(0)}
\end{eqnarray*}
into \eqref{eq:covar-exact-form-mult}.
The appearance of two different functions $F_{\rho; (s_1)} (x)$ and $F_{\rho; (s_2)} (x)$ instead of one function $F_{\rho} (x)$ leads to a modification of computations of Section \ref{sec:6-2} which is covered by Lemmas \ref{lem:a-l-r-integrals} and \ref{lem:extremal-equality} with $F_1(x) = F_{\rho; (s_1)} (x)$ and  $F_2 (x) = F_{\rho; (s_2)} (x)$. This gives the covariance in Theorem \ref{theorem:main-multiplication}.

\textit{Covariance in Theorem \ref{th:general-for-domino} }

The moments of power sums are given by
\begin{multline*}
\mathbf E \left( p_{k_1;t_1}^{[a_{t_1} N]} p_{k_2;t_2}^{[a_{t_2} N]} \right) =  \frac{1}{V_{[a_{t_2} N]} ( x_1, x_2, \dots, x_{[a_{t_2} N]} )} \sum_{i=1}^{[a_{t_1 N}]} ( x_{i} \pa_{i})^{k_1} \prod_{r=t_1}^{t_2-1} g_r^{(N)} (x_1, x_2, \dots, x_{[a_r N]}) \\ \left. \times \sum_{j=1}^{[a_{t_2} N]} (x_j \pa_j)^{k_2} V_{[a_{t_2} N]} ( x_1, x_2, \dots, x_{[a_{t_2} N]} ) H_{t_2}^{(N)} (x_1, x_2, \dots, x_{[a_{t_2} N]} ) \right|_{\vec{x}=1}.
\end{multline*}
The analysis goes in the same way as in the previous two cases with both changes made simultaneously.

%\textit{Covariance in Theorem \ref{th:general-clt-multilevel}}

%We use the general formula \eqref{eq:prop-gen-form-moments} for moments in this setting. The analysis of this formula is identical to the case of Theorem \ref{th:general-for-domino} --- we just need to make both modifications considered above.

\section{Asymptotic normality}
\label{sec:7}

\subsection{Gaussianity: Theorem \ref{theorem:main-one-level}}
\label{sec:7-1}

In the notations of Theorem \ref{theorem:main-one-level} we prove the asymptotic normality of the vector $\left\{ N^{-k} \left( p_{k}^{(N)} - \E p_{k}^{(N)} \right) \right\}_{k \in \N}$.

Note that for any $k$ we have
$\E p_{k}^{(N)} = \FF_{(k)} (1^N).$  For any $k_1, k_2$ in Section \ref{sec:6-2} we showed that the quantity
$$
C_{k_1,k_2} := \lim_{N \to \infty} \frac{\cov( p_{k_1}^{(N)}, p_{k_2}^{(N)})}{N^{k_1+k_2}}
$$
exists (and also computed it).

\begin{proposition}
\label{prop:gauss-main}
For any positive integers $k_1, \dots, k_s$ we have
$$
\lim_{N \to \infty} \frac{\mathbf E \left( p_{k_1}^{(N)} - \mathbf E p_{k_1}^{(N)} \right) \dots \left( p_{k_s}^{(N)} - \mathbf E p_{k_s}^{(N)} \right)}{N^{k_1 + \dots + k_s}} = 0,
$$
if $s$ is odd, and
$$
\lim_{N \to \infty} \frac{ \mathbf E \left( p_{k_1}^{(N)} - \mathbf E p_{k_1}^{(N)} \right) \dots \left( p_{k_s}^{(N)} - \mathbf E p_{k_s}^{(N)} \right) }{N^{k_1 + \dots + k_s}} = \sum_{P \in \mathcal P^{s}_{\varnothing} } \prod_{(a,b) \in P} C_{k_a, k_b},
$$
where $\mathcal P^{s}_{\varnothing}$ is the set of all pairings of $\{1,2,\dots,s \}$.
\end{proposition}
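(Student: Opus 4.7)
The plan is to derive Proposition \ref{prop:gauss-main} as a direct consequence of Lemma \ref{lem:covar-general}, the computation of pairwise covariances from Section \ref{sec:6-2}, and the method of moments characterization of Gaussian vectors.

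The first step is to translate the centered joint moment on the left-hand side into the differential-operator expression appearing in Lemma \ref{lem:covar-general}. The operators $A_m := \sum_{i=1}^N (x_i\pa_i)^m$ commute pairwise (they are diagonal in the monomial basis with eigenvalues $\sum_i a_i^m$), and the scalars $E_{l_j}$ commute with everything, so the product $\prod_j (A_{l_j} - E_{l_j})$ is order-independent and well defined. Expanding the product and using the basic identity $\frac{1}{V_N S_N}\, A_{l_{j_1}}\cdots A_{l_{j_r}}\, V_N S_N\bigl|_{\vec x=1^N} = \E[p_{l_{j_1}}^{(N)}\cdots p_{l_{j_r}}^{(N)}]$ (which follows, as in Section \ref{sec:4}, from the fact that $\mathcal D_m$ has eigenvalue $\sum_i(\lambda_i+N-i)^m$ on $s_\lambda/s_\lambda(1^N)$), the expansion collapses to
\begin{equation*}
\frac{1}{V_N S_N}\prod_{j=1}^s \bigl(A_{l_j} - E_{l_j}\bigr)\, V_N S_N \Big|_{\vec x=1^N}
= \E\prod_{j=1}^s \bigl(p_{l_j}^{(N)} - E_{l_j}\bigr).
\end{equation*}

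Next I would apply Lemma \ref{lem:covar-general} to this expression with $l_j=k_j$, obtaining
\begin{equation*}
\E\prod_{j=1}^s\bigl(p_{k_j}^{(N)}-\E p_{k_j}^{(N)}\bigr)
= \sum_{P\in\mathcal P^{s}_{\varnothing}} \prod_{(a,b)\in P} \GG_{(k_a,k_b)}(\vec x)\Big|_{\vec x=1^N} + \tilde T_{\varnothing}(\vec x)\Big|_{\vec x=1^N},
\end{equation*}
where $\tilde T_{\varnothing}$ has $N$-degree strictly less than $k_1+\dots+k_s$. Dividing by $N^{k_1+\dots+k_s}$, the error term contributes $0$ in the limit by the very definition of $N$-degree, while the main term is a finite sum of finite products, so limits and sums may be interchanged.

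For odd $s$, the set $\mathcal P^s_{\varnothing}$ is empty, so the main term vanishes identically and we obtain the first claim of the proposition. For even $s$, it remains to identify $\lim_{N\to\infty} N^{-(k_a+k_b)}\GG_{(k_a,k_b)}(1^N)$ with $C_{k_a,k_b}$. This is exactly what Lemma \ref{lem:two-diff-terms} gives: evaluating at $\vec x = 1^N$ yields $\E[p_{k_a} p_{k_b}] = E_{k_a}E_{k_b} + \GG_{(k_a,k_b)}(1^N) + \tilde T(1^N)$ with $\tilde T$ of lower $N$-degree, so
\begin{equation*}
C_{k_a,k_b} = \lim_{N\to\infty} \frac{\cov(p_{k_a}^{(N)}, p_{k_b}^{(N)})}{N^{k_a+k_b}} = \lim_{N\to\infty} \frac{\GG_{(k_a,k_b)}(1^N)}{N^{k_a+k_b}}.
\end{equation*}
Multiplying the $s/2$ factors together (one for each pair of the pairing $P$) and summing over $P\in\mathcal P^s_{\varnothing}$ yields the Wick formula in the statement.

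There is no real obstacle in this proof, since the technical work is already contained in Section \ref{sec:5}; the only point that warrants a sanity check is that the operator expression involving $\prod_j(A_{l_j}-E_{l_j})$ truly matches the centered joint moment after the division by $V_N S_N$ and evaluation at $\vec x=1^N$, which is why I would treat that identification carefully in the first step.
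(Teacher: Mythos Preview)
Your proof is correct and follows essentially the same route as the paper: you identify the centered joint moment with the differential-operator expression, invoke Lemma \ref{lem:covar-general} (the paper routes this through its immediate corollary, Proposition \ref{prop:gener-gaussianity-final}), and then identify $\lim_{N\to\infty} N^{-(k_a+k_b)}\GG_{(k_a,k_b)}(1^N)$ with $C_{k_a,k_b}$ via Lemma \ref{lem:two-diff-terms}, exactly as in \eqref{eq:covar-ext-lim-beg}. The extra care you take in verifying the operator identity in the first step is welcome but not a departure from the paper's argument.
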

\begin{proof}

One sees that
\begin{multline*}
\E \left( p_{k_1}^{(N)} - \mathbf E p_{k_1}^{(N)} \right) \dots \left( p_{k_s}^{(N)} - \mathbf E p_{k_s}^{(N)} \right) \\ =
\frac{1}{V_N S_{\rho_N}} \left( \sum_{i_1=1}^N (x_{i_1} \pa_{i_1})^{k_1} - \FF_{(k_1)} (1^N) \right) \left( \sum_{i_2=1}^N (x_{i_2} \pa_{i_2})^{k_2} - \FF_{(k_2)} (1^N) \right) \\ \times \left. \dots \left( \sum_{i_s=1}^N (x_{i_s} \pa_{i_s})^{k_s} - \FF_{(k_s)} (1^N) \right) V_N S_{\rho_N} \right|_{\vec{x}=1}.
\end{multline*}
Therefore, the statement  of the proposition is a direct corollary of Proposition
\ref{prop:gener-gaussianity-final}.
\end{proof}
Proposition \ref{prop:gauss-main} asserts that the joint moments satisfy the Wick formula (see, e.g., \cite[Section 1.2]{Z} for the basic information about Wick formula) which implies the asymptotic normality. Therefore, Propositions \ref{prop:main-covar-comput} and \ref{prop:gauss-main} readily imply Theorem \ref{theorem:main-one-level}.

\subsection{Gaussianity: Theorems \ref{theorem:main-projections}, \ref{theorem:main-multiplication}, \ref{th:general-for-domino} }
\label{sec:7-2}

We discuss the case of Theorem \ref{th:general-for-domino} only, since this theorem implies Theorems \ref{theorem:main-projections} and \ref{theorem:main-multiplication}.

We denote by $\vec{x}_{a}$ the set of variables $(x_1,x_2, \dots, x_a)$.

We use a general formula \eqref{eq:prop-gen-form-moments} for moments. In the notations of Theorem \ref{th:general-for-domino}, for arbitrary $s$ and $k_1,\dots, k_s$, $t_1 \le t_2 \le \dots \le t_s$, we have
\begin{multline*}
\mathbf E \left( p_{k_1;t_1}^{[a_{t_1} N]} p_{k_2;t_2}^{[a_{t_2} N]} \dots p_{k_s;t_s}^{[a_{t_s} N]} \right) =  \frac{1}{V_{[a_{t_s} N]} (\vec{x}_{[a_{t_s} N]}) } \sum_{i_1=1}^{[a_{t_1 N}]} ( x_{i_1} \pa_{i_1})^{k_1} \prod_{r=[a_{t_1} N]}^{[a_{t_2} N]-1} g_r^{(N)} (\vec{x}_{[ a_r N]} ) \\ \times \sum_{i_2=1}^{[a_{t_2} N]} (x_{i_2} \pa_{i_2})^{k_2} \prod_{r=[a_{t_2} N]}^{[a_{t_3} N]-1} g_r^{(N)} (\vec{x}_{[ a_r N]}) \left. \dots  \sum_{i_s=1}^{[a_{t_s N}]} ( x_{i_s} \pa_{i_s})^{k_s} V_{[a_{t_s} N]} (\vec{x}_{[a_{t_s} N]} ) H_{t_s}^{(N)} (\vec{x}_{[a_{t_s} N]}) \right|_{\vec{x}=1}.
\end{multline*}

The analysis of this formula is exactly the same as in Sections \ref{sec:5-4}, \ref{sec:5-5}, and \ref{sec:7-1}. Let us indicate necessary modifications of notations. For $1 \le q \le s$ instead of \eqref{eq:def-Fl} we consider the function
\begin{equation}
\label{eq:def-Fl-gen-domino}
\FF_{(l); t_q} (\vec{x}) := \frac{1}{ H^{(N)}_{t_q} (\vec{x}_{[a_{t_q} N]}) V_{[a_{t_q} N]} (\vec{x}_{[a_{t_q} N]}) } \sum_{i=1}^{[a_{t_q} N]} \left( x_i \pa_i \right)^{l} V_{[a_{t_q} N]} (\vec{x}_{[a_{t_q} N]}) H^{(N)}_{t_q} (\vec{x}_{[a_{t_q} N]}).
\end{equation}
For $1 \le q \le w \le s$ instead of \eqref{eq:def-GGfunc} we use the function
\begin{multline}
\label{eq:def-GGfunc-gen-domino}
\GG_{(l_1, l_2); t_q,t_w} (\vec{x}_{[a_{t_q} N]}) := l_1 \sum_{r=0}^{l_1-1} \binom{l_1-1}{r} \\ \times \sum_{\{ \mathbf{a}_1, \dots, \mathbf{a}_{r+1} \} \subset [ a_{t_q} N ] } Sym_{\mathbf{a}_1, \dots, \mathbf{a}_{r+1}} \frac{ x_{\mathbf{a}_1}^{l_1} \pa_{\mathbf{a}_1} \left[ \FF_{(l_2);t_w} \right] \left( \pa_{\mathbf{a}_1} \left[ \log H_{t_1} \right] \right)^{l_1-1-r}} {(x_{\mathbf{a}_1} -x_{\mathbf{a}_2} ) \dots (x_{\mathbf{a}_1}-x_{\mathbf{a}_{r+1}})}.
\end{multline}
Instead of \eqref{eq:def-e-l-const} we use $E_{l; t_q} := \FF_{(l); t_q} (1^{[a_{t_q} N]})$. With these changes, all the analysis of Sections \ref{sec:5} and \ref{sec:7-1} goes in exactly the same way, which gives us the asymptotic normality of functions $\{ p_{k;t_q}^{[a_{t_q} N]} \}_{q \ge 1, k \ge 1}$.

\section{Asymptotics of Schur functions}
\label{sec:asymp-Schur-func}

In this section we recall and extend certain asymptotics of normalised Schur functions, which were developed in \cite{GM}, \cite{GP}, \cite{BG}.

Recall that we encode a signature $\la = \la_1 \ge \dots \ge \la_N$ by a discrete probability measure on $\R$ via
$$
m [\la] := \frac{1}{N} \sum_{i=1}^N \delta \left( \frac{\la_i+N-i}{N} \right).
$$

We use the notation from Section \ref{sec:statem-requir-formulae}. The following theorem is a special case of Theorem 4.2 from \cite{BG}.

\begin{theorem}[\cite{GM},\cite{GP}, \cite{BG} ]
\label{Theorem_character_asymptotics}
 Suppose that $\lambda(N)\in \GT_N$, $N=1,2,\dots$ is a regular sequence of signatures (see Definition \ref{definition_regularity} ), such that
 $$
  \lim_{N\to\infty} m[\lambda(N)]=\mes.
 $$
 Then for any
 $k=1,2,\dots$ we have
 \begin{equation}
 \label{eq_limit_of_logarithm}
  \lim_{N\to\infty} \frac{1}{N}
  \log \left(\frac{s_{\lambda(N)}(x_1,\dots,x_k,1^{N-k})}{s_{\lambda(N)}(1^N)}\right)=
  H_\mes(x_1)+\dots+ H_\mes(x_k),
 \end{equation}
 where the convergence is uniform over an open complex neighborhood of $(x_1, \dots, x_k)=(1^k)$.
\end{theorem}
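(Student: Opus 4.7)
The plan is to reduce the multi-variable asymptotics to the one-variable case and then analyze the latter via an integral representation combined with steepest descent. Write $l_i = \lambda_i(N) + N - i$ and recall that the regularity hypothesis implies weak convergence of $m[\lambda(N)]$ to $\mes$, together with uniform boundedness of the supports. For the one-variable case ($k=1$), the Weyl character formula together with the evaluation $s_\lambda(1^N)$ via the Weyl dimension formula yields the finite-sum representation
\[
\frac{s_{\lambda(N)}(x, 1^{N-1})}{s_{\lambda(N)}(1^N)} = (N-1)! \sum_{i=1}^{N} \frac{x^{l_i - (N-1)}}{\prod_{j \ne i}(l_i - l_j)},
\]
which can be repackaged as a contour integral $\tfrac{(N-1)!}{2\pi \ii} \oint x^{z-(N-1)} \prod_{i=1}^N (z-l_i)^{-1} dz$ over a contour enclosing all $l_i$.

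Next I would perform a steepest descent analysis on this integral with the scaling $z = Nu$. The exponent becomes $N \bigl(u \log x - \int \log(u-t)\, dm[\lambda(N)](t)\bigr)$ up to lower order terms, and the saddle-point equation
\[
\log x = \int \frac{d\mes(t)}{u-t} = C_{\mes}(u)
\]
has the unique relevant root $u = C_{\mes}^{(-1)}(\log x)$ for $x$ close to $1$. Evaluating the action at this saddle and integrating against $dx$, and then matching with the definition $R_\mes(z) = C_{\mes}^{(-1)}(z) - 1/z$ and the formula for $H_\mes$, produces exactly $H_\mes(x)$ as the $N^{-1}\log$-limit. The regularity condition \eqref{eq_reg1}--\eqref{eq_reg2} is precisely what allows the approximation of the sum $\tfrac{1}{N}\sum \log(u - l_i/N)$ by $\int \log(u-t)\,d\mes(t)$ uniformly for $u$ in a complex neighborhood avoiding $[\inf f, \sup f]$.

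For general $k \ge 2$, I would proceed by induction combined with the branching rule
\[
\frac{s_{\lambda}(x_1, \dots, x_k, 1^{N-k})}{s_\lambda(1^N)} = \sum_{\mu \prec \lambda} \frac{s_\mu(x_1, \dots, x_{k-1}, 1^{N-k})}{s_\mu(1^{N-1})} \cdot \frac{s_\mu(1^{N-1})}{s_\lambda(1^N)} \cdot (\text{coefficient in } x_k),
\]
or, more efficiently, by applying the one-variable result to each variable $x_j$ one at a time after a suitable integral representation. The key structural fact that produces the additive decoupling $H_\mes(x_1) + \dots + H_\mes(x_k)$ is that the intermediate signatures contributing to the sum concentrate (to leading exponential order in $N$) on signatures whose empirical measure is still $\mes$; thus each factor contributes a copy of $H_\mes$ evaluated at its own variable, with no cross-terms at leading order.

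The main obstacle is the passage from pointwise (or weak) asymptotics to the claimed \emph{uniform} convergence on a complex neighborhood of $(1^k)$. This requires two ingredients beyond the naive saddle-point heuristic: first, uniform tail bounds on the contour integral that are independent of $N$, which follow from \eqref{eq_reg2}; and second, an application of Montel's theorem / normal-family arguments to upgrade pointwise convergence of the holomorphic functions $N^{-1}\log[s_{\lambda(N)}(\vec{x}, 1^{N-k})/s_{\lambda(N)}(1^N)]$ to uniform convergence. The piecewise-continuous nature of $f$ in Definition \ref{definition_regularity} makes the passage from Riemann sum to Riemann integral delicate near discontinuities, but the monotonicity of coordinates of $\lambda(N)$ keeps the defect controllably small in the $L^1$ sense of \eqref{eq_reg1}.
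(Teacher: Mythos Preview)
The paper does not supply an independent proof of this theorem; it cites it as a special case of Theorem~4.2 in \cite{BG}, which in turn builds on \cite{GM} and \cite{GP}. Your $k=1$ sketch is essentially the approach of \cite{GP}: the single-variable normalized character admits exactly the contour integral you write, and steepest descent at the root of $C_{\mes}(u)=\log x$ produces $H_{\mes}(x)$. That part is sound, modulo the standard but nontrivial work of controlling the contour deformation and the subleading terms uniformly in $x$.

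Your treatment of $k\ge 2$ has a real gap. The branching-rule argument you propose requires knowing that the intermediate signatures $\mu$ concentrate near signatures with empirical measure $\mes$, but this concentration is essentially equivalent to (or at least as delicate as) the asymptotic statement you are trying to prove; you cannot invoke it as a ``key structural fact'' without independent justification, and doing so makes the argument circular. The route actually taken in \cite{GP}, and used explicitly in this paper in the proof of Theorem~\ref{Theorem_character_asymtotics_2}, is the exact determinantal identity \eqref{eq:character-formula-GP}, which writes the $k$-variable normalized Schur function as a product of the one-variable functions $S_{\lambda}(x_j;N,1)$ hit by the finite-order differential operator $\det[(x_a\partial_a)^{b-1}]_{a,b=1}^k$ and divided by a Vandermonde in $x_1,\dots,x_k$. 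The additive decoupling $H_{\mes}(x_1)+\dots+H_{\mes}(x_k)$ then drops out immediately on the exponential scale from the product structure, with the differential operator and Vandermonde contributing only $O(\log N)$ corrections. Your second, vaguer suggestion (``after a suitable integral representation'') gestures toward this, but the specific formula \eqref{eq:character-formula-GP} is the missing ingredient.
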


\begin{theorem}
\label{Theorem_character_asymtotics_2}

Suppose that $\lambda(N)\in \GT_N$, $N=1,2,\dots$ is a regular sequence of signatures such that
 $$
  \lim_{N\to\infty} m[\lambda(N)]=\mes.
 $$
 Then we have
\begin{multline}
 \label{eq_limit_of_logarithm_2}
  \lim_{N \to \infty}
  \pa_{1} \pa_{2} \log \left(\frac{s_{\lambda(N)}(x_1, x_2, \dots, x_k, 1^{N-k})}{s_{\lambda(N)}(1^N)} \right) \\ =
   \pa_{1} \pa_{2} \log \left( 1 - (x_1-1) (x_2-1) \frac{x_1 H'_\mes(x_1) - x_2 H'_{\mes} (x_2)}{x_1-x_2} \right),
 \end{multline}
and
$$
\lim_{N \to \infty}
  \pa_{1} \pa_{2} \pa_3 \log \left(\frac{s_{\lambda(N)}(x_1, x_2, \dots, x_k, 1^{N-k})}{s_{\lambda(N)}(1^N)} \right) = 0,
$$
where the convergence is uniform over an open complex neighborhood of $(x_1,\dots, x_k)=(1^k)$.
\end{theorem}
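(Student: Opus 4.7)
The proof will proceed via multi-variable contour integral representations of the normalized Schur function and steepest-descent analysis, refining the arguments that yielded Theorem \ref{Theorem_character_asymptotics} in \cite{GP, BG}. For each fixed $k$, I would start from a $k$-fold contour integral representation of $s_{\lambda(N)}(x_1, \ldots, x_k, 1^{N-k})/s_{\lambda(N)}(1^N)$ of the schematic form
\begin{equation*}
\frac{s_{\lambda(N)}(x_1,\ldots,x_k,1^{N-k})}{s_{\lambda(N)}(1^N)} = \frac{P_N(\vec{x})}{V_k(\vec{x})}\cdot \frac{1}{(2\pi\ii)^k}\oint\cdots\oint V_k(\vec{z})\prod_{j=1}^k \Phi_N(z_j,x_j)\, dz_1\cdots dz_k,
\end{equation*}
where $\Phi_N(z,x)=x^z \prod_{i=1}^{N-k}(z-i+1)/\prod_{i=1}^N (z-\lambda_i(N)-N+i)$, $V_k$ denotes the Vandermonde, $P_N$ is an elementary prefactor, and the contours encircle the relevant poles.

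Steepest descent, under the regularity of $\lambda(N)$, identifies for each $j$ a unique relevant saddle point $z_j=\zeta(x_j):=C_{\mes}^{(-1)}(\log x_j)$. The leading-order contribution from these saddles reproduces $N\sum_j H_{\mes}(x_j)$, recovering Theorem \ref{Theorem_character_asymptotics}. The order-$1$ correction has two sources: (a) the Gaussian prefactor at each saddle, which depends only on the single variable $x_j$, and (b) the coupling factor $V_k(\vec{z})/V_k(\vec{x})$, which evaluates at the saddles to $\prod_{i<j\leq k}(\zeta(x_i)-\zeta(x_j))/(x_i-x_j)$.

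Taking $\log$ of the coupling decomposes it as the pairwise sum $\sum_{i<j}\log\bigl[(\zeta(x_i)-\zeta(x_j))/(x_i-x_j)\bigr]$. Using the identity $\zeta(x)=xH'_{\mes}(x)+x/(x-1)$ from Section \ref{sec:statem-requir-formulae}, a short algebraic manipulation gives
\begin{equation*}
\frac{\zeta(x_1)-\zeta(x_2)}{x_1-x_2}= -\frac{1}{(x_1-1)(x_2-1)}\left(1-(x_1-1)(x_2-1)\frac{x_1 H'_{\mes}(x_1)-x_2 H'_{\mes}(x_2)}{x_1-x_2}\right).
\end{equation*}
The extra factor $-1/((x_1-1)(x_2-1))$ is a product of single-variable functions and is therefore annihilated by $\pa_1\pa_2\log$. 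Combined with the fact that the Gaussian-prefactor contributions are also single-variable, this yields \eqref{eq_limit_of_logarithm_2}.

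For the triple-derivative statement, the same decomposition shows that the order-$1$ correction is a sum of one-variable and two-variable terms only, so $\pa_1\pa_2\pa_3$ of the log of the normalized Schur function tends to zero. The main technical obstacle is the rigorous steepest-descent control, uniform over a complex neighborhood of $(1^k)$, starting only from the regularity of $\lambda(N)$ in the sense of Definition \ref{definition_regularity}; this requires careful contour deformations past the poles together with uniform error bounds for the Gaussian expansion, following the techniques of \cite{GP}. A secondary subtlety is that the piecewise-continuity assumption on the limiting profile $f$ is needed precisely to justify the saddle-point approximation uniformly in $\vec{x}$.
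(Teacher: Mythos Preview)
Your proposal is correct in outline and ultimately lands on the same algebraic content, but the paper takes a genuinely different and more economical route. Rather than setting up a fresh $k$-fold contour integral and running a multi-variable steepest descent, the paper invokes a differential-operator identity (Theorem~3.7 of \cite{GP}) that expresses the $k$-variable normalized Schur function as
\[
\frac{s_{\lambda(N)}(x_1,\dots,x_k,1^{N-k})}{s_{\lambda(N)}(1^N)}
= (\text{elementary prefactor})\cdot
\frac{\det\bigl[(x_a\partial_a)^{b-1}\bigr]_{a,b=1}^k}{V_k(\vec x)}
\prod_{j=1}^k S_\lambda(x_j;N,1)(x_j-1)^{N-1},
\]
a formula built entirely out of the \emph{single-variable} normalized characters $S_\lambda(x_j;N,1)$. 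The already-established Theorem~\ref{Theorem_character_asymptotics} supplies the asymptotics of those, and then the determinant, to leading order in $N$, becomes the Vandermonde in $\zeta(x_a):=x_aH'_{\mes}(x_a)+x_a/(x_a-1)$. Dividing by $V_k(\vec x)$ and taking $\partial_1\partial_2\log$ yields \eqref{eq_limit_of_logarithm_2}; the pairwise structure of this Vandermonde ratio immediately gives the triple-derivative statement.

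So both approaches isolate the same cross term $\prod_{i<j}(\zeta(x_i)-\zeta(x_j))/(x_i-x_j)$ and perform the same algebra you wrote down. The difference is where the analytical work is spent: the paper bootstraps from the one-variable saddle-point result and needs no new uniform steepest-descent estimates, whereas your route requires controlling a genuine $k$-dimensional saddle analysis uniformly near $(1^k)$---doable, but a heavier lift than the paper's reduction.
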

\begin{remark}
Note that the limit in \eqref{eq_limit_of_logarithm_2} does not depend on $(x_3, \dots, x_k)$.
\end{remark}

\begin{proof}[Proof of Theorem \ref{Theorem_character_asymtotics_2}]

Let
$$
S_{\lambda} (x_j;N,1) := \frac{s_{\la} (1^{j-1}, x_j, 1^{N-j})}{s_{\la} (1^N)}.
$$
Theorem 3.7 of \cite{GP} asserts that
\begin{multline}
\label{eq:character-formula-GP}
\frac{s_{\lambda(N)}(x_1, x_2, \dots, x_k, 1^{N-k})}{s_{\lambda(N)}(1^N)} = \prod_{i=1}^k \frac{(N-i)!}{(N-1)! (x_i-1)^{N-k}} \\
\times \frac{\det \left[ (x_a \pa_a)^{b-1} \right]_{a,b=1}^k }{\prod_{1 \le a<b \le N} (x_a-x_b)} \prod_{j=1}^k S_{\lambda} (x_j;N,1) (x_j-1)^{N-1}.
\end{multline}

Let us consider the application of the differential operator $\det \left[ (x_a \pa_a)^{b-1} \right]_{a,b=1}^k$. Each differentiation from it can be applied to $S_{\lambda} (x_j;N,1) (x_j-1)^{N-1}$ for some $j$ or to the factors appeared from the other differentiations. Note that
the highest degree in $N$ is obtained when each differentiation is applied to $S_{\lambda} (x_j;N,1) (x_j-1)^{N-1}$.
Using Theorem \ref{Theorem_character_asymptotics} we obtain
\begin{multline*}
\left(x_a \pa_a \right)^{b-1} \prod_{j=1}^k S_{\lambda}(x_j;N,1)(x_j-1)^{N-1} \\ = N^{b-1}\prod_{j=1}^k
S_{\lambda}(x_j;N,1)(x_j-1)^{N-1} \cdot\left(\frac{x}{1-x}+x H'_{\mes} (x) \right)^{b-1} + o( N^{b-1}),
\end{multline*}
(here and below the convergence in $o(\cdot)$ is uniform over an open complex neighborhood of $(x_1, \dots, x_k) = (1^k)$). Hence,
\begin{multline*}
\det \left[ (x_a \pa_a)^{b-1} \right]_{a,b=1}^k \prod_{j=1}^k S_{\lambda} (x_j;N,1) (x_j-1)^{N-1} = \prod_{j=1}^k S_{\lambda} (x_j;N,1) (x_j-1)^{N-1} \\ \times \left( N^{b(b-1)/2} \det \left[ \left( \frac{x_a}{1-x_a}+x_a H'_{\mes} (x_a) \right)^{b-1} \right]_{a,b=1}^k + o \left( N^{b(b-1)/2} \right) \right),
\end{multline*}
where we use that the uniform convergence of analytic functions implies the convergence of its derivatives.
Substituting this formula into \eqref{eq:character-formula-GP}, we get
\begin{multline*}
\pa_1 \pa_2 \log \frac{s_{\lambda(N)}(x_1, x_2, \dots, x_k, 1^{N-k})}{s_{\lambda(N)}(1^N)} \\ = \pa_1 \pa_2 \log \left( \prod_{j=1}^k S_{\lambda}(x_j;N,1) (x_j-1)^{k-1} \left( \frac{ \det \left[ \left( \frac{x_a}{1-x_a}+x_a H'_{\mes} (x_a) \right)^{b-1} \right]_{a,b=1}^k }{\prod_{a<b}^k (x_a-x_b)} + o(1) \right) \right)
\\ = \pa_1 \pa_2 \log \left( \prod_{a<b}^k \frac{ \left( \frac{x_a}{1-x_a}+x_a H'_{\mes} (x_a) \right) - \left( \frac{x_b}{1-x_b}+x_b H'_{\mes} (x_b) \right)}{x_a - x_b} + o(1) \right)
\\ = \pa_1 \pa_2 \log \left( 1 + \frac{x_1 H'_{\mes} (x_1) - x_2 H'_{\mes} (x_2)}{x_1 - x_2} (x_1-1) (x_2 -1) \right) + o(1).
\end{multline*}
Also we see that
\begin{multline*}
\pa_1 \pa_2 \pa_3 \log \frac{s_{\lambda(N)}(x_1, x_2, \dots, x_k, 1^{N-k})}{s_{\lambda(N)}(1^N)}
\\ = \pa_1 \pa_2 \pa_3 \log \left( \prod_{a<b}^k \frac{ \left( \frac{x_a}{1-x_a}+x_a H'_{\mes} (x_a) \right) - \left( \frac{x_b}{1-x_b}+x_b H'_{\mes} (x_b) \right)}{x_a - x_b} + o(1) \right) = o(1). \qedhere
\end{multline*}
\end{proof}

Recall that for a representation $T$ of $U(N)$ we define a probability measure $\rho_T$ on $\GT_N$ with the use of \eqref{eq:repr-meas-sign-def}. The pushforward of $\rho_T$
with respect to the map $\lambda \to m[\lambda]$ is a \emph{random} probability measure on $\mathbb
R$ that we denote $m[\rho_T]$.

\begin{proposition}
\label{prop:Schur-are-approp}

Assume that $\lambda^{(1)} (N), \lambda^{(2)} (N) \in \GT_N$, $N=1,2,\dots$, are two regular sequences of signatures such that
$$
\lim_{N\to\infty} m[\lambda^{(1)} (N)]=\mes_1, \qquad \lim_{N\to\infty} m[\lambda^{(2)} (N)]=\mes_2,
$$
for probability measures $\mes_1$ and $\mes_2$ with compact supports. Then $m [ \rho_{\pi^{\la^{(1)}} \otimes \pi^{\la^{(2)}} } ]$ is an appropriate probability measure on $\GT_{N}$ with functions
\begin{multline*}
F_{\rho} (x) = H'_{\mes_1} (x) + H'_{\mes_2} (x), \\
G_{\rho} (x,y) = \pa_{x} \pa_{y} \log \left( 1 - (x-1) (y-1) \frac{x H'_{\mes_1} (x) - y H'_{\mes_1} (y)}{x-y} \right) \\ + \pa_{x} \pa_{y} \log \left( 1 - (x-1) (y-1) \frac{x H'_{\mes_2} (x) - y H'_{\mes_2} (y)}{x-y} \right).
\end{multline*}

\end{proposition}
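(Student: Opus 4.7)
The plan is to identify the Schur generating function of the tensor product measure explicitly and then apply the two asymptotic results already stated (Theorems \ref{Theorem_character_asymptotics} and \ref{Theorem_character_asymtotics_2}) together with the criterion of Example \ref{exam:clt}. First I would observe that for the tensor product $T_N = \pi^{\lambda^{(1)}(N)} \otimes \pi^{\lambda^{(2)}(N)}$, the character satisfies $\chi_{T_N}(u_1,\dots,u_N) = s_{\lambda^{(1)}(N)}(u) \cdot s_{\lambda^{(2)}(N)}(u)$, so after normalization the Schur generating function of $\rho_{T_N}$ factors as
\begin{equation*}
S_{\rho_{T_N}}(\vec{x}) = \frac{s_{\lambda^{(1)}(N)}(\vec{x})}{s_{\lambda^{(1)}(N)}(1^N)} \cdot \frac{s_{\lambda^{(2)}(N)}(\vec{x})}{s_{\lambda^{(2)}(N)}(1^N)}.
\end{equation*}
This is immediate from the definition \eqref{eq_Schur_gen_intro} together with the Littlewood--Richardson expansion \eqref{eq_Littlewood_Richardson} and the multiplicativity of dimensions $\dim(\pi^\eta) = s_\eta(1^N)$.

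Next, taking logarithms turns the product into the sum
\begin{equation*}
\log S_{\rho_{T_N}}(\vec{x}) = \log \frac{s_{\lambda^{(1)}(N)}(\vec{x})}{s_{\lambda^{(1)}(N)}(1^N)} + \log \frac{s_{\lambda^{(2)}(N)}(\vec{x})}{s_{\lambda^{(2)}(N)}(1^N)},
\end{equation*}
so the limits required by Example \ref{exam:clt} split as sums of the analogous limits for the two factors. Applying Theorem \ref{Theorem_character_asymptotics} (with $k$ chosen as needed) and differentiating once in $x_1$, the uniform convergence allows me to exchange limit and derivative, giving
\begin{equation*}
\lim_{N\to\infty} \frac{\partial_1 \log S_{\rho_{T_N}}(x_1, 1^{N-1})}{N} = H'_{\mes_1}(x_1) + H'_{\mes_2}(x_1),
\end{equation*}
uniformly in a complex neighborhood of $x_1 = 1$. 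This identifies $U_1(x) = H'_{\mes_1}(x) + H'_{\mes_2}(x)$, which is exactly the desired $F_\rho(x)$.

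Then I apply Theorem \ref{Theorem_character_asymtotics_2} to each factor to obtain
\begin{equation*}
\lim_{N\to\infty} \partial_1 \partial_2 \log S_{\rho_{T_N}}(x_1,x_2, 1^{N-2}) = \partial_1\partial_2 \log\!\Bigl(1 - (x_1{-}1)(x_2{-}1)\tfrac{x_1 H'_{\mes_1}(x_1)-x_2 H'_{\mes_1}(x_2)}{x_1-x_2}\Bigr) + (\mes_1 \to \mes_2),
\end{equation*}
again uniformly. This identifies $U_2(x,y) = G_\rho(x,y)$ in the form stated in the proposition. Plugging all this into the criterion of Example \ref{exam:clt} gives that $\{\rho_{T_N}\}$ is CLT-appropriate with the claimed $F_\rho$ and $G_\rho$.

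There is essentially no hard step here: the factorization of $S_{\rho_{T_N}}$ is a formal consequence of the representation-theoretic setup, and the required asymptotics of one- and two-variable logarithmic derivatives of normalized Schur functions are precisely the content of Theorems \ref{Theorem_character_asymptotics} and \ref{Theorem_character_asymtotics_2}. The only point that requires a little care is checking that the convergence regime (uniform in an open complex neighborhood of $(1^k)$) meets the hypothesis of Example \ref{exam:clt}, and that the limit functions obtained in this way automatically inherit holomorphy and hence possess convergent power series expansions in $(x-1)$ and $(x-1,y-1)$, so that the last bullet of Definition \ref{def:appr-func} is satisfied.
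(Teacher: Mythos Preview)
Your proof is correct and follows exactly the same route as the paper: identify the Schur generating function of the tensor product as the product of the two normalized Schur functions, then invoke Theorems \ref{Theorem_character_asymptotics} and \ref{Theorem_character_asymtotics_2}. The paper's own proof is a two-line sketch of precisely this argument; you have simply spelled out the details (taking logarithms, applying Example \ref{exam:clt}, and noting that uniform convergence justifies differentiating under the limit).
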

\begin{proof}
The Schur generating function of the measure $m [ \rho_{ \pi^{\la^{(1)}} \otimes \pi^{\la^{(2)}} }
]$ is $s_{\la^{(1)}} (\vec{x}) s_{\la^{(2)}} (\vec{x})$. Therefore, the statement of the lemma
follows from Theorems \ref{Theorem_character_asymptotics} and \ref{Theorem_character_asymtotics_2}.
\end{proof}

\section{Proofs of applications}

\subsection{Lozenge tilings and Gaussian Free Field}

\label{Section_tilings_GFF}

In this section we prove Theorem \ref{th:ctl-proj-GFF-1}.

Recall that we study the uniform measure on the set of paths $\mathcal P_N (\la^{(N)})$. The projection of this measure to one level $\GT_M$, $M \le N$, produces a probability measure on $\GT_M$. The branching rule for Schur functions \eqref{eq:Schur-branching-pr-coef} shows that its Schur generating function equals $s_{\la^{(N)}} (x_1, \dots, x_M, 1^{N-M}) / s_{\la^{(N)}} (1^{N})$.

For $a<1$, let $p_k^{[aN]}$ be moments of these measures:
$$
p_k^{[aN]} = \sum_{i=1}^{[aN]} \left( \la_i + [aN]-i \right)^k, \qquad \la \in \GT_{[aN]}.
$$
Note that they are random variables.

\begin{proposition}
\label{prop:restr-moments-gauss}
Let $0 < a_1 \le \dots \le a_s \le 1$ be a collection of reals. Under the assumptions and in the notations of Theorem \ref{th:ctl-proj-GFF-1} the collection of random variables
\begin{equation}
\label{eq:vector-tilings}
\left\{ N^{-k_i} \left( p_{k_i}^{[a_i N]} - \E p_{k_i}^{[a_i N]} \right) \right\}_{i=1, \dots, s}
\end{equation}
converges to the Gaussian vector $(\xi_1, \dots, \xi_s)$ with the covariance
\begin{multline}
\label{eq:prop-tilings-covar}
\cov(\xi_r, \xi_t) = \frac{a_t^{k_t} a_r^{k_r} }{(2 \pi \ii)^2} \oint_{|z|=\ep} \oint_{|w|=2 \ep} \left( \frac{1}{z} +1 + \frac{(1+z) \mathbf H'_{\mes} (1+z)}{a_t} \right)^{k_t} \\ \times \left( \frac{1}{w} +1 + \frac{(1+w) \mathbf H'_{\mes} (1+w)}{a_r} \right)^{k_r}  \pa_z \pa_w \left[ \log \left( \left( \frac{1}{w} +1 + (1+w) \mathbf H'_{\mes} (w) \right) \right. \right. \\ \left. \left. - \left( \frac{1}{z} +1 + (1+z) \mathbf H'_{\mes} (z) \right) \right) \right] dz dw,
\end{multline}
for $1 \le t \le r \le s$, $\ep \ll 1$, with $H'_{\mes} (w)$ given by \eqref{eq:H-S-connection}.
\end{proposition}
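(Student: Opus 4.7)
The plan is to read Proposition~\ref{prop:restr-moments-gauss} as a direct application of Theorem~\ref{theorem:main-projections}. In the language of Section~\ref{Section_multilevel_specified} we take $\rho_N$ to be the delta measure at $\lambda(N)$, whose Schur generating function is $S_{\rho_N}(\vec x) = s_{\lambda(N)}(\vec x)/s_{\lambda(N)}(1^N)$. By the branching rule \eqref{eq:Schur-branching-pr-coef}, the projection of the uniform measure on $\mathcal P_N(\lambda(N))$ to $\GT_{[a_i N]}$ produces exactly the measure with Schur generating function $s_{\lambda(N)}(x_1,\dots,x_{[a_i N]},1^{N-[a_i N]})/s_{\lambda(N)}(1^N)$, so the Markov structure matches \eqref{eq:def-projections-general}. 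Hence it suffices to check that $\{\rho_N\}$ is CLT--appropriate and to identify the resulting limit functions $F_\rho$ and $Q_\rho$.

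For CLT-appropriateness, I would invoke Example~\ref{exam:clt}: Theorem~\ref{Theorem_character_asymptotics} gives uniform convergence of $\frac{1}{N}\log S_{\rho_N}(x_1,\dots,x_k,1^{N-k})$ to $H_\mes(x_1)+\cdots+H_\mes(x_k)$ in a complex neighborhood of $(1^k)$, from which differentiation yields $\frac{1}{N}\partial_1 \log S_{\rho_N} \to H'_\mes(x_1)$. Theorem~\ref{Theorem_character_asymtotics_2} supplies the missing second-order information: $\partial_1\partial_2 \log S_{\rho_N}$ converges to
\[
G_\rho(x_1,x_2) := \partial_{x_1}\partial_{x_2}\log\left(1 - (x_1-1)(x_2-1)\frac{x_1 H'_\mes(x_1) - x_2 H'_\mes(x_2)}{x_1-x_2}\right),
\]
while the third mixed derivative vanishes in the limit (the analogue of the ``independence'' condition in Definition~\ref{def:main}). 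Thus $F_\rho(x) = H'_\mes(x)$ and $Q_\rho(z,w) = G_\rho(1+z,1+w) + 1/(z-w)^2$.

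Applying Theorem~\ref{theorem:main-projections} then produces the asymptotic Gaussianity of \eqref{eq:vector-tilings} and a limiting covariance given by the double contour integral with integrand $\prod_{\star=t,r}\bigl(\tfrac{1}{z_\star}+1+\tfrac{(1+z_\star)H'_\mes(1+z_\star)}{a_\star}\bigr)^{k_\star} Q_\rho(z,w)$. The final step is the algebraic identity matching this $Q_\rho$ with the expression in \eqref{eq:prop-tilings-covar}. Setting $\Phi(u) := \tfrac{1}{u}+1+(1+u)H'_\mes(1+u)$ and $A := (1+z)H'_\mes(1+z)$, $B := (1+w)H'_\mes(1+w)$, a direct computation gives
\[
\Phi(w) - \Phi(z) = \frac{z-w}{zw}\left[1 - zw\,\frac{A-B}{z-w}\right],
\]
so that $\log(\Phi(w)-\Phi(z)) = \log(z-w) - \log(zw) + \log\!\bigl[1 - zw\,\tfrac{A-B}{z-w}\bigr]$. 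Applying $\partial_z\partial_w$ annihilates $\log(zw)$, turns $\log(z-w)$ into $1/(z-w)^2$, and turns the last term into $G_\rho(1+z,1+w)$, giving exactly $Q_\rho(z,w)$.

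I do not anticipate a serious obstacle: once the asymptotics of Schur functions in Theorems~\ref{Theorem_character_asymptotics} and~\ref{Theorem_character_asymtotics_2} are in hand, the proof reduces to verifying hypotheses of the general CLT and a short algebraic manipulation. The only mildly delicate point is confirming that the three-variable mixed derivative really converges to zero, which is the last statement of Theorem~\ref{Theorem_character_asymtotics_2}, and that uniformity is preserved when differentiating the limit of holomorphic functions, which is a standard consequence of Cauchy's estimates.
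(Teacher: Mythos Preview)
Your proposal is correct and follows essentially the same route as the paper: verify CLT-appropriateness of the delta measure at $\lambda(N)$ via Theorems~\ref{Theorem_character_asymptotics} and~\ref{Theorem_character_asymtotics_2}, apply Theorem~\ref{theorem:main-projections}, and then rewrite the resulting $Q_\rho$ in the form \eqref{eq:prop-tilings-covar}. Your explicit factorization $\Phi(w)-\Phi(z)=\tfrac{z-w}{zw}\bigl[1-zw\,\tfrac{A-B}{z-w}\bigr]$ is exactly the algebraic step the paper summarizes by citing $\partial_z\partial_w\log(z-w)=1/(z-w)^2$ and $\partial_z\partial_w\log(zw)=0$.
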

\begin{proof}
By Theorems \ref{Theorem_character_asymptotics}, \ref{Theorem_character_asymtotics_2} this model satisfies the conditions of Theorem \ref{theorem:main-projections} with functions $F_{\rho} (x)= \mathbf H'_{\mes} (x)$ and
$$
Q_{\rho} (x,y) = \pa_x \pa_y \left( \log \left( 1 - x y \frac{ (1+x) \mathbf H'_{\mes} (1+x) - (1+y) \mathbf H'_{\mes} (1+y)}{x-y} \right) \right).
$$
Applying Theorem \ref{theorem:main-projections}, we obtain that the Central Limit Theorem holds for the vector \eqref{eq:vector-tilings} with the covariance
\begin{multline}
\label{eq:tilings-covar-interm}
\cov(\xi_r, \xi_t) \\ = \frac{a_t^{k_t} a_r^{k_r}}{(2 \pi \ii)^2} \oint_{|z|=\ep} \oint_{|w|=2 \ep} \left( \frac{1}{z} +1 + \frac{(1+z) \mathbf H'_{\mes} (1+z)}{a_t} \right)^{k_t} \left( \frac{1}{w} +1 + \frac{(1+w) \mathbf H'_{\mes} (1+w)}{a_r} \right)^{k_r} \\ \times \left( \pa_z \pa_w \left[ \log \left( 1 - z w \frac{ (1+z) \mathbf H'_{\mes} (1+z) - (1+w) \mathbf H'_{\mes} (1+w)}{z-w} \right) \right] + \frac{1}{(z-w)^2} \right) dz dw,
\end{multline}
for $1 \le r \le t \le s$, and $\ep \ll 1$. With the use of the equalities $ \pa_z \pa_w \log(z-w) = \frac{1}{(z-w)^2}$ and $\pa_z \pa_w (zw) =0$, we transform \eqref{eq:tilings-covar-interm} into \eqref{eq:prop-tilings-covar}.
\end{proof}

Proposition \ref{prop:restr-moments-gauss} shows that the fluctuations in our model are Gaussian. We next recover the structure of the Gaussian Free Field, for that we transform the expression for the covariance.

\begin{lemma}
\label{lem:two-int-proj}
The expression \eqref{eq:prop-tilings-covar} is equal to
\begin{multline}
\label{eq:lem-two-int-proj}
\frac{1}{(2 \pi \ii)^2 } \oint_{|z|= 2 C} \oint_{|w|= C} \left( z + \frac{1- a_r}{\exp \left( - C_{\mes} \left( z \right) \right) - 1} \right)^{k_r} \left( w + \frac{1- a_t}{\exp \left( - C_{\mes} \left( w \right) \right) - 1} \right)^{k_t} \\ \times \frac{1}{(z-w)^2} dz dw,
\end{multline}
where $C \gg 1$, that is, the contours of integration contain all poles of the integrand (recall that the function $C_{\mes}(z)$ is defined in \eqref{eq:def-sm-function} ).
\end{lemma}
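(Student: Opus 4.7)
The plan is to relate \eqref{eq:prop-tilings-covar} to \eqref{eq:lem-two-int-proj} through a single change of variables. The key preliminary observation comes from rewriting \eqref{eq:H-S-connection} as
\[
\frac{1}{z}+1+(1+z)H'_{\mes}(1+z) = C_{\mes}^{(-1)}(\log(1+z)),
\]
which strongly suggests introducing $u:=C_{\mes}^{(-1)}(\log(1+z))$ (equivalently $z=e^{C_{\mes}(u)}-1$), and the analogous $v$ replacing $w$.

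Under this substitution a short calculation gives
\[
a_t\Bigl(\tfrac{1}{z}+1+\tfrac{(1+z)H'_{\mes}(1+z)}{a_t}\Bigr)=(a_t-1)\tfrac{1+z}{z}+C_{\mes}^{(-1)}(\log(1+z)) = u+\frac{1-a_t}{e^{-C_{\mes}(u)}-1},
\]
where I used $\tfrac{1+z}{z}=\tfrac{1}{1-e^{-C_{\mes}(u)}}$; the analogous identity for $w\mapsto v$ introduces the factor $(1-a_r)$. Raising to the $k_t$th and $k_r$th powers absorbs the prefactor $a_t^{k_t}a_r^{k_r}$ of \eqref{eq:prop-tilings-covar} and produces exactly the polynomial factors of \eqref{eq:lem-two-int-proj}. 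For the logarithmic kernel, writing $A(z):=C_{\mes}^{(-1)}(\log(1+z))$ one has $A(w)-A(z)=v-u$ in the new variables, and invariance of the 2-form $\partial_z\partial_w f\,dz\wedge dw$ under holomorphic changes of variables gives
\[
\partial_z\partial_w\log(A(w)-A(z))\,dz\,dw=\partial_u\partial_v\log(v-u)\,du\,dv=\frac{du\,dv}{(u-v)^2},
\]
which matches the remaining factor in \eqref{eq:lem-two-int-proj}.

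The last step is to match the contours. Since $C_{\mes}^{(-1)}(\eta)=\eta^{-1}+O(1)$ as $\eta\to0$ and $\log(1+z)=z+O(z^2)$ near $z=0$, one has $u\sim 1/z$ as $z\to 0$, so the small counter-clockwise circles $|z|=\varepsilon$ and $|w|=2\varepsilon$ map to large closed curves near $|u|\sim 1/\varepsilon$ and $|v|\sim 1/(2\varepsilon)$, both traversed clockwise. Because both orientations reverse, the net sign is $+1$, and after a further deformation (enlarging each contour to a round circle of the appropriate radius) one arrives at two concentric counter-clockwise contours that are homotopic to $|z|=2C$, $|w|=C$ of \eqref{eq:lem-two-int-proj}. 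Renaming $u\to z$, $v\to w$ and invoking the manifest symmetry of the integrand (which matches the symmetry of the covariance) to align $(a_t,k_t)$ and $(a_r,k_r)$ with the contours in \eqref{eq:lem-two-int-proj} completes the identification.

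The main technical obstacle is the contour-deformation step: I must check that all singularities of the transformed integrand — namely the zeros of $e^{-C_{\mes}(\cdot)}-1$ in the finite plane and the branch singularities of $C_{\mes}$ on $\operatorname{supp}(\mes)$ — lie in a bounded region, so that the homotopy from the image of the small $(z,w)$-circles to the large fixed contours in \eqref{eq:lem-two-int-proj} sweeps only through a region where the integrand is analytic. This will follow from the compactness of $\operatorname{supp}(\mes)$ and the analyticity of $C_{\mes}$ in a neighborhood of infinity, together with the simple asymptotics $w+(1-a)/(e^{-C_{\mes}(w)}-1)\sim aw$ at infinity which shows that the integrand has only polynomial growth there.
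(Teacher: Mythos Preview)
Your proposal is correct and follows essentially the same route as the paper: a change of variables based on the identity $\tfrac{1}{z}+1+(1+z)H'_{\mes}(1+z)=C_{\mes}^{(-1)}(\log(1+z))$ coming from \eqref{eq:H-S-connection}. The paper performs this in two steps --- first $\tilde z = 1/C_{\mes}^{(-1)}(\log(1+z))$ (which is approximately the identity near $0$ and preserves the small contours), then $\tilde z\mapsto 1/\tilde z$ --- whereas you compose them into the single substitution $u=C_{\mes}^{(-1)}(\log(1+z))$; the algebra and the contour analysis are otherwise identical.
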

\begin{proof}
Let us make a change of variables in \eqref{eq:prop-tilings-covar}
$$
\tilde z = \frac{1}{C_{\mes}^{(-1)} \left( \log (1+z) \right)}, \qquad \tilde w = \frac{1}{C_{\mes}^{(-1)} \left( \log (1+w) \right)};
$$
this change of variables is well-defined since we are dealing with analytic functions in a neighborhood of the origin.

Using the relation between $C_{\mes} (z)$ and $\mathbf H'_{\mes} (z)$ (see equation \eqref{eq:H-S-connection}), we have
\begin{align*}
\frac{1}{z} + 1 + \frac{(1+z) \mathbf H'_{\mes} (1+z)}{a_t} = \frac{1}{a_t} \left( \frac{1}{\tilde z} + \frac{1- a_t}{\exp( - C_{\mes} (\frac{1}{\tilde z})) - 1} \right), \\
\frac{1}{w} + 1 + \frac{(1+w)\mathbf H'_{\mes} (1+w)}{a_r} = \frac{1}{a_r} \left( \frac{1}{\tilde w} + \frac{1- a_r}{\exp( - C_{\mes} (\frac{1}{\tilde w})) - 1} \right), \\
\log \left( \left( \frac{1}{w} +1 + (1+w) \mathbf H'_{\mes} (1+w) \right) - \left( \frac{1}{z} +1 + (1+z) \mathbf H'_{\mes} (1+z) \right) \right) = \log \left( \frac{1}{\tilde w} - \frac{1}{\tilde z} \right).
\end{align*}
Substituting these equalities, we obtain that \eqref{eq:prop-tilings-covar} equals
\begin{multline*}
\frac{1}{(2 \pi \ii)^2 } \oint_{|\tilde z|=\ep} \oint_{|\tilde w|=2 \ep} \left( \frac{1}{\tilde z} + \frac{1- a_t}{\exp( - C_{\mes} (\frac{1}{\tilde z})) - 1} \right)^{k_t} \left( \frac{1}{\tilde w} + \frac{1- a_r}{\exp( - C_{\mes} (\frac{1}{\tilde  w})) - 1} \right)^{k_r} \\ \times \frac{1}{(\tilde z- \tilde w)^2} d \tilde z d \tilde w.
\end{multline*}
Making a further change of variables $\tilde z \to \frac{1}{\tilde z}$, $\tilde w \to \frac{1}{\tilde w}$, we arrive at \eqref{eq:lem-two-int-proj}.

\end{proof}

\begin{proof}[Proof of Theorem \ref{th:ctl-proj-GFF-1} ]

We recall that the functions $y_{\mes} (z)$ and $\eta_{\mes} (z)$ were defined in Section \ref{sec:restrictions}.

For $0<a < 1$ let $\mathbf{C}_{a;\mes}$ be the union of the set $\{ z \in \HH: \eta_{\mes}(z)= a \}$ and its conjugate.
A direct check shows that if $z \to \infty$ then $\eta_{\mes} (z) \to 0$. Therefore, for $a_r < a_t$ the contour $\mathbf{C}_{a_r;\mes}$ contains the contour $\mathbf{C}_{a_t;\mes}$. Thus, in \eqref{eq:lem-two-int-proj} we can deform the contour $|w|=C$ to $\mathbf{C}_{a_t;\mes}$ and the contour $|z|=2C$ to $\mathbf{C}_{a_r;\mes}$ without meeting poles of the integrand. We obtain
\begin{multline}
\label{eq:trans-gff-end-1}
\frac{1}{(2 \pi \ii)^2} \oint_{|z|= 2 C} \oint_{|w|= C} \left( z + \frac{1- a_r}{\exp \left(-C_{\mes} \left( z \right) \right) - 1} \right)^{k_r} \left( w + \frac{1- a_t}{\exp \left(-C_{\mes} \left( w \right) \right) - 1} \right)^{k_t} \\ \times \frac{1}{(z-w)^2} dz dw = \frac{1}{(2 \pi \ii)^2} \oint_{\mathbf{C}_{a_r;\mes} } \oint_{\mathbf{C}_{a_t;\mes}} y_{\mes} (z)^{k_r} y_{\mes} (w)^{k_t} \frac{1}{(z-w)^2} dz dw.
\end{multline}
Recall that the values of $y_{\mes} (z)$ are real. Using this fact and the equality
$$
2 \log \left| \frac{z-w}{z- \bar w} \right| = \log \frac{(z-w) (\bar z - \bar w)}{(z - \bar w) (\bar z - w)},
$$
we can rewrite this expression as
\begin{equation}
\label{eq:trans-gff-end-2}
\frac{1}{(2 \pi \ii)^2} \oint_{z \in \HH: \eta_{\mes}(z)= a_r } \oint_{w \in \HH: \eta_{\mes}(w)= a_t } y_{\mes} (z)^{k_r} y_{\mes} (w)^{k_t} \pa_z \pa_w \left[ 2 \log \left| \frac{z-w}{z- \bar w} \right| \right] dz dw,
\end{equation}
(in this equation we are integrating over curves in the upper half-plane only).

Let us now transform the quantities involved in the statement of the theorem. An integration by parts gives us
\begin{multline*}
M_{\eta,k}^{pr} = \int_{-\infty}^{+\infty} y^k \left( H^{\la^{(N)}} ( N y, N \eta) - \mathbf E H^{\la^{(N)}} ( N y, N \eta) \right) dy = \frac{N^{-(k+1)}}{k+1} \left( p_{k+1}^{[N \eta]} - \E p_{k+1}^{[N \eta]} \right).
\end{multline*}
Therefore, Proposition \ref{prop:restr-moments-gauss}, Lemma \ref{lem:two-int-proj} and equations \eqref{eq:trans-gff-end-1}, \eqref{eq:trans-gff-end-2} show that the collection $\{M_{\eta,k}^{pr}\}_{\eta \ge 0; k \in \mathbb Z_{\ge 0}}$ converges to the Gaussian limit and the limit covariance is given by
\begin{multline*}
\lim_{N \to \infty} \cov \frac{\left( M_{a_r,k_r}^{pr}, M_{a_t,k_t}^{pr} \right)}{N^{(k_r+1)+(k_t+1)}} = \frac{-1}{4 \pi^2 (k_r+1) (k_t+1)} \\ \times \oint_{z \in \HH: \eta_{\mes}(z)= a_r } \oint_{w \in \HH: \eta_{\mes}(w)= a_t } y_{\mes} (z)^{k_r+1} y_{\mes} (w)^{k_t+1} \pa_z \pa_w \left[ 2 \log \left| \frac{z-w}{z- \bar w} \right| \right] dz dw.
\end{multline*}
The definition of the Gaussian Free Field implies
\begin{multline*}
\cov \left( \mathcal M_{a_r,k_r}^{pr}, \mathcal M_{a_t, k_t}^{pr} \right) = \int_{z \in \HH: a_r = \eta_{\mes} (z)} \int_{w \in \HH; a_t = \eta_{\mes} (w)} y_{\mes} (z)^{k_r} y_{\mes} (w)^{k_t} \\
 \times \frac{d y_{\mes} (z)}{d z}   \frac{d y_{\mes} (w)}{d w}   \left[ \frac{-1}{2 \pi} \log \left| \frac{z-w}{z-\bar w} \right| \right] dz dw.
\end{multline*}
An integration by parts shows that the right-hand sides of two equations differ by a factor $\pi$.
This concludes the proof of the theorem.
\end{proof}

\subsection{Extreme characters of $U(\infty)$ and Gaussian Free Field}
\label{sec:ext-GFF-proof}

In this Section we prove Proposition \ref{prop:comp-struct-ext} and Theorem \ref{theorem:extr-char-gff-1}.

Recall that $C_{\mes} (z)$ is a Stieltjes transform of  measure $\mes$ on the real line (see
\eqref{eq:def-sm-function}).
\begin{lemma}
\label{lem:Voic-lim}
Assume that a sequence of extreme characters $\omega(N)$ satisfies the condition \eqref{eq:ext-basic-cond} with the limiting sextuple $\mathbf J = \{\mathcal A^+, \mathcal B^+, \mathcal A^-, \mathcal B^{-}, \Gamma^+, \Gamma^-)$. Then we have the convergence
$$
\lim_{N \to \infty} \frac{\pa_t \log \Phi^{\omega (N)} (1+t)}{N} = \F_{\mathbf J} (1+t), \qquad \mbox{uniformly in $|t|<\ep, \ \ \ep >0$},
$$
where $\F_{\mathbf J} (1+t)$ is given by the formula
\begin{multline}
\label{eq:limit-Voiculescu}
\F_{\mathbf J} (1+t) := \frac{1}{t^2} C_{\mathcal A^+} \left( \frac{1}{t} \right) - \frac{\mathcal A^+ (\R)}{t} + \frac{1}{t^2} C_{\mathcal B^+} \left( - \frac{1}{t} \right) + \frac{\mathcal B^+ (\R)}{t} - \frac{1}{t^2} C_{\mathcal A^-} \left( - \frac{1+t}{t} \right) \\ - \frac{\mathcal A^- (\R)}{t(1+t)} - \frac{1}{t^2} C_{\mathcal B^{-}} \left( \frac{1+t}{t} \right) + \frac{\mathcal B^- (\R)}{t(1+t)} + \Gamma^+ - \frac{\Gamma^-}{(1+t)^2}.
\end{multline}

\end{lemma}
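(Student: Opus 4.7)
The strategy is a direct term-by-term differentiation of $\log \Phi^{\omega(N)}(1+t)$ using the explicit formula \eqref{eq:Voic-formula}, followed by rewriting each of the resulting six sums as an integral against one of the empirical measures from \eqref{eq:ext-basic-cond}, and finally re-expressing these integrals through the Cauchy--Stieltjes transform \eqref{eq:def-sm-function}.

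First I would substitute $u=1+t$, so that $u-1=t$ and $u^{-1}-1=-t/(1+t)$, and differentiate termwise. The $\gamma^{\pm}$ summands contribute $\gamma^+(N)-\gamma^-(N)/(1+t)^2$. The $\alpha_i^{+},\beta_i^{+}$ terms give
$$
\sum_i \frac{\alpha_i^+(N)}{1-\alpha_i^+(N)\,t}+\sum_i\frac{\beta_i^+(N)}{1+\beta_i^+(N)\,t},
$$
while the chain rule applied to the inner function $-t/(1+t)$ produces, for the $\alpha^-,\beta^-$ terms,
$$
-\sum_i\frac{\alpha_i^-(N)}{(1+t)(1+t+\alpha_i^-(N)\,t)}-\sum_i\frac{\beta_i^-(N)}{(1+t)(1+t-\beta_i^-(N)\,t)}.
$$

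Next I would divide by $N$ and identify each sum with an integral against an empirical measure, e.g.\ $\frac{1}{N}\sum_i \frac{\alpha_i^+(N)}{1-\alpha_i^+(N)t}=\int \frac{x}{1-xt}\,d\nu_N^+(x)$ with $\nu_N^+:=\frac{1}{N}\sum_i\delta(\alpha_i^+(N))$. Assumption \eqref{eq:ext-basic-cond} gives weak convergence $\nu_N^+\to \mathcal A^+$, and since the limit has compact support and the sequences $\alpha_i^\pm(N),\beta_i^\pm(N)$ are monotone with bounded sum, the supports of $\nu_N^\pm$ and its $\beta$-analogues are in fact uniformly bounded in a fixed compact set. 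For $|t|<\varepsilon$ sufficiently small the integrand is analytic and uniformly bounded on this set, so the convergence of the integrals is uniform over $|t|<\varepsilon$, as required.

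Finally I would rewrite each of the four limit integrals using partial fractions, e.g.
$$
\frac{x}{1-xt}=-\frac{1}{t}+\frac{1/t^2}{x-1/t},
$$
which gives $\int \frac{x}{1-xt}\,d\mathcal A^+(x)=\frac{1}{t^2}C_{\mathcal A^+}(1/t)-\frac{\mathcal A^+(\mathbb R)}{t}$, and three analogous identities for $\mathcal B^+,\mathcal A^-,\mathcal B^-$ with the argument $1/t$ replaced by $-1/t$, $-(1+t)/t$ and $(1+t)/t$ respectively (the denominators $1+t$ in the $\alpha^-,\beta^-$ terms contribute the factors $1/(1+t)$ visible in \eqref{eq:limit-Voiculescu}). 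Summing the six contributions reproduces \eqref{eq:limit-Voiculescu} verbatim. No real obstacle is expected: the only analytic point is uniformity of convergence, which is handled as above, and the rest is bookkeeping of signs in the partial-fraction step.
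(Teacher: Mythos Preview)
Your approach is correct and is precisely the ``direct computation from the Voiculescu formula'' that the paper's one-line proof alludes to. (One minor sign slip: with the paper's convention $C_\mu(z)=\int\frac{d\mu(x)}{z-x}$, the partial-fraction identity should read $\frac{x}{1-xt}=-\frac{1}{t}+\frac{1/t^2}{1/t-x}$, but your stated conclusion $\int\frac{x}{1-xt}\,d\mathcal A^+=\frac{1}{t^2}C_{\mathcal A^+}(1/t)-\frac{\mathcal A^+(\R)}{t}$ is correct as written.)
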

\begin{proof}
The explicit formula for $\F_{\mathbf J} (1+t)$ comes as a direct computation from the Voiculescu formula \eqref{eq:Voic-formula}.
\end{proof}

We will need the following elementary technical statement about the measures on $\R$; we omit its proof.
\begin{lemma}
\label{lem:measures-easy-tech}
For each finite measure $\mu$ on $\R$ with compact support there exists a sequence of measures $\mu_K$ such that
$$
\lim_{K \to \infty} C_{\mu_K} (z) = C_{\mu} (z), \qquad \mbox{as $K \to \infty$,}
$$
and $\mu_K$ has a density with respect to Lebesgue measure which does not exceed $K^{1/10}$.
\end{lemma}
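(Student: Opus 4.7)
The plan is to prove this by a standard mollification argument. The idea is that the required density bound $K^{1/10}$ is a purely cosmetic constraint: any bound that grows with $K$ would do, since the key point is that we can smooth $\mu$ out on a scale that shrinks to zero while keeping the density controlled.

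First I would fix once and for all a smooth nonnegative function $\phi:\R\to\R$ with $\int_\R \phi(x)\,dx = 1$, compact support contained in $[-1,1]$, and $\|\phi\|_\infty <\infty$. For each $K \ge 1$ I set $\varepsilon_K := c_\mu / K^{1/10}$, where $c_\mu := \mu(\R)\|\phi\|_\infty$, and define the rescaled kernel $\phi_K(x) := \varepsilon_K^{-1}\phi(x/\varepsilon_K)$. Then the approximating measure is
\begin{equation*}
 \mu_K := \mu * \phi_K, \qquad \text{i.e.\ with density } \ \frac{d\mu_K}{dx}(x) \ = \ \int_{\R} \phi_K(x-y)\,d\mu(y).
\end{equation*}
Since $\phi_K$ is supported in $[-\varepsilon_K,\varepsilon_K]$, the support of $\mu_K$ is contained in the $\varepsilon_K$-neighborhood of $\mathrm{supp}(\mu)$, so it is still compact and, for $K$ large, uniformly contained in a fixed compact set.

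Next I would verify the two required properties. The density bound is immediate: the density of $\mu_K$ at any point $x$ is at most $\|\phi_K\|_\infty \cdot \mu(\R) = \|\phi\|_\infty \mu(\R)/\varepsilon_K = K^{1/10}$ by the choice of $c_\mu$. For convergence of Cauchy--Stieltjes transforms, note that by Fubini
\begin{equation*}
 C_{\mu_K}(z) = \int_\R \frac{d\mu_K(x)}{z-x} = \int_\R \left(\int_\R \frac{\phi_K(u)}{z-y-u}\,du\right) d\mu(y),
\end{equation*}
and for any fixed $z$ in the complement of $\mathrm{supp}(\mu)$ the inner integral converges to $1/(z-y)$ uniformly in $y \in \mathrm{supp}(\mu)$ as $\varepsilon_K \to 0$ (because $1/(z-y-u)$ is a smooth function of $u$ on a neighborhood of $0$, uniformly in $y$ in a compact set away from $z$). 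Dominated convergence then gives $C_{\mu_K}(z) \to C_\mu(z)$.

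There is no real obstacle here---the argument is a textbook mollification---so the only point that requires minor attention is ensuring the convergence in the appropriate domain: one must keep $z$ away from $\mathrm{supp}(\mu)$, but since $\mathrm{supp}(\mu_K)$ shrinks down to $\mathrm{supp}(\mu)$, the limit makes sense on all of $\C\setminus\mathrm{supp}(\mu)$, which is exactly the domain on which $C_\mu$ was defined in \eqref{eq:def-sm-function}.
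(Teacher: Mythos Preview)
Your mollification argument is correct and complete. The paper itself omits the proof of this lemma, labeling it an ``elementary technical statement''; your construction supplies precisely such an elementary proof.
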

%\textcolor{green}{[Is some uniformity claimed here? Why not just $\lim..=..$? Also I
%think that the lemma is elementary and we are allowed to omit the proof]}

%\begin{proof}
%Let $\{ d_K \}_{K \ge 1}$ be a sequence of discrete measures which converges to $\mu$, and each $d_K$ consists of atoms of weight $1/K$. It is clear that such a sequence always exists. For every atom of $d_K$ we chose a segment of length $K^{-1/10}$ which covers the point where the delta measure is placed. After this, we put the uniform measure on the segment in such a way that it has the same total weight which was in the atom (note that this density is proportional to $K^{- 9/10}$).
%Next, we change all atoms of $d_k$ with such segments. As a result, we obtain the sequence of measures $\{ \mu_K \}$ which has required properties.
%\end{proof}

For a measure $\mes$ and $a \in \R$ let $\mathrm{sh}_{a} (\mes)$ be a shift of $\mes$ into $a \in \R$, that is
$$
\mathrm{sh}_{a} (\mes) (A+a) = \mes (A), \qquad \mbox{for any measurable $A \subset
\R$}.
$$
For a measure $\mes$ and $c \in \R$ we denote by $c \mes$ the measure
$$
(c \mes) (A) := c \cdot \mes(A), \qquad \mbox{for any measurabe $A \subset \R$}.
$$
For a set $A \subset \R$ let $\mathrm{sym} (A)$ be a set obtained from $A$ by reflecting with respect to $0$. For a measure $\mes$ we denote by $\mathrm{sym} (\mes)$ the measure
$$
(\mathrm{sym} (\mes)) (A) := \mes ( \mathrm{sym} (A) ), \qquad \mbox{for any
measurable $A \subset \R$}.
$$
We denote by $\mes_1 \cup \mes_2$ the union (equivalently, the sum )of measures
$\mes_1$ and $\mes_2$.

\begin{lemma}
\label{lem:predel-til-ext}
Assume that $\mathbf J = (\mathcal A^+, \mathcal B^+, \mathcal A^-, \mathcal B^-, \Gamma^+, \Gamma^- )$ is a sextuple that appears in the limit in the condition \eqref{eq:ext-basic-cond}. Then there exists a sequence of probability measures $\mu_{\mathbf J;K}$ with bounded by $1$ densities with respect to the Lebesgue measure on $\R$ such that theirs Stieltjes' transforms satisfy:
\begin{multline}
\label{eq:ext-stielt-k}
C_{\mu_{\mathbf J;K}} (z) = \log z - \log (z-1) +\frac{1}{K} \left( \left( C_{\mathcal A^+} (z-1) - \frac{\mathcal A^+ (\R)}{(z-1)} \right) \right. \\ \left. + \left( \frac{\mathcal B^+ (\R) }{z-1} + C_{\mathcal B^+} (1-z) \right) + \left( - C_{\mathcal A^-} (-z) - \frac{\mathcal A^- (\R) }{z} \right) + \left( - C_{\mathcal B^-} (z) + \frac{\mathcal B^- (\R) }{z} \right) \right. \\ \left. + \frac{\Gamma^+}{(z-1)^2} - \frac{\Gamma^-}{z^2} \right) + o \left( \frac{1}{K} \right),
\end{multline}
as $K \to \infty$.
\end{lemma}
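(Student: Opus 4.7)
The strategy is to build $\mu_{\mathbf{J};K}$ explicitly as an order $1/K$ perturbation of the uniform probability measure $\mathbf 1_{[0,1]}\,dx$, whose Stieltjes transform is precisely $\log z-\log(z-1)$. Each summand on the right of \eqref{eq:ext-stielt-k} will be produced by a specific local modification of mass $O(1/K)$. First, using Lemma \ref{lem:measures-easy-tech}, I replace each of $\mathcal{A}^{\pm}, \mathcal{B}^{\pm}$ by an absolutely continuous approximant $\mathcal{A}^{\pm}_{K}, \mathcal{B}^{\pm}_{K}$ of density bounded by $K^{1/10}$; their Stieltjes transforms converge to $C_{\mathcal{A}^{\pm}}, C_{\mathcal{B}^{\pm}}$ uniformly on compacta bounded away from supports, so after multiplication by $1/K$ the resulting errors are $o(1/K)$.

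The contributions are produced as follows. The bulk pieces $\tfrac{1}{K}C_{\mathcal{A}^{+}}(z-1)$ and $-\tfrac{1}{K}C_{\mathcal{A}^{-}}(-z)$ come from adding the smeared measures $\tfrac{1}{K}\mathrm{sh}_{1}(\mathcal{A}^{+}_{K})$ on $[1,1+R]$ and $\tfrac{1}{K}\mathrm{sym}(\mathcal{A}^{-}_{K})$ on $[-R,0]$, both outside the base support and of density $\le K^{-9/10}$. Using $C_{\mathcal{B}^{+}}(1-z)=-C_{\mathrm{sh}_{1}(\mathrm{sym}(\mathcal{B}^{+}))}(z)$, the $\mathcal{B}^{+}$ bulk piece is produced by \emph{subtracting} $\tfrac{1}{K}\mathrm{sh}_{1}(\mathrm{sym}(\mathcal{B}^{+}_{K}))$ from the base on $[1-\beta_{1}^{+},1]$, and similarly for $\mathcal{B}^{-}$ on $[0,\beta_{1}^{-}]$. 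The four ``atomic'' contributions $\pm c/(K(z-x_{0}))$ with $x_{0}\in\{0,1\}$ come from removing or inserting uniform density-$1$ slivers of width $c/K$ immediately adjacent to $x_{0}$, using the identity $\int_{x_{0}-c/K}^{x_{0}}\tfrac{dx}{z-x}=\tfrac{c/K}{z-x_{0}}+O(1/K^{2})$; the dipole contributions $\pm\Gamma^{\pm}/K(z-x_{0})^{2}$ come from ``shift pairs'' (remove $[x_{0}-a,x_{0}]$ and add $[x_{0},x_{0}+a]$ with $a=\sqrt{\Gamma^{\pm}/K}$), which change the Stieltjes transform by $\log\frac{(z-x_{0})^{2}}{(z-x_{0})^{2}-a^{2}}=\frac{a^{2}}{(z-x_{0})^{2}}+O(a^{4})$.

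Summing the Stieltjes transforms of all these elementary pieces and invoking the approximations $C_{\mathcal{A}^{\pm}_{K}}\to C_{\mathcal{A}^{\pm}}$, $C_{\mathcal{B}^{\pm}_{K}}\to C_{\mathcal{B}^{\pm}}$ yields \eqref{eq:ext-stielt-k} up to an $o(1/K)$ remainder. Mass added and removed balance pairwise between each bulk contribution and its accompanying atomic strip, so $\mu_{\mathbf{J};K}$ is a probability measure. The main obstacle is density bookkeeping near $x=0,1$: several density-$1$ removals of total width $O(K^{-1/2})$ must coexist with the small-density smeared subtractions $\tfrac{1}{K}\mathrm{sh}_{1}(\mathrm{sym}(\mathcal{B}^{+}_{K}))$ and $\tfrac{1}{K}\mathcal{B}^{-}_{K}$ without forcing the density below $0$. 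This is arranged by placing the density-$1$ strips in pairwise disjoint sub-intervals and by choosing the approximants $\mathcal{B}^{\pm}_{K}$ to be supported at distance of order $K^{-1/4}$ from $\{0,1\}$, a scale intermediate between the strip widths $K^{-1/2}$ and a constant. The resulting shift of the smeared supports by $K^{-1/4}$ changes the bulk Stieltjes coefficients only by $O(K^{-1/4})\cdot(1/K)=o(1/K)$; any loss of mass from $\mathcal{B}^{\pm}$ near its endpoint under this truncation is automatically compensated by matching the atomic-strip width to $\mathcal{B}^{\pm}_{K}(\R)/K$, using that an atom of $\mathcal{B}^{+}$ at $0$ contributes equal and opposite terms to $C_{\mathcal{B}^{+}}(1-z)$ and to $\mathcal{B}^{+}(\R)/(z-1)$.
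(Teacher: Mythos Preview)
Your approach is essentially the same as the paper's: both build $\mu_{\mathbf{J};K}$ as a perturbation of $\mathbf 1_{[0,1]}\,dx$, realise each bracketed summand in \eqref{eq:ext-stielt-k} as the Stieltjes transform of a mass-zero signed piece, and check positivity and total mass at the end. The only substantive difference is the scale of the auxiliary strips. The paper uses low-density strips (density of order $K^{-9/10}$ or $K^{-8/10}$, width $K^{-1/10}$) for the atomic and dipole terms, whereas you use density-$1$ strips of width $O(K^{-1})$ and $O(K^{-1/2})$. The paper's choice makes the density bounds automatic --- every negative piece sits inside $[0,1]$ with density $o(1)$ and every positive piece sits outside $[0,1]$ with density $o(1)$ --- while your choice forces the spatial-separation argument you sketch. (Conversely, your smoothing of $\mathcal A^{\pm}$ via Lemma~\ref{lem:measures-easy-tech} is a point the paper glosses over.)

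There is one oversight in your bookkeeping. You arrange carefully that the density does not drop below $0$ inside $[0,1]$, but you do not address the mirror issue outside $[0,1]$: on, say, $[1,1+\sqrt{\Gamma^+/K}]$ your density-$1$ dipole strip overlaps with the smeared addition $\tfrac{1}{K}\mathrm{sh}_{1}(\mathcal A^{+}_{K})$ and possibly with the $\mathcal B^{+}$ atomic insertion, so the total density there exceeds $1$. The fix is exactly the trick you already use on the other side --- shift the $\mathcal A^{\pm}_{K}$ supports away from $\{0,1\}$ by an intermediate scale such as $K^{-1/4}$, and lay the positive density-$1$ strips on pairwise disjoint intervals --- after which your argument goes through.
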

\begin{proof}
First, note that $\log(z) - \log(z-1)$ is the Stieltjes transform of the uniform
measure on $[0;1]$. Next, we will consider several other signed measures with total
weight $0$ which give rise to other terms in expression \eqref{eq:ext-stielt-k}. It
follows that the union of the uniform measure on $[0;1]$ and all signed measures
will have weight $1$, and we will check that it is a probability measure.

Let $I_1$ be a (negative) measure with the density $ - \mathcal A^+ (\R) K^{-9/10}$
on the segment $[1- K^{-1/10}; 1]$. Let $\tilde A^+ := \frac{1}{K}
\mathrm{sh}_{+1}(\mathcal A^+)$ (note that the total weight of $\tilde A^+$ is
$1/K$). Then the measure $\tilde A^+ \cup I_1$ has total zero weight and the
Stieltjes transform of the form $\frac{1}{K} \left( C_{\mathcal A^+} (z-1) -
\frac{\mathcal A^+ (\R)}{(z-1)} \right) + o(1/K)$.

Let $I_2$ be a (positive) measure with the density $ \mathcal B^+ (\R) K^{-9/10}$ on
the segment $[1; 1+K^{-1/10}]$. Let $\tilde B^+ (K)$ be a sequence of measures given
by Lemma \ref{lem:measures-easy-tech} applied to the measure $\mathrm{sym}
(\mathrm{sh}_{-1} (\mathcal B^+))$. Then the measure $- \frac{1}{K} \tilde B^+ (K)
\cup I_2$ has total zero weight and the Stieltjes transform of the form $\frac{1}{K}
\left( C_{\mathcal B^+} (1-z) + \frac{\mathcal B^+ (\R)}{(z-1)} \right) + o(1/K)$.

The measures for $\mathcal A^-$ and $\mathcal B^-$ are constructed in an analogous way. In order to obtain the term $ \frac{\Gamma^+}{K (z-1)^2} + o(1/K)$ let us consider the measure which has density $\Gamma^+ K^{-8/10}$ on the interval $[1;1+K^{-1/10}]$ and density $(- \Gamma^+ K^{-8/10})$ on the interval $[1- K^{-1/10};1]$. In an analogous way we obtain the term $- 1/K \frac{\Gamma^-}{z^2} + o(1/K)$.

Finally, let us notice that all negative measures in the construction above are placed on the
segment from $0$ to $1$ (recall that beta parameters are bounded by $1$, see
\eqref{eq:Voic-formula} ) and has densities which decrease with $K$. In the same time, all positive
parts in the constructed signed measures lie outside of the segment $[0;1]$. Therefore, for large
$K$ the union of all these 6 measures with total weight zero and the uniform measure (with weight
$1$) on the segment $[0;1]$ forms a probability measure which has a required form of the Stieltjes
transform, and the density of this measure does not exceed $1$.
\end{proof}

\textit{Proof of Proposition \ref{prop:comp-struct-ext}}.

We prove this Proposition by a limit transition from Proposition \ref{prop:comp-struct-tilings}.

Let $K>0$ be a large real number. Let us consider the probability measure $\mu_{\mathbf J;K}$ on $\R$ which is given by Lemma \ref{lem:predel-til-ext}. Let us apply Proposition \ref{prop:comp-struct-tilings} to the measure $\mu_{\mathbf J;K}$. As a result, we obtain a diffeomorphism between $D_{\mu_{\mathbf J;K}}$ and $\HH$. For a fixed pair $(x, \alpha) \in D_{\mu_K}$ it is given by a unique root of the equation
\begin{equation}
\label{eq:Djk-bijection}
x = z+ \frac{1-\alpha}{e^{-C_{\mu_{\mathbf J;K}} (z)}-1},
\end{equation}
which lies in $\HH$ (the uniqueness of such a root is a part of Proposition \ref{prop:comp-struct-tilings}). We can rewrite it in the form
$$
C_{\mu_{\mathbf J;K}} (z) = \log(z-x) - \log(z - x+\alpha -1).
$$

Let us now set $x = \frac{X}{K}$, $\alpha = \frac{A}{K}$, for some fixed $X \in \R$ and $A>0$. %and use \eqref{eq:ext-stielt-k}.
For large $K$ we obtain
\begin{equation}
\label{eq:trrr2}
C_{\mu_{\mathbf J;K}} (z) = \log(z) - \log(z-1) - \frac{X}{Kz} - \frac{A-X}{K(z-1)} + o \left( \frac{1}{K} \right).
\end{equation}
Note that Lemma \ref{lem:Voic-lim} and Lemma \ref{lem:predel-til-ext} show that
\begin{equation}
\label{eq:trrr1}
C_{\mu_{\mathbf J;K}} (z) = \log(z) - \log(z-1) + \frac{1}{K (z-1)^2} \F_{\mathbf J} \left( 1+\frac{1}{z-1} \right) + o \left( \frac{1}{K} \right).
\end{equation}
Plugging \eqref{eq:trrr1} into \eqref{eq:trrr2}, cancelling $\log(z)-\log(z-1)$ and multiplying by $K$, we get
\begin{equation}
\label{eq:dikoe-ravenstvo}
X = A z + \frac{z}{z-1} \F_{\mathbf J} \left( 1+\frac{1}{z-1} \right) + o \left( 1 \right).
\end{equation}
Let us do a change of variables $t = \frac{1}{z-1}$.
Equation \eqref{eq:dikoe-ravenstvo} shows that
\begin{equation}
\label{eq:ext-main-coord}
\frac{X}{A} =  (1+t) \left( \frac{1}{t}+ \frac{ \F_{\mathbf J} (1+t)}{A} \right),
\end{equation}
which has a form given by Proposition \ref{prop:comp-struct-ext}.

Note that the function
$$
K \left( C_{\mu_{\mathbf J;K}} (z) - \log(z-X/K) + \log(z - X/K + A/K -1) \right)
$$
is analytic and converges uniformly on compact sets inside $\HH$ as $K \to \infty$. Therefore, the number of zeros of this function inside $\HH$ cannot increase in the limit as $K \to \infty$. Thus, for any pair $(X, A)$ the equation \eqref{eq:ext-main-coord} has no more than one solution in $\HH$, which shows the existence of $D_{\mathbf F}$ from the statement of Proposition \ref{prop:comp-struct-ext} and the existence of the map $D_{\mathbf F} \to \HH$.
%which proves the surjection in Proposition \ref{prop:comp-struct-ext}.

On the other side, the bijection $\HH \to D_{\mu_{\mathbf J;K}}$ is given by the explicit formulas:
$$
x_{\mu_{\mathbf J;K}} (z) = z+  \frac{(z- \bar z) (\exp ( C_{\mu_{\mathbf J;K}} ( \bar z) )-1) \exp( C_{\mu_{\mathbf J;K}} (z))}{\exp(C_{\mu_{\mathbf J;K}} (z)) - \exp( C_{\mu_{\mathbf J;K}} (\bar z))},
$$
$$
\al_{\mu_{\mathbf J;K}} (z) = 1 + \frac{(z- \bar z) (\exp ( C_{\mu_{\mathbf J;K}} ( \bar z) )-1) ( \exp( C_{\mu_{\mathbf J;K}} (z))-1)}{\exp(C_{\mu_{\mathbf J;K}} (z)) - \exp( C_{\mu_{\mathbf J;K}} (\bar z))},
$$
( these functions are solutions to \eqref{eq:Djk-bijection} ). In the limit $K \to \infty$ and with the change of variables $t=\frac{1}{z-1}$ as above, these functions converge to the functions
$$
X(t) = \frac{(1+t) (1 + \bar{t} ) (t \F_{\mathbf J} (1+t) - \bar t \F_{\mathbf J} (1 + \bar{t} ))}{t - \bar{t}},
$$
$$
A(t) = \frac{(1+\bar t) ( t \F_{\mathbf J} (1+t) - \bar t \F_{\mathbf J} (1+\bar t))}{t - \bar t} t - t \F_{\mathbf J} (1+t).
$$

Note that for any $t \in \HH$ these limiting functions are solutions to \eqref{eq:ext-main-coord} with $X=X(t)$ and $A=A(t)$. Therefore, the map $D_{\mathbf F} \to \HH$ is a bijection. Moreover, this is a diffeomorphism since the functions $X(t)$ and $A(t)$ are differentiable, and the differentiability of the map $D_{\mathbf F} \to \HH$ is provided by the Implicit Function theorem.

\begin{proof}[Proof of Theorem \ref{theorem:extr-char-gff-1}]

Recall that we have a probability measure $\mu_{\chi^{w(N)}}$ on the set of paths $\mathcal P$ in the Gelfand-Tsetlin graph.
Let $p_{A;k}$ be the shifted moments of the random signature $\la^{([A N])}$ distributed according to this measure:
$$
p_{A;k} = \sum_{i=1}^{A N} (\la^{(A N)}_i + [A N] - i)^k.
$$

Our probabilistic model clearly satisfies assumptions of Theorem \ref{theorem:main-projections}, with $F_{\rho} (z) =  \F_{\mathbf J} (z)$, and $G_{\rho} (z) =0$. Applying it, we obtain that the random variables $\{ p_{A;k} - \E p_{A;k} \}_{A>0;k \ge 1}$ converge to the jointly Gaussian limit with zero mean and covariance
\begin{multline}
\label{eq:extr-gauss-proof}
\lim_{N \to \infty} \frac{\cov(p_{A_1;k_1}, p_{A_2, k_2})}{N^{k_1+k_2}} = \frac{1}{(2 \pi \ii)^2} \oint_{|z|=\ep} \oint_{|w| = 2 \ep} \left( \frac{1}{z} +1 + \frac{(1+z) \F_{\mathbf J} (1+z)}{A_1} \right)^{k_1} \\ \times \left( \frac{1}{w} +1 + (1+w) \frac{(1+z) \F_{\mathbf J} (1+z)}{A_2} \right)^{k_2} \frac{1}{(z-w)^2} dz dw,
\end{multline}
where $0 < A_1 \le A_2$ and $\ep \ll 1$.

Note that the formula \eqref{eq:extr-gauss-proof} for covariance already contains the cross factor
$\frac{1}{(z-w)^2}$ --- this is a key indication of the presence of the Gaussian Free Field. The
derivation of Theorem \ref{theorem:extr-char-gff-1} from \eqref{eq:extr-gauss-proof} is completely
analogous to the derivation of Theorem \ref{th:ctl-proj-GFF-1} from \eqref{eq:lem-two-int-proj}
modulo the fact that one needs to use Proposition \ref{prop:comp-struct-ext} instead of Proposition
\ref{prop:comp-struct-tilings} in order to deal with the arising level curves.
\end{proof}

\subsection{Domino tilings of Aztec diamond and Gaussian Free Field}
\label{Section_Aztec_proof}

In this section we prove Theorem
\ref{theorem:extr-char-aztec}.
%Proposition ??

Let us formally describe the probability measure on a particle system which turns
out to be equivalent to the domino tiling model.

For each $t=1,2,\dots, N$ let $\la^{(t)}$, $\upsilon^{(t)}$ be signatures of length
$t$, and let $\beta := \frac{\mathfrak{q}}{\mathfrak{q}+1}$, where $\mathfrak{q}$ is a parameter from Section \ref{Section_Aztec_statement}.

Define the coefficients $\kappa (\la^{(t)} \to \upsilon^{(t)})$ via
\begin{equation*}
\frac{s_{\la^{(t)}} (x_1, \dots, x_t)}{s_{\la^{(t)}} (1^t)} \prod_{i=1}^t
(1-\beta + \beta x_i) = \sum_{\upsilon^{(t)} \in \GT_t} \kappa (\la^{(t)} \to
\upsilon^{(t)}) \frac{s_{\upsilon^{(t)}} (x_1, \dots, x_t)}{s_{\upsilon^{(t)}}
(1^t)}.
\end{equation*}
Recall that the coefficients $\pr_{t \to t-1} (\upsilon^{(t)} \to \la^{(t-1)})$ were
defined in Section \ref{Section_multilevel_specified}. The branching rule and the
Pieri rule for Schur polynomials imply that the coefficients $\kappa (\la^{(t)} \to
\upsilon^{(t)})$ and $\pr_{t \to t-1} (\upsilon^{(t)} \to \la^{(t-1)})$ are nonnegative.

Define the probability measure on the sets of signatures of the form $(\la^{(N)},
\upsilon^{(N)}, \la^{(N-1)}, \upsilon^{(N-1)}, \dots, \la^{(2)}, \upsilon^{(2)},
\la^{(1)})$ by the formula
\begin{multline}
\label{eq:meas-Aztec} \mathrm{Prob} (\la^{(N)}, \upsilon^{(N)}, \la^{(N-1)},
\upsilon^{(N-1)}, \dots, \la^{(2)}, \upsilon^{(2)}, \la^{(1)}) \\ := 1_{\la^{(N)} =
(0^N) } \prod_{i=2}^{N} \kappa ( \la^{(i)} \to \upsilon^{(i)}) \pr_{i \to (i-1)}
(\upsilon^{(i)} \to \la^{(i-1)}),
\end{multline}
(it can be directly checked by induction that the total sum of these weights is 1).
Let $\mathbb S_N$ be the set of such configurations that has a nonzero probability
measure.
%A direct computation shows that to each configuration from $\mathbb S_N$ the formula \eqref{eq:meas-Aztec} assigns weight $2^{-N(N+1)/2}$.

\begin{proposition}
\label{prop:part-aztec-biject} There is a bijection between $\mathbb S_N$ and the set of domino
tilings of the Aztec diamond of size $N$. Moreover, under this bijection the measure
\eqref{eq:meas-Aztec} turns into the measure $\mathfrak{q}^{\frac{1}{2}(\text{number of horizontal dominos})}
\cdot {(1+\mathfrak{q})^{-N(N+1)/2}}$ on the set of domino tilings of the Aztec diamond of size
$N$.
\end{proposition}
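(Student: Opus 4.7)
The strategy is to identify both sides with the same Schur-process-type combinatorial object, using the particle construction already introduced in Section \ref{Section_Aztec_statement}. First I would recall that, in the color-based encoding from Figure \ref{Fig_Domino_1}, a domino tiling of the Aztec diamond of size $N$ determines two interleaved families of particles: the yellow particles on white checkerboard squares, which read off as signatures $\lambda^{(1)},\dots,\lambda^{(N)}$ with $\lambda^{(t)}\in\GT_t$, and the green particles on gray squares, which read off as signatures $\upsilon^{(1)},\dots,\upsilon^{(N)}$ with $\upsilon^{(t)}\in\GT_t$. The local rules for how the four domino types meet on the lattice are exactly the interlacing constraints $\lambda^{(t-1)}\prec\upsilon^{(t)}$ and $\lambda^{(t)}\prec\upsilon^{(t)}$ (one interlacing per half-step), and the north/east corner forces $\lambda^{(N)}=(0^N)$. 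Running this locally gives a map from tilings to $\mathbb S_N$; the inverse reconstructs the tiling unambiguously slice by slice, since at each position the pair $(\lambda,\upsilon)$ and the interlacing determines which of the four dominoes is placed there. This is the standard correspondence underlying the Johansson/Nordenstam particle system for the Aztec diamond.

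The next step is to identify the algebraic coefficients in \eqref{eq:meas-Aztec} with the combinatorial weights. The map $\pr_{t\to t-1}$ is precisely the branching rule \eqref{eq:Schur-branching-pr-coef}, which has weight $1$ on every interlacing pair $\upsilon^{(t)}\succ\lambda^{(t-1)}$; combinatorially, this is the half-step where no weight is produced because the relevant dominoes carry multiplicative weight $1$ in the Aztec partition function. The map $\kappa$ is the one obtained by multiplying the normalized Schur function by $\prod_{i=1}^t (1-\beta+\beta x_i)$; expanding via the Pieri rule,
\[
\prod_{i=1}^t (1-\beta+\beta x_i)\,=\,\sum_{k=0}^t \beta^k(1-\beta)^{t-k}\,e_k(x_1,\dots,x_t),
\]
so $\kappa(\lambda^{(t)}\to\upsilon^{(t)})$ vanishes unless $\upsilon^{(t)}/\lambda^{(t)}$ is a vertical strip, and on such pairs it equals
\[
\frac{s_{\upsilon^{(t)}}(1^t)}{s_{\lambda^{(t)}}(1^t)}\,\beta^{|\upsilon^{(t)}|-|\lambda^{(t)}|}(1-\beta)^{t-|\upsilon^{(t)}|+|\lambda^{(t)}|}.
\]
This matches the local weight produced by the other half-step of the Aztec bijection, where the choice ``particle moves vs.\ particle stays'' encodes the choice between a horizontal and a vertical domino.

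Finally I would check that the global weights match. On the tiling side each horizontal domino carries $\mathfrak q$, and $\beta=\mathfrak q/(\mathfrak q+1)$ is chosen exactly so that $\beta/(1-\beta)=\mathfrak q$; hence the product over all $t$ of the $\kappa$-factors in \eqref{eq:meas-Aztec} produces $\mathfrak q^{H}(1+\mathfrak q)^{-N(N+1)/2}$, where $H$ is the total number of horizontal dominoes, once the ratios $s_{\upsilon}(1^t)/s_{\lambda}(1^t)$ telescope against the companion ratios implicit in $\pr_{t\to t-1}$ and in the $\kappa$-step at the next level. This telescoping is the cleanest verification: summing \eqref{eq:meas-Aztec} over all admissible arrays must reproduce the known Aztec normalization, and this in turn pins down the proportionality constant, so matching the $\beta$-powers is enough.

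The main technical obstacle will be the bookkeeping in this last step: showing carefully that the product of $s_{\mu}(1^t)$ ratios across alternating $\kappa$- and $\pr$-steps collapses, and that the combinatorial identification of horizontal dominoes with the $\beta$-choices in the Pieri expansion at each level is consistent with the global weight $\mathfrak q^{H}$. Everything else is a routine translation of the interlacing combinatorics into the Schur function framework of Sections \ref{Section_multilevel_general}--\ref{Section_multilevel_specified}.
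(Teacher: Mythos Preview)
Your proposal is correct and follows exactly the bijection the paper has in mind: identifying the two interlaced particle families (yellow and green) with the signatures $\lambda^{(t)}$ and $\upsilon^{(t)}$, reading the $\kappa$-step as the Pieri (vertical-strip) transition and the $\mathrm{pr}$-step as the branching/interlacing transition, and matching the $\beta$-weights to $\mathfrak q$. The paper itself does not give an argument at all---its proof is a one-sentence citation to \cite{Joh}, \cite{BorFer}, \cite{BCC}, \cite{BK}---so your sketch is strictly more detailed than what appears there; the only loose end is the telescoping of the $s_\mu(1^t)$ ratios you flag at the end, which is indeed the one piece of honest bookkeeping needed (and is carried out in the cited references).
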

\begin{proof}
This fact is well-known, and essentially two sequences of signatures are yellow and
green particles in Figure \ref{Fig_Domino_1}. It was implicitly used in \cite{Joh}
and \cite{BorFer}; for a recent exposition, see \cite{BCC}. See also \cite{BK},
where a generalization of this construction is used for a study of domino tilings of
more general domains.
\end{proof}

The bijection described in Proposition \ref{prop:part-aztec-biject} allows to
translate all results about a two-dimensional particle array into the geometric
language of domino tilings. We will now proceed in the language of arrays.

%\begin{proof}[Proof of Proposition \ref{prop:aztec-struct}]

%The equation
%$$
%\frac{1+z}{z} + \frac{(1+z) (1-a) \beta }{a (1- \beta + \beta(1+z) )} = \frac{x}{a},
%$$
%can be solved:
%$$
%z = \frac{x-a- \beta -1 + \sqrt{- ( x^2+4 a \beta x - 2 a x +a^2- 2 \beta x+\beta^2- 2 a \beta )} \mathbf i}{2 \beta (1-x)}, \qquad (x,a) \in \mathbf D_{\mathbf A},
%$$
%which gives a map $\mathbf D_{\mathbf A} \to \HH$. The inverse of this map is also given by explicit formulae:
%$$
%a_{\mathbf A} (z) = \frac{1}{2} \frac{|z|^2 z}{|z|^2 z + 2z + z^2 +|z|^2}, \qquad x_{\mathbf A} (z) = a_{\mathbf A} (z) \frac{1+z}{z} + (1-a_{\mathbf A} (z)) \frac{z+1}{z+2}.
%$$
%Both maps are differentiable, which implies the proposition.

%\end{proof}

\begin{proof}[Proof of Theorem \ref{theorem:extr-char-aztec}]

Let $\la^{(1)}, \la^{(2)}, \dots, \la^{(N)}$ be random signatures distributed according to the measure \eqref{eq:meas-Aztec}. For $a<1$ let $p_{k;t}$ be the $k$-th power degree of the coordinates of the signature $\la^{([a N])}$. That is, we have
$$
p_{k;t}^{[a N]} := \sum_{i=1}^{[a N]} (\la_i^{([a N])} + [a N] -i)^k.
$$
\begin{proposition}
\label{prop:aztec-gff-2}
In the notations of Theorem \ref{theorem:extr-char-aztec}, the collection of random variables $\{ N^{-k} p_{k;t}^{[a N]} \}_{k \in \N; 0<a \le 1}$ is asymptotically Gaussian with the limit covariance
\begin{multline}
\label{eq:aztec-moment-formula}
\lim_{N \to \infty} \frac{\cov \left( p_{k_1}^{[a_1 N]}, p_{k_2}^{[a_2 N]} \right)}{N^{k_1+k_2}} = \frac{ a_1^{k_1} a_2^{k_2} }{(2 \pi \ii)^2} \oint_{|z|=\ep} \oint_{|w| = 2 \ep} \left( \frac{1}{z} +1 + \frac{(1+z)(1-a_1) \beta}{a_1 (1- \beta + \beta (z+1) )} \right)^{k_1} \\ \times \left( \frac{1}{w} +1 + \frac{(1+w)(1-a_2)\beta}{a_2 (1 - \beta + \beta (w+1))} \right)^{k_2} \frac{1}{(z-w)^2} dz dw,
\end{multline}
where $0< a_1 \le a_2$ and $\ep \ll 1$.
\end{proposition}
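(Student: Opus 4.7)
I will deduce the statement by identifying the Aztec measure \eqref{eq:meas-Aztec} as a particular case of the multi-level construction of Theorem \ref{th:general-for-domino} and substituting the relevant limit functions into its contour-integral formula. My first task is to extract the Schur generating function of the marginal $\la^{([aN])}$. Starting from $\la^{(N)}=(0^N)$, whose Schur generating function is $1$, and iterating the single-step transition---multiplication by $\prod_{i=1}^k(1-\beta+\beta x_i)$ (the $\kappa$ map), followed by projection (the $\pr$ map, which on the level of Schur generating functions amounts to the substitution $x_k\mapsto 1$)---one obtains the recursion $H_{k-1}(x_1,\dots,x_{k-1}) = H_k(x_1,\dots,x_{k-1},1)\cdot\prod_{i=1}^{k-1}(1-\beta+\beta x_i)$, whose closed-form solution is
$$H_{[aN]}^{(N)}(x_1,\dots,x_{[aN]}) = \prod_{i=1}^{[aN]}(1-\beta+\beta x_i)^{N-[aN]}.$$
For any finite collection $a_1\le\dots\le a_n=1$, composing $[a_{r+1}N]-[a_rN]$ single-step kernels and comparing with \eqref{eq:expansion-g2} identifies the joint Aztec marginal on these levels with the $\st$-construction of Theorem \ref{th:general-for-domino} driven by $g_r^{(N)}(x_1,\dots,x_{[a_rN]}):=\prod_{i=1}^{[a_rN]}(1-\beta+\beta x_i)^{[a_{r+1}N]-[a_rN]}$ and top-level measure $\delta_{(0^N)}$. (If the collection in the proposition does not already include $a=1$, I adjoin it; joint Gaussianity on the larger collection implies joint Gaussianity on the original one.)

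Next, I verify CLT-appropriateness and compute the limit functions of Definition \ref{def:main}. Since $\log H_{[a_tN]}^{(N)}$ is a sum of one-variable functions in the individual $x_i$'s, every mixed partial derivative $\pa_i\pa_j\log H$ with $i\ne j$ vanishes identically, so the hypotheses on cross derivatives hold trivially and $G_{\rho,(t)}\equiv 0$, whence $Q_{\rho,(t)}(x,y) = 1/(x-y)^2$. Viewing $\{H_{[a_tN]}^{(N)}\}_{N\ge 1}$ as a sequence of symmetric functions in $[a_tN]$ variables (so that the normalization in Definition \ref{def:main} is by $[a_tN]$), a direct computation gives
$$F_{\rho,(t)}(x) \;=\; \lim_{N\to\infty}\frac{\pa_1\log H_{[a_tN]}^{(N)}(x,1,\dots,1)}{[a_tN]} \;=\; \lim_{N\to\infty}\frac{(N-[a_tN])\beta}{[a_tN]\,(1-\beta+\beta x)} \;=\; \frac{(1-a_t)\beta}{a_t(1-\beta+\beta x)},$$
and the power series of Definition \ref{def:main} converge in a neighborhood of $1$ because $1-\beta+\beta\cdot 1 = 1\ne 0$.

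Finally, plugging these limit functions into the covariance formula of Theorem \ref{th:general-for-domino} yields both asymptotic Gaussianity of $\{N^{-k}(p_k^{[a_tN]}-\E p_k^{[a_tN]})\}$ and an explicit covariance. Using the identity $1-\beta+\beta(1+z)=1+\beta z$, the factor $\frac{1}{z}+1+(1+z)F_{\rho,(t_1)}(1+z)$ simplifies to exactly $\frac{1}{z}+1+\frac{(1+z)(1-a_{t_1})\beta}{a_{t_1}(1-\beta+\beta(z+1))}$, matching the first bracket in \eqref{eq:aztec-moment-formula}; the $w$-factor simplifies analogously, $Q_{\rho,(t_2)}(z,w) = 1/(z-w)^2$ provides the kernel, and the prefactor $a_{t_1}^{k_1}a_{t_2}^{k_2}$ comes from the theorem itself, completing the identification with \eqref{eq:aztec-moment-formula}. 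The argument is essentially mechanical once the Schur generating function is identified; the one subtle point is the normalization convention in Definition \ref{def:main}, as it is the division by $[a_tN]$ (the number of variables of $H_{[a_tN]}^{(N)}$), rather than by the master parameter $N$, that produces the crucial factor $1/a_t$ in $F_{\rho,(t)}$.
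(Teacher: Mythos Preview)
Your argument is correct and follows essentially the same approach as the paper's proof: identify the Aztec measure as an instance of the setup of Theorem~\ref{th:general-for-domino}, compute the level-$[aN]$ Schur generating function $\prod_{i=1}^{[aN]}(1-\beta+\beta x_i)^{N-[aN]}$, and substitute the resulting limit functions into the contour-integral covariance. The paper's own proof is a two-sentence citation of Theorem~\ref{th:general-for-domino} together with the Schur generating function; you have filled in the intermediate computations (the recursion for $H_k$, the vanishing of $G_{\rho,(t)}$, and the explicit form of $F_{\rho,(t)}$) that the paper leaves implicit, and your flag about the normalization by $[a_tN]$ rather than by $N$ in Definition~\ref{def:main} is both correct and a point the paper does not spell out.
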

\begin{proof}
By construction, the probability measure \eqref{eq:meas-Aztec} satisfies the assumptions of Theorem \ref{th:general-for-domino}. Note that the Schur generating function on the level $[a N]$ (that is, on signatures of length $[a N]$) is equal to
$$
 \prod_{i=1}^{[a N]} (1 - \beta + \beta x_i)^{N-[a N]}.
$$
Thus, the application of Theorem \ref{th:general-for-domino} implies this proposition.
\end{proof}

Notice that the equation \eqref{eq:aztec-moment-formula} contains the cross factor $\frac{1}{(z-w)^2}$. Using $\beta = \frac{\mathfrak{q}}{\mathfrak{q}+1}$, one directly checks that the equation
$$
 \frac{1}{z} +1 + \frac{(1+z)(1-a_1) \beta}{a_1 (1- \beta + \beta (z+1) )} = \frac{y}{\eta}
$$
coincides with the equation given in Proposition \ref{prop:aztec-struct}. Theorem
\ref{theorem:extr-char-aztec} can be obtained from Proposition \eqref{prop:aztec-gff-2} with the
use of Proposition \ref{prop:aztec-struct} in exactly the same way as in the previous two sections.
\end{proof}

\subsection{Tensor products and degeneration to random matrices}
\label{sec:9-4}

There exists a way to degenerate the tensor products of representations into sums of
Hermitian matrices, see e.g. Section 1.3 of \cite{BG} for details. Our goal is to
show that under this degeneration the covariance for tensor products (given in
Theorem \ref{theorem:tensor}) turns into the covariance for the sum of random
matrices.

Let $a_i (N)$ and $b_i (N)$, $i=1, \dots ,N$, be two sets of reals, let $A(N)$ be a
diagonal $N \times N$ matrix with eigenvalues $\{ a_i (N) \}_{i=1}^N$, and let
$B(N)$ be a diagonal $N \times N$ matrix with eigenvalues $\{ b_i (N) \}_{i=1}^N$.
Assume that $U_N$ is a uniformly (=Haar-distributed) random unitary $N\times N$
matrix. Let
$$
H_N := A(N) + U_N^{-1} B(N) U_N,
$$
and let $\la_1 (H_N) \ge \dots \ge \la_N (H_N)$ be (random) eigenvalues of $H_N$.
Set
$$
p_k (H_N) := \sum_{i=1}^N \la_i^k (H_N).
$$
Assume that
$$
\frac{1}{N} \sum_{i=1}^N \delta(a_i(N)) \xrightarrow[N \to \infty]{} \hat \mes_1, \qquad \frac{1}{N} \sum_{i=1}^N \delta(b_i(N) ) \xrightarrow[N \to \infty]{} \hat \mes_2, \qquad \mbox{weak convergence},
$$
where $\hat \mes_1, \hat \mes_2$ are probability measures on $\R$ with compact supports.

Let
$$
\hat f (z) := \left( \frac{-1}{z} + R_{\hat \mes_1} (-z) + R_{\hat \mes_2} (-z) \right)^{(-1)},
$$
where by $\mathbf F^{(-1)} (z)$ we mean the functional inverse of the function $\mathbf F(z)$, and the function $R_{\mes} (z)$ was introduced in Section \ref{sec:statem-requir-formulae}.

In the limit regime $N \to \infty$ the covariance of the functions $p_k (H)$ is given by the following formula, see \cite[Chapter 10]{PS}
\begin{multline*}
 \lim_{N\to\infty} \cov(p_k(H_N), p_l(H_N) ) =
 \frac{1}{(2\pi\ii)^2} \oint_z \oint_w  z^k
 w^{l}  \frac{\partial^2}{\partial z \partial
 w} \left(
 \log \left( R_{\hat \mes^2} (- \hat f(z))- \frac{1}{\hat f(z)} \right. \right. \\ \left. \left. - R_{\hat \mes^2} (- \hat f(w))+ \frac{1}{\hat f(w)} \right)
 +\log
 \left( R_{\hat \mes^1} (- \hat f(z))- \frac{1}{\hat f(z)} - R_{\hat \mes^1} (- \hat f(w))+
\frac{1}{\hat f(w)} \right)  \right. \\ \left. -\log (z-w)-\log \left( \frac{1}{\hat f(w)}- \frac{1}{\hat f(z)} \right)
 \right) dz dw,
\end{multline*}
where the contours encircle infinity and no other poles of the integrand.

Let us make a change of variables $z=\hat f^{(-1)}(-\hat z)$ (i.e.\ $\hat z=-f(z)$), which, in
particular, swaps $0$ with $\infty$. Note that conveniently $\frac{\partial^2}{\partial z \partial
w} F(z,w) \cdot dz dw$ is a differential form for an arbitrary function $F(z,w)$, and thus it does not change at all. Therefore, we obtain
\begin{multline}
\label{eq:degen-matrix}
 \lim_{N\to\infty}\cov(p_k (H_N), p_l (H_N))
 =\frac{1}{(2\pi\ii)^2} \oint_{\hat z} \oint_{\hat w}  \left( \hat f^{(-1)}(-\hat z) \right)^k
 \left( \hat f^{(-1)}(- \hat w ) \right)^{l} \\ \times \frac{\partial^2}{\partial \hat z \partial
 \hat w} \left(
 \log \left( R_B(\hat z) + \frac{1}{\hat z} - R_B (\hat w)- \frac{1}{\hat w} \right)
 +\log \left( R_A (\hat z)  + \frac{1}{\hat z} - R_A (\hat w) -1/\hat w \right)  \right. \\  \left.
  -\log \left( \hat f^{(-1)}(-\hat z) - \hat f^{(-1)}(-\hat z) \right)-\log \left( -\frac{1}{\hat w} +\frac{1}{\hat
z} \right)
 \right) d\hat z d\hat w,
\end{multline}
where contours of integration encircle zero and no other poles.

\medskip

Recall the setting and notations of Theorem \ref{theorem:tensor}. Let $\mes^1, \mes^2$ be two limiting measures for signatures.
Theorem \ref{theorem:tensor} asserts that random variables $p_k$ which corresponds to the measure $m[T^{\la^1(N)} \otimes T^{\la^2 (N)}]$ have the covariance given by the formula
\begin{multline}
\label{eq:degen-tensor}
 \lim_{N \to \infty} N^{-k-l} \cov \left( p_k, p_l \right) \\ = \frac{1}{(2 \pi \ii)^2} \oint_{|z|=\ep} \oint_{|w|=2 \ep} \left( \frac{1}{z} +1 + (1+z) \left( H'_{\mes^1} (1+z)+H'_{\mes^2} (1+z) \right) \right)^{k_1} \\ \times \left( \frac{1}{w} + 1 + (1+w) \left( H'_{\mes^1} (1+w)+ H'_{\mes^2} (1+w) \right) \right)^{k_2} Q_{\mes^1, \mes^2}^{\otimes} (z, w) dz dw,
 \end{multline}
where contours of integration encircle zero and no other poles.

\begin{proposition}
\label{prop-degener-to-matr}
The right-hand side of \eqref{eq:degen-tensor} converges to the right-hand side of \eqref{eq:degen-matrix} in the limit
$$
\mes^i = \hat \mes^i \delta^{-1}, \quad z = \delta \hat z, \quad w= \delta \hat w, \quad i=1,2,
$$
where positive real $\delta$ tends to 0.
\end{proposition}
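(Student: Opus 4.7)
The plan is to rewrite both integrands in terms of Cauchy--Stieltjes transforms and match them termwise under the limit $\delta\to 0$. By \eqref{eq:H-S-connection} one has the compact identity
\[
\mathbf F_\mes(z) \;:=\; \tfrac{1}{z}+1+(1+z)H'_\mes(1+z) \;=\; C_\mes^{(-1)}(\log(1+z)).
\]
A short algebraic check gives
\[
1 - zw\,\frac{(1+z)H'_\mes(1+z) - (1+w)H'_\mes(1+w)}{z-w} \;=\; -zw\,\frac{\mathbf F_\mes(z) - \mathbf F_\mes(w)}{z-w},
\]
and, since $\partial_z\partial_w\log(-zw) = 0$ and $\partial_z\partial_w\log(z-w) = 1/(z-w)^2$, the kernel $Q^{\otimes}$ from \eqref{eq:def-Q-function} rewrites as
\[
Q^{\otimes}_{\mes^1,\mes^2}(z,w) = \partial_z\partial_w\log\bigl(\mathbf F_{\mes^1}(z)-\mathbf F_{\mes^1}(w)\bigr) + \partial_z\partial_w\log\bigl(\mathbf F_{\mes^2}(z)-\mathbf F_{\mes^2}(w)\bigr) - \frac{1}{(z-w)^2}.
\]

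Next I analyse the degeneration of $\mathbf F_\mes$. Interpreting $\mes^i = \hat\mes^i\delta^{-1}$ as the pushforward of $\hat\mes^i$ by $x\mapsto x/\delta$ (the scaling that yields the semiclassical limit discussed in \cite[Section 1.3]{BG}), one has $C_{\mes^i}^{(-1)}(u) = \delta^{-1}C_{\hat\mes^i}^{(-1)}(u/\delta)$; combining with $\log(1+\delta\hat z)/\delta\to\hat z$ gives, uniformly on compacts away from $0$,
\[
\delta\mathbf F_{\mes^i}(\delta\hat z) \to C_{\hat\mes^i}^{(-1)}(\hat z) = \tfrac{1}{\hat z}+R_{\hat\mes^i}(\hat z) \qquad\text{as }\delta\to 0.
\]
The outer factor for the tensor product is, by Proposition \ref{prop:Schur-are-approp}, built from $F_\rho = H'_{\mes^1}+H'_{\mes^2}$, and hence satisfies
\[
\delta\bigl(\tfrac{1}{\delta\hat z}+1+(1+\delta\hat z)(H'_{\mes^1}+H'_{\mes^2})(1+\delta\hat z)\bigr) \to \tfrac{1}{\hat z}+R_{\hat\mes^1}(\hat z)+R_{\hat\mes^2}(\hat z) = \hat f^{(-1)}(-\hat z),
\]
using the explicit inversion $\hat f^{(-1)}(w) = -w^{-1}+R_{\hat\mes^1}(-w)+R_{\hat\mes^2}(-w)$.

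Finally, I substitute these asymptotics into the right-hand side of \eqref{eq:degen-tensor}, change variables to $(\hat z,\hat w)$ with $dz\,dw = \delta^2 d\hat z\,d\hat w$, and observe that the $\hat z,\hat w$-independent constants $-\log\delta$ in $\log(\mathbf F_{\mes^i}(\delta\hat z)-\mathbf F_{\mes^i}(\delta\hat w))$ and in $\log(\delta\hat z-\delta\hat w)$ vanish under $\partial_{\hat z}\partial_{\hat w}$. What remains, after the identification $C_{\hat\mes^i}^{(-1)}(\hat z) = R_{\hat\mes^i}(\hat z)+\hat z^{-1}$, matches the $R$-transform logarithms of \eqref{eq:degen-matrix}. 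The main technical obstacle is the careful tracking of the $\delta$-scalings---the outer factors contribute $\delta^{-k_1-k_2}$, the kernel contributes $\delta^{-2}$, and the Jacobian contributes $\delta^{+2}$---together with producing the extra term $-\partial_{\hat z}\partial_{\hat w}\log(\hat f^{(-1)}(-\hat z)-\hat f^{(-1)}(-\hat w))$ appearing in \eqref{eq:degen-matrix} from the combined contribution of the outer-factor degeneration and the $-1/(z-w)^2$ piece of the rewritten $Q^{\otimes}$; verifying this last compensation explicitly is the core step that completes the proof.
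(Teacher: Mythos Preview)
Your rewriting
\[
Q^{\otimes}_{\mes^1,\mes^2}(z,w)=\partial_z\partial_w\log\bigl(\mathbf F_{\mes^1}(z)-\mathbf F_{\mes^1}(w)\bigr)+\partial_z\partial_w\log\bigl(\mathbf F_{\mes^2}(z)-\mathbf F_{\mes^2}(w)\bigr)-\frac{1}{(z-w)^2}
\]
is correct, the limits $\delta\,\mathbf F_{\mes^i}(\delta\hat z)\to C_{\hat\mes^i}^{(-1)}(\hat z)=R_{\hat\mes^i}(\hat z)+\hat z^{-1}$ and $\delta\bigl(\tfrac{1}{\delta\hat z}+1+(1+\delta\hat z)F_\rho(1+\delta\hat z)\bigr)\to \hat f^{(-1)}(-\hat z)$ are right, and the $\delta$-bookkeeping is consistent.

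The gap is your diagnosis of the final step. The extra term $-\partial_{\hat z}\partial_{\hat w}\log\bigl(\hat f^{(-1)}(-\hat z)-\hat f^{(-1)}(-\hat w)\bigr)$ in \eqref{eq:degen-matrix} is \emph{not} produced by any ``compensation'' between the outer-factor degeneration and the $-1/(z-w)^2$ piece; your limiting integrand genuinely does not contain it, and no further expansion of the outer factors will manufacture a logarithm in the kernel. What actually resolves the discrepancy is that this extra term contributes \emph{zero} to the double contour integral. Set $g(\hat z):=\hat f^{(-1)}(-\hat z)=\hat z^{-1}+R_{\hat\mes^1}(\hat z)+R_{\hat\mes^2}(\hat z)$ and change variables $u=g(\hat z)$, $v=g(\hat w)$; since $g(\hat z)\sim\hat z^{-1}$, the small nested circles $|\hat z|<|\hat w|$ around $0$ become large nested circles with $|u|>|v|$, and
\[
\oint\oint g(\hat z)^{k}\,g(\hat w)^{l}\,\partial_{\hat z}\partial_{\hat w}\log\bigl(g(\hat z)-g(\hat w)\bigr)\,d\hat z\,d\hat w
\;=\;\oint_{u}\oint_{v}\frac{u^{k}\,v^{l}}{(u-v)^{2}}\,du\,dv\;=\;0,
\]
because for each fixed $u$ on the outer contour the function $v\mapsto v^{l}/(u-v)^{2}$ is holomorphic inside the inner contour. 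Hence your limit and \eqref{eq:degen-matrix} agree as \emph{integrals} even though the integrands differ; the ``core step'' is this vanishing, not the compensation you propose.

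For comparison, the paper's proof proceeds by a direct computation of $\lim_{\delta\to0}\delta^{2}Q^{\otimes}$ and records it with a third logarithm $-\log\bigl(1-\hat z\hat w\,[(R_{\hat\mes^1}+R_{\hat\mes^2})(\hat z)-(R_{\hat\mes^1}+R_{\hat\mes^2})(\hat w)]/(\hat z-\hat w)\bigr)$, which makes the integrand match \eqref{eq:degen-matrix} on the nose. Your (correct) pointwise limit of $\delta^{2}Q^{\otimes}$ lacks that third term and carries $+1/(\hat z-\hat w)^{2}$ instead; the two expressions differ exactly by $\partial_{\hat z}\partial_{\hat w}\log\bigl(g(\hat z)-g(\hat w)\bigr)$, which is the quantity shown above to integrate to zero. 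So both routes reach the same destination, but the honest justification of the last step is the vanishing argument, not a pointwise identity.
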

\begin{proof}
By a straightforward computation we have
$$
\lim_{\delta \to 0} H'_{\mes^i} (1+ z) = R_{\hat \mes^i} (\hat z), \quad i=1,2.
$$
We can further transform as $\delta\to 0$ the $Q^{\otimes}_{m^1,m^2}(z,w)$ part of (9.18) (without changing the integral) to
\begin{multline*}
\partial_{\hat z} \partial_{\hat w} \left( \log \left( 1 - \hat z \hat w \frac{R_{\hat \mes^1} (\hat z) - R_{\hat \mes^1} (w)}{\hat z-\hat w} \right) \right.  \\ \left.
 + \log \left( 1 - \hat z \hat w \frac{R_{\hat \mes^2} (\hat z) - R_{\hat \mes^2} (w)}{\hat z-\hat w} \right)
 - \log \left( 1 - \hat z \hat w \frac{R_{\hat \mes^1} (\hat z)+R_{\hat \mes^2} (\hat z) - R_{\hat \mes^1} (w) - R_{\hat \mes^2} (w)}{\hat z-\hat w} \right) \right)
\end{multline*}

Plugging these limit relations into the right-hand side of \eqref{eq:degen-tensor}, we obtain that as $\delta \to 0$ we have
\begin{multline}
\frac{1}{(2 \pi \ii)^2} \oint_{\hat z} \oint_{\hat w} \left( \frac{1}{z} + R_{\hat \mes^1} (\hat z) + R_{\hat \mes^2} (\hat z) +o(1) \right)^k \left( \frac{1}{w} + R_{\hat \mes^1} (\hat w) + R_{\hat \mes^2} (\hat w) +o(1) \right)^l \\ \times \partial_{\hat z} \partial_{\hat w} \left( \log \left( 1 - \hat z \hat w \frac{R_{\hat \mes^1} (\hat z) - R_{\hat \mes^1} (w)}{\hat z-\hat w} +o(1) \right)
 + \log \left( 1 - \hat z \hat w \frac{R_{\hat \mes^2} (\hat z) - R_{\hat \mes^2} (w)}{\hat z-\hat w} +o(1) \right)
 \right.  \\ \left. - \log \left( 1 - \hat z \hat w \frac{R_{\hat \mes^1} (\hat z)+R_{\hat \mes^2} (\hat z) - R_{\hat \mes^1} (w) - R_{\hat \mes^2} (w)}{\hat z-\hat w} + o(1) \right) \right) d \hat z d \hat w.
\end{multline}

Therefore, in the limit we obtain the right-hand side of \eqref{eq:degen-matrix}.
\end{proof}

\begin{remark}
This limit regime of Proposition \ref{prop-degener-to-matr} is closely related to
the \textit{semi-classical limit}; see Section 1.3 of \cite{BG} for more details on
this transition.
\end{remark}

\section{Appendix: Law of Large Numbers}
\label{sec:4-2}

In this section we prove Theorem \ref{theorem:LLN-general}. In fact, this is \cite[Theorem 5.1]{BG}, and we comment on slight differences here.

The first difference is that \cite[Theorem 5.1]{BG} requires that the Schur generating function $S_{\rho}$ of a probability measure $\rho = \rho(N)$ satisfies the condition
$$
\lim_{N \to \infty} \frac{\log S_{\rho} (x_1, \dots, x_k, 1^{N-k})}{N} = U (x_1)+ \dots + U (x_k), \qquad \mbox{for any fixed $k \ge 1$},
$$
where $U$ is a holomorphic function and the convergence is uniform in an open neighborhood of $(x_1, \dots, x_k) = 1^{N-k}$. It is clear that this condition implies the properties of Definition \ref{def:LLN-appr_schur} with $\pa_z^l U (z) = \c_l$, for $l \ge 1$. The uniform convergence of holomorphic functions implies the convergence of Taylor coefficients, though the opposite is not always correct. However, in the proof of Theorem 5.1 from \cite{BG} we use only the convergence of Taylor coefficients, so the same proof (see Section 5.2 of \cite{BG} ) goes without any changes.

The second difference is that the right-hand side of equation
\eqref{eq:LLN-theorem-statem} in Theorem \ref{theorem:LLN-general} is written in an
integral form rather than in a summation form. The computation which shows the
equivalence of these two expressions is given in equation (6.2) of \cite{BG}.

\end{document}